\documentclass[11pt]{amsart}
\usepackage{amsmath,amssymb}
\usepackage{thmtools}
\usepackage[margin=2.5cm, marginparwidth=2cm]{geometry}
\usepackage{mathtools}
\usepackage{extarrows}
\usepackage{enumerate}
\usepackage{commath}
\usepackage{float}
\usepackage{tikz, tikz-cd}
\usepackage{eucal}
\usepackage{graphicx}
\usepackage{hyperref}
\usepackage[nameinlink, capitalise]{cleveref}

\delimitershortfall=10pt
\delimiterfactor=820

\newcommand{\dd}{\partial}
\newcommand{\R}{\mathbb{R}}
\newcommand{\N}{\mathbb{N}}
\newcommand{\Z}{\mathbb{Z}}
\newcommand{\CC}{\mathbb{C}}
\newcommand{\suchthat}{\;\ifnum\currentgrouptype=16 \middle\fi|\;}
\newcommand{\topp}{\bigwedge\nolimits^{\mathrm{top}}}
\newcommand{\gad}{\rotatebox[origin=c]{180}{$\dag$}}

\DeclareMathOperator{\im}{im}
\DeclareMathOperator{\id}{id}
\DeclareMathOperator{\CZ}{CZ}
\DeclareMathOperator{\ind}{ind}
\DeclareMathOperator{\codim}{codim}
\DeclareMathOperator{\coker}{coker}

\DeclareMathOperator{\ev}{ev}

\newcommand*{\defeq}{\mathrel{\vcenter{\baselineskip0.65ex \lineskiplimit0pt
                     \hbox{$\raisebox{-0.200ex}{\scriptsize.}$}\hbox{\scriptsize.}}}
                     =}
\newcommand*{\eqdef}{=\mathrel{\vcenter{\baselineskip0.45ex \lineskiplimit0pt
                     \hbox{$\raisebox{0.10ex}{\scriptsize.}$}\hbox{\scriptsize.}}}
                     }
\newcommand{\co}{\colon\thinspace}
\newcommand{\setmin}{\mathbin{\vcenter{\hbox{$\rotatebox{20}{\raisebox{0.6ex}{$\scriptscriptstyle\mathrlap{\setminus}{\hspace{.2pt}}\setminus$}}$}}}}

\newcommand{\imp}{\ \Rightarrow \ }
\newcommand{\paren}[1]{\left( #1 \right)}
\newcommand{\parenb}[1]{\left[ #1 \right]}
\newcommand{\parenm}[1]{\left\{ #1 \right\}}
\newcommand{\ip}[2]{\left \langle #1, #2 \right \rangle}

\declaretheorem[style=plain, name=Theorem, numberwithin=section]{thm}

\declaretheorem[style=plain, name=Proposition, sibling=thm]{prp}
\declaretheorem[style=plain, name=Corollary, sibling=thm]{cor}
\declaretheorem[style=plain, name=Lemma, sibling=thm]{lma}
\declaretheorem[style=definition, name=Remark, sibling=thm]{rmk}
\declaretheorem[style=definition, name=Example, sibling=thm]{ex}
\numberwithin{equation}{section}

\setcounter{tocdepth}{1}

\title{Fiber Floer cohomology and conormal stops}
\author{Johan Asplund}
\address{Department of Mathematics, Uppsala University, 751 06 Uppsala, Sweden}
\email{johan.asplund@math.uu.se}

\begin{document}
\begin{abstract}
	Let $S$ be a closed orientable spin manifold. Let $K \subset S$ be a submanifold and denote its complement by $M_K$. In this paper we prove that there exists an isomorphism between partially wrapped Floer cochains of a cotangent fiber stopped by the unit conormal $\varLambda_K$ and chains of a Morse theoretic model of the based loop space of $M_K$, which intertwines the $A_\infty$-structure with the Pontryagin product. As an application, we restrict to codimension 2 spheres $K \subset S^n$ where $n = 5$ or $n\geq 7$. Then we show that there is a family of knots $K$ so that the partially wrapped Floer cohomology of a cotangent fiber is related to the Alexander invariant of $K$. A consequence of this relation is that the link $\varLambda_K \cup \varLambda_x$ is not Legendrian isotopic to $\varLambda_{\mathrm{unknot}} \cup \varLambda_x$ where $x\in M_K$.
\end{abstract}
\maketitle
\tableofcontents
\section{Introduction}
	In this paper we consider the wrapped Floer cohomology of a cotangent fiber with wrapping stopped by a conormal. We relate it to chains of based loops on the complement of a submanifold. Then we show that the Legendrian conormal knows about the smooth topology of the submanifold beyond the fundamental group.

	Let $S$ be a closed orientable spin manifold. Let $K \subset S$ be a submanifold and denote its complement by $M_K$. Consider the disk cotangent bundle $DT^\ast S$ equipped with the canonical Liouville form $\lambda = pdq$. The ideal contact boundary of the Weinstein domain $DT^\ast S$ is the unit cotangent bundle $ST^\ast S$. Associated to $K$ are the conormal bundle
	\[
		L_K = \parenm{(q,p) \in T^\ast S \suchthat q \in K, \, \langle p, T_qK \rangle = 0} \subset DT^\ast S\, ,
	\]
	and the unit conormal $\varLambda_K = L_K \cap ST^\ast S$. Consider a cotangent fiber $F = DT^\ast_\xi S$ at $\xi \in M_K$ and let $CW^\ast_{\varLambda_K}(F,F)$ be the partially wrapped Floer cochains on $F$ with wrapping stopped by $\varLambda_K$. Let $BM_K$ denote the space of piecewise geodesic loops in $M_K$ based at $\xi$. Consider the space $C^{\text{cell}}_{-\ast}(BM_K)$ of cellular chains of $BM_K$ equipped with the Pontryagin product. Then we have the following result:
	\begin{thm}[\cref{thm:isomorphism_cw_and_cell_bmk} and \cref{thm:extending_Y1_to_Zpi_module_iso}]\label{thm:theorem_A}
		There exists a geometrically defined isomorphism of $A_\infty$-algebras $\varPsi \co CW^\ast_{\varLambda_K}(F,F) \longrightarrow C^{\text{cell}}_{-\ast}(BM_K)$.

		Moreover, $\varPsi$ induces an isomorphism $HW^\ast_{\varLambda_K}(F,F) \longrightarrow H_{-\ast}(\varOmega_\xi M_K)$ of $\Z[\pi_1(M_K)]$-modules.
	\end{thm}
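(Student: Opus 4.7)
The plan is to adapt the classical Abbondandolo--Schwarz--Abouzaid identification $CW^\ast(F,F) \cong C_{-\ast}(\varOmega_\xi S)$ for a cotangent fiber to the partially wrapped setting stopped at the unit conormal. Fix a generic Riemannian metric on $S$ so that geodesic loops at $\xi$ are Morse-nondegenerate and transverse to $K$. Choose a wrapping Hamiltonian on the Liouville sector $(DT^\ast S, \varLambda_K)$ that vanishes in a neighborhood of the stop and grows linearly in the radial direction away from it. Its time-one chords from $F$ to $F$ are then in bijection with piecewise geodesic loops in $M_K$ based at $\xi$: the partial wrapping prevents chords from escaping across $\varLambda_K$, and via the usual correspondence between Reeb chords of a cotangent fiber and geodesic loops on the base, this cuts the generators down to loops that are supported in $M_K$. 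These loops assemble into a generating set of cells for $C^{\text{cell}}_{-\ast}(BM_K)$, and declaring $\varPsi$ to send each chord to its geodesic loop defines $\varPsi$ at the chain level.

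To upgrade $\varPsi$ to an $A_\infty$-morphism, I would identify the pertinent moduli spaces on both sides. On the Floer side, the $A_\infty$-operations count pseudoholomorphic polygons in $DT^\ast S$ with boundary on $F$, asymptotic at punctures to the chosen chords, and disjoint from a neighborhood of the stop. Following Abouzaid's strategy with an almost complex structure adapted to the Liouville projection $T^\ast S \to S$, one sends such a polygon to the gradient tree of piecewise geodesic loops in $M_K$ traced out by its boundary, and argues that this map is an oriented diffeomorphism of compactified moduli spaces. The stop condition is exactly the geometric constraint that every loop in the tree lies in $M_K$. Under this correspondence the $A_\infty$-operations on $C^{\text{cell}}_{-\ast}(BM_K)$ realize the Pontryagin product together with its higher Morse-theoretic corrections, yielding the $A_\infty$-algebra isomorphism.

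Passing to cohomology gives an algebra isomorphism $HW^\ast_{\varLambda_K}(F,F) \to H_{-\ast}(BM_K) \cong H_{-\ast}(\varOmega_\xi M_K)$. Both sides carry a natural $\Z[\pi_1(M_K)]$-module structure: on the topological side through the action of $\pi_1(M_K)$ on $\varOmega_\xi M_K$ by concatenation, and on the Floer side through the partition of Hamiltonian chords by homotopy class relative to $F$. Since $\varPsi$ was constructed to respect this partition on generators, the induced map is $\Z[\pi_1(M_K)]$-linear by inspection.

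The main obstacle is the moduli-space identification of the second step in the presence of the stop. One must choose the almost complex structure near $\varLambda_K$ so that a maximum-principle or open-mapping argument rules out pseudoholomorphic disks that stray across the stop, and then set up a coherent orientation and gluing analysis at the stratified boundary of the compactified moduli. Once these tools are in place, adapting the Abouzaid-type matching of Floer moduli with Morse gradient trees to the complement of the stop furnishes the $A_\infty$-isomorphism, and the $\Z[\pi_1(M_K)]$-linearity on cohomology follows by tracking homotopy classes through the construction.
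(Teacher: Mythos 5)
Your proposal takes a genuinely different route from the paper, and in its present form it has two real gaps at exactly the points where the paper does its actual work.

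First, the claimed bijection between time-one chords of a stop-adapted Hamiltonian and piecewise geodesic loops in $M_K$ is not justified. Geodesic loops of the metric of $S$ that happen to miss $K$ do not compute $H_{-\ast}(\varOmega_\xi M_K)$, and chords that ``wrap around'' the stop are not such geodesics; one needs a metric on the open manifold $M_K$ adapted to the end near $K$ so that its geodesics are nondegenerate and in bijection with the Reeb chords of the stopped/surgered contact boundary. The paper arranges this by attaching a handle $D_\varepsilon T^\ast([0,\infty)\times\varLambda_K)$ to form $W_K$, producing the exact Lagrangian $M_K\subset W_K$ by Lagrangian surgery on the skeleton $L_K\cup S$, and choosing a warped-product metric $dt^2+f(t)g$ with $f'<0$ on the end (equation \eqref{eq:metric_definition}); the degree-preserving chord--geodesic correspondence is then \cref{lma:1-1-correspondence_geodeiscs_reeb_of_index_k}. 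Without an analogue of this, your generating-set identification does not get off the ground. Second, you reduce everything to the assertion that the polygon-to-gradient-tree map is an oriented diffeomorphism of compactified moduli spaces. You correctly flag this as the main obstacle, but it is not a technicality to be supplied later: in the unstopped case it is the entire content of Abouzaid's paper, and no argument is offered for why it survives the presence of the stop. The paper deliberately avoids this identification. It instead defines $\varPsi$ by pushing forward fundamental chains of moduli spaces of $J$-holomorphic half strips with boundary on $F$ and on the Lagrangian $M_K$ under an evaluation map to $\varOmega_\xi M_K$, proves via a Stokes/monotonicity argument that $\varPsi_1$ is diagonal for the action and length filtrations (\cref{prp:chain_map_is_diagonal_wrt_filtrations}), shows the trivial half strip over each Reeb chord is transversely cut out and that $d(r\circ\ev)$ surjects onto the unstable manifold of the corresponding geodesic, and concludes by an upper-triangularity argument on filtration quotients with one generator each. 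This filtration argument is the key idea your proposal is missing: it replaces the global moduli-space matching by a local transversality statement at one explicit solution per generator.

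Two smaller points. The target $C^{\text{cell}}_{-\ast}(BM_K)$ carries the Pontryagin product with all higher operations zero; there are no ``higher Morse-theoretic corrections'' to match, and the isomorphism is an $A_\infty$-morphism with nontrivial higher components $\varPsi_m$, not a dga map. And the $\Z[\pi_1(M_K)]$-linearity is not ``by inspection'' from a partition of chords by homotopy class: the module structure on $HW^\ast_{\varLambda_K}(F,F)$ is defined by $\mu^2$ against the chord $a_\gamma$ corresponding to the minimal geodesic in the class, and compatibility uses the $A_\infty$-homomorphism relation $\varPsi_1\circ\mu^2=P\circ(\varPsi_1\otimes\varPsi_1)$ on homology (\cref{thm:extending_Y1_to_Zpi_module_iso}).
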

	We define $CW^\ast_{\varLambda_K}(F,F)$ using a surgery approach similar to \cite[Appendix B]{ekholm2017duality} and \cite[Section 6]{ekholm2016completearXiv} (see \cref{sec:partially_wrapped_floer_cohomology_using_a_surgery_approach} for details).
	The outline of the surgery approach is the following. We attach a handle modeled on $D_\varepsilon T^\ast ([0,\infty) \times \varLambda_K)$ to $DT^\ast S$ along a neighborhood of $\varLambda_K$. We denote the resulting Liouville sector by $W_K$ (with terminology as in \cite{ganatra2017covariantly}). Then $CW^\ast_{\varLambda_K}(F,F)$ is the wrapped Floer cochain complex of $F$ in $W_K$. The skeleton of $W_K$ is $L_K \cup S$ with clean intersection $L_K \cap S = K$. By performing Lagrangian surgery along the clean intersection, we obtain an exact Lagrangian submanifold $M_K \subset W_K$ which is diffeomorphic to the complement $S \setmin K$ (see \cref{sec:partially_wrapped_floer_cohomology_using_a_surgery_approach} for details). 

 Let $\varOmega_\xi M_K$ denote the space of loops in $M_K$ based at $\xi$. Consider singular chains on the space of based loops $C_{-\ast}(\varOmega_\xi M_K)$. We give it the structure of an $A_\infty$-algebra by equipping it with the Pontryagin product and all higher products equal to zero. See \cref{sub:based_loops_on_} and \cref{sub:filtration_on_based_loops} for a more detailed discussion about the model of the based loop space we use.

	In the spirit of Cieliebak--Latschev \cite{cieliebak2009role} and Abouzaid \cite{abouzaid2012wrapped}, we have a geometrically defined $A_{\infty}$-homomorphism $\varPsi \co CW^\ast_{\varLambda_K}(F,F) \longrightarrow C_{-\ast}(\varOmega_\xi M_K)$. By analyzing the action filtrations, we show that $\varPsi$ is diagonal with respect to the action filtrations. A key point in proving that $\varPsi$ is an isomorphism is showing that the disks contributing to the diagonal are transversely cut out. The solutions of the linearized Floer equation are precisely those vector fields along the disk that restricts to broken Jacobi fields along $\gamma$ on which the Hessian of the energy functional is negative definite.
 
	In the surgery approach we attach a handle modeled on $D_\varepsilon T^\ast([0,\infty) \times \varLambda_K)$, with skeleton $[0,\infty) \times \varLambda_K$. We consider a generic product metric on $[0,\infty) \times \varLambda_K$ such that the metric on $\varLambda_K$ is scaled by a positive function with strictly negative derivative (warped product metric), see \eqref{eq:metric_definition}. By the genericity of the metric, there is a natural one-to-one correspondence between Reeb chords and geodesics, see \cref{lma:1-1-correspondence_geodeiscs_reeb_of_index_k} for details. 

	Since $W_K$ and $M_K$ are non-compact we use monotonicity of $J$-holomorphic curves to prove that relevant moduli spaces of $J$-holomorphic curves are compact, see \cref{sec:monotonicity_estimates} for details. 

	\subsection{Applications}
	 Let $Q$ be a smooth manifold and let $K \subset Q$ be a submanifold. Consider the cotangent bundle $T^\ast Q$ and the unit conormal bundle $\varLambda_K$. It is known in certain cases that the symplectic topology of $T^\ast Q$ knows about the smooth topology of $Q$ \cite{abouzaid2012framed,ekholm2016exact,ekholm2016lagrangian}. In some cases the contact topology of $\varLambda_K$ knows about the smooth topology of $K$. For instance, it is known that conormal tori $\varLambda_K \subset ST^\ast \R^3$ of knots $K \subset \R^3$ are complete knot invariants \cite{shende1604conormal,ekholm2016completearXiv}. The results of Ekholm--Ng--Shende fit nicely into the broader picture of partially wrapped Floer cohomology that we consider in this paper, and is summarized in \cite[Section 1.3]{ekholm2016completearXiv}. Specifically, in \cite{ekholm2016completearXiv} it is proven that there is a ring isomorphism
		\[
			HW^0_{\varLambda_K}(F,F) \cong \Z[\pi_1(M_K)]\, ,
		\]
		which is also obtained from \cref{thm:theorem_A} by restricting to degree 0. Furthermore there is a relation  between the knot contact homology of $K \subset \R^3$ and the Alexander polynomial of $K$ \cite{ng2008framed,ekholm2016completearXiv}.

		Let $K \subset S^n$ be a codimension 2 sphere. In this paper we show that the partially wrapped Floer cohomology of the fiber is related to the Alexander invariant. The Alexander invariant is $H_\ast(\widetilde M_K)$ regarded as a $\Z[\pi_1(M_K)]$-module, where $\widetilde M_K$ denotes the infinite cyclic cover of $M_K$, see \cref{sub:using_the_leray--serre_spectral_sequence} for details. Denote by $\varLambda_{\mathrm{unknot}}$ the unit conormal of the standard embedded $S^{n-2} \subset S^n$. As an application of \cref{thm:theorem_A} we have the following theorem.
		\begin{thm}[\cref{thm:exists_codim_2_knot_with_conormal_not_leg_iso_to_unknot}]\label{thm:thm_B}
			Let $n = 5$ or $n\geq 7$. Let $x \in M_K$ be a point. Then there exists a codimension 2 knot $K \subset S^{n}$ with $\pi_1(M_K) \cong \Z$, such that $\varLambda_{K} \cup \varLambda_x$ is not Legendrian isotopic to $\varLambda_{\mathrm{unknot}} \cup \varLambda_x$.
		\end{thm}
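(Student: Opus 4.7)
I would argue by contradiction: a Legendrian isotopy between $\varLambda_K \cup \varLambda_x$ and $\varLambda_{\mathrm{unknot}} \cup \varLambda_x$ would, through \cref{thm:theorem_A}, force the Alexander invariant of $K$ to be trivial; one then constructs an explicit codimension-$2$ knot $K$ violating this.

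First I would record the Legendrian isotopy invariance. A Legendrian isotopy of the link $\varLambda_K \cup \varLambda_x$ to $\varLambda_{\mathrm{unknot}} \cup \varLambda_x$ (interpreted in the Liouville sector $W_K$ of \cref{sec:partially_wrapped_floer_cohomology_using_a_surgery_approach}, while parallel-transporting the cotangent fiber $F$ along the motion of $\varLambda_x$) induces a quasi-isomorphism of partially wrapped Floer complexes, hence a ring isomorphism $HW^\ast_{\varLambda_K}(F,F) \cong HW^\ast_{\varLambda_{\mathrm{unknot}}}(F,F)$. Since the degree-zero part $HW^0 \cong \Z[\pi_1(M_\bullet)] \cong \Z[t,t^{-1}]$ acts on the whole ring by multiplication, this is in particular an isomorphism of $\Z[t,t^{-1}]$-modules (up to the automorphism $t \mapsto t^{\pm 1}$, which is harmless below). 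Applying the second part of \cref{thm:theorem_A} on both sides then yields
\[
	H_{-\ast}(\varOmega_\xi M_K) \cong H_{-\ast}(\varOmega_\xi M_{\mathrm{unknot}})
\]
as graded $\Z[t,t^{-1}]$-modules.

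Next I would relate both sides to the Alexander invariant. Since $\pi_1(M_K) \cong \Z$, the universal cover of $M_K$ is the infinite cyclic cover $\widetilde M_K$, and the based loop space decomposes over path components as $\varOmega_\xi M_K \simeq \coprod_{n \in \Z} \varOmega \widetilde M_K$, so that $H_\ast(\varOmega_\xi M_K) \cong H_\ast(\varOmega \widetilde M_K) \otimes_\Z \Z[t,t^{-1}]$ as a $\Z[t,t^{-1}]$-module. The complement $M_{\mathrm{unknot}} \simeq S^1$ has universal cover $\widetilde M_{\mathrm{unknot}} \simeq \R$, hence $\varOmega \widetilde M_{\mathrm{unknot}}$ has no positive-degree homology. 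Consequently the hypothetical isotopy would force $H_{>0}(\varOmega\widetilde M_K) = 0$, and the Leray--Serre spectral sequence of the path-loop fibration $\varOmega \widetilde M_K \to P\widetilde M_K \to \widetilde M_K$ over the simply connected base $\widetilde M_K$ (cf.~\cref{sub:using_the_leray--serre_spectral_sequence}) then implies that $\widetilde M_K$ is acyclic; equivalently, the Alexander invariant of $K$ vanishes.

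To close the argument I would invoke a realization result: for $n = 5$ or $n\geq 7$, Levine's realization theorem for simple high-dimensional knots produces a smooth codimension-$2$ embedding $K\co S^{n-2} \hookrightarrow S^n$ with $\pi_1(M_K) \cong \Z$ and non-trivial Alexander invariant, contradicting the previous paragraph. The main obstacle is Step~1: one must verify that Legendrian isotopy invariance of partially wrapped Floer cohomology in this setting genuinely preserves the $\Z[\pi_1(M_K)]$-module structure provided by \cref{thm:theorem_A}, not merely the underlying graded-group structure, and in particular that the motion of the second link component $\varLambda_x$ and of the fiber $F$ is compatible with the identification of $HW^0$ with the group ring on both sides; the rest of the proof is then routine covering-space theory combined with the above spectral-sequence and realization inputs.
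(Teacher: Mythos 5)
Your proposal is correct, and at the top level it shares the paper's strategy: transfer the Legendrian isotopy question to $HW^\ast$ by invariance, pass to loop-space homology via \cref{thm:theorem_A}, and detect the difference using the Alexander invariant of a knot with $\pi_1(M_K)\cong\Z$. The middle of the argument, however, is genuinely different. The paper runs the Leray--Serre spectral sequence of the path--loop fibration over $M_K$ with local coefficients (\cref{ex:plumbing_Sp_Sp}--\cref{ex:plumbing_Sp_Sq}) to extract, for the explicit plumbing knots of \cref{lma:non-trivial_knots_with_pi1_Z}, the identity $H_p(\widetilde M_K)\cong \Z[\pi_1(M_K)]/(t-1)\otimes_{\Z[\pi_1(M_K)]} HW^{1-p}_{\varLambda_K}(F,F)$, and concludes since the left side is classically nonzero while $HW^{1-p}_{\varLambda_{\mathrm{unknot}}}(F,F)=0$. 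You instead decompose $\varOmega_\xi M_K$ over its path components, identify each with $\varOmega\widetilde M_K$, and run the spectral sequence over the simply connected cover $\widetilde M_K$; this shows that \emph{any} $K$ with $\pi_1(M_K)\cong\Z$ and nontrivial Alexander invariant is distinguished from the unknot, which is cleaner and more general than the paper's route --- in particular it does not need the $\abs{p-q}\neq 1$ hypothesis that drives the paper's spectral-sequence computations (and hence the exclusion of $n=6$ in the explicit examples). Two remarks. First, the ``main obstacle'' you flag is a non-issue: both your argument and the paper's use only the underlying graded groups ($HW^{1-p}$ zero versus nonzero, equivalently $H_{>0}(\varOmega\widetilde M_K)$ zero versus nonzero), so the bare graded isomorphism coming from Legendrian isotopy invariance suffices and no compatibility of $\Z[t^{\pm1}]$-module structures needs to be verified. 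Second, for the existence input you cite Levine's realization theorem, whereas the paper constructs the knots explicitly as $\partial(S^p\#_{\text{plumb}}S^q)$ with nontrivial Alexander invariant computed classically; if you go the realization route you should confirm it applies in the lowest-dimensional case $S^3\subset S^5$, though the explicit plumbing of \cref{lma:non-trivial_knots_with_pi1_Z} covers that case directly and is the safer citation here.
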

	\subsection{Relation to other results}
		Let $Q$ be a closed smooth manifold and consider the exact symplectic manifold $(T^\ast Q, d \lambda)$ where $\lambda$ is the canonical Liouville form $\lambda = pdq$. Abbondandolo--Schwarz proved that the wrapped Floer cohomology of a cotangent fiber $T_\xi^\ast Q$ is isomorphic to the homology of the based loop space of $Q$ \cite{abbondandolo2006floer}. Abouzaid extended this to an $A_\infty$-quasi-isomorphism in \cite{abouzaid2012wrapped} where the loop space is equipped with the Pontryagin product. Recently, Ganatra--Pardon--Shende proved that this result continues to hold even when $Q$ is not assumed to be compact as a consequence of a deeper relationship between the wrapped Fukaya category of a Liouville sector and a certain category of sheaves \cite{ganatra2018microlocal}.

		In this paper, we consider a similar $J$-holomorphic curve setup to the one used by Abouzaid in \cite{abouzaid2012wrapped}, but instead we work in the context of the partially wrapped Fukaya category of $T^\ast S$ stopped by the unit conormal $\varLambda_K$.

		\begin{rmk}\label{rmk:handle_attachment_remark}
		Another interesting geometric point of view which motivates \cref{thm:theorem_A} is the following. Consider the wrapped Fukaya category of $T^\ast M_K$ \cite{ganatra2017covariantly,bae2019wrapped}. By \cite[Corollary 6.1]{ganatra2018microlocal} we have an $A_{\infty}$-quasi-isomorphism
		\[
			CW^\ast(F,F) \cong C_{-\ast}(\varOmega_\xi M_K)\, ,
		\]
		where $F \subset T^\ast M_K$ is the cotangent fiber at $\xi \in M_K$. We realize $W_K$ as the result of attaching a handle to $T^\ast M_K$ as follows: Take a tubular neighborhood $N(K) \subset S$ of $K$ and consider $N'(K) \defeq N(K) \cap M_K$. Then remove $L_{N'(K)} \subset T^\ast M_K$ and replace it with $L_{N(K)}$, identifying their common boundaries $\varLambda_{N(K)} = \varLambda_{N'(K)}$.
		\begin{figure}[H]
			\centering
			\includegraphics{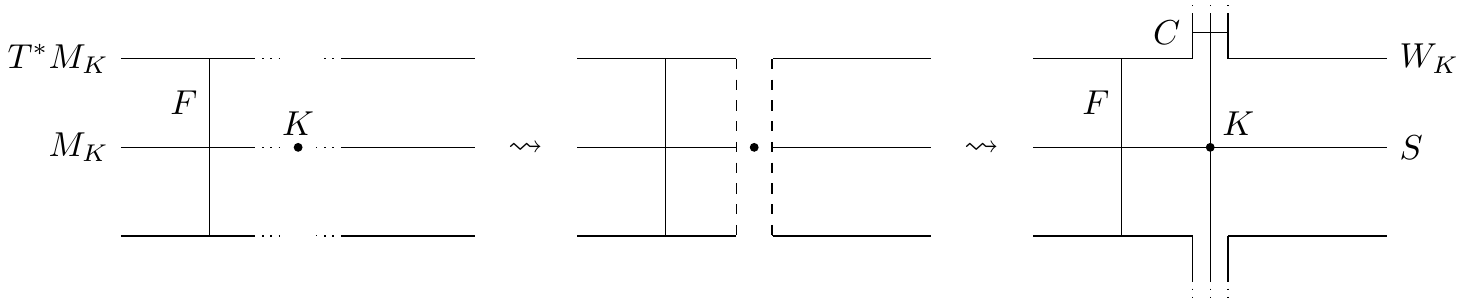}
			\caption{The figure shows the construction of $W_K$ via handle attachment on $T^\ast M_K$.}
		\end{figure}
	 From the point of view of handle attachment, there is a new generator of the wrapped Fukaya category, namely the cocore disk $C$. Because of this, the wrapped Floer cohomology of the fiber $F$ will change on the level of chains. However, if we push $C$ very far out in the punctured handle by a Lagrangian isotopy, we look at filtered $A_{\infty}$-algebras and yield a chain isomorphism
		\[
			\mathcal F_L CW^\ast(F,F)_{W_K} \cong \mathcal F_L CW^\ast(F,F)_{T^\ast M_K}\, ,
		\]
		where $\mathcal F_L$ means we only consider generators of action less than $L$. A standard filtration argument then shows that the wrapped Floer cohomology of $F$ is unaffected by this type of handle attachment and thus $HW^\ast(F,F)_{W_K} \cong HW^\ast(F,F)_{T^\ast M_K}$. Hence we obtain an indirect proof of the isomorphism
		\[
			HW^\ast_{\varLambda_K}(F,F) = HW^{\ast}(F,F)_{W_K} \cong HW^\ast(F,F)_{T^\ast M_K} \cong H_{-\ast}(\varOmega M_K)
		\]
		in \cref{thm:theorem_A}.
	\end{rmk}
	\subsection{Organization of the paper}
		In \cref{sec:wrapped_floer_cohomology_wo_ham} we describe the version of wrapped Floer cohomology defined without Hamiltonian perturbations which we use in this paper. In \cref{sec:partially_wrapped_floer_cohomology_and_chains_of_based_loops} we first discuss the surgery approach to define partially wrapped Floer cohomology. Then we define the operations $\varPsi = \parenm{\varPsi_m}_{k=1}^\infty$ between $CW^\ast_{\varLambda_K}(F,F)$ and $C_{-\ast}(\varOmega_\xi M_K)$ and show that $\varPsi$ is an $A_{\infty}$-homomorphism. \cref{sec:chain_map_is_a_quasi-isomorphism} is devoted to proving that $\varPsi$ is a isomorphism between $CW^\ast_{\varLambda_K}(F,F)$ and the Morse theoretic model of chains of based loops. Lastly, in \cref{sec:applications} we equip $CW^\ast_{\varLambda_K}(F,F)$ and $C_{-\ast}(\varOmega_\xi M_K)$ with $\Z[\pi_1(M_K)]$-module structures relate $HW^\ast_{\varLambda_K}(F,F)$ to the Alexander invariant $H_\ast(\widetilde M_K)$ for certain families of codimension 2 knots $K \subset S^n$. Then we show that this relation is used to show that $\varLambda_K \cup \varLambda_x$ is not Legendrian isotopic to $\varLambda_{\mathrm{unknot}} \cup \varLambda_x$, where $x\in S^n \setmin K$ is a point.
	\subsection*{Acknowledgments}
		The author would like to thank his PhD advisor Tobias Ekholm for all his guidance and helpful discussions. He would also like to thank Georgios Dimitroglou Rizell for useful discussions regarding \cref{rmk:handle_attachment_remark}. Finally, the author would like to thank the anonymous referee whose many comments has improved the exposition of the paper. The author was supported by the Knut and Alice Wallenberg Foundation.
\section{Wrapped Floer cohomology without Hamiltonian}
	\label{sec:wrapped_floer_cohomology_wo_ham}
	In this paper, we consider a version of wrapped Floer cohomology defined without Hamiltonian perturbations. Wrapped Floer cohomology without Hamiltonian has been studied in e.g.\@ \cite{ekholm2012rational,rizell2013lifting,ekholm2017duality} and in particular it is useful in proving various surgery formulas involving the wrapped Floer cohomology \cite{bourgeois2012effect,ekholm2017duality,ekholm2019holomorphic}. It has also been used to study knots via knot contact homology from which there is a relationship to string topology and the cord algebra \cite{ekholm2016completearXiv, ekholm2013knot, cieliebak2017knot}.

	\begin{rmk}
		The relationship between wrapped Floer cohomology defined with and without Hamiltonians has also been studied. The version without Hamiltonian is known to be quasi-isomorphic to the version defined with Hamiltonians by counting strips with a Hamiltonian term that is turned on as one goes from the positive end to the negative end \cite[Theorem 7.2]{ekholm2016legendrian}. Such $J$-holomorphic maps with a Hamiltonian term that turns on has been more systematically studied in \cite{ekholm2017symplectic} and it is proven in \cite[Lemma 68, 69]{ekholm2017duality} that the two versions of wrapped Floer cohomology are $A_{\infty}$-quasi-isomorphic.

		When working with wrapped Floer cohomology without Hamiltonian we have a priori bubbling issues. This is circumvented by considering parallel copies, which also removes the possibility of having multiply covered curves, see \cite[Section 3.3]{ekholm2017duality}. Furthermore we need to count anchored curves \cite[Section 2.2]{bourgeois2012effect} \cite[Section A.1]{ekholm2017duality}. A specific perturbation scheme involving anchored curves is constructed in \cite{ekholm2019holomorphic}, and we fix such perturbation scheme so that all relevant moduli spaces are transversely cut out.
	\end{rmk}

	We give a brief description of the wrapped Floer cohomology without Hamiltonian by following \cite[Appendix A-B]{ekholm2017duality}. We consider a Weinstein domain $M$ together with a smooth exact Lagrangian submaniold $(M,\omega \defeq d\lambda, L)$. Let $Y \defeq \dd M$ and $\varLambda \defeq L \cap Y$ be its Legendrian boundary. The boundary $(Y, \alpha \defeq \eval[0]\lambda_{Y})$ is a contact manifold. We consider the completion of $M$ and $L$ by attaching cylindrical ends $[0,\infty) \times Y$ to $Y$ and $[0,\infty) \times \varLambda$ to $\varLambda$. Then we pick a system of parallel copies of $L$ as in \cite[Section 3.3]{ekholm2017duality}. Consider a family $(H_k,h_k)_{k=1}^\infty$ of pairs of Morse functions, $H_k \co L \longrightarrow \R$ and $h_k \co \varLambda \longrightarrow \R$. Let $L_k$ be the time-1 flow of $L$ of the Hamiltonian vector field $X_{H_k}$, and let $\varLambda_k \defeq L_k \cap Y$. Then we call $\parenm{L_k}_{k=0}^\infty$ a \emph{system of parallel copies of $L$} where $L_0 \defeq L$. Let $\overline L \defeq \bigcup_{k=0}^\infty L_k$ and $\overline \varLambda \defeq \bigcup_{k=0}^\infty \varLambda_k$.

	Note that in this paper, $L$ is a cotangent fiber. Therefore we choose the Morse functions $H_k$ in such a way that all of them only have one minimum, since $L \cong D^n$.
	\subsection{$A_{\infty}$-structure and moduli space of disks}
		\label{sub:a_infty_structure_and_moduli_space_of_disks_cw_wo_ham}
		Let $(M, \lambda)$ be a spin Weinstein domain. Let $L \subset M$ be an orientable exact Lagrangian with vanishing Maslov class (see \cite{arnol1967characteristic} for a definition of the Maslov class). Let $\overline L = \parenm{L_k}_{k=0}^\infty$ be the corresponding system of parallel copies of $L$ as in the previous section.

		First we define $CW^\ast(L,L)$ as a $\Z$-graded module over $\Z$. Note that, for each Reeb chord $c'$ starting at $\varLambda_{i}$ and ending at $\varLambda_{j}$, there is a unique Reeb chord $c$ of $\varLambda$ close to $c'$. Similarly, for each transverse intersection point $a'$ in $L_{i} \cap L_{j}$, there is a unique transverse intersection point $a \in L_{0} \cap L_{1}$. We implicitly fix an identification of $c'$ with $c$, and $a'$ with $a$. We then define $CW^\ast(L,L)$ to be the $\Z$-graded module over $\Z$, which is generated by Reeb chords of $\varLambda$ and intersection points $L_0 \cap L_1$. The grading is given by the Maslov index (see \cref{rmk:description_of_maslov_index_of_reeb_chord_generators} below for a more precise definition).

		We now describe how we equip $CW^\ast(L,L)$ with a $A_{\infty}$-structure $\parenm{\mu^i}_{i=1}^\infty$ which is defined by $J$-holomorphic curve counts. Let $D_{m} \subset \CC$ denote the positively oriented unit disk, with $m$ points along the boundary removed. We denote the boundary punctures in $D_m$ by $\zeta_{1},\ldots,\zeta_{m}$, one of which is distinguished. These boundary punctures subdivide the boundary of $D_m$ into $m$ arcs. We enumerate these arcs by $\kappa_{1}, \ldots, \kappa_{m}$, according to the boundary orientation, starting from the distinguished boundary puncture. We call $\kappa \defeq \parenm{\kappa_{i}}_{i=1}^{m}$ a \emph{boundary numbering} of $D_{m}$. If the sequence $\kappa$ is decreasing (increasing), we say that the disk $D_{m}$ has decreasing (increasing) boundary numbering $\kappa$. If $\kappa_{i-1} \leq \kappa_{i}$ ($\kappa_{i-1} \geq \kappa_{i}$), we say that the puncture $\zeta_{i}$ is increasing (decreasing), and if $\kappa_{i-1} = \kappa_{i}$ we say that $\zeta_{i}$ is a constant puncture. 

		We equip the boundary punctures $\zeta_j \in \partial D_m$ with both a positive and a negative strip-like end. Namely, we pick biholomorphisms
		\[
			\begin{cases}
				\varepsilon_+^i \co (0,\infty) \times [0,1] \longrightarrow N(\zeta_i)\\
				\varepsilon_-^i \co (-\infty,0) \times [0,1] \longrightarrow N(\zeta_i)
			\end{cases} \forall i\in \parenm{1,\ldots, m}\, ,
		\]
		where $N(\zeta_i)$ is a neighborhood of the boundary puncture $\zeta_i\in \partial D_m$. 

		Using notation as in \cite{ekholm2017duality}, we are interested in the moduli spaces of $J$-holomorphic disks which are denoted by $\mathcal M^{\text{fi}}(\boldsymbol{c};\kappa)$, $\mathcal M^{\text{sy}}(\boldsymbol{c};\kappa)$ and $\mathcal M^{\text{pb}}(\boldsymbol{c};\kappa)$. These moduli spaces consist of \emph{filling disks}, \emph{symplectization disks} and \emph{partial holomorphic buildings} respectively, and we define them below.
		\begin{description}
			\item[Filling disks] 
				Consider $D_{m}$ equipped with a strictly decreasing boundary numbering. Note that every puncture is strictly decreasing except for the distinguished puncture, which is strictly increasing. We let $\boldsymbol c = c_{1} \cdots c_{m}$ be a word of generators of $CW^{\ast}(L,L)$. Then we define $\mathcal M^{\text{fi}}(\boldsymbol c; \kappa)$ to be the moduli space of $J$-holomorphic maps $u\co (D_{m}, \dd D_{m}) \longrightarrow (M, \overline L)$ such that
				\begin{itemize}
					\item near the boundary puncture $\zeta_{i}$, $u$ is asymptotic to the generator $c_{i}$, that is
					\[
						\begin{cases}
							\lim_{s\to \pm \infty} u(\varepsilon_{\pm}^i(s,t)) = c_i, & \text{ if } c_i \text{ is an intersection generator} \\
							\lim_{s\to \pm \infty} u(\varepsilon_{\pm}^i(s,t)) = (\infty,c_i), & \text{ if } c_i \text{ is a Reeb chord generator.}
						\end{cases}
					\]
					The sign in the above formulas is equal to $-$ if $i = j$, and $+$ otherwise.
					\item $u$ maps the boundary arc labeled by $\kappa_{j}$ to the component $L_{\kappa_{j}}$ of $\overline L$.
				\end{itemize}
				\begin{figure}[H]
					\centering
					\includegraphics[scale=0.9]{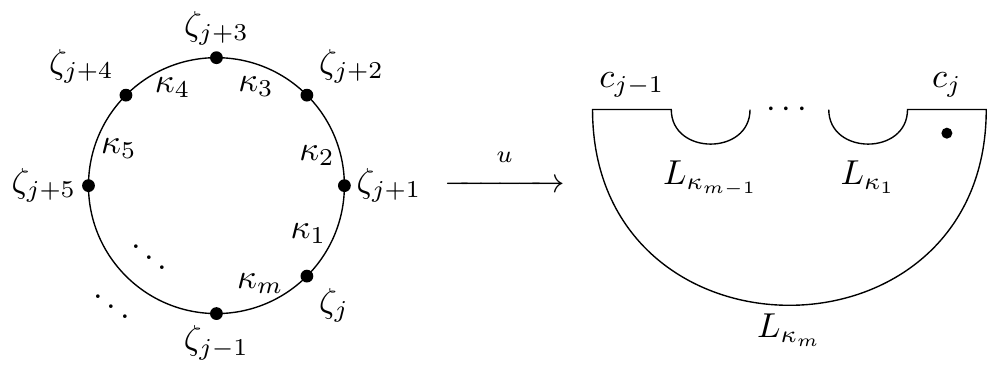}
					\caption{A $J$-holomorphic disk in $\mathcal M^{\text{fi}}(\boldsymbol c;\kappa)$. The dot on the right hand side indicates that $\zeta_j$ (near which, $u$ is asymptotic to $c_j$) is the distinguished puncture.}
				\end{figure}
			\item[Symplectization disks]
				Consider $D_{m}$ equipped with a decreasing boundary numbering (not necessarily strictly decreasing). Let $D_{m,k} \defeq D_m \setmin \parenm{\zeta^{\text{in}}_1,\ldots,\zeta^{\text{in}}_k}$, where each $\zeta^{\text{in}}_i$ is a point in the interior of $D_m$. We equip each $\zeta^{\text{in}}_i$ with a negative cylinder-like end. That is a biholomorphism
				\[
					\varphi_-^i \co (0,\infty) \times S^1	\longrightarrow N(\zeta^{\text{in}}_i)\quad \forall i \in \parenm{1,\ldots,k}\, .
				\]
				 We let $\boldsymbol c = c_{1}^{\sigma_{1}} \cdots c_{m}^{\sigma_{m}}$ be a word of signed Reeb chord generators of $CW^\ast(L,L)$, where $\sigma_{i} \in \parenm{+,-}$ for every $i$. We also let $\boldsymbol \gamma = \gamma_1 \cdots \gamma_k$ be a word of Reeb orbits in $Y$, each of which is equipped with an \emph{asymptotic marker}, i.e.\@ a point $p_i \in \im \gamma_i$. The distinguished boundary puncture $\zeta_j$ induces an asymptotic marker for each interior puncture $\zeta^{\text{in}}_i$, which is a half-ray $\varphi^{-1}((-\infty,0) \times \parenm{x_i})$ near $\zeta_i$ \cite[Section 2.1]{ekholm2017symplectic}. By abuse of notation we say that $x_i \in S^1$ is the asymptotic marker of $\zeta^{\text{in}}_i$. Then we define $\mathcal M^{\text{neg}}(\boldsymbol c, \boldsymbol \gamma; \kappa)$ to be the moduli space of $J$-olomorphic maps $u \co (D_{m,k}, \partial D_{m,k}) \longrightarrow (\R \times Y, \R \times \overline \varLambda)$ such that
				\begin{itemize}
					\item near the boundary puncture $\zeta_{i}$, $u$ is asymptotic to the Reeb chord $c_{i}^{\sigma_i}$ of $\varLambda$ at $\pm \infty$, depending on the sign $\sigma_{i}$, that is
						\[
							\lim_{s\to \pm \infty} u(\varepsilon_{\pm}^i(s,t)) = (\pm \infty,c_i)\, .
						\]
					\item near the interior puncture $\zeta^{\text{in}}_i$, $u$ is asymptotic to the Reeb orbit $\gamma_i$ in $Y$ at $-\infty$ respecting the asymptotic markers, that is
					\[
						\begin{cases}
							\lim_{s\to -\infty} u(\varphi_-^i(s,t)) = (-\infty, \gamma_i) \\
							\lim_{s\to -\infty} u(\varphi_-^i(s,x_i)) = (-\infty, p_i)\, .
						\end{cases}
					\]
					\item $u$ maps the boundary arc labeled by $\kappa_{j}$ to the component $\R \times \varLambda_{\kappa_{j}}$ of $\R \times \overline \varLambda$, and
					\item if $\zeta_{i}$ is a constant puncture, we require $\zeta_{i}$ to be a \emph{negative puncture} (i.e.\@ asymptotic to a Reeb chord of $\varLambda$ at $-\infty$).
				\end{itemize}
				\begin{figure}[H]
					\centering
					\includegraphics[scale=0.9]{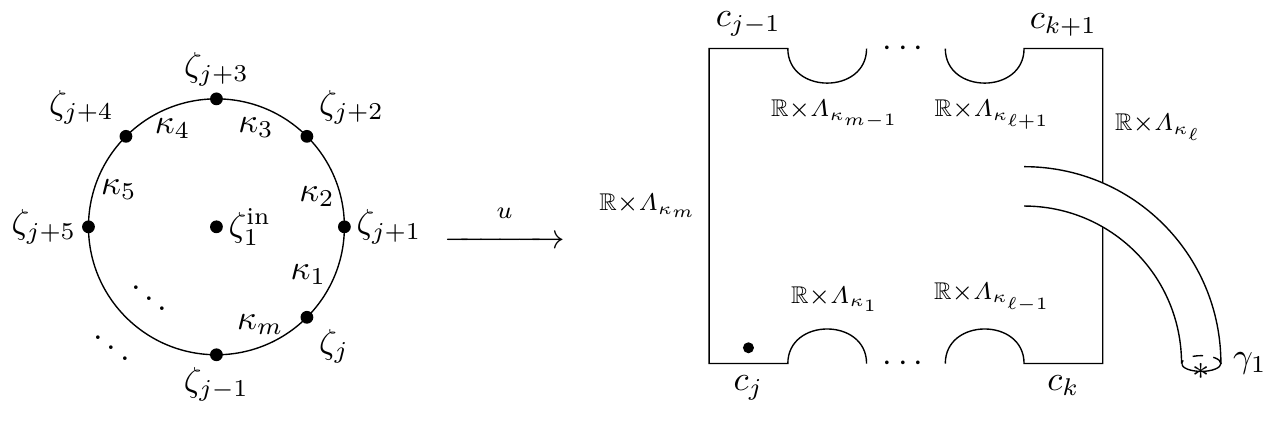}
					\caption{A $J$-holomorphic disk in $\mathcal M^{\text{neg}}(\boldsymbol c, \gamma_1;\kappa)$. The dot on the right hand side indicates that the puncture $\zeta_j$ is the distinguished puncture. The $\ast$ on the right hand side is the asymptotic marker $p_1 \in \im \gamma_1$.} \label{fig:moduli_neg}
				\end{figure}

				Let $\gamma$ be a Reeb orbit in $Y$, equipped with the asymptotic marker $p \in \im \gamma$. Let $S$ denote $S^2$ with one puncture $\zeta\in S^2$, with a fixed choice of asymptotic marker $x$ at $\zeta$. Equip $\zeta$ with a positive cylinder-like end
				\[
					\varphi_+\co (0,\infty) \times S^1 \longrightarrow N(\zeta)\, .
				\]
				Let $\mathcal M^\lambda(\gamma)$ be the $\lambda$-perturbed moduli space of $J_\lambda$-holomorphic maps $u \co S \longrightarrow X$ with notation as in \cite[Theorem 1.1]{ekholm2019holomorphic}, satisfying
				\[
					\begin{cases}
						\lim_{s\to \infty} u(\varphi_+(s,t)) = (\infty, \gamma) \\
						\lim_{s\to \infty} u(\varphi_+(s,x)) = (\infty, p) \, .
					\end{cases}
				\]
				Then we define
				\[
					\mathcal M^{\text{sy}}(\boldsymbol c; \kappa) \defeq \bigcup_{\boldsymbol \gamma} \paren{\mathcal M^{\text{neg}}(\boldsymbol c, \boldsymbol \gamma; \kappa) \times \prod_{\gamma_i \in \boldsymbol \gamma} \mathcal M^{\lambda}(\gamma_i)}\, ,
				\]
				See \cite{ekholm2019holomorphic} and \cite[Appendix A.1]{ekholm2017duality} for more deatils. Each curve in $\mathcal M^{\text{sy}}(\boldsymbol c; \kappa)$ should be interpreted as curves shown in \cref{fig:moduli_neg}, but with all Reeb orbits capped off by punctured $J_\lambda$-holomorphic spheres.
				\begin{figure}[H]
					\centering
					\includegraphics[scale=0.9]{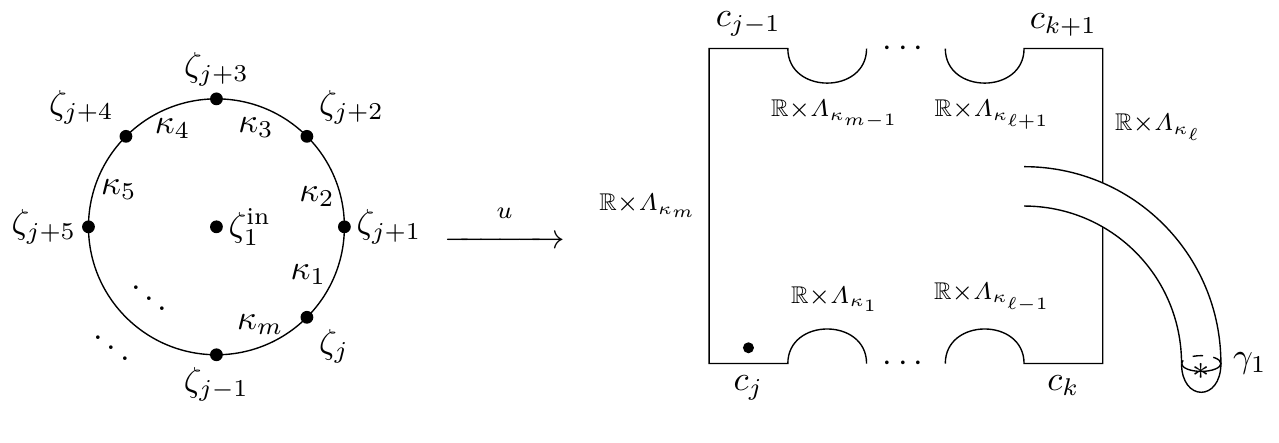}
					\caption{A $J$-holomorphic disk in $\mathcal M^{\text{sy}}(\boldsymbol c;\kappa)$. The dot on the right hand side indicates that the puncture $\zeta_j$ is the distinguished puncture. The $\ast$ on the right hand side is the asymptotic marker $p_1 \in \im \gamma_1$.}
				\end{figure}
			\item[Partial holomorphic buildings]
				The domain of a partial holomorphic building is a possibly broken disk with $m+1$ boundary punctures, see \cref{fig:phb}. We denote this (possibly broken) disk by $D_{m+1}$ and equip it with a decreasing boundary numbering $\kappa$. In the target, the partial holomorphic building consists of a two-level $J$-holomorphic building, with exactly one symplectization disk (called the \emph{primary disk}), and multiple filling disks (called \emph{secondary disks}). We require that the distinguished puncture (which is the only increasing puncture), is a negative puncture of the primary disk. If the primary disk only has one negative puncture, the primary disk is the only component, and the disk is not broken. If the primary disk has more than 1 negative puncture, each additional negative puncture has a secondary disk attached to it, at the distinguished puncture of the secondary disks disk. If $\boldsymbol c = c_{0}c_{1} \cdots c_{m}$ is a word of generators of $CW^{\ast}(L,L)$, where $c_{0}$ is the generator to which the distinguished puncture is asymptotic to, we denote the moduli space of partial holomorphic buildings by $\mathcal M^{\text{pb}}(\boldsymbol c; \kappa)$.
				\begin{figure}[H]
					\centering
					\includegraphics[scale=0.9]{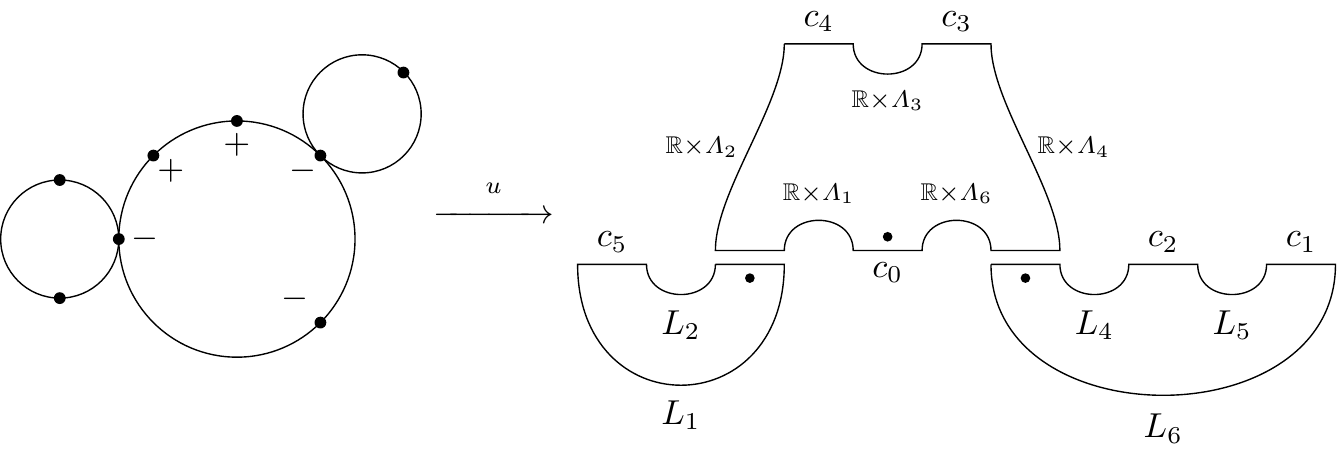}
					\caption{A partial holomorphic building in $\mathcal M^{\text{pb}}(\boldsymbol c;\kappa)$. The dots on the right hand side indicate the distinguished punctures of the corresponding disks. The signs on the left hand side indiciate the sign of the punctures of the primary disk.}
					\label{fig:phb}
				\end{figure}
				\begin{rmk}
					Take note that we might have additional negative punctures of the symplectization disk, at which there are constant filling disks with only 1 positive puncture attached. We have not depicted these above, but they should nonetheless be taken into account.
				\end{rmk}
		\end{description}
		By \cite[Theorem 63,65]{ekholm2017duality} and \cite[Theorem 1.1]{ekholm2019holomorphic}, $\mathcal M^{\mathrm{fi}}(\boldsymbol c; \kappa)$ and $\mathcal M^{\text{sy}}(\boldsymbol c; \kappa)$ are transversely cut out smooth manifolds that are independent of the boundary numbering $\kappa$ up to diffeomorphism. This follows from the observation that disks in $\mathcal M^{\text{fi}}(\boldsymbol c; \kappa)$ or $\mathcal M^{\text{sy}}(\boldsymbol c; \kappa)$ can not be multiply covered for topological reasons. Transversality is then proved using standard techniques as in \cite{ekholm2007legendrian}. Furthermore the moduli spaces admit compactifications that consists of $J$-holomorphic buildings of several levels.
		
		\begin{rmk}\label{rmk:description_of_maslov_index_of_reeb_chord_generators}
			For Reeb chord generators the grading $\abs{a}$ is more explicitly described as follows. Suppose that $a \co [0,\ell] \longrightarrow Y$, then we first define the Conley--Zehnder index $\CZ(a)$ by following \cite[Section 2.2]{ekholm2005contact}. Namely, let $a^-$ and $a^+$ be the start and endpoints of the Reeb chord $a$, respectively. Then pick a capping path $\gamma_c\co [0,1] \longrightarrow \varLambda \subset Y$ so that $\gamma_c(0) = a^+$, $\gamma_c(1) = a^-$. Let $\alpha = \eval[0]{\lambda}_{\dd M}$ and $\xi = \ker \alpha$. Then $T_{a^+}\varLambda \subset \xi_{a^+}$ is a Lagrangian submanifold. By parallel transport along $\gamma_c$ and via the linearized Reeb flow we get a path of Lagrangian submanifolds in the contact planes $\xi \subset T Y$. If we close this path up by \emph{positive close-up} in the contact planes we obtain a loop of Lagrangian submanifolds in $\xi$ denoted by $\varGamma_a$. We then define the Conley--Zehnder index of $a$ to be the Maslov index of $\varGamma_a$ (in the sense of \cite{robbin1993maslov}),
			\[
				\CZ(a) \defeq \mu(\varGamma_a)\, .
			\]
			Then we define
			\[
				\abs{a} = -\CZ(a) + (n-1)\, .
			\]
			For Lagrangian intersection generators $x\in L_{0} \cap L_{1}$ we use the choice of graded lifts of $L_0$ and $L_1$ to obtain a path starting at $T_x L_1$ and ending at $T_x L_0$. We close this path up in $T_x M$ by a positive rotation. This gives a loop of Lagrangian submanifolds denoted by $\varGamma_x$, which starts and ends at $T_x L_0 \subset T_x M$. Then define the grading of $x$ as the Maslov index of this loop \cite[p.~89]{ekholm2017duality} \cite[Appendix A]{cieliebak2010compactness}
			\[
				\abs x \defeq \mu(\varGamma_x)\, .
			\]
		\end{rmk}
		The dimension of the moduli space $\mathcal M^\text{fi}(\boldsymbol a; \kappa)$ is dependent on whether the distinguished puncture is a Reeb chord or a Lagrangian intersection puncture. To emphasize the differences, we introduce some more notation.
		\begin{itemize}
			\item If the distinguished puncture is a Reeb chord generator we denote it by $\mathcal M^{\text{fi,Reeb}}(\boldsymbol a; \kappa)$, and
			\item if the distinguished puncture is an intersection generator we denote it by $\mathcal M^{\text{fi,Lag}}(\boldsymbol a; \kappa)$.
		\end{itemize}
		\begin{thm}\label{thm:dimension_of_various_moduli_spaces}
			Let $\boldsymbol a = ca_{2} \cdots a_{m}$ be a word of generators of $CW^\ast(L,L)$. Assume that $c$ is the distinguished puncture and that it is a Reeb chord generator. Then the dimension of the moduli space $\mathcal M^{\mathrm{fi,Reeb}}(\boldsymbol a; \kappa)$ is
			\[
				\dim \paren{\mathcal M^{\mathrm{fi,Reeb}}(\boldsymbol a; \kappa)} = (n-3) + m - \abs c - \sum_{j=2}^{m} \abs{a_{j}}\, .
			\]
			Let $\boldsymbol a = xa_{2} \cdots a_{m}$ be a word of generators of $CW^\ast(L,L)$. Assume that $x$ is the distinguished puncture and that it is a Lagrangian intersection generator. Then the dimension of the moduli space $\mathcal M^{\mathrm{fi,Lag}}(\boldsymbol a; \kappa)$ is
			\[
				\dim \paren{\mathcal M^{\mathrm{fi,Lag}}(\boldsymbol a; \kappa)} = -3 + m - \abs x - \sum_{j=2}^{m} \abs{a_{j}}\, .
			\]
			For any word of Reeb chord generators $\boldsymbol c = c_{1} \cdots c_{m}$, the dimension of the moduli space $\mathcal M^{\mathrm{sy}}(\boldsymbol c; \kappa)$ is
			\[
				\dim \paren{\mathcal M^{\mathrm{sy}}(\boldsymbol c; \kappa)} = (n-3) + m + \sum_{\sigma_{j} = -} (\abs{c_{j}} - (n-1)) - \sum_{\sigma_{j} = +} \abs{c_{j}}\, .
			\]
			For any word of Reeb chord generators $\boldsymbol c = c_{0} \cdots c_{m}$, the dimension of the moduli space $\mathcal M^{\mathrm{pb}}(\boldsymbol c; \kappa)$ is
			\[
				\dim \paren{\mathcal M^{\mathrm{pb}}(\boldsymbol c; \kappa)} = -1 + m + \abs{c_{0}} - \sum_{j=1}^{m}\abs{c_{j}}\, .
			\]
		\end{thm}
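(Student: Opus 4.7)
All four dimensions are virtual (Fredholm) dimensions of moduli spaces of $J$-holomorphic disks with Lagrangian boundary conditions, so the plan is to apply the standard Riemann--Roch-based index formula as in \cite{ekholm2007legendrian}, translate the Conley--Zehnder and Maslov contributions using \cref{rmk:description_of_maslov_index_of_reeb_chord_generators}, and use the Maslov-zero hypothesis on $L$ to kill the overall boundary Maslov term. By the transversality results already cited, the virtual dimension equals the actual dimension.

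For $\mathcal M^{\mathrm{fi,Reeb}}(\boldsymbol a;\kappa)$ the Fredholm index decomposes as a topological contribution $(n-3)+m$ (from the Euler characteristic of the disk together with the conformal moduli of $D_m$) plus asymptotic contributions; the distinguished positive Reeb puncture contributes $(n-1)-\CZ(c)$ and each negative Reeb puncture contributes an analogous term, and using $\abs{a}=-\CZ(a)+(n-1)$ these asymptotic contributions collapse to $-\abs{c}-\sum\abs{a_j}$. For $\mathcal M^{\mathrm{fi,Lag}}(\boldsymbol a;\kappa)$ the only change is that replacing the distinguished Reeb asymptote by a Lagrangian intersection asymptote drops the Fredholm index by $n$, converting $(n-3)$ into $-3$. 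For $\mathcal M^{\mathrm{sy}}(\boldsymbol c;\kappa)$ the target is the symplectization, so I subtract $1$ for the $\R$-translation quotient, and each signed Reeb puncture contributes $\abs{c_j}-(n-1)$ or $-\abs{c_j}$ according to its sign, assembling to the stated formula. The auxiliary capping spheres $\mathcal M^{\lambda}(\gamma)$ have Fredholm index zero by \cite[Theorem~1.1]{ekholm2019holomorphic}, so summing over $\boldsymbol\gamma$ does not change the dimension.

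For $\mathcal M^{\mathrm{pb}}(\boldsymbol c;\kappa)$ I use that the virtual dimension is additive across the two-level building: at each Reeb chord node the $1$-parameter gluing length cancels the codimension-$1$ asymptotic matching, so no correction enters. Writing the primary symplectization disk with distinguished negative puncture $c_0$, with $l$ auxiliary negative punctures $d_1,\ldots,d_l$ each carrying a secondary filling disk with $k_i$ external negative punctures, and with the remaining external positive punctures, one sums the previously established formulas. The $\abs{d_i}$'s cancel in pairs between the primary's negative-puncture contribution and the matched secondary's distinguished-puncture contribution, while the $(n-1)$ shifts at the $1+l$ primary negative punctures combine with the $(n-3)$ topological terms of the $1+l$ total disks into a net $-2(l+1)$, which is balanced against a compensating $+2l$ coming from the increase in total puncture count; after these cancellations all dependence on $l$, $k_i$, and $\abs{d_i}$ drops out, leaving $-1+m+\abs{c_0}-\sum\abs{c_j}$. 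The main obstacle is precisely this bookkeeping in the partial building case, which reduces to the arithmetic identity $(n-3)-(n-1)=-2$ applied once per breaking, combined with careful tracking of signs and of which punctures carry an $(n-1)$ offset.
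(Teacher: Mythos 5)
Your strategy is the same as the paper's: invoke the Fredholm index formula of Cieliebak--Ekholm--Latschev to get $(n-3)+m+\mu(\dd u, Z_{\dd u})$, evaluate the Maslov contribution puncture by puncture using the conventions of \cref{rmk:description_of_maslov_index_of_reeb_chord_generators}, use the vanishing of the Maslov class of $L$ to kill the global boundary term, and get the partial-building formula by additivity of the index over the two levels. Three places in your bookkeeping do not land on the stated formulas, though. First, the paper's $\mathcal M^{\mathrm{sy}}(\boldsymbol c;\kappa)$ is a space of maps to $\R\times Y$ that is \emph{not} quotiented by $\R$-translation --- its leading term $(n-3)+m$ is identical to the filling-disk case --- so your extra ``$-1$ for the $\R$-translation quotient'' would give $(n-4)+m+\cdots$, which is off by one from the theorem. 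Second, in the partial-building count the total number of boundary punctures over both levels is $m+2l+1$, not $m+2l$: each of the $l$ breakings duplicates a chord $d_i$, and the distinguished negative puncture $d_0$ is itself an extra puncture beyond the $m$ external inputs. The excess $2l+1$ against your (correct) $-2(l+1)$ from the $(n-3)$'s and the $(n-1)$-shifts yields the required $-1$, whereas your $+2l$ yields $-2$. Third, your asserted contribution $(n-1)-\CZ(c)=\abs{c}$ at the distinguished positive Reeb puncture is the negative of the $-\abs{c}$ you then feed into the total; the sign of the close-up at the distinguished puncture versus the non-distinguished ones is exactly the delicate point (compare items (fi1)(a) and (fi1)(b) in the paper's proof), and it needs to be written out rather than asserted. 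None of this is a wrong approach --- the route is the paper's own --- but as sketched the arithmetic does not reproduce the statements.
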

		\begin{proof}
			The theorem follows from applying \cite[Theorem A.1]{cieliebak2010compactness}, and the fact that the index of a several-level $J$-holomorphic building is the sum of the indices of the disks at each level. Let $\boldsymbol a = a_1 \cdots a_m$ be a word of generators of $CW^\ast(L,L)$. Let either $u\in \mathcal M^{\text{sy}}(\boldsymbol a; \kappa)$ or $u\in \mathcal M^{\text{fi}}(\boldsymbol a; \kappa)$. Let $\widehat D_m$ be the unit disk in $\CC$ together with $\zeta_1,\ldots,\zeta_m$ regarded as marked points (and not punctures). The boundary of $\widehat D_m$ is equal to the union of closed boundary arcs $C$ such that the interiors of all the boundary arcs $C$ are pairwise disjoint, and only missing the marked points $\parenm{\zeta_1,\ldots,\zeta_m}$.
			\begin{enumerate}[(T1)]
				\item For all Reeb chord generators $a_i \in \boldsymbol a$, fix a complex trivialization $Z_{a_i}$ of the contact structure $\xi$ along $a_i$, such that the linearized Reeb flow along the chord $a_i$ expressed in $Z_{a_i}$ is constantly equal to the identity.
				\item For each boundary arc $C$ in $\widehat D_m$, fix a complex trivialization $Z_C$ of $u^\ast TM$ (if $u\in \mathcal M^{\text{fi}}(\boldsymbol a; \kappa)$) or $u^\ast T(\R \times Y)$ (if $u\in \mathcal M^{\text{sy}}(\boldsymbol a; \kappa)$) with the following properties:
				\begin{enumerate}
					\item If an endpoint of $C$ is a puncture $\zeta_i$ asymptotic to a Reeb chord $a_i$, then $Z_C = Z_{a_i}$.
					\item If an endpoint of $C$ is a puncture $\zeta_i$ asymptotic to an intersection generator $x_i \in L_{\kappa_i} \cap L_{\kappa_{i+1}}$, then $Z_C = Z_{C'}$ where $\zeta_i$ is the common endpoint of the boundary arcs $C$ and $C'$.
				\end{enumerate}
			\end{enumerate} 
			Items (T1) and (T2) above give a complex trivialization $Z_{\partial_ju}$ of $u^\ast TM$ (or $u^\ast T(\R \times Y)$) over the $j^{\text{th}}$ boundary arc $C_j$ of $\widehat D_m$. For each boundary arc $C_j$, let $C'_j$ be the complement of its endpoints in $C_j$. The tangent planes of $L$ along all $f(C'_j)$ expressed in the trivialization $Z_{\partial_j u}$ gives a collection of paths of Lagrangian subspaces in $\CC^n$. We close up this path to a loop as follows. For each Reeb chord $a_i \in \boldsymbol a$, denote its start and endpoints by $a_i^\pm$ respectively.
			\begin{enumerate}[(C1)]
				\item For each positive puncture $\zeta_i$ near which $u$ is asymptotic to the Reeb chord $a_i$, the tangent planes of $L = \R \times \varLambda$ are connected by the product of the linearized Reeb flow along $a_i$ in $\xi$, and the identity in the $\R$-factor, followed by negative close-up in the contact plane in $\xi_{a_i^+} \times \CC$ (cf.\@ \cref{rmk:description_of_maslov_index_of_reeb_chord_generators}). Denote this path of Lagrangian subspaces by $g_{a_{i}}^{+}$.
				\item For each negative puncture $\zeta_i$ near which $u$ is asymptotic to the Reeb chord $a_i$, the tangent planes of $L = \R \times \varLambda$ are connected by the product of the backwards linearized Reeb flow along $a_i$ in $\xi$, and the identity in the $\R$-factor, followed by negative close-up in the contact plane in $\xi_{a_i^-} \times \CC$ (cf.\@ \cref{rmk:description_of_maslov_index_of_reeb_chord_generators}). Denote this path of Lagrangian subspaces by $g_{a_{i}}^{-}$.
				\item For each puncture $\zeta_i$ near which $u$ is asymptotic to the intersection generator $x_i \in L_{\kappa_i} \cap L_{\kappa_{i+1}}$, connect the planes $T_{x_i} L_{\kappa_i}$ and $T_{x_i} L_{\kappa_{i+1}}$ by a negative rotation taking $T_{x_i}L_{\kappa_i}$ to $T_{x_i}L_{\kappa_{i+1}}$ in $\CC^n$ (cf.\@ \cref{rmk:description_of_maslov_index_of_reeb_chord_generators} and \cite[Remark A.1]{cieliebak2010compactness}). Denote this path of Lagrangian subspaces by $g_{x_{i}}^{\cap}$.
			\end{enumerate}
			Define $\mu(\partial u, Z_{\partial u})$ to be the Maslov index of the loop of Lagrangian subspaces in $\CC^n$ which is constructed by closing up paths of Lagrangian subspaces as described in (C1), (C2) and (C3). For the moduli spaces of filling disks and symplectization disks, we then have by \cite[Theorem A.1.]{cieliebak2010compactness} that
			\begin{align*}
				\dim \paren{\mathcal M^{\text{fi}}(\boldsymbol a; \kappa)} &= (n-3) + m + \mu(\dd u, Z_{\dd u})\\
				\dim \paren{\mathcal M^{\text{sy}}(\boldsymbol a; \kappa)} &= (n-3) + m + \mu(\dd u, Z_{\dd u})\, .
			\end{align*}
			Since $L$ is assumed to have vanishing Maslov class, the contribution to $\mu(\partial u, Z_{\partial u})$ is equal to the sum of each contribution at every boundary puncture of $D_{m}$. Next we describe each of these contributions in terms of the grading of each generator. First let $u \in \mathcal M^{\text{sy}}(\boldsymbol a; \kappa)$.
			\begin{enumerate}[(sy1)]
				\item If $\zeta_{i}$ is a positive puncture near which $u$ is asymptotic to the Reeb chord $a_{i}$ then
				\[
					\mu(g_{a_{i}}^{+} \circ (\varGamma_{a_{i}})^{-1}) = -(n-1) \Leftrightarrow \mu(g_{a_{i}}^{+}) = \mu(\varGamma_{a_{i}}) - (n-1) = -\abs{a_{i}}\, .
				\]
				\item If $\zeta_{i}$ is a negative puncture near which $u$ is asymptotic to the Reeb chord $a_{i}$ then
				\[
					\mu(g_{a_{i}}^{-} \circ \varGamma_{a_{i}}) = 0 \Leftrightarrow \mu(g_{a_{i}}^{-}) = -\mu(\varGamma_{a_{i}}) = \abs{a_{i}} - (n-1)\, .
				\]
			\end{enumerate}
			Then let $u \in \mathcal M^{\text{fi}}(\boldsymbol a; \kappa)$.
			\begin{enumerate}[({fi}1)]
				\item Let $\zeta_{i}$ be a puncture near which $u$ is asymptotic to the Reeb chord $a_{i}$.
				\begin{enumerate}
					\item If $\zeta_{i}$ is the distinguished puncture then
					\[
						\mu(g_{a_{i}}^{+} \circ \varGamma_{a_{i}}) = 0 \Leftrightarrow \mu(g_{a_{i}}^{+}) = -\mu(\varGamma_{a_{i}}) = \abs{a_{i}} - (n-1)\, .
					\]
					\item If $\zeta_{i}$ is not the distinguished puncture then
					\[
						\mu(g_{a_{i}}^{+} \circ (\varGamma_{a_{i}})^{-1}) = -(n-1) \Leftrightarrow \mu(g_{a_{i}}^{+}) = \mu(\varGamma_{a_{i}}) - (n-1) = -\abs{a_{i}}\, .
					\]
				\end{enumerate}
				\item Let $\zeta_{i}$ be a puncture near which $u$ is asymptotic to the intersection generator $x_{i}$
				\begin{enumerate}
					\item If $\zeta_{i}$ is the distinguished puncture then
					\[
						\mu(g_{x_{i}}^{\cap} \circ (\varGamma_{x_{i}})^{-1}) = -n \Leftrightarrow \mu(g_{x_{i}}^{\cap}) = \mu(\varGamma_{x_{i}})-n = \abs{x_{i}}-n\, .
					\]
					\item If $\zeta_{i}$ is not the distinguished puncture then
					\[
						\mu(g_{x_{i}}^{\cap} \circ \varGamma_{x_{i}}) = 0 \Leftrightarrow \mu(g_{x_{i}}^{\cap}) = -\mu(\varGamma_{x_{i}}) = -\abs{x_{i}}\, .
					\]
				\end{enumerate}
			\end{enumerate}

			From (sy1) and (sy2) we obtain
			\begin{align*}
				\dim \paren{\mathcal M^{\text{sy}}(\boldsymbol c; \kappa)} &= (n-3) + m + \sum_{\sigma_j = -} \paren{\abs{c_j} - (n-1)} - \sum_{\sigma_j = +} \abs{c_j} \, .
			\end{align*}
			From (fi1)(a), (fi1)(b) and (fi2)(b) we obtain
			\[
				\dim \paren{\mathcal M^{\text{fi,Reeb}}(\boldsymbol a; \kappa)} = (n-3) + m - \abs c - \sum_{j=2}^m \abs{a_j}\, .
			\]
			From (fi2)(a), (fi1)(b) and (fi2)(b) we obtain
			\[
				\dim \paren{\mathcal M^{\text{fi,Lag}}(\boldsymbol a; \kappa)} = (n-3) + m + (\abs x - n)-\sum_{j=2}^m \abs{a_j} =  - 3 + m + \abs x - \sum_{j=2}^m \abs{a_j}\, .
			\]
			For a partial holomorphic building, let $a_1, \ldots, a_p$ be the positive punctures of the primary disk, let $d_0$ be the distinguished negative puncture of the primary disk and let $d_1,\ldots,d_q$ be the remaining negative punctures. Let $b_1,\ldots,b_r$ be all the non-distinguished punctures of all the secondary disks, see \cref{fig:phb_dim}. Each secondary disk lies in $\mathcal M^{\text{fi,Reeb}}(\boldsymbol a; \kappa)$.
			We may then compute the dimension by taking sums, that is
		\begin{align*}
			\dim \paren{\mathcal M^{\text{pb}}(\boldsymbol c; \kappa)} &= \parenb{(n-3) - \sum_{j=1}^p \paren{\abs{a_j}-1} + \sum_{j=0}^q \paren{\abs{d_j}-(n-2)}}\\
			&\qquad + \parenb{\sum_{j=1}^q \paren{(n-3) - \paren{\abs{d_j}-1}} + \sum_{j=1}^r -\paren{\abs{b_j}-1}} \, .
		\end{align*}
		\begin{figure}[H]
			\centering
			\includegraphics{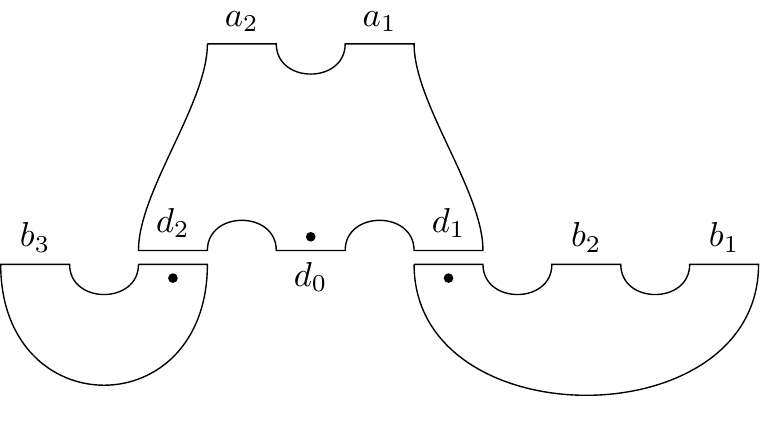}
			\caption{}
			\label{fig:phb_dim}
		\end{figure}
		After canceling we get
		\begin{align*}
			\dim \paren{\mathcal M^{\text{pb}}(\boldsymbol c; \kappa)} &= (n-3) - \sum_{j=1}^p \paren{\abs{a_j}-1} + \paren{\abs{d_0} - (n-2)} - \sum_{j=1}^r \paren{\abs{b_j}-1} \\
			&= -1 + (p+r) + \abs{d_0} - \sum_{j=1}^p \abs{a_j} - \sum_{j=1}^r \abs{b_j}\, .
		\end{align*}
		Now let $c_0 \defeq d_0$ and let $\boldsymbol c$ be the word of $m\defeq p+r$ letters corresponding to all the generators $a_1,\ldots,a_p,b_1,\ldots,b_r$ in the appropriate order. Therefore
		\[
			\dim \paren{\mathcal M^{\text{pb}}(\boldsymbol c; \kappa)} = m-1 + \abs{c_0}- \sum_{j=1}^m \abs{c_j}\, .
		\]
		\end{proof}
		We now define operations, one for each $i\geq 1$,
		\[
			\mu^i \co CW^{\ast}(L_{\kappa_{i-1}}, L_{\kappa_{i}}) \otimes \cdots \otimes CW^{\ast}(L_{\kappa_{1}}, L_{\kappa_{2}}) \longrightarrow CW^{\ast}(L_{\kappa_{1}}, L_{\kappa_{i}})\, ,
		\]
		that counts various $J$-holomorphic disks discussed above. We split it as a sum $\mu^i = \mu^i_{\text{Lag}} + \mu^i_{\text{Reeb}}$, where $\mu^i_{\text{Lag}}$ takes values in Lagrangian intersection generators and $\mu^i_{\text{Reeb}}$ takes values in Reeb chord generators.
		
		First we consider $\mu^i_{\text{Lag}}$. Let $\boldsymbol c' = c_1 \cdots c_i$ be a word of generators of $CW^\ast(L,L)$. Then
		\[
			\mu^i_{\text{Lag}}(c_i \otimes \cdots \otimes c_1)\defeq \sum_{\abs{c_0} = \abs{\boldsymbol c'} + (2-i)} \abs{\mathcal M^{\text{fi,Lag}}(c_0 \boldsymbol c'; \kappa)} c_0\, .
		\]
		The sum is taken over all Lagrangian intersection generators $c_0$ so that $\dim \paren{\mathcal M^{\text{fi,Lag}}(c_0 \boldsymbol c'; \kappa)} = 0$.
		
		To define $\mu^i_{\text{Reeb}}$, consider a word of generators $\boldsymbol c' = c_1 \cdots c_i$. Then
		\[
			\mu^i_{\text{Reeb}}(c_i \otimes \cdots \otimes c_1) \defeq \sum_{\abs{c_0} = \abs{\boldsymbol c'} + (2-i)} \abs{\mathcal M^{\text{pb}}(c_0 \boldsymbol c'; \kappa)} c_0\, .
		\]
		The sum is taken over all Reeb chords $c_0$ so that $\dim \paren{\mathcal M^{\text{pb}}(c_0 \boldsymbol c'; \kappa)} = 0$. The total operation $\mu^i$ is then defined as
		\begin{equation}\label{eq:def_of_a_infty_operations_wo_ham}
			\mu^i(c_i \otimes \cdots \otimes c_1) = (-1)^{\diamond}\paren{\mu^i_{\text{Lag}}(c_i \otimes \cdots \otimes c_1)+\mu^i_{\text{Reeb}}(c_i \otimes \cdots \otimes c_1)}
		\end{equation}
		where
		\[
			\diamond = \sum_{j=1}^i j \abs{c_j}\, .
		\]
		\begin{lma}
			With the sign conventions as in~\cite{seidel2008fukaya}, $\paren{CW^{\ast}(L,L), \parenm{\mu^i}_{i=1}^\infty}$ forms an $A_{\infty}$-algebra, that is
			\[
				\sum_{\substack{d_1+d_2 = d+1 \\ 0 \leq k < d_1}} (-1)^{\maltese_k} \mu^{d_1}(c_d,\ldots,c_{k+d_2+1}, \mu^{d_2}(c_{k+d_2},\ldots,c_{k+1}),c_k,\ldots,c_1) = 0\, ,
			\]
			where
			\[
				\maltese_k = k + \sum_{j=1}^k \abs{c_j}\, .
			\]
		\end{lma}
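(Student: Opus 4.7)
The plan is to deduce the $A_{\infty}$ relations from the structure of the boundary of the relevant one-dimensional moduli spaces of $J$-holomorphic disks that enter the definition of the $\mu^i$'s. The underlying principle is the usual one: the signed count of boundary points of a compact oriented $1$-manifold is zero. For each output generator $c_0$ and each input word $\boldsymbol c' = c_1 \cdots c_d$ satisfying $\abs{c_0} = \abs{\boldsymbol c'} + (3-d)$, I would fix the corresponding moduli space: either $\mathcal M^{\text{fi,Lag}}(c_0 \boldsymbol c'; \kappa)$ if $c_0$ is a Lagrangian intersection generator, or $\mathcal M^{\text{pb}}(c_0 \boldsymbol c'; \kappa)$ if $c_0$ is a Reeb chord generator. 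By \cref{thm:dimension_of_various_moduli_spaces}, these spaces are one-dimensional.

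Next I would invoke the SFT-type compactness statements \cite[Theorems 63, 65]{ekholm2017duality} and \cite[Theorem 1.1]{ekholm2019holomorphic} to obtain compactifications of these moduli spaces as compact one-manifolds with boundary. Thanks to the system of parallel copies and the vanishing Maslov class assumption, interior disk bubbling is excluded, and the anchored spheres $\mathcal M^{\lambda}(\gamma_i)$ absorb the potential Reeb-orbit bubbling in the symplectization direction. What remains is the enumeration of domain-breakings along boundary chords or intersection generators, which constitute all boundary strata of the compactification.

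I would then identify each boundary stratum with a composition $\mu^{d_1}(\cdots, \mu^{d_2}(\cdots), \cdots)$. Concretely, a broken configuration whose breaking generator $e$ sits at position $k+1$ of the input decomposes as a product of two rigid moduli spaces, one for each of the two disks sharing $e$; the types (fi,Lag), (fi,Reeb) or (pb) of the pieces are dictated by whether $c_0$ and $e$ are Reeb chord or Lagrangian intersection generators. Summing over all intermediate $e$, over all positions $k$, and over all splittings $(d_1, d_2)$ with $d_1 + d_2 = d+1$ reproduces the full left-hand side of the $A_{\infty}$ relation, and its vanishing follows from $\partial^2 = 0$ at the level of oriented $1$-manifolds. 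The prefactor $(-1)^{\diamond}$ in \eqref{eq:def_of_a_infty_operations_wo_ham} is then chosen precisely so that the geometric orientation signs on broken configurations (obtained from a coherent orientation scheme on moduli spaces as in \cite[Appendix A]{ekholm2017duality}) match the Koszul signs $(-1)^{\maltese_k}$ of \cite{seidel2008fukaya}.

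The main obstacle I anticipate is the case analysis of boundary strata for partial holomorphic buildings: a breaking can occur inside the primary symplectization disk, inside one of the secondary filling disks, or along a Reeb chord joining primary and secondary components. One must verify that each such possibility is correctly repackaged as a composition of two operations $\mu^{d_1}$ and $\mu^{d_2}$, and that no spurious contributions arise from the constant filling disks capping off additional negative punctures of the primary, nor from the anchoring spheres at Reeb orbits. A parallel subtlety is the sign verification, which is tedious but algorithmic given the conventions fixed in \cite{ekholm2017duality, seidel2008fukaya}; the shift by $\diamond = \sum_{j=1}^i j \abs{c_j}$ is exactly what is needed to reconcile the geometric signs with the Koszul signs.
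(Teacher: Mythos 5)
Your outline is the standard argument --- identify the relevant one-dimensional moduli spaces, compactify, match boundary strata with compositions $\mu^{d_1}(\ldots,\mu^{d_2}(\ldots),\ldots)$, and check that the orientation signs reproduce the Koszul signs --- and this is precisely the argument the paper delegates to \cite[Lemma 67]{ekholm2017duality}, which it cites in lieu of a proof. So your approach is essentially the same as the paper's (via its reference), and the points you flag (the case analysis of breakings for partial holomorphic buildings and the sign bookkeeping against the prefactor $(-1)^{\diamond}$) are exactly the places where the details live.
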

		\begin{proof}
			See \cite[Lemma 67]{ekholm2017duality}.
		\end{proof}
\section{Partially wrapped Floer cohomology and chains of based loops}
\label{sec:partially_wrapped_floer_cohomology_and_chains_of_based_loops}
	Let $S$ be any closed orientable spin manifold and $K \subset S$ any submanifold. The purpose of this section is to describe the surgery approach to compute the partially wrapped Floer cohomology of a cotangent fiber in the Weinstein domain $(DT^\ast S, \lambda = p dq)$ stopped by the unit conormal $\varLambda_K$. We then define a chain map relating the partially wrapped Floer cohomology of a fiber to chains of based loops on a Lagrangian submanifold $M_K$ that is diffeomorphic to the complement $S \setmin K$.

	In \cref{sec:partially_wrapped_floer_cohomology_and_chains_of_based_loops} we describe the surgery approach in more detail, and also construct the Lagrangian $M_K$. In \cref{sub:based_loops_on_} we describe the model we use for the chains of based loops on $M_K$, and equip it with the Pontryagin product. Then in \cref{sub:moduli_space_of_half_strips} we describe the moduli space of half strips which we need in order to to define an $A_{\infty}$-homomorphism between the partially wrapped Floer cocomplex and the chains of based loops on $M_K$. The construction of the $A_{\infty}$-homomorphism is carried out in \cref{sub:the_evaluation_map_etc}.
	\subsection{Partially wrapped Floer cohomology using a surgery approach}\label{sec:partially_wrapped_floer_cohomology_using_a_surgery_approach}
		Following~\cite[Appendix B]{ekholm2017duality} and~\cite[Section 6]{ekholm2016completearXiv} we will now describe the surgery approach. We consider the disk cotangent bundle $DT^\ast S$ the conormal bundle of $K$
		\[
			L_K = \parenm{(q,p) \in DT^{\ast} S \suchthat q \in K, \, \ip{p}{T_{q}K} = 0}\, .
		\] 
		Let $\varLambda_K \defeq L_K \cap ST^\ast S$ be the unit conormal of $K$. We take a tubular neighborhood $U$ of $\varLambda_K$ in $ST^\ast S$ and we attach a handle modeled on $D_{\varepsilon}T^{\ast}([0,\infty) \times \varLambda)$ to $U$. After handle attachment and after smoothing out corners, the Liouville vector field is equal to $p\dd_p$ in $D_{\varepsilon}T^{\ast}([T,\infty) \times \varLambda)$ (for $T \geq 0$ large enough) for coordinates $(q,p)$ in the handle. We call the resulting manifold $W_K$, see \cref{fig:wk}.

		We then consider a cotangent fiber $F \cong DT^\ast_\xi S$ at $\xi \in M_K$ in $W_K$. Denote the wrapped Floer cochains of $F$ in $W_K$ as described in \cref{sec:wrapped_floer_cohomology_wo_ham} by $CW^\ast_{\varLambda_K}(F,F)$.
		\begin{rmk}
			In the language of Sylvan~\cite{sylvan2016partially}, we obtain a \emph{stop} $\sigma_{\varLambda_K}$ from $\varLambda_K$ as follows. Pick a tubular neighborhood $U \supset \varLambda_K$ in $ST^\ast S$, and a strict contactomorphism $\varphi\co (U, \eval[0]{\lambda}_{U}) \longrightarrow (V, dz- ydx)$ where $V$ is a tubular neighborhood of $\varLambda_K \subset J^{1}(\varLambda_K) = T^{\ast} \varLambda_K \times \R$, viewed as the zero section. Then the Liouville hypersurface $\sigma_{\varLambda_K} \defeq \varphi^{-1}(T^{\ast}\varLambda_K \cap V) \subset U$ is a stop.

			Another point of view, is to remove the tubular neighborhood $U$ from $ST^\ast S$, and take the Liouville completion of $\paren{DT^\ast S} \setmin U$ to obtain a Liouville sector as defined~\cite{ganatra2017covariantly}. The wrapped Fukaya category of this Liouville sector coincides with the wrapped Fukaya category associated to the pair $(M, \sigma_{\varLambda_K})$, and also with the Fukaya category associated to $W_K$ \cite{ekholm2017duality,ganatra2017covariantly,ganatra2018structural}.
		\end{rmk}
		\begin{figure}[H]
			\centering
			\includegraphics{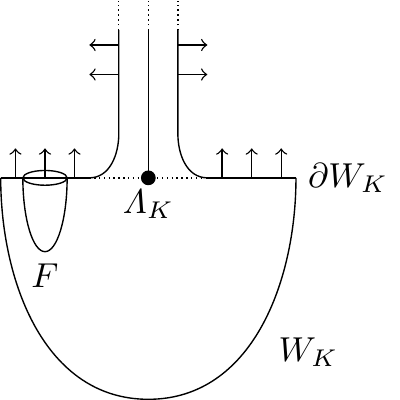}
			\caption{The Liouville sector $W_K$.}
			\label{fig:wk}
		\end{figure}
		To construct the complement Lagrangian $M_K$, we perform Lagrangian surgery of $L_K$ and $S$ which intersect cleanly along $K$. Above each point of $K$, the intersection $L_K \cap S$ looks like the transverse intersection of two Lagrangian disks of dimension $k$. We perform Lagrangian surgery along $K$ as in~\cite[Section 2.2.2]{mak2018dehn} \cite{aganagic2014topological}. We denote the result of the surgery by $M_K \cong S \setmin K$ (cf.\@ \cite{aganagic2014topological}).

		\begin{rmk}\label{rmk:maslov_class_of_complement}
			Note that the Maslov class of $M_K$ vanishes, because it is the result of surgery of $S \subset W$ and $L_K \subset W$, both of which have vanishing Maslov class. In particular, consider the following model. We pick a $\CC^n$-neighborhood around $p \in K$ such that $L_K = i \R^n$ and $S = \R^n$. Following the discussion in \cite[Section 2.2]{ekholm2016exact}, we have a phase function $\phi\co H \longrightarrow \R$ which is unique up to an additive constant on the handle $H$, so that $\eval[0]{\phi}_{S \cap H} = 0$ and $\eval[0]{\phi}_{L_K \cap H} = n-1$.

			Any loop that is based at any point outside of the handle pass through the entire handle an even number of times, which means that the total Maslov index of the loop is zero.
		\end{rmk}
	\subsection{Based loops on $M_{K}$}
	\label{sub:based_loops_on_}
		Consider the Moore loop space of $M_K$, based at $\xi$
		\[
			\varOmega_{\xi} M_{K} = \parenm{\gamma\co [0,R]\longrightarrow M_{K} \suchthat \gamma(0) = \gamma(R) = \xi}\, .
		\]
		We use a cubical model for chains of based loops as in \cite{abouzaid2012wrapped, ekholm2017duality}.

		A \emph{singular $k$-cube} is a smooth map $\sigma\co [0,1]^{k} \longrightarrow \varOmega_{\xi} M_{K}$ and it is called \emph{degenerate} if $\sigma(x_{1},\ldots,x_{k})$ is constant in at least one of the coordinates. We define the space of cubical $k$-chains by
		\[
			C_{k}(\varOmega_{\xi}M_{K}) = \frac{\Z[\text{singular $k$-cubes}]}{\Z[\text{degenerate singular $k$-cubes}]}\, .
		\]
		We equip $C_{\ast}(\varOmega_{\xi}M_K)$ with the differential
		\begin{equation}\label{eq:differential_on_cubical_chains}
			\partial \sigma \defeq \sum_{i=1}^k \sum_{\varepsilon=0}^1 (-1)^{i+\varepsilon} \sigma(\delta_{i,\varepsilon}(x_1,\ldots,x_k))\, ,
		\end{equation}
		where
		\[
			\delta_{i,\varepsilon}(x_1,\ldots,x_k) = (x_1,\ldots,x_{i-1},\varepsilon,x_{i+1},\ldots,x_{k}), \; \varepsilon \in \parenm{0,1}
		\]
		is the map that replaces the $i$-th coordinate with $\varepsilon$.
		
		The Pontryagin product $P$ is defined as the following composition:
		\begin{equation}\label{eq:def_pontryagin_product}
			\begin{tikzcd}[row sep=tiny, column sep=scriptsize]
				C_{k}(\varOmega_{\xi}M_{K}) \otimes C_{\ell}(\varOmega_{\xi}M_{K}) \rar & C_{k+\ell}\paren{\paren{\varOmega_{\xi}M_{K}}^{2}} \rar & C_{k+\ell}(\varOmega_{\xi}M_{K}) \\
				\sigma_2 \otimes \sigma_1 \rar[mapsto] & (-1)^{\abs{\sigma_1}} \sigma_2 \times \sigma_1 \rar[mapsto] & (-1)^{\abs{\sigma_1}} \sigma_1 \circ \sigma_2
			\end{tikzcd}
		\end{equation}
		The cross product of a singular $i$-cube $\sigma_1$ and a $j$-cube $\sigma_2$ is the $(i+j)$-cube
		\begin{align*}
			\sigma_{1} \times \sigma_{2} \co [0,1]^{i+j} &\longrightarrow \varOmega_{\xi}M_{K} \times \varOmega_{\xi}M_{K} \\
			(x_1,\ldots,x_{i+j}) &\longmapsto (\sigma_1(x_1,\ldots,x_i), \sigma_2(x_{i+1},\ldots,x_{i+j}))\, .
		\end{align*}
		The map $\circ$ is pointwise concatenation of loops where we first follow $\sigma_1(x_1,\ldots,x_i)$, and then $\sigma_2(x_{i+1},\ldots,x_{i+j})$. That is $(\sigma_1 \circ \sigma_2)(x) = \sigma_1(x_1,\ldots,x_i) \circ \sigma_2(x_{i+1},\ldots,x_{i+j})$, where
		\[
			(\sigma_1(x_1,\ldots,x_i) \circ \sigma_2(x_{i+1},\ldots,x_{i+j}))(t) \defeq \begin{cases}
				\sigma_1(x_1,\ldots,x_i)(t), & t\in [0,R_1] \\
				\sigma_2(x_{i+1},\ldots,x_{i+j})(t-R_1), & t \in [R_1,R_1+R_2]
			\end{cases}\, .
		\]
		From the definitions of $P$ and $\dd$ we see that for any two singular cubes $\sigma_1 \in C_{k}(\varOmega_\xi M_K)$ and $\sigma_2 \in C_{\ell}(\varOmega_\xi M_K)$ we have
		\[
			\dd(\sigma_1 \circ \sigma_2) = (-1)^k (\sigma_1 \circ \dd \sigma_2) + \dd \sigma_1 \circ \sigma_2\, .
		\]
		This leads via \eqref{eq:def_pontryagin_product} to 
		\[
			\dd P(\sigma_2 \otimes \sigma_1) + P(\sigma_2 \otimes \dd \sigma_1) + (-1)^{k+1} P(\dd \sigma_2 \otimes \sigma_1) = 0\, .
		\]
		Hence $(C_{\ast}(\varOmega_{\xi}M_{K}), \dd, P)$ is an $A_{\infty}$-algebra with all higher operations being zero with sign conventions as in \cite{abouzaid2010open,seidel2008fukaya}.
	\subsection{Moduli space of half strips}
	\label{sub:moduli_space_of_half_strips}
		Consider the cotangent fiber $F \cong T^\ast_{\xi}S \subset W_K$ at $\xi\in M_K$ defined in \cref{sec:partially_wrapped_floer_cohomology_using_a_surgery_approach} and consider a system of parallel copies of $F$ as in \cref{sec:wrapped_floer_cohomology_wo_ham}. In this section we construct a moduli spaces of $J$-holomorphic half strips similar to \cite{abouzaid2012wrapped}. This moduli space is used to define a chain map between $CW^\ast_{\varLambda_K}(F,F)$ and $C_{-\ast}(\varOmega_\xi M_K)$. By non-compactness of $W_K$ in the horizontal direction, we use monotonicity for $J$-holomorphic half strips to establish compactness of moduli spaces, see \cref{sec:monotonicity_estimates} for details.

		Let $D_{3} \subset \CC$ be the positively oriented unit disk with three boundary punctures $\zeta_{+}, \zeta_-, \zeta_{1}$. Then $D_3$ is biholomorphic to
		\[
			T \defeq ([0,\infty) \times [0,1]) \setmin \parenm{\zeta_+, \zeta_-} \subset \CC\, ,
		\]
		where $\zeta_+ = (0,1) \in \CC$ and $\zeta_- = (0,0) \in \CC$. The boundary segment between $\zeta_+$ and $\zeta_-$ is called the \emph{outgoing segment}.
		\begin{figure}[H]
			\centering
			\includegraphics{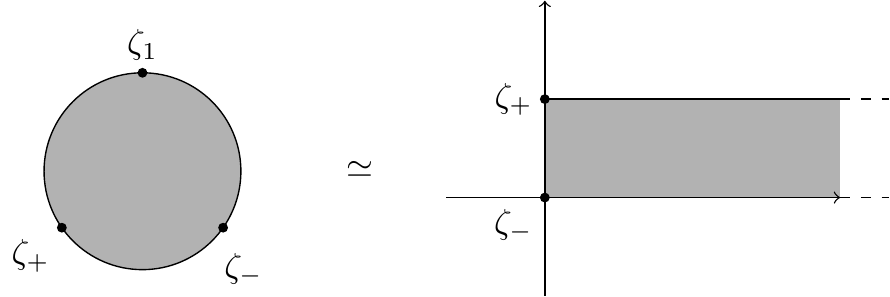}
		\end{figure}
		Define
		\[
			\begin{cases}
				Z_- = (-\infty,0) \times [0,1] \subset \CC\\
				Z_+ = (0,\infty) \times [0,1] \subset \CC\, ,
			\end{cases}
		\]
		equipped with the standard complex structure $j$ on $\CC$. We pick a positive strip-like end $\varepsilon^{+}$ near $\zeta_{+}$, and a negative strip-like end $\varepsilon_-$ near $\zeta_-$. That is, $\varepsilon_\pm$ are maps
		\begin{align*}
			\varepsilon_{+}\co Z_+ \longrightarrow T \\
			\varepsilon_- \co Z_- \longrightarrow T
		\end{align*}
		defined in neighborhoods of $\zeta_+$ and $\zeta_-$ respectively. Fix a family $\parenm{J_t}_{t\in [0,1]} \subset \mathcal J(W_k, \omega)$ of $\omega$-compatible almost complex structures, parametrized by $t\in [0,1]$. Then consider a map
		\[
			J_T \co T \longrightarrow \mathcal J(W_K, \omega)
		\]
		which satisfies
		\[
			\begin{cases}
				J_T(s,t) = J_t, & s > N \text{ for some } N > 0\\
				(\varepsilon_-)^\ast J_T = J_t, & \text{near }\zeta_- \\
				(\varepsilon_{+})^\ast J_T = J_t, & \text{near }\zeta_+\, .
			\end{cases}
		\]
		Given a generator $a\in CW^\ast_{\varLambda_K}(F,F)$ we consider maps
		\[
			u\co T \longrightarrow W_K
		\]
		that satisfies the following Floer equation:
		\begin{equation}\label{eq:floer_eqn_half_strips}
			\begin{cases}
				du + J_T \circ du \circ j = 0\\
				\lim_{s\to \infty} u(s,t) = a(t), &\forall t\in [0,1] \\
				\lim_{s\to \infty} u(\varepsilon_{+}(s,t)) = \xi, & \forall t \in [0,1] \\
				\lim_{s\to -\infty} u(\varepsilon_{-}(s,t)) = \xi, & \forall t \in [0,1]
			\end{cases}
		\end{equation}
		where the boundary conditions on $u$ is indicated in \cref{fig:pseudo_hol_disk_in_one_positive_punc} below.
		\begin{figure}[H]
			\centering
			\includegraphics{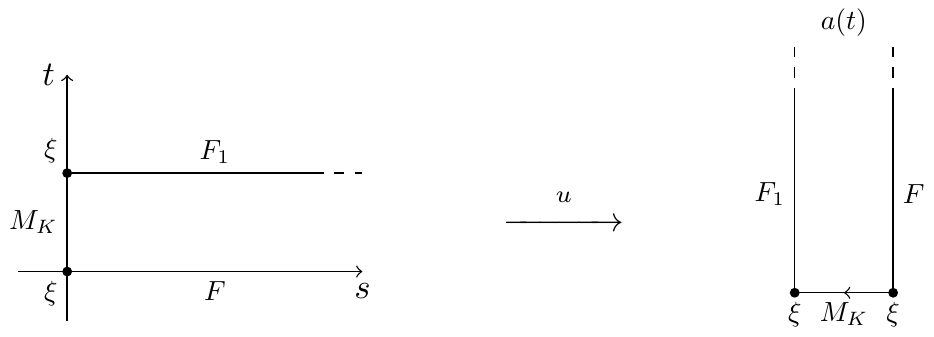}
			\caption{A $J_T$-holomorphic disk in $\mathcal M(a)$.}
			\label{fig:pseudo_hol_disk_in_one_positive_punc}
		\end{figure}
		For a generator $a\in CW^\ast_{\varLambda_K}(F,F)$ we define $\mathcal M(a)$ to be the moduli space of $J_T$-holomorphic maps $u\co T \longrightarrow W_K$ that satisfies \eqref{eq:floer_eqn_half_strips}.

		Analogous to \cite[Theorem 63]{ekholm2017duality} and \cite[Lemma 4.2]{abouzaid2012wrapped} we have
		\begin{lma}
			For generic choices of almost complex structure $J_T$, the moduli space $\mathcal M(a)$ is a smooth orientable manifold of dimension
			\[
				\dim \mathcal M(a) = - \abs a\, .
			\]
		\end{lma}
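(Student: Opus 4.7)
The plan is to establish both the smooth manifold structure and the dimension formula via the standard Fredholm framework, closely following the method used in the proof of \cref{thm:dimension_of_various_moduli_spaces}. First I would realize $\mathcal M(a)$ as the zero set of the Cauchy--Riemann section $\bar\partial_{J_T}$ on a Banach manifold of weighted Sobolev maps $u\co T \longrightarrow W_K$ satisfying the prescribed Lagrangian boundary conditions on the parallel copies of $F$ along the two arcs emanating from $\zeta_1$ and on $M_K$ along the outgoing segment, with exponential decay to $a$ at $\zeta_1$ and to $\xi$ at $\zeta_\pm$. Since $T$ has trivial automorphism group (the three boundary punctures are fixed), no quotient by reparametrization is needed, and the linearization is a standard Cauchy--Riemann operator with totally real boundary conditions and asymptotic Fredholm behavior at the three punctures.

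Transversality would then follow by the usual universal moduli space argument: one varies $J_T$ among domain-dependent almost complex structures compatible with $\omega$, matching the fixed $\{J_t\}_{t\in[0,1]}$ on the cylindrical parts of $T$. A somewhere-injective argument, exploiting the freedom to perturb $J_T$ in an open region of $T$ away from the strip-like ends, combined with Sard--Smale, yields a comeager set of $J_T$ for which $\mathcal M(a)$ is cut out transversely. This is structurally identical to the transversality arguments invoked in \cref{sub:a_infty_structure_and_moduli_space_of_disks_cw_wo_ham}. Orientability is inherited from the spin structure on $S$ and the induced orientations on $F$ and on $M_K$ (using the vanishing of the Maslov class of $M_K$ recorded in \cref{rmk:maslov_class_of_complement}) via the standard sign conventions of Seidel.

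For the dimension I would apply Riemann--Roch in exactly the form used in the proof of \cref{thm:dimension_of_various_moduli_spaces}: trivialize $u^\ast TW_K$ along each boundary arc of $T$ as in (T1)--(T2), and close up into a loop of Lagrangian subspaces at each of the three punctures via (C1)--(C3). The total contribution decomposes into the Euler characteristic term $(n-3)$, the term $m=3$ from the number of boundary punctures, the Maslov closing-up at $\zeta_1$ (case (fi1)(a) or (fi2)(a), depending on whether $a$ is a Reeb chord or a Lagrangian intersection generator), and the closing-ups at $\zeta_\pm$ for the transverse intersections $\xi\in F\cap M_K$. The main obstacle is correctly identifying the latter contributions: one must verify that $\xi$ is the unique minimum of the Morse function used to define the parallel-copy system so that its Floer grading vanishes, and that the clean-surgery description of $M_K$ around $\xi$ in \cref{rmk:maslov_class_of_complement} produces the expected cancellation of the two Maslov closing-ups at $\zeta_\pm$. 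Once this is pinned down, the remaining arithmetic mirrors that of \cref{thm:dimension_of_various_moduli_spaces} and collapses to $\dim\mathcal M(a) = -\abs{a}$.
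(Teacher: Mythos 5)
Your overall strategy matches the paper's: the paper proves this lemma by specializing the proof of \cref{lma:transversality_of_moduli_space_of_half_strips} to $m=1$, namely a Fredholm setup, transversality by standard methods (the paper emphasizes that the system of parallel copies and the switching boundary condition rule out multiple covers, which is the same point as your somewhere-injectivity remark), orientability from the spin/Maslov data, and the dimension via the boundary Maslov index formula of Cieliebak--Mohnke exactly as in the proof of \cref{thm:dimension_of_various_moduli_spaces}.

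However, your bookkeeping of the Maslov contributions is wrong in a way that does not collapse to $-\abs a$. The puncture $\zeta_1$ asymptotic to $a$ is an input (positive) puncture of the half strip, not the distinguished output, so it must be closed up as in (fi1)(b) or (fi2)(b) and contributes $-\abs a$, not $\abs a - (n-1)$ as in (fi1)(a). Likewise the two corners $\zeta_\pm$ at the intersection $\xi \in F \cap M_K$ do \emph{not} cancel: one is closed up as a distinguished intersection puncture, contributing $\abs \xi - n$, and the other as a non-distinguished one, contributing $-\abs\xi$, so their sum is $-n$; this is precisely the statement in the proof of \cref{lma:transversality_of_moduli_space_of_half_strips} that the two Lagrangian intersection punctures together contribute $-n$. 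With the correct assignments one gets $(n-3) + 3 - \abs a - n = -\abs a$, whereas with yours one gets $(n-3) + 3 + (\abs a - (n-1)) + 0 = \abs a + 1$. Note also that the value of $\abs\xi$ (and hence whether $\xi$ is the minimum of the Morse function) is irrelevant to the dimension count, since the $\abs\xi$ terms cancel in the sum $(\abs\xi - n) + (-\abs\xi)$ in any case.
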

		\begin{proof}
			See the proof of \cref{lma:transversality_of_moduli_space_of_half_strips} for the proof of the statement about the dimension. Note that, because we work with a system of parallel copies of $F$, $J_T$-holomorphic curves can not be multiply covered, and transversality for such is achieved using standard methods as in \cite{ekholm2007legendrian,ekholm2017duality}.
		\end{proof}

		Let $D_{m+2} \subset \CC$ be the positively oriented unit disk with $m+2$ boundary punctures which we denote by $\zeta_{-}, \zeta_{1},\ldots, \zeta_{m}, \zeta_+$. Let $\mathcal R_m$ be the Deligne--Mumford space of unit disks in the complex plane with $m+1$ boundary punctures that are oriented counterclockwise. Let $\overline{\mathcal R}_m$ denote the Deligne--Mumford compactification of $\mathcal R_m$ as in~\cite[Section C.1]{abouzaid2010geometric} and~\cite[Section (9f)]{seidel2008fukaya}. Also define $\mathcal H_m$ to be the Deligne--Mumford space of unit disks in the complex plane with $m+2$ boundary punctures that are oriented counterclockwise. Its Deligne--Mumford compactification is denoted by $\overline{\mathcal H}_m$. The boundary of $\overline{\mathcal H}_m$ is obtained by adding broken disks and hence the codimension one boundary of $\overline{\mathcal H}_{m}$ is covered by the following spaces
		\begin{align}
			\label{eq:stratification_of_deligne_mumford_compactification_1}
			\overline{\mathcal H}_{m_1} &\times \overline{\mathcal H}_{m_2}, \, m_1+m_2 = m \\
			\label{eq:stratification_of_deligne_mumford_compactification_2}
			\overline{\mathcal H}_{m_1} &\times \overline{\mathcal R}_{m_2}, \, m_1+m_2 = m+1
		\end{align}
		where we regard each stratum as being included in $\overline{\mathcal H}_m$ via the natural inclusion.

		Consider a word of generators $a_k \in CW^\ast_{\varLambda_K}(F_{k-1},F_k)$
		\[
			\boldsymbol a = a_1 \cdots a_m\, .
		\]
		Then we define the moduli space $\mathcal M(\boldsymbol a)$ to be maps
		\[
			u\co T \longrightarrow W_K\, ,
		\]
		where $T\in \overline{\mathcal H}_m$, and so that $u$ satisfies the following Floer equation
		\[
			\begin{cases}
				du + J_T \circ du \circ j = 0\\
				\lim_{s\to \infty} u(\varepsilon^k(s,t)) = a_k(t), &\forall t\in [0,1] \text{ and } k \in \parenm{1,\ldots,m} \\
				\lim_{s\to \infty} u(\varepsilon_{+}(s,t)) = \xi, & \forall t \in [0,1] \\
				\lim_{s\to -\infty} u(\varepsilon_{-}(s,t)) = \xi, & \forall t \in [0,1]
			\end{cases}
		\]
		where $\varepsilon_{}{\pm}\co Z_{\pm} \longrightarrow T$ and $\varepsilon^k \co Z_+ \longrightarrow T$ are strip-like ends near each puncture $\zeta^{\pm}$ and $\zeta^k$ for $k\in \parenm{1,\ldots,m}$. The boundary conditions of $u$ is indicated in \cref{fig:a_inf_moduli_space_disk} below
		\begin{figure}[H]
			\centering
			\includegraphics{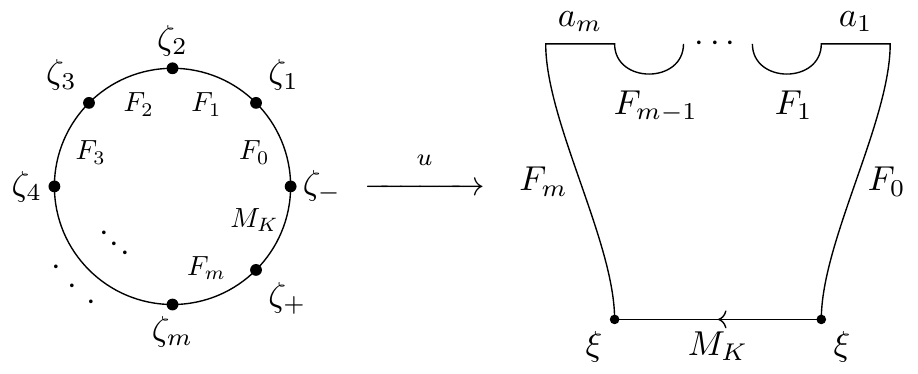}
			\caption{A $J_T$-holomorphic disk in $\mathcal M(\boldsymbol a)$.}
			\label{fig:a_inf_moduli_space_disk}
		\end{figure}
		Again, analogous to \cite[Theorem 63]{ekholm2017duality} and \cite[Lemma 4.7]{abouzaid2012wrapped} we have the following standard transversality result.
		\begin{lma}\label{lma:transversality_of_moduli_space_of_half_strips}
			For a generic choice of almost complex structure, $\mathcal M(\boldsymbol a)$ is a smooth orientable manifold of dimension
			\[
				\dim \mathcal M(\boldsymbol a) = -1+m - \sum_{j=1}^{m} \abs{a_{j}}\, .
			\]
		\end{lma}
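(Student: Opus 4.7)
The strategy is to imitate the Fredholm index computation in the proof of \cref{thm:dimension_of_various_moduli_spaces}, with the two extra boundary punctures $\zeta_\pm$ treated as asymptotics at a constant Lagrangian intersection generator $\xi$. Because we chose each Hamiltonian $H_k$ to have a unique minimum on $F \cong D^n$ at the basepoint $\xi$, a small further perturbation renders $\xi$ a transverse intersection point of consecutive parallel copies $F_{k-1} \cap F_k$ with Morse grading $\abs{\xi}=0$. Declaring $\zeta_-$ to be the distinguished puncture and $\zeta_+,\zeta_1,\ldots,\zeta_m$ the non-distinguished ones, transversality follows by exactly the same argument as \cite[Theorem 63]{ekholm2017duality}: no element of $\mathcal M(\boldsymbol a)$ can be multiply covered for topological reasons because the boundary labels cycle through the pairwise distinct $F_k$, and a generic choice of domain-dependent $J_T$ compatible with the perturbation scheme for anchored curves from \cite{ekholm2019holomorphic} makes the linearised $\bar\partial$-operator surjective.

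For the dimension, I would apply \cite[Theorem A.1]{cieliebak2010compactness} to the domain $T \in \overline{\mathcal H}_m$, a disk with $m+2$ boundary punctures, yielding
\[
    \dim \mathcal M(\boldsymbol a) = (n-3) + (m+2) + \mu(\partial u, Z_{\partial u}).
\]
Computing $\mu(\partial u, Z_{\partial u})$ puncture by puncture using items (fi1)(b), (fi2)(a), (fi2)(b) from the proof of \cref{thm:dimension_of_various_moduli_spaces}, each Reeb chord puncture $\zeta_k$ contributes $-\abs{a_k}$, the distinguished intersection puncture $\zeta_-$ contributes $\abs{\xi}-n = -n$, and the non-distinguished intersection puncture $\zeta_+$ contributes $-\abs{\xi} = 0$. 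Summing these gives
\[
    \dim \mathcal M(\boldsymbol a) = (n-3) + (m+2) - n - \sum_{j=1}^{m} \abs{a_j} = -1 + m - \sum_{j=1}^{m} \abs{a_j},
\]
as claimed. Orientability follows from the spin structure on $F$ together with the standard choice of capping operators at each asymptotic, exactly as for the filling disks in \cref{thm:dimension_of_various_moduli_spaces}.

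The main obstacle in this plan is the conceptual justification that $\zeta_\pm$ may legitimately be treated as genuine Lagrangian intersection punctures rather than as boundary marked points carrying an evaluation constraint to $\xi$. Both interpretations must produce the same count: an evaluation constraint into an $n$-dimensional Lagrangian imposes codimension $n$, which matches precisely the combined $-n$ contribution from $\zeta_-$ and $\zeta_+$ above. Once the Morse perturbation is fixed and $\abs{\xi}=0$ is verified, this identification is clean, and the remainder of the proof is routine Fredholm bookkeeping parallel to \cref{thm:dimension_of_various_moduli_spaces}.
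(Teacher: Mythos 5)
Your proof follows the paper's argument essentially verbatim: the same index formula from \cite[Theorem A.1]{cieliebak2010compactness} applied to the $(m+2)$-punctured disk, the same total contribution of $-n$ from the two corners $\zeta_\pm$, the same appeal to switching boundary conditions to rule out multiple covers, and the same standard transversality and orientation references. One correction to the setup: the asymptotic point $\xi$ at $\zeta_\pm$ is not an intersection point of consecutive parallel copies $F_{k-1}\cap F_k$, but the intersection of a fiber copy with the surgered Lagrangian $M_K$ --- the outgoing segment between $\zeta_+$ and $\zeta_-$ maps to $M_K$, which is precisely what makes the evaluation to $\varOmega_\xi M_K$ meaningful. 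This slip does not affect your count, since the distinguished and non-distinguished contributions at the two corners cancel to $-n$ regardless of the value of $\abs{\xi}$ or of which pair of Lagrangians meets there, exactly as in the paper's proof and in items (fi2)(a)--(b) of \cref{thm:dimension_of_various_moduli_spaces}.
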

		\begin{proof}
			We first observe that disks in $\mathcal M(\boldsymbol a)$ have switching boundary condition which implies that they can not be multiply covered for topological reasons. Then transversality is proved using standard techniques as in \cite{ekholm2007legendrian,ekholm2017duality}.

			We now prove the statement about the dimension. The proof is similar to the proof of \cref{thm:dimension_of_various_moduli_spaces}. By \cite[Theorem A.1.]{cieliebak2010compactness} we have
			\[
				\dim \mathcal M(\boldsymbol a) = (n-3) + m+2 + \mu(\dd u, Z_{\dd u})\, ,
			\]
			where $\mu(\dd u, Z_{\dd u})$ is defined as in the proof of \cref{thm:dimension_of_various_moduli_spaces}. There is a new type of contribution coming from the Lagrangian intersection punctures $\zeta_{\pm}$. By definition of $\mu(\partial u, Z_{\partial u})$ we see that the sum of the contributions from both $\zeta_{\pm}$ is equal to $-n$. The Maslov class of $M_K$ vanishes (see \cref{rmk:maslov_class_of_complement}), so the only contributions to $\mu(\partial u, Z_{\partial u})$ comes from the generators $\boldsymbol a$ and the Lagrangian intersection pucntures $\zeta_{\pm}$. Therefore
			\[
				\dim \mathcal M(\boldsymbol a) = (n-1) + m - n - \sum_{j=1}^m \abs{a_j} = -1 + m - \sum_{j=1}^m \abs{a_j}\, .
			\]
			Furthermore, by vanishing of the Maslov class of $M_{K}$ (see \cref{rmk:maslov_class_of_complement}) it allows us to find a coherent orientation of the moduli spaces. See \cref{sec:signs_gradings_and_orientations_of_moduli_spaces} for a more general discussion about orientations.
		\end{proof}
		Since $W_K$ is non-compact, we use monotonicity together with a generically chosen metric to make sure $J_T$-holomorphic half strips do not escape to horizontal infinity, see \cref{sec:monotonicity_estimates} and in particular \cref{thm:holomorphic_strips_stays_in_compact_part}. This gives that $\mathcal M(\boldsymbol a)$ can be compactified by adding several-level curves and we denote the compactification by $\overline{\mathcal M}(\boldsymbol a)$. Similar to~\cite[Lemma 4.9]{abouzaid2012wrapped} and by \eqref{eq:stratification_of_deligne_mumford_compactification_1}, \eqref{eq:stratification_of_deligne_mumford_compactification_2} the codimension one boundary of $\overline{\mathcal M}(\boldsymbol a)$ is stratified as
		\begin{equation}\label{eq:compactification_stratification}
			\dd \overline{\mathcal M}(\boldsymbol a) = \coprod_{\substack{\tilde{\boldsymbol a} \subset \boldsymbol a \\ t+s+r = m}} \overline{\mathcal M}(\boldsymbol a \setmin \tilde{\boldsymbol a}) \times \mathcal M^{\mathrm{cw}}(\tilde{\boldsymbol a}) \amalg \coprod_{\substack{\boldsymbol a' \boldsymbol a'' = \boldsymbol a \\ m_1 + m_2 = m}} \overline{\mathcal M}(\boldsymbol a') \times \overline{\mathcal M}(\boldsymbol a'')\, .
		\end{equation}
		Note that we define $\mathcal M^{\mathrm{cw}}(\tilde{\boldsymbol a})$ to mean either $\mathcal M^{\mathrm{pb}}(\tilde{\boldsymbol a})$ or $\mathcal M^{\mathrm{fi,Lag}}(\tilde{\boldsymbol a})$ as in \cref{sec:wrapped_floer_cohomology_wo_ham}, depending on whether the breaking happens at a Reeb chord or a Lagrangian intersection generator.

		To be more precise, \eqref{eq:compactification_stratification} means that the codimension one boundary of $\dd \overline{\mathcal M}(\boldsymbol a)$ is covered by images of the natural inclusions of products $\overline{\mathcal M}(\boldsymbol a \setmin \tilde{\boldsymbol a}) \times \mathcal M^{\mathrm{cw}}(\tilde{\boldsymbol a})$ for subwords $\tilde{\boldsymbol a} \subset \boldsymbol a$ and $\overline{\mathcal M}(\boldsymbol a') \times \overline{\mathcal M}(\boldsymbol a'')$ for partitions
		\[
			\boldsymbol a = \underbrace{a_1 \cdots a_{m_1}}_{=\boldsymbol a'} \underbrace{a_{m_1+1} \cdots a_{m_1+m_2}}_{= \boldsymbol a''}\, .
		\]
		Note that $\boldsymbol a \setmin \tilde{\boldsymbol a}$ is the word of generators obtained by starting with the word $\boldsymbol a$ and replacing the subword $\tilde{\boldsymbol a}$ with an auxiliary generator $y$, see \cref{fig:broken_disks_1} and \cref{fig:broken_disks_2}. If $\boldsymbol a = a_1 \cdots a_m$ and $\tilde{\boldsymbol a} = a_{t+1} \cdots a_{t+s} \subset \boldsymbol a$ then
		\begin{equation}\label{eq:a_minus_subword_a_tilde_def}
			\boldsymbol a \setmin \tilde{\boldsymbol a} \defeq a_1 \cdots a_t y a_{t+s+1} \cdots a_m = (\boldsymbol a \setmin \tilde{\boldsymbol a})_1 y (\boldsymbol a \setmin \tilde{\boldsymbol a})_2\, .
		\end{equation}
		In this case, where the auxiliary generator $y$ is placed at position $t+1$ in \eqref{eq:a_minus_subword_a_tilde_def} we say that $\tilde{\boldsymbol a} \subset \boldsymbol a$ is a \emph{subword of $\boldsymbol a$ at position $t+1$}.

		We now have a lemma of how the orientation of the different strata compares to the boundary orientation. See \cref{sec:signs_gradings_and_orientations_of_moduli_spaces} for a general discussion about orientations of moduli spaces.
		\begin{lma}\label{lma:product_ori_differs_from_boundary_ori}
			The product orientation on $\overline{\mathcal M}(\boldsymbol a') \times \overline{\mathcal M}(\boldsymbol a'')$ differs from the boundary orientation on $\dd \overline{\mathcal M}(\boldsymbol a)$ by $(-1)^{\ddag_1}$ where
			\begin{equation}\label{eq:sign_difference_HH_strata}
				\ddag_1 = (m_2+1) \paren{\sum_{i=1}^{m_1} \abs{a_i}} + m_1\, ,
			\end{equation}
			while the product orientation on $\overline{\mathcal M}(\boldsymbol a \setmin \tilde{\boldsymbol a}) \times \mathcal M^{\mathrm{cw}}(\tilde{\boldsymbol a})$ differs from the boundary orientation on $\dd \overline{\mathcal M}(\boldsymbol a)$ by $(-1)^{\ddag_2}$ where
			\begin{equation}\label{eq:sign_difference_HR_strata}
				\ddag_2 = s \paren{\abs \xi + \sum_{i=1}^{t+s}\abs{a_i}} + s(m-t)+t+s\, ,
			\end{equation}
			whenever $\mathcal M^{\mathrm{cw}}(\tilde{\boldsymbol a})$ is rigid. Here $\tilde{\boldsymbol a}$ is a subword of $\boldsymbol a$ at position $t+1$ as in \eqref{eq:a_minus_subword_a_tilde_def}.
		\end{lma}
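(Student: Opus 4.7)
The plan is to perform a direct orientation computation following the conventions of \cite{seidel2008fukaya} together with the more specialized framework for moduli spaces with asymptotic ends described in \cref{sec:signs_gradings_and_orientations_of_moduli_spaces}. Each moduli space appearing in \eqref{eq:compactification_stratification} carries a canonical orientation obtained by coherently orienting the determinant line of the linearized Cauchy--Riemann operator with the prescribed asymptotic conditions. The boundary orientation on a codimension one stratum of $\overline{\mathcal M}(\boldsymbol a)$ is then compared with the product orientation on its factors by writing the boundary orientation as (outward gluing parameter)\,$\wedge$\,(tangent directions of the stratum), and subsequently commuting the gluing parameter past the tangent factors of each moduli factor using Koszul signs determined by the dimensions and gradings.

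First I would treat the HH strata $\overline{\mathcal M}(\boldsymbol a') \times \overline{\mathcal M}(\boldsymbol a'')$. Here the gluing parameter $\rho$ sits at the outgoing segment where the two half-strips are joined, with $\boldsymbol a'$ carrying one outgoing endpoint at $\xi$ and $\boldsymbol a''$ the other. Moving $\rho$ into the position compatible with the product orientation of the two factors costs a sign of $(-1)^{\dim \overline{\mathcal M}(\boldsymbol a')}$. Using \cref{lma:transversality_of_moduli_space_of_half_strips}, the dimension $\dim \overline{\mathcal M}(\boldsymbol a') = -1 + m_1 - \sum_{i=1}^{m_1}\abs{a_i}$ combines with the extra sign from permuting the asymptotic factors of the broken pair at $\xi$ across the $m_2$ inputs of the second factor to give precisely $\ddag_1 \equiv (m_2+1)\sum_{i=1}^{m_1}\abs{a_i} + m_1 \pmod 2$.

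For the HR strata $\overline{\mathcal M}(\boldsymbol a \setmin \tilde{\boldsymbol a}) \times \mathcal M^{\mathrm{cw}}(\tilde{\boldsymbol a})$, the gluing occurs at the position $t+1$ of the auxiliary generator $y$ in $\boldsymbol a \setmin \tilde{\boldsymbol a}$. Rigidity of $\mathcal M^{\mathrm{cw}}(\tilde{\boldsymbol a})$ together with \cref{thm:dimension_of_various_moduli_spaces} pins $\abs{y}$ down in terms of $s$ and $\sum_{i=t+1}^{t+s}\abs{a_i}$. Commuting the gluing parameter through the $m-t$ asymptotic slots lying to the right of $y$ produces a contribution of $s(m-t)$, while commuting it through the inner generators of $\tilde{\boldsymbol a}$ together with the outgoing intersection generator at $\xi$ gives a contribution of $s\bigl(\abs{\xi} + \sum_{i=1}^{t+s}\abs{a_i}\bigr)$. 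The residual $t+s$ arises from the standard convention in \cite{seidel2008fukaya} relating boundary orientations of disk moduli to product orientations across an interior breaking, after accounting for the position of the breaking in the cyclic boundary labeling.

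The main obstacle is not conceptual but purely bookkeeping: one must coordinate the coherent orientations on the determinant line bundles (including their compatibility with the fixed spin structure on $F$ and the vanishing of the Maslov class of $M_K$ recorded in \cref{rmk:maslov_class_of_complement}) with the orientation of the gluing parameter, and apply Koszul signs consistently when permuting each asymptotic factor past the others. In particular, the appearance of $\abs{\xi}$ in \eqref{eq:sign_difference_HR_strata} encodes the parity of the grading of the intersection asymptotics at the outgoing punctures, which must be threaded through the full sequence of commutations before reducing modulo $2$ to obtain the stated expression.
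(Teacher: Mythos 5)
Your overall strategy --- comparing product and boundary orientations by Koszul bookkeeping on coherently oriented determinant lines --- is the same as the paper's, but the computation itself is never carried out, and the intermediate signs you do assert do not assemble into the stated formulas. The paper's proof factors each comparison into two separately computable pieces: (i) the splitting $\topp(T\overline{\mathcal M}(\boldsymbol a)) \cong \topp(T\overline{\mathcal H}_m)\otimes o_\xi \otimes o_{\boldsymbol a}^\vee \otimes o_\xi^\vee$ of \cref{lma:det_in_terms_of_orientation_lines}, which reduces the reordering of orientation lines to explicit Koszul signs ($\gad_1 = (m_2+1)\sum_{i=1}^{m_1}\abs{a_i}$ from moving $\topp(T\overline{\mathcal H}_{m_2})$ as a block of degree $m_2+1$, resp.\ $\gad_2 = s\paren{\abs\xi+\sum_{i=1}^{t+s}\abs{a_i}}$, with rigidity used to move one block at no cost); and (ii) the known comparison \cite[(12.22)]{seidel2008fukaya} of product versus boundary orientations for the abstract strata $\overline{\mathcal H}_{m_1}\times\overline{\mathcal H}_{m_2}$ and $\overline{\mathcal H}_{m_1}\times\overline{\mathcal R}_{m_2}$ inside $\dd\overline{\mathcal H}_m$, contributing $\dag_1 = m_1$ and $\dag_2 = s(m-t)+t+s$. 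Your proposal collapses these into a single ``commute the gluing parameter'' argument without invoking the determinant-line splitting, and the pieces you name do not match what is needed.

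Concretely, in the HH case you claim the comparison is $(-1)^{\dim\overline{\mathcal M}(\boldsymbol a')}$ times a sign ``from permuting the asymptotic factors of the broken pair at $\xi$ across the $m_2$ inputs''. Since $\dim\overline{\mathcal M}(\boldsymbol a') \equiv 1+m_1+\sum_{i=1}^{m_1}\abs{a_i} \pmod 2$, the unspecified second contribution would have to equal $m_2\sum_{i=1}^{m_1}\abs{a_i}+1 \pmod 2$ in order to reach $\ddag_1$; nothing built only from $\abs\xi$ and the $m_2$ inputs of $\boldsymbol a''$ can produce a term in $\sum_{i=1}^{m_1}\abs{a_i}$, so the mechanism cannot work as described. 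Similarly, in the HR case you attribute $s(m-t)$ to commuting a one-dimensional gluing parameter past $m-t$ slots, but a one-dimensional factor picks up the degrees of what it passes, not $s$ times them; the factor $s$ in both $s(m-t)$ and $s\paren{\abs\xi+\sum_{i=1}^{t+s}\abs{a_i}}$ arises because $\topp(T\overline{\mathcal R}_s)$, of dimension $s$, is moved as a block, which is exactly the step your sketch omits (and the summand $t+s$ is part of Seidel's $\dag_2$, not a separate residue). Since the entire content of the lemma is this sign computation, the proposal as written has a genuine gap.
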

		\begin{proof}
			See \cref{sec:signs_gradings_and_orientations_of_moduli_spaces}.
		\end{proof}
		\begin{lma}\label{lma:choice_of_fundamental_chain}
			There exists a family of fundamental chains $[\overline{\mathcal M}(\boldsymbol a)]\in C_{\ast}(\overline{\mathcal M}(\boldsymbol a))$ such that
			\begin{equation}\label{eq:fund_class_of_bdry}
				\dd[\overline{\mathcal M}(\boldsymbol a)] =\sum_{\boldsymbol a' \boldsymbol a'' = \boldsymbol a} (-1)^{\ddag_1}[\overline{\mathcal M}(\boldsymbol a')] \times [\overline{\mathcal M}(\boldsymbol a'')] + \sum_{\tilde{\boldsymbol a} \subset \boldsymbol a} (-1)^{\ddag_2}[\overline{\mathcal M}(\boldsymbol a \setmin \tilde{\boldsymbol a})] \times [\mathcal M^{\mathrm{cw}}(\tilde{\boldsymbol a})]\, ,
			\end{equation}
			where $\ddag_1$ and $\ddag_2$ are as in \eqref{eq:sign_difference_HH_strata} and \eqref{eq:sign_difference_HR_strata} respectively.
		\end{lma}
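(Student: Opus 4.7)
The plan is to construct the family $\{[\overline{\mathcal M}(\boldsymbol a)]\}_{\boldsymbol a}$ by induction on $\dim \overline{\mathcal M}(\boldsymbol a) = -1 + m - \sum_{j=1}^m \abs{a_j}$, which is the dimension given by \cref{lma:transversality_of_moduli_space_of_half_strips}. When the dimension is negative the moduli space is empty and there is nothing to do. When it is zero, $\overline{\mathcal M}(\boldsymbol a)$ is a finite set of oriented points and we declare $[\overline{\mathcal M}(\boldsymbol a)]$ to be its signed $0$-cube count; the codimension-one boundary is empty, so \eqref{eq:fund_class_of_bdry} holds vacuously. For the inductive step, assume fundamental chains have already been chosen for every $\overline{\mathcal M}(\boldsymbol b)$ of strictly lower dimension, and that analogous fundamental chains for $\mathcal M^{\mathrm{cw}}(\tilde{\boldsymbol a})$ have been fixed (as in \cref{sec:wrapped_floer_cohomology_wo_ham}).

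Using the stratification \eqref{eq:compactification_stratification} together with the sign comparison from \cref{lma:product_ori_differs_from_boundary_ori}, define a cubical chain $c_{\boldsymbol a}$ supported on $\partial \overline{\mathcal M}(\boldsymbol a)$ by the right-hand side of \eqref{eq:fund_class_of_bdry}, inserting the previously constructed fundamental chains into each Cartesian factor. The main step is to verify that $c_{\boldsymbol a}$ is a cycle, so that it represents the boundary fundamental class of the compact oriented manifold with corners $\overline{\mathcal M}(\boldsymbol a)$. Applying the cubical Leibniz rule to each summand in $c_{\boldsymbol a}$ and substituting the inductively known formula \eqref{eq:fund_class_of_bdry} into each factor produces sums indexed by iterated breakings of $\boldsymbol a$. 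Such iterated breakings correspond bijectively to codimension-two corners of $\overline{\mathcal M}(\boldsymbol a)$, and each codimension-two corner appears as a face of exactly two codimension-one strata, corresponding to the two orders in which a pair of non-nested breakings can occur, or to the two factorizations of a nested pair.

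The hard part is showing that these two appearances cancel. This is the standard $A_\infty$-style bookkeeping: the exponents $\ddag_1, \ddag_2$ in \cref{lma:product_ori_differs_from_boundary_ori} are designed precisely so that the Koszul signs produced by the cubical Leibniz rule, combined with the two sign comparisons coming from the two breakings, agree up to an overall minus sign, so the two contributions at each corner cancel. The verification is local in a neighborhood of each codimension-two corner and case-analyzes whether the two breakings are both Hamiltonian (type \eqref{eq:stratification_of_deligne_mumford_compactification_1}), both of wrapped-Floer type (type \eqref{eq:stratification_of_deligne_mumford_compactification_2}), or mixed, and in the wrapped-Floer factors one uses the already-established $A_\infty$-relations for $\{\mu^i\}$ encoded in the analogous identities for $[\mathcal M^{\mathrm{cw}}(\tilde{\boldsymbol a})]$.

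Once $c_{\boldsymbol a}$ is a cycle, I extend it to a cubical fundamental chain on all of $\overline{\mathcal M}(\boldsymbol a)$ by a standard cubulation argument: pick a sufficiently fine cubical subdivision of the manifold with corners $\overline{\mathcal M}(\boldsymbol a)$ whose restriction to $\partial \overline{\mathcal M}(\boldsymbol a)$ refines the one underlying $c_{\boldsymbol a}$, then set $[\overline{\mathcal M}(\boldsymbol a)]$ to be the signed sum of all top-dimensional cubes. Interior faces cancel in pairs, producing exactly $\dd [\overline{\mathcal M}(\boldsymbol a)] = c_{\boldsymbol a}$, which is \eqref{eq:fund_class_of_bdry}. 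The only real obstacle in the argument is the sign check in the cycle verification; everything else is either inductive hypothesis or standard cubical chain technology.
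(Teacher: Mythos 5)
Your proposal is correct and is essentially the argument the paper invokes: the paper's proof is just a citation of \cite[Lemma 4.11]{abouzaid2012wrapped}, whose content is exactly your inductive scheme (choose fundamental chains for lower-dimensional factors, check that the signed sum over codimension-one strata is a cycle via cancellation at codimension-two corners, then lift it to a relative fundamental chain). The only cosmetic caveat is that the final lifting step is cleaner phrased homologically --- the boundary cycle represents $[\partial\overline{\mathcal M}(\boldsymbol a)]$, hence is the boundary of some chain representing the relative fundamental class --- rather than via a literal cubulation refining the cross-product chains.
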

		\begin{proof}
			See \cite[Lemma 4.11]{abouzaid2012wrapped}.
		\end{proof}
	\subsection{The evaluation map and construction of the $A_{\infty}$-homomorphism}
	\label{sub:the_evaluation_map_etc}
		In this section we construct the evaluation map used to define the $A_\infty$-homomorphism between $CW^\ast_{\varLambda_K}(F,F)$ and $C_{-\ast}(\varOmega_\xi M_K)$. 

		First pick any smooth, orientation reversing map $r\co \R \longrightarrow D_{m+2}$ which parametrizes the outgoing segment. (That is, the boundary arc of $D_{m+2}$ that lies between $\zeta_+$ and $\zeta_-$.)
		
		Pick two strip-like ends
			\begin{align*}
				\varepsilon_{\pm}\co (0,\infty) \times [0,1] \longrightarrow U_{\pm}\, ,
			\end{align*}
			where $U_{\pm} \subset D_{m+2}$ are neighborhoods of $\zeta_{\pm} \in D_{m+2}$. We pick the strip-like ends so that $\varepsilon_{\pm}((0,\infty) \times \parenm{0}) \subset U_{\pm}$ are the parts of the boundary of $D_{m+2}$ that points towards $\zeta_{\pm}$ (according to the boundary orientation on $D_{m+2}$), and $\varepsilon_{\pm}((0,\infty) \times \parenm{1}) \subset U_{\pm}$ are the parts of the boundary of $D_{m+2}$ that points away from $\zeta_{\pm}$.
		
		Assume that $r \co \R \longrightarrow D_{m+2}$ satisfies the following
		\begin{equation}\label{eq:paramterization_of_boundary_of_D}
			\begin{cases}
				\lim_{t\to \pm \infty} r(t) = \zeta_{\mp} \\
				\sup_{\abs t \geq \tilde t} \abs{(\varepsilon_{\pm}^{-1} \circ r)^{(n)}(t)} < \infty, \quad \exists \tilde t > 0, \;\forall n \geq 1 \, .
			\end{cases}
		\end{equation}
		Then $u\circ r \co \R \longrightarrow M_K$ is a map so that $\lim_{t \to \pm \infty} (u\circ r)(t) = \xi$. We reparametrize $r$ by arc length with respect to a Riemannian metric on $M_{K}$ (see \eqref{eq:metric_definition}), and compactify the domain. In doing so, we get a smooth, orientation reversing map
		\[
			\tilde r\co [0,R] \longrightarrow D_{m+2}\, ,
		\]
		that satisfies $(u\circ \tilde r)(0) = (u \circ \tilde r)(R) = \xi$, which means $u\circ \tilde r \in \varOmega_{\xi}M_K$. We then define the \emph{evaluation map} as
		\begin{align}\label{eq:evaluation_map_def}
			\ev \co \mathcal M(\boldsymbol a) &\longrightarrow \varOmega_{\xi}M_K \\
			u &\longmapsto u \circ \tilde r \nonumber \, .
		\end{align}	
		\begin{lma}\label{lma:eval_exp_decay}
			Let $u\co D_{m+2} \longrightarrow W_K$ be a $J$-holomorphic disk and take $r\co \R \longrightarrow D_{m+2}$ so that \eqref{eq:paramterization_of_boundary_of_D} holds. Then $\dd_s u \circ r$ decays exponentially in the $C^\infty$-topology.
		\end{lma}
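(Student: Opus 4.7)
The plan is to localize the question at the two punctures $\zeta_\pm$ and then reduce it to the standard exponential decay of $J$-holomorphic strips at a non-degenerate Lagrangian intersection.

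Since \eqref{eq:paramterization_of_boundary_of_D} gives $r(t)\to\zeta_\mp$ as $t\to\pm\infty$, the content of the lemma is purely about the two tails, and I would work separately in each strip-like neighborhood $U_\pm$ of $\zeta_\pm$. In the coordinates $\varepsilon_\pm\co Z_\pm \longrightarrow U_\pm$ the composition $u\circ\varepsilon_\pm$ is a $J$-holomorphic half-strip in $W_K$ with boundary on two adjacent parallel copies of the fiber $F$, asymptotic at the infinite end of $Z_\pm$ to the transverse Lagrangian intersection point $\xi$. Away from $U_\pm$ the map $r$ stays in a compact subset of $D_{m+2}$ where $\partial_s u$ and all its derivatives are uniformly bounded, so only the strip ends matter.

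Next I would invoke the standard exponential convergence of $J$-holomorphic strips at such an intersection. In an exponential chart around $\xi$, the difference $u\circ\varepsilon_\pm - \xi$ satisfies a Cauchy--Riemann equation whose linearization is conjugate to $\partial_s + A$, with $A$ a self-adjoint operator on $L^2([0,1])$ whose spectrum is bounded away from zero by non-degeneracy of the intersection $\xi$. Expanding $u\circ\varepsilon_\pm - \xi$ in eigenfunctions of $A$ and applying elliptic bootstrapping to the nonlinear equation, as in e.g.\ \cite{ekholm2007legendrian}, should yield constants $\mu>0$ and $C_k>0$ with
\begin{equation*}
\norm{\partial_s(u\circ\varepsilon_\pm)(s,\cdot)}_{C^k([0,1])} \leq C_k e^{-\mu\abs s}
\end{equation*}
for every $k\geq 0$ and every $\abs s$ sufficiently large.

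The last step is to transfer this estimate back along $r$. Writing $(\varepsilon_\pm^{-1}\circ r)(t) = (\sigma(t),\tau(t))$ for $\abs t$ large, the hypothesis \eqref{eq:paramterization_of_boundary_of_D} forces $\abs{\sigma(t)}\to\infty$ and bounds every derivative of $(\sigma,\tau)$ uniformly. The chain rule expresses each $t$-derivative of $\partial_s u\circ r$ as a finite sum of products of derivatives of $\partial_s(u\circ\varepsilon_\pm)$ at $(\sigma(t),\tau(t))$ with derivatives of $(\sigma,\tau)$: the former decay exponentially in $\abs{\sigma(t)}$ by the previous step, and the latter are bounded. The hard part, and the reason the hypothesis asks for uniform bounds on \emph{all} derivatives of $\varepsilon_\pm^{-1}\circ r$ rather than only the first, is making this chain rule argument close at every order so as to upgrade the $C^0$ decay to the claimed $C^\infty$ statement.
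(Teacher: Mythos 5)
Your proposal is correct and follows essentially the same route as the paper: establish exponential $C^\infty$-decay of $\dd_s u$ in the strip-like coordinates at $\zeta_\pm$ and then transfer it along $r$ via the chain rule, using the uniform bounds on all derivatives of $\varepsilon_\pm^{-1}\circ r$ from \eqref{eq:paramterization_of_boundary_of_D}. The only difference is that the paper simply cites the asymptotic decay result of Robbin--Salamon for the strip ends, whereas you sketch its proof via the spectral gap of the asymptotic operator and bootstrapping.
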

		\begin{proof}
			Pick strip-like ends
			\begin{align*}
				\varepsilon_{\pm}\co (0,\infty) \times [0,1] \longrightarrow U_{\pm}\, ,
			\end{align*}
			as above. By~\cite[Theorem A]{robbin2001asymptotic} we have that $\dd_s u \circ \varepsilon_{\pm}$ decays exponentially in the $C^{\infty}$-topology. When we say that a function decays exponentially in the $C^{\infty}$-toplogy we mean that there are constants $\delta, c_0, c_1, c_2, \ldots > 0$ so that $\forall k \in \N$ and for every $t_0\in (0,\infty)$ we have
			\begin{equation}\label{eq:exponential_decay_of_hol_strip}
				\norm{\dd_s u \circ \varepsilon_{\pm}}_{C^k([t_0,\infty) \times [0,1])} \leq c_k e^{-\delta t_0}\, .
			\end{equation}
			
			Next consider $r\co \R \longrightarrow D_{m+2}$ which satisfies \eqref{eq:paramterization_of_boundary_of_D}, where $\tilde t > 0$ is large enough so that $r(t) \in U_{\pm}$ for $\abs t > \tilde t$. This also gives
			\[
				(u \circ r)(t) = \begin{cases}
					\paren{u \circ \varepsilon_-} \circ (\varepsilon_-^{-1} \circ r)(t), & t \geq \tilde t\\
					\paren{u \circ \varepsilon_+}\circ (\varepsilon_+^{-1} \circ r)(t), & t \leq -\tilde t
				\end{cases}
			\]
			where $\varepsilon_{\pm}^{-1} \circ r \co \R \longrightarrow (0,\infty) \times [0,1]$ are maps so that
			\[
				\begin{cases}
					(\varepsilon_-^{-1} \circ r)(t) \subset (0,\infty) \times \parenm{1} \\
					(\varepsilon_+^{-1} \circ r)(t) \subset (0,\infty) \times \parenm{0}\, ,
				\end{cases}
			\]
			and
			\[
				\begin{cases}
					\lim_{t \to \infty} (\varepsilon_{-}^{-1} \circ r)(t) = (\infty, 1) \\
					\lim_{t \to -\infty} (\varepsilon_{+}^{-1} \circ r)(t) = (\infty, 0)\, .
				\end{cases}
			\]
			
			Then we have constants $\delta, c_0,c_1,c_2, \ldots > 0$ so that $\forall k \geq 0$
			\begin{align*}
				\norm{\dd_s u \circ r}_{C^k([\tilde t, \infty))} &= \sum_{\abs \alpha \leq k} \sup_{\abs t \geq \tilde t} \abs{D^\alpha(\dd_s u \circ r)} \\
				&= \sum_{\abs \alpha \leq k} \sup_{\abs t \geq \tilde t} \abs{D^\alpha	\parenb{\dd_s u \circ \varepsilon_{\pm}}(\varepsilon_{\pm}^{-1}(r(t))) \cdot D^\alpha \parenb{\varepsilon_{\pm}^{-1} \circ r}(t)} \\
				&= \sum_{\abs \alpha \leq k} \parenb{\sup_{[t_0,\infty) \times [0,1]} \abs{D^\alpha(\dd_s u \circ \varepsilon_{\pm})}} \parenb{\sup_{\abs t \geq \tilde t} \abs{D^\alpha(\varepsilon_{\pm}^{-1} \circ r)}} \, .
			\end{align*}
			Here $D^\alpha$ denotes derivative with respect to the multi-index $\alpha$. Because of~\eqref{eq:paramterization_of_boundary_of_D} we have
			\[
				\sup_{\abs t \geq \tilde t} \abs{D^\alpha(\varepsilon_{\pm}^{-1} \circ r)} \leq A_{\alpha}\, ,
			\]
			where $A_\alpha$ is some constant depending on $\alpha$. We conclude
			\[
				\norm{\dd_s u\circ r}_{C^k([\tilde t, \infty))} \leq A_k\norm{\dd_s u \circ \varepsilon_{\pm}}_{C^k([t_0, \infty) \times [0,1])} \leq A_k \cdot c_k e^{- \delta t_0}\, ,
			\]
			by~\eqref{eq:exponential_decay_of_hol_strip}, where $A_k \defeq \max_{\abs a \leq k} A_{\alpha}$. Furthermore we note that $\tilde t > 0$ is large enough so that for $\abs t \geq \tilde t$ we have
			\[
				\begin{cases}
					(\varepsilon_{-}^{-1} \circ r)(t) \subset [t_0, \infty) \times \parenm{1} \\
					(\varepsilon_{+}^{-1} \circ r)(t) \subset [t_0, \infty) \times \parenm{0}\, .
				\end{cases}
			\]
		\end{proof}
			The previous lemma enables us to extend the evaluation map to the compactification of the moduli space of half strips.
		\begin{lma}\label{lma:ev_map_def_on_compactification}
			There is an extension of the evaluation map $\ev$ to a continuous map on the compactification of $\mathcal M(\boldsymbol a)$,
			\[
				\ev \co \overline{\mathcal M}(\boldsymbol a) \longrightarrow \varOmega_{\xi}M_K\, ,
			\]
			such that the following diagram commutes up to an overall sign of $(-1)^{\ddag_1}$, where $\ddag_1$ is defined in \eqref{eq:sign_difference_HH_strata}.
			\[
				\begin{tikzcd}[row sep=scriptsize, column sep=scriptsize]
					\overline{\mathcal M}(\boldsymbol a') \times \overline{\mathcal M}(\boldsymbol a'') \rar{\iota} \dar{\ev \times \ev} & \overline{\mathcal M}(\boldsymbol a) \dar{\ev} \\
					\varOmega_{\xi}M_K \times \varOmega_{\xi}M_K \rar{\circ} & \varOmega_{\xi}M_K\, ,
				\end{tikzcd}
			\]
			The map $\iota$ in the top row is inclusion as in~\eqref{eq:compactification_stratification}. The map in the bottom row is concatenation of loops.
		\end{lma}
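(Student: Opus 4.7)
The plan is to first define the extension of $\ev$ on the boundary strata of $\overline{\mathcal M}(\boldsymbol a)$. On an HH-stratum $\overline{\mathcal M}(\boldsymbol a') \times \overline{\mathcal M}(\boldsymbol a'') \hookrightarrow \dd \overline{\mathcal M}(\boldsymbol a)$, set
\[
	\ev(u', u'') \defeq \ev(u') \circ \ev(u'') \in \varOmega_\xi M_K,
\]
where $\circ$ denotes Moore loop concatenation. Since both factors are loops based at $\xi$ of lengths $R'$ and $R''$ coming from the arc-length reparametrization with respect to the metric in \eqref{eq:metric_definition}, the right-hand side is a well-defined Moore loop of length $R'+R''$. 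On an HR-stratum $\overline{\mathcal M}(\boldsymbol a \setmin \tilde{\boldsymbol a}) \times \mathcal M^{\mathrm{cw}}(\tilde{\boldsymbol a})$, the $\mathcal M^{\mathrm{cw}}$-factor contributes no outgoing boundary, so set $\ev(u,v) \defeq \ev(u)$. With these definitions, the diagram in the statement commutes set-theoretically before orientation signs are taken into account.

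Next, I would prove continuity of the extension. For a sequence $(u_n) \subset \mathcal M(\boldsymbol a)$ Gromov-converging to an HH-broken configuration $(u', u'')$, the domain $T_n$ develops a long neck at the breaking point, which corresponds to $\zeta_-$ of $u'$ and $\zeta_+$ of $u''$, both asymptotic to the Lagrangian intersection point identified with $\xi$ under the parallel copies scheme. I would decompose the arc-length parametrization $\tilde r_n$ of the outgoing segment of $u_n$ into three pieces: an initial piece converging uniformly to $\tilde r'$, a middle neck piece, and a terminal piece converging uniformly to $\tilde r''$. Applying \cref{lma:eval_exp_decay} on either side of the neck via the strip-like ends at the forming punctures yields that the image of $u_n$ along the neck segment lies in a shrinking neighborhood of $\xi$ in $M_K$, and that the arc-length contributions from the two halves assemble in the limit into the missing tails of $R'$ and $R''$. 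Combined with $C^\infty_{\mathrm{loc}}$-convergence of $u_n$ to $u'$ and $u''$ away from the neck, this gives $\ev(u_n) \to \ev(u') \circ \ev(u'')$ in $\varOmega_\xi M_K$. Continuity at the HR-strata is easier, since the $\mathcal M^{\mathrm{cw}}$-component degenerates into a bubble that does not affect the outgoing segment.

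The overall sign $(-1)^{\ddag_1}$ is then a bookkeeping step. The set-level identity above is formulated using the product orientation on $\overline{\mathcal M}(\boldsymbol a') \times \overline{\mathcal M}(\boldsymbol a'')$, whereas the diagram in the lemma is stated with respect to the boundary orientation inherited from $\overline{\mathcal M}(\boldsymbol a)$. By \cref{lma:product_ori_differs_from_boundary_ori}, these orientations differ by exactly $(-1)^{\ddag_1}$, producing the claimed sign.

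I expect the main obstacle to be the neck analysis in the continuity step. One must show that the arc-length reparametrization of the outgoing segment of $u_n$ genuinely splits, in the Moore loop space, into the concatenation of $\ev(u')$ and $\ev(u'')$. Standard Gromov compactness yields only compact convergence away from the neck, so the exponential decay in \cref{lma:eval_exp_decay} is essential to guarantee that the neck image collapses to $\xi$ and that no phantom arc length is gained or lost in the passage to the limit.
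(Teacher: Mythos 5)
Your proposal is correct and follows essentially the same route as the paper: define the extension by Moore-loop concatenation on the $\overline{\mathcal M}(\boldsymbol a')\times\overline{\mathcal M}(\boldsymbol a'')$ strata and by forgetting the $\mathcal M^{\mathrm{cw}}$ factor on the other strata, prove continuity by combining $C^\infty_{\mathrm{loc}}$ Gromov convergence away from the forming puncture with the exponential decay of \cref{lma:eval_exp_decay} near it, and obtain the sign from \cref{lma:product_ori_differs_from_boundary_ori}. The only cosmetic difference is that the paper builds the limit loop as a single smooth map on $\R$ via an explicit reparametrization before passing to arc length, rather than concatenating the two arc-length loops directly, but the two constructions coincide.
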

		\begin{proof}
			For this proof, we follow the idea outlined in \cite[p.\@ 37]{abouzaid2012wrapped}.
		\begin{description}
			\item[Extension of $\ev$ to the compactification]
				It is obvious how to extend it to the boundary strata $\overline{\mathcal M}(\boldsymbol a \setmin \tilde{\boldsymbol a}) \times \mathcal M^{\text{cw}}(\tilde{\boldsymbol a})$; we define the evaluation map of such broken disk to be the same as the evaluation map when we forget about the factor $\mathcal M^{\text{cw}}(\tilde{\boldsymbol a})$. However, if we have a sequence $\parenm{u^\nu}_{\nu=0}^\infty \subset \mathcal M(\boldsymbol a)$ which Gromov converges to a broken disk in any of the boundary strata $\overline{\mathcal M}(\boldsymbol a') \times \overline{\mathcal M}(\boldsymbol a'')$, then the Gromov limit is a stable $J$-holomorphic map (a broken disk), consisting of two $J$-holomorphic disks $u_i \co D_{k_i} \longrightarrow W_K$ where $k_1+k_2-2 = m+2$, and two boundary punctures $z_1 \in \dd D_{k_1}$, $z_2 \in \dd D_{k_2}$ so that we either have $(z_1,z_2) = (\zeta_-, \zeta_+)$ or $(z_1,z_2) = (\zeta_+, \zeta_-)$~\cite{frauenfelder2008gromov}. More precisely, it means that there are two families of M{\"o}bius transformations of the unit disk $D \subset \CC$
				\[
					\varphi_{1}^{\nu}, \varphi_{2}^{\nu} \co D \longrightarrow D, \text{ where } \nu\in \N\, ,
				\]
				so that
		\begin{equation}\label{eq:extension_of_ev_compact_möbius}
			\begin{cases}
				u^\nu \circ \varphi_1^\nu \longrightarrow u_1 & \text{in } C^{\infty}_{\text{loc}}(D_{k_1} \setmin \parenm{z_1})\\
				u^\nu \circ \varphi_2^\nu \longrightarrow u_2 & \text{in } C^{\infty}_{\text{loc}}(D_{k_2} \setmin \parenm{z_2})\, ,
			\end{cases}
		\end{equation}
		and
		\[
			\begin{cases}
				(\varphi_1^\nu)^{-1} \circ \varphi_2^\nu \longrightarrow z_1 & \text{in } C^{\infty}_{\text{loc}}(D_{k_1} \setmin \parenm{z_1}) \\
				(\varphi_2^\nu)^{-1} \circ \varphi_1^\nu \longrightarrow z_2 & \text{in } C^{\infty}_{\text{loc}}(D_{k_2} \setmin \parenm{z_2})\, .
			\end{cases}
		\]
		Recall that convergence in $C^{\infty}_{\text{loc}}(X)$ means $C^{\infty}$-convergence on every compact subset $K \subset X$.
		
		Define parametrizations
		\begin{align*}
			r_1\co \R &\longrightarrow D_{k_1} \\
			r_2 \co \R &\longrightarrow D_{k_2}
		\end{align*}
		so that $r_1$ and $r_2$ satisfy \eqref{eq:paramterization_of_boundary_of_D}. Then the two maps $u_i \circ r_i \co \R \longrightarrow M_K$ are smooth maps so that $\dd_s u_i \circ r_i$ decay exponentially in the $C^{\infty}$-topology by \cref{lma:eval_exp_decay}. Hence the composition of two smooth loops $u_i \circ r_i$ is again a smooth loop. There are two cases, depending on whether the two components of the broken disk have the puncture $\zeta_+$ or $\zeta_-$ in common. That is, we either have $(z_1,z_2) = (\zeta_-, \zeta_+)$ or $(z_1,z_2) = (\zeta_+, \zeta_-)$. In the first case when $(z_1,z_2) = (\zeta_-, \zeta_+)$, we define a map $\gamma\co \R \longrightarrow M_K$ as
		\begin{equation}\label{eq:composition_of_loops_exp_decay}
			\gamma(t) \defeq \begin{cases}
				(u_1 \circ r_1)\paren{t-\frac 1t}, & t < 0 \\
				\xi, & t = 0 \\
				(u_2 \circ r_2)\paren{t-\frac 1t}, & t > 0\, .
			\end{cases}
		\end{equation}
		In the second case when $(z_1,z_2) = (\zeta_+, \zeta_-)$ we swap places of $u_1 \circ r_1$ and $u_2 \circ r_2$ in the above definition of $\gamma$.
		
		We then claim that this map is smooth and has exponentially decaying derivatives in the $C^{\infty}$-topology as $t \to \pm \infty$. Since $u_1 \circ r_1$ and $u_2 \circ r_2$ are smooth maps with exponentially decaying derivatives in the $C^{\infty}$-topology as $t \to \pm \infty$, it suffices to show that all derivatives of $\gamma$ at $t = 0$ exists. This follows from the exponential decay of every derivative of $u_1 \circ r_1$ and $u_2 \circ r_2$ in the $C^{\infty}$-topology. We may then reparametrize $\gamma$ by arc length and compactify the domain to obtain a map $\tilde \gamma \co [0,R] \longrightarrow M_K$ so that $\tilde \gamma(0) = \tilde \gamma(R) = \xi$, that is $\tilde \gamma \in \varOmega_{\xi}M_K$, and we define $\ev((u_1,u_2)) \defeq \tilde \gamma$.
		
		\item[Commutativity of the diagram]
			It follows almost immediately from the definition of the evaluation map
			\[
				\ev \co \overline{\mathcal M}(\boldsymbol a) \longrightarrow \varOmega_\xi M_K
			\]
			that the diagram
			\[
				\begin{tikzcd}[row sep=scriptsize, column sep=scriptsize]
					\overline{\mathcal M}(\boldsymbol a') \times \overline{\mathcal M}(\boldsymbol a') \rar{\iota} \dar{\ev \times \ev}& \overline{\mathcal M}(\boldsymbol a) \dar{\ev} \\
					\varOmega_{\xi}M_K \times \varOmega_{\xi}M_K \rar{\circ} & \varOmega_{\xi}M_K
				\end{tikzcd}
			\]
			commutes, since $\gamma$ in \eqref{eq:composition_of_loops_exp_decay} is essentially defined as the concatenation of $u_1 \circ r_1$ and $u_2\circ r_2$. More precisely, we consider $u_i \circ r_i \co \R \longrightarrow M_K$ for $i\in \parenm{1,2}$ as above. Then reparametrize $r_1$ and $r_2$ by arc length so that we obtain two maps
			\[
				u_i \circ \tilde r_i \co [0,R_i] \longrightarrow M_K\, .
			\]
			These maps are so that $(u_i \circ \tilde r_i)(0) = (u_i \circ \tilde r_i)(R_i) = \xi$ for $i\in \parenm{1,2}$, and the concatenation of these maps yields a map $\psi \co [0,R_1+R_2] \longrightarrow M_K$ defined by
			\begin{align*}
				\psi(t) = \begin{cases}
					(u_1 \circ \tilde r_1)(t), & t \in [0,R_1] \\
					(u_2 \circ \tilde r_2)(t-R_1), & t \in [R_1,R_1+R_2]
				\end{cases}
			\end{align*}
			which coincides with the map $\tilde \gamma \co [0,R] \longrightarrow M_K$ obtained by parametrizing $\gamma$ defined in~\eqref{eq:composition_of_loops_exp_decay} by arc length. The overall sign $(-1)^{\ddag_1}$ comes from \cref{lma:product_ori_differs_from_boundary_ori}, see \cref{sec:signs_gradings_and_orientations_of_moduli_spaces} for a discussion about sign and orientations.
		\item[Continuity of $\ev$]
			We claim that $\ev$ is a continuous map, meaning that if $\parenm{u^\nu}_{\nu=0}^\infty \subset \mathcal M(\boldsymbol a)$ is a Gromov convergent sequence of $J$-holomorphic disks, then the map $\tilde \gamma(t)$ defined in \eqref{eq:composition_of_loops_exp_decay} is realized as a limit of loops in the compact-open topology of $\varOmega_{\xi}M_K$.
			
			Pick a family of smooth maps $\parenm{r^\nu\co \R \longrightarrow D_{m+2}}_{\nu=0}^\infty$ which satisfies~\eqref{eq:paramterization_of_boundary_of_D}. Then we have that $\parenm{u^\nu \circ r^\nu}_{\nu=0}^\infty$ is a family of smooth maps with exponentially decaying derivatives as $t\to \pm \infty$ in the $C^{\infty}$-topology by \cref{lma:eval_exp_decay}. From \eqref{eq:extension_of_ev_compact_möbius} we have two families of M{\"o}bius transformations $\parenm{\varphi_1^\nu}_{\nu=0}^\infty$ and $\parenm{\varphi_2^\nu}_{\nu=0}^\infty$ such that
			\[
				u^\nu \circ \varphi_i^{\nu} \to u_i, \quad \text{in } C^{\infty}_{\text{loc}}(D_{k_i} \setmin \parenm{z_i})\, ,
			\]
			for $i\in \parenm{1,2}$. We also have that $\varphi_i^{\nu}$ preserves the boundary of $D_m$ and that $\paren{\varphi_i^{\nu}}^{-1}$ preserves boundary marked points in the sense that $\lim_{\nu\to \infty}\paren{\varphi_i^{\nu}}^{-1}(\zeta_j) = \zeta_j$. Then we have
			\begin{equation}\label{eq:convergece_of_parametrizations}
				\begin{cases}
					\paren{\varphi_1^{\nu}}^{-1} \circ r^\nu \longrightarrow r_1 , \quad \text{in } C^{\infty}_{\text{loc}}(\R_{< 0})\\
					\paren{\varphi_2^{\nu}}^{-1} \circ r^\nu \longrightarrow r_2 , \quad \text{in } C^{\infty}_{\text{loc}}(\R_{> 0})\, .
				\end{cases}
			\end{equation}
			Hence for any multi-index $\alpha$ and $i\in \parenm{1,2}$ we have
			\begin{align} \label{eq:alpha_derivatives_difference}
				\abs{D^{\alpha}(u^\nu \circ r^\nu) - D^\alpha(u_i \circ r_i)} &= \abs{D^{\alpha}(u^\nu \circ \varphi_i^\nu \circ \paren{\varphi_i^\nu}^{-1} \circ r^\nu) - D^\alpha(u_i \circ r_i)}\\
				&= \abs{D^\alpha \paren{u^\nu \circ \varphi_i^\nu} D^\alpha \parenb{\paren{\varphi_i^\nu}^{-1} \circ r^\nu} - D^\alpha (u_i) D^\alpha(r_i)}\nonumber \\
				&\leq \abs{D^\alpha(u^\nu \circ \varphi_i^\nu) \cdot \paren{D^\alpha \parenb{\paren{\varphi_i^\nu}^{-1} \circ r^\nu}-D^\alpha(r_i)}} \nonumber \\
				&\qquad + \abs{D^\alpha(r_i) \cdot \parenb{D^\alpha \paren{u^\nu \circ \varphi_i^\nu} - D^\alpha(u_i)}}\nonumber \, .
			\end{align}
			Let $i\in \parenm{1,2}$ and define $\R_1 \defeq \R_{< 0}$ and $\R_2 \defeq \R_{> 0}$. Inserting suprema over suitable compact sets $A \subset \R_i$ and $K \subset D_{k_i} \setmin \parenm{z_i}$ gives
			\begin{align}\label{eq:supremum_estimate_parametrization}
				&\sup_{A \subset \R_{i}}\abs{D^{\alpha}(u^\nu \circ r^\nu) - D^\alpha(u_i \circ r_i)} \\
				&\overset{\eqref{eq:alpha_derivatives_difference}}{\leq} \sup_{K \subset D_{k_i} \setmin \parenm{z_i}}\abs{D^\alpha(u^\nu \circ \varphi_i^\nu)} \sup_{A \subset \R_{i}}\abs{D^\alpha \paren{\paren{\varphi_i^\nu}^{-1} \circ r^\nu}-D^\alpha(r_i)} \nonumber \\
				&\qquad + \sup_{A \subset \R_{i}}\abs{D^\alpha (r_i)} \sup_{K \subset D_{k_i} \setmin \parenm{z_i}}\abs{D^\alpha \paren{u^\nu \circ \varphi_i^\nu} - D^\alpha(u_i)} \nonumber \\
				&\leq C_1 \underbrace{\sup_{A \subset \R_{i}}\abs{D^\alpha \paren{\paren{\varphi_i^\nu}^{-1} \circ r^\nu}-D^\alpha(r_i)}}_{\longrightarrow 0} + C_2 \underbrace{\sup_{K \subset D_{k_i} \setmin \parenm{z_i}}\abs{D^\alpha \paren{u^\nu \circ \varphi_i^\nu} - D^\alpha(u_i)}}_{\longrightarrow 0} \, , \nonumber
			\end{align}
			Here we have used that $u^\nu \circ \varphi_i^\nu \longrightarrow u_i$ in $C^{\infty}_{\mathrm{loc}}(D_{k_i} \setmin \parenm{z_i})$ and hence that $u^\nu \circ \varphi_i^\nu$ is also bounded in this topology. Furthermore we have used that $\paren{\varphi_i^{\nu}}^{-1} \circ r^\nu \longrightarrow r_i$ in $C^\infty_{\text{loc}}(\R_i)$ by \eqref{eq:convergece_of_parametrizations}.

			Then by recalling the definition of $\gamma(t)$ in~\eqref{eq:composition_of_loops_exp_decay}, we have
			\begin{align*}
				&\sup_{A \subset \R} \abs{D^\alpha(u^\nu \circ r^\nu) - D^\alpha(\gamma)} \\
				&\leq \sup_{A \subset \R_{< 0}} \abs{D^\alpha(u^\nu \circ r^\nu) - D^\alpha(\gamma)} + \sup_{A \subset \R_{> 0}} \abs{D^\alpha(u^\nu \circ r^\nu) - D^\alpha(\gamma)} \\
				&= \sup_{A \subset \R_{< 0}} \abs{D^\alpha(u^\nu \circ r^\nu) - D^\alpha(u_1 \circ r_1)} + \sup_{A \subset \R_{> 0}} \abs{D^\alpha(u^\nu \circ r^\nu) - D^\alpha(u_2 \circ r_2)}
			\end{align*}
			By~\eqref{eq:supremum_estimate_parametrization}, we get $u^\nu \circ r^\nu \longrightarrow \gamma$ in $C^{\infty}_{\mathrm{loc}}(\R)$, and thus by passing to arc length parametrizations we get $\ev(u^\nu) \longrightarrow \ev(u)$ in the compact-open topology on $\varOmega_{\xi}M_K$.
		\end{description}
		\end{proof}
		
 		The evaluation map
 		\[
 			\ev \co \overline{\mathcal M}(\boldsymbol a) \longrightarrow \varOmega_\xi M_K
 		\]
 		induces a map on chains $\ev_{\ast} \co C_{-\ast}(\overline{\mathcal M}(\boldsymbol a)) \longrightarrow C_{-\ast}(\varOmega_{\xi}M_K)$. We then pick a fundamental chain $[\overline{\mathcal M}(\boldsymbol a)]$ by \cref{lma:choice_of_fundamental_chain} so that~\eqref{eq:fund_class_of_bdry} holds, and define a family of maps $\parenm{\varPsi_m}_{m=1}^\infty$
		\begin{align}\label{eq:def_of_chain_maps}
			\varPsi_m\co CW^\ast_{\varLambda_K}(F_{m-1},F_m) \otimes \cdots \otimes CW^\ast_{\varLambda_K}(F_0,F_1) &\longrightarrow C_{-\ast}(\varOmega_{\xi}M_K) \\
			a_m \otimes \cdots \otimes a_1&\longmapsto (-1)^{\S} \ev_{\ast}[\overline{\mathcal M}(\boldsymbol a)] \nonumber\, ,
		\end{align}
		where
		\[
			\S = \sum_{j=1}^m j \abs{a_j} + (m+1)\abs\xi + (\abs \xi + m) \dim \overline{\mathcal M}(\boldsymbol a) = \sum_{j=1}^m j \abs{a_j} + (\abs \xi + m) \sum_{j=1}^m \abs{a_j} \pmod 2\, .
		\]
		Note that $\abs \xi$ means the grading of $\xi$ regarded as an intersection generator of $CW^\ast_{\varLambda_K}(F_0,F_1)$ as in \cref{sub:a_infty_structure_and_moduli_space_of_disks_cw_wo_ham}.
		\begin{lma}\label{lma:chain_diagrams}
			The following diagram commutes,
			\[
				\begin{tikzcd}[row sep=scriptsize, column sep=scriptsize]
					C_{-k}(\overline{\mathcal M}(\boldsymbol a)) \dar{\ev_{\ast}} \rar{\dd} &  C_{-k+1}(\overline{\mathcal M}(\boldsymbol a)) \dar{\ev_{\ast}}\\
					C_{-k}(\varOmega_{\xi}M_K) \rar{\dd} & C_{-k+1}(\varOmega_{\xi}M_K)
				\end{tikzcd}
			\]
			and the following diagram commutes up to an overall sign of $(-1)^{\ddag_1 + \dim \overline{\mathcal M}(\boldsymbol a')}$, where $\ddag_1$ is defined in \eqref{eq:sign_difference_HH_strata}.
			\[
				\begin{tikzcd}[row sep=scriptsize, column sep=scriptsize]
					C_{-k}(\overline{\mathcal M}(\boldsymbol a')) \otimes C_{-\ell}(\overline{\mathcal M}(\boldsymbol a'')) \rar{\iota_{\ast} \circ \times} \dar{\ev_{\ast} \otimes \ev_{\ast}} & C_{-(k+\ell)}(\overline{\mathcal M}(\boldsymbol a)) \dar{\ev_{\ast}} \\
					C_{-k}(\varOmega_{\xi}M_K) \otimes C_{-\ell}(\varOmega_{\xi}M_K) \rar{P} & C_{-(k+\ell)}(\varOmega_{\xi}M_K)
				\end{tikzcd}
			\]
			In the latter diagram we have the subdivision $\boldsymbol a = \boldsymbol a' \boldsymbol a''$, and the map $\iota_\ast$ is the composition of the map induced by the inclusion
			\[
				\iota\co \overline{\mathcal M}(\boldsymbol a') \times \overline{\mathcal M}(\boldsymbol a'') \longrightarrow \overline{\mathcal M}(\boldsymbol a)\, .
			\]
		\end{lma}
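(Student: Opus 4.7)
The proof splits into the two diagrams, the first being essentially formal and the second carrying the geometric content.

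For the first diagram, the only input needed is that $\ev\co \overline{\mathcal M}(\boldsymbol a) \longrightarrow \varOmega_\xi M_K$ is continuous, which is provided by \cref{lma:ev_map_def_on_compactification}. The cubical differential \eqref{eq:differential_on_cubical_chains} is a signed sum of face operators $\sigma \mapsto \sigma \circ \delta_{i,\varepsilon}$, each of which commutes with the pushforward $\ev_{\ast}$ by associativity of composition. Hence $\ev_{\ast}\circ \dd = \dd \circ \ev_{\ast}$ on the nose, with no sign correction; this part deserves only a brief remark.

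For the second diagram, my plan is to compute both paths pointwise on a decomposable element $\sigma' \otimes \sigma''$, where $\sigma'\co [0,1]^k \longrightarrow \overline{\mathcal M}(\boldsymbol a')$ and $\sigma''\co [0,1]^\ell \longrightarrow \overline{\mathcal M}(\boldsymbol a'')$, and compare the two resulting $(k+\ell)$-cubes in $\varOmega_\xi M_K$. The clockwise path sends $(x_1,\ldots,x_{k+\ell})$ to $\ev\bigl(\iota(\sigma'(x_1,\ldots,x_k),\sigma''(x_{k+1},\ldots,x_{k+\ell}))\bigr)$; by \cref{lma:ev_map_def_on_compactification} applied pointwise this equals $(-1)^{\ddag_1}$ times the concatenation of the two loops $\ev(\sigma'(x_1,\ldots,x_k))$ and $\ev(\sigma''(x_{k+1},\ldots,x_{k+\ell}))$, taken in the order prescribed by the outgoing segment of the broken disk running from $\zeta_+$ to $\zeta_-$. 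The counterclockwise path, unfolded via the definition \eqref{eq:def_pontryagin_product} of the Pontryagin product, yields exactly the same loop at every point $(x_1,\ldots,x_{k+\ell})$, modulo a Koszul sign coming from the convention in $P$ and the cross product.

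The main obstacle is the precise sign bookkeeping. One must track (i) the sign $(-1)^{\ddag_1}$ of \cref{lma:ev_map_def_on_compactification}, (ii) the Koszul sign built into the definition $P(\sigma_2 \otimes \sigma_1) = (-1)^{\abs{\sigma_1}}\sigma_1 \circ \sigma_2$, and (iii) any sign arising from reordering the two blocks of cube variables induced by the cross product. A direct computation, parallel to \cite[Lemma 4.10]{abouzaid2012wrapped} in the unstopped setting, shows that these three contributions combine to exactly $(-1)^{\ddag_1 + \dim \overline{\mathcal M}(\boldsymbol a')}$. The stop $\varLambda_K$ plays no role in this sign analysis.
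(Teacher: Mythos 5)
Your proposal is correct and follows essentially the same route as the paper: the first square is checked directly from the definition of the cubical face maps, and the second is reduced to the space-level statement of \cref{lma:ev_map_def_on_compactification} applied pointwise (the paper phrases this as splitting the square into a cross-product square and a concatenation square), with the extra $\dim\overline{\mathcal M}(\boldsymbol a')$ in the sign attributed, as you do, to the Koszul sign built into the definition of $P$ in \eqref{eq:def_pontryagin_product}. The paper is equally brief about the final sign bookkeeping, deferring the systematic orientation analysis to \cref{sec:signs_gradings_and_orientations_of_moduli_spaces}, so no gap here.
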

		\begin{proof}
			That the first diagram commutes follows more or less by definition. Namely, let $A \in C_{-k}(\overline{\mathcal M}(\boldsymbol a))$. Then $\ev_{\ast}(A) = \ev \circ A$, and by using the definition of $\dd$ in \eqref{eq:differential_on_cubical_chains} and the definition of $\ev$ in \eqref{eq:evaluation_map_def} we get
			\begin{align*}
				\dd(\ev_{\ast}(A)) &= \sum_{i=1}^k \sum_{\varepsilon = 0}^1 (-1)^{i+\varepsilon} (\ev \circ A)(\delta_{i,\varepsilon}(x)) = \sum_{i=1}^k \sum_{\varepsilon = 0}^1 (-1)^{i+\varepsilon} (A(\delta_{i,\varepsilon}(x)) \circ \tilde r) \\
				&= \paren{\sum_{i=1}^k \sum_{\varepsilon=0}^1(-1)^{i+ \varepsilon} A(\delta_{i,\varepsilon}(x))} \circ \tilde r \\
				&= \ev_{\ast}(\dd A)\, .
			\end{align*}
			The second diagram is split up into the following digram
			\[
				\begin{tikzcd}[row sep=scriptsize, column sep=scriptsize]
					C_{-k}(\overline{\mathcal M}(\boldsymbol a')) \otimes C_{-\ell}(\overline{\mathcal M}(\boldsymbol a'')) \dar{\ev_{\ast} \otimes \ev_{\ast}} \rar{\times} & C_{-(k+\ell)}(\overline{\mathcal M}(\boldsymbol a') \times \overline{\mathcal M}(\boldsymbol a'')) \rar{\iota_{\ast}} \dar{\ev_{\ast} \times \ev_{\ast}} & C_{-(k+\ell)}(\overline{\mathcal M}(\boldsymbol a)) \dar{\ev_{\ast}} \\
					C_{-k}(\varOmega_{\xi}M_K)\otimes C_{-\ell}(\varOmega_{\xi}M_K) \rar{\times} \ar[rr, bend right, swap, looseness=0.2, "P"] & C_{-(k+\ell)}((\varOmega_{\xi}M_K)^2) \rar{\circ} & C_{-(k+\ell)}(\varOmega_{\xi}M_K)\, .
				\end{tikzcd}
			\]
			The right square commutes, since the corresponding diagram before application of $C_{-\ast}$ commutes, by \cref{lma:ev_map_def_on_compactification}, and the maps $\iota_\ast$, $\ev_\ast$ and $\circ$ on chains are defined pointwise. The left square also commutes, because $\ev_{\ast} \otimes \ev_{\ast}$ and $\ev_{\ast} \times \ev_{\ast}$ act componentwise. Hence the outer square also commutes.

			The overall sign $(-1)^{\ddag_1 + \dim(\overline{\mathcal M}(\boldsymbol a'))}$ comes from the definition of $P$ in \eqref{eq:def_pontryagin_product}, and from the inclusion
			\[
				\iota\co \overline{\mathcal M}(\boldsymbol a') \times \overline{\mathcal M}(\boldsymbol a'') \longrightarrow \overline{\mathcal M}(\boldsymbol a)\, ,
			\]
			of $\overline{\mathcal M}(\boldsymbol a') \times \overline{\mathcal M}(\boldsymbol a'')$ as a boundary stratum of $\overline{\mathcal M}(\boldsymbol a)$ as in \eqref{eq:fund_class_of_bdry}.
		\end{proof}
		\begin{lma}\label{lma:varpsi_a_infty_morphism}
			The maps $\parenm{\varPsi_m}_{m=1}^\infty$ form an $A_{\infty}$-homomorphism. That is,
			\[
				\dd \varPsi_m + \sum_{m_1+m_2 = m} P(\varPsi_{m_2} \otimes \varPsi_{m_1}) = \sum_{r+s+t=m} (-1)^{\maltese_t} \varPsi_{r+1+t}(\id^{\otimes r} \otimes \mu^s \otimes \id^{\otimes t})\, ,
			\]
			where
			\[
				\maltese_t = t + \sum_{j=1}^t \abs{x_j}\, .
			\]
		\end{lma}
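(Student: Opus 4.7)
The plan is to apply $\ev_\ast$ to the boundary formula \eqref{eq:fund_class_of_bdry} for $[\overline{\mathcal M}(\boldsymbol a)]$ and then interpret each term via the two commutative diagrams in \cref{lma:chain_diagrams}. Concretely, I would start with
\[
	\varPsi_m(a_m \otimes \cdots \otimes a_1) = (-1)^{\S}\, \ev_\ast[\overline{\mathcal M}(\boldsymbol a)],
\]
apply $\dd$, commute it past $\ev_\ast$ using the first diagram of \cref{lma:chain_diagrams}, and then expand $\dd[\overline{\mathcal M}(\boldsymbol a)]$ via \eqref{eq:fund_class_of_bdry}. This produces two families of terms corresponding to the two strata in \eqref{eq:compactification_stratification}: the $\overline{\mathcal M}(\boldsymbol a') \times \overline{\mathcal M}(\boldsymbol a'')$ strata (to be identified with $P(\varPsi_{m_2}\otimes \varPsi_{m_1})$) and the $\overline{\mathcal M}(\boldsymbol a\setmin \tilde{\boldsymbol a}) \times \mathcal M^{\mathrm{cw}}(\tilde{\boldsymbol a})$ strata (to be identified with $\varPsi_{r+1+t}(\id^{\otimes r} \otimes \mu^s \otimes \id^{\otimes t})$).

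For the first type of stratum, I would use the second diagram of \cref{lma:chain_diagrams}, noting that it commutes up to $(-1)^{\ddag_1 + \dim \overline{\mathcal M}(\boldsymbol a')}$, to rewrite $\ev_\ast \iota_\ast([\overline{\mathcal M}(\boldsymbol a')]\times[\overline{\mathcal M}(\boldsymbol a'')])$ as a Pontryagin product. For the second type, the factor $[\mathcal M^{\mathrm{cw}}(\tilde{\boldsymbol a})]$ only contributes when the moduli space is rigid, in which case it contributes the count appearing in the definition of $\mu^s$ in \eqref{eq:def_of_a_infty_operations_wo_ham}; the remaining data gives $\ev_\ast[\overline{\mathcal M}(\boldsymbol a\setmin \tilde{\boldsymbol a})]$, which is essentially $\varPsi_{r+1+t}$ applied to the word where the $\mu^s$-output $y$ replaces $\tilde{\boldsymbol a}$.

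The main obstacle, and the bulk of the bookkeeping, is matching signs. On the left, $\dd\varPsi_m$ picks up $(-1)^{\S}$; on the right, each boundary stratum contributes $(-1)^{\ddag_1}$ or $(-1)^{\ddag_2}$ from \eqref{eq:fund_class_of_bdry}, the Pontryagin product contributes an extra $(-1)^{\dim \overline{\mathcal M}(\boldsymbol a')}$ from the second diagram of \cref{lma:chain_diagrams}, and the target $\varPsi$-terms carry their own $\S$-signs as well as the sign in \eqref{eq:def_of_a_infty_operations_wo_ham}. I would separately verify that
\[
	\S(\boldsymbol a) + \ddag_1 + \dim \overline{\mathcal M}(\boldsymbol a') \equiv \S(\boldsymbol a') + \S(\boldsymbol a'') \pmod 2,
\]
and that
\[
	\S(\boldsymbol a) + \ddag_2 \equiv \maltese_t + \S(\boldsymbol a\setmin \tilde{\boldsymbol a}) + \diamond(\tilde{\boldsymbol a}) + |y|\cdot(\text{contribution from position}) \pmod 2,
\]
using $|y| = |\tilde{\boldsymbol a}| + 2 - s$ from \cref{thm:dimension_of_various_moduli_spaces} and the formulas for $\ddag_1$, $\ddag_2$, $\S$, $\diamond$, $\maltese_t$. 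These are routine but fiddly $\mathbb{Z}/2$ computations, which I would carry out stratum-by-stratum.

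Finally, assembling the three ingredients (boundary expansion, diagram chase, sign reconciliation) yields the desired $A_\infty$-relation. The only global input beyond these are that the moduli spaces $\mathcal M^{\mathrm{cw}}$ used in \eqref{eq:fund_class_of_bdry} are exactly the ones counted by the $\mu^s$ in the wrapped Floer $A_\infty$-structure of \cref{sec:wrapped_floer_cohomology_wo_ham}, and that the extension of $\ev$ to the compactification in \cref{lma:ev_map_def_on_compactification} makes the Pontryagin-product diagram strictly compatible on chains.
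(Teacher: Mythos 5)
Your proposal follows essentially the same route as the paper: apply $\ev_\ast$ to the boundary formula \eqref{eq:fund_class_of_bdry}, identify the two types of boundary strata with the $P(\varPsi_{m_2}\otimes\varPsi_{m_1})$ and $\varPsi_{r+1+t}(\id^{\otimes r}\otimes\mu^s\otimes\id^{\otimes t})$ terms via \cref{lma:chain_diagrams} (including the observation that non-rigid $\mathcal M^{\mathrm{cw}}$ factors yield degenerate chains), and defer the sign reconciliation to a stratum-by-stratum parity check, which is exactly what the paper does in \cref{sec:signs_gradings_and_orientations_of_moduli_spaces}. The only caveat is that your first displayed $\mathbb Z/2$ identity should carry an extra $1$ (the paper verifies $\circ+\ddag_1+\S_1+\S_2\equiv 1+\S$, the $1$ accounting for the $P$-terms sitting on the same side of the relation as $\dd\varPsi_m$), a discrepancy your proposed verification would surface in any case.
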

		\begin{proof}
			From \cref{lma:transversality_of_moduli_space_of_half_strips} it is clear that $\varPsi_m$ has degree $1-m$.
			
			We first ignore signs and prove the statement modulo 2. We look at the codimension one boundary of $\overline{\mathcal M}(\boldsymbol a)$ of dimension $d$. It consists of two types of broken $J$-holomorphic curves as in~\eqref{eq:fund_class_of_bdry}, and we analyze each boundary term separately. 
			\begin{enumerate}
				\item The first boundary term is
				\[
					\coprod_{\tilde{\boldsymbol a} \subset \boldsymbol a} \overline{\mathcal M}(\boldsymbol a \setmin \tilde{\boldsymbol a}) \times \mathcal M^{\mathrm{cw}}(\tilde{\boldsymbol a})\, ,
				\]
				where $\tilde{\boldsymbol a} \subset \boldsymbol a$ is a subword at position $t+1$ of $\boldsymbol a$.
				\item The second boundary term is
				\[
					\coprod_{\boldsymbol a' \boldsymbol a'' = \boldsymbol a} \overline{\mathcal M}(\boldsymbol a') \times \overline{\mathcal M}(\boldsymbol a'')\, ,
				\]
				and it consists of broken half strips that is broken at the Lagrangian intersection point $\xi$.
			\end{enumerate}
			In view of~\eqref{eq:fund_class_of_bdry}, we consider the fundamental chain of $\dd \overline{\mathcal M}(\boldsymbol a)$. Consider the natural inclusions of the boundary strata
			\begin{align*}
				\iota \co\overline{\mathcal M}(\boldsymbol a') \times \overline{\mathcal M}(\boldsymbol a'') &\longrightarrow \overline{\mathcal M}(\boldsymbol a)\\
				\iota\co \overline{\mathcal M}(\boldsymbol a \setmin \tilde{\boldsymbol a}) \times \mathcal M^{\mathrm{cw}}(\tilde{\boldsymbol a}) &\longrightarrow \overline{\mathcal M}(\boldsymbol a)\, .
			\end{align*}
			We consider $\dd \varPsi_m(a_m \otimes \cdots \otimes a_1)$ and use \cref{lma:chain_diagrams}. Then
			\begin{align}\label{eq:ev_incl_applied_to_chain_of_boundary}
				\dd \varPsi_{m}(a_m \otimes \cdots \otimes a_1) &= \dd \ev_{\ast}[\overline{\mathcal M}(\boldsymbol a)] = \ev_{\ast} \dd [\overline{\mathcal M}(\boldsymbol a)] \\
				&= \sum_{\boldsymbol a' \boldsymbol a'' = \boldsymbol a} \ev_{\ast}\paren{\iota_\ast \paren{[\overline{\mathcal M}(\boldsymbol a')] \times [\overline{\mathcal M}(\boldsymbol a'')]}} \nonumber \\
				&\quad + \sum_{\tilde{\boldsymbol a} \subset \boldsymbol a} \ev_{\ast} \paren{\iota_\ast \paren{[\overline{\mathcal M}(\boldsymbol a \setmin \tilde{\boldsymbol a})] \times [\mathcal M^{\mathrm{cw}}(\tilde{\boldsymbol a})]}} \nonumber
			\end{align}
			We start by considering boundary terms of type (1). The evaluation applied to these terms is
			\[
				\ev_{\ast}\iota_\ast \paren{[\overline{\mathcal M}(\boldsymbol a \setmin \tilde{\boldsymbol a})] \times [\mathcal M^{\mathrm{cw}}(\tilde{\boldsymbol a})]} = \ev_{\ast}[\overline{\mathcal M}((\boldsymbol a \setmin \tilde{\boldsymbol a})_{1} \mu^{s}(\tilde{\boldsymbol a}) (\boldsymbol a \setmin \tilde{\boldsymbol a})_{2})]\, ,
			\]
			because of the definition of $\ev$ on these boundary strata. Note that if $\mathcal M^{\mathrm{cw}}(\tilde{\boldsymbol a})$ is not rigid, then the image $\ev_{\ast}\iota_\ast \paren{[\overline{\mathcal M}(\boldsymbol a \setmin \tilde{\boldsymbol a})] \times [\mathcal M^{\mathrm{cw}}(\tilde{\boldsymbol a})]}$ would be degenerate in $C_{-\ast}(\varOmega_\xi M_K)$, and hence does not contribute. In figures we illustrate this equality as follows
			\[
				\ev_{\ast} \iota_{\ast} \parenb{\raisebox{-0.4\height}{\includegraphics{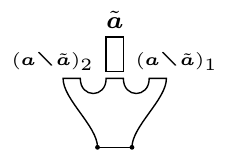}}} = \ev_{\ast} \parenb{\raisebox{-0.4\height}{\includegraphics{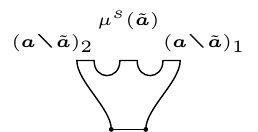}}}\, .
			\]
			The word $(\boldsymbol a \setmin \tilde{\boldsymbol a})_{1} \mu^{s}(\tilde{\boldsymbol a}) (\boldsymbol a \setmin \tilde{\boldsymbol a})_{2}$ is the word obtained from $\boldsymbol a$, by replacing the word $\tilde{\boldsymbol a}$ with $\mu^{s}(\tilde{\boldsymbol a})$. Therefore 
			\begin{equation}\label{eq:broken_disks_of_type_1}
				\ev_{\ast} \paren{\iota_\ast \paren{[\overline{\mathcal M}(\boldsymbol a \setmin \tilde{\boldsymbol a})] \times [\mathcal M^{\mathrm{cw}}(\tilde{\boldsymbol a})]}} = \varPsi_{r+1+t}\paren{a_{m} \otimes \cdots \otimes a_{t+s+1} \otimes \mu^{s}(\tilde{\boldsymbol a}) \otimes a_{t} \otimes \cdots \otimes a_{1}}\, ,
			\end{equation}
			where
			\begin{align*}
				\begin{cases}
					t = \text{length of the word } (\boldsymbol a \setmin \tilde{\boldsymbol a})_{1} \\
					s = \text{length of the word } \boldsymbol{\tilde a} \\
					r = \text{length of the word } (\boldsymbol a \setmin \tilde{\boldsymbol a})_{2}\, .
				\end{cases}
			\end{align*}
			This means that the broken disks of type (1) correspond to terms of the form $\varPsi_{r+1+t}(\id^{\otimes r} \otimes \mu^s \otimes \id^{\otimes t})$ where $r+s+t = m$.
			\begin{figure}[H]
				\centering
				\includegraphics{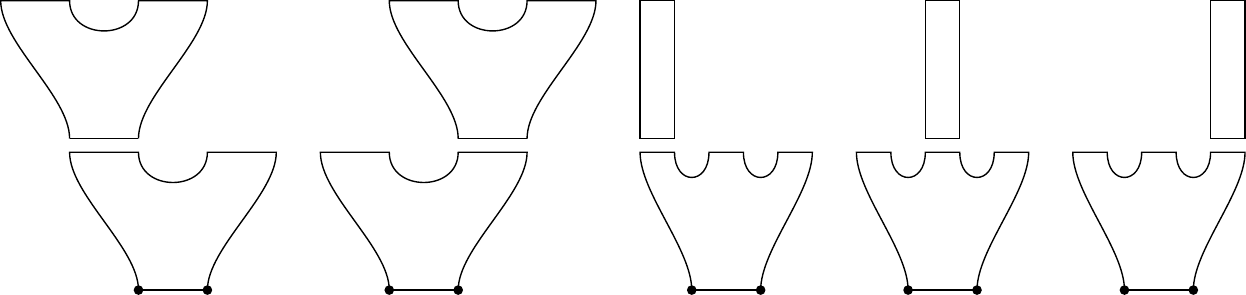}
				\caption{All broken disks of type (1) in the case of $m = 3$.}
				\label{fig:broken_disks_1}
			\end{figure}
			Similarly, for the first terms in~\eqref{eq:ev_incl_applied_to_chain_of_boundary} which correspond to broken disks of type (2) we apply \cref{lma:chain_diagrams} to get
			\begin{equation}\label{eq:broken_disks_of_type_2}
				\ev_{\ast}\paren{\iota_\ast \paren{[\overline{\mathcal M}(\boldsymbol a')] \times [\overline{\mathcal M}(\boldsymbol a'')]}} = P \paren{\ev_{\ast} [\overline{\mathcal M}(\boldsymbol a'')] \otimes \ev_{\ast}[\overline{\mathcal M}(\boldsymbol a')]} = P(\varPsi_{m_{2}}(\boldsymbol a'') \otimes \varPsi_{m_{1}}(\boldsymbol a'))\, ,
			\end{equation}
			so that the broken disks of type (2) correspond to terms of the form $P(\varPsi_{m_2} \otimes \varPsi_{m_1})$ where $m_1+m_2=m$.
			\begin{figure}[H]
				\centering
				\includegraphics{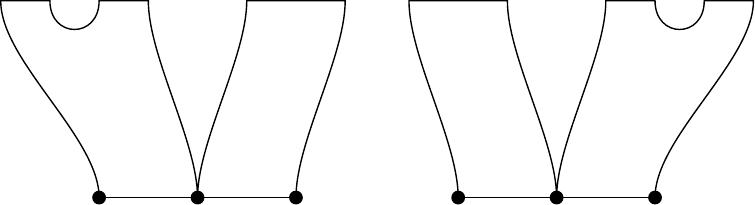}
				\caption{All broken disks of type (2) in the case of $m=3$.}
				\label{fig:broken_disks_2}
			\end{figure}
			Therefore via \eqref{eq:broken_disks_of_type_1} and \eqref{eq:broken_disks_of_type_2}, equation \eqref{eq:ev_incl_applied_to_chain_of_boundary} becomes
			\[
				\dd \varPsi_m + \sum_{m_1+m_2=m} P(\varPsi_{m_2} \otimes \varPsi_{m_1}) = \sum_{r+s+t = m} \varPsi_{r+1+t}(\id^{\otimes r} \otimes \mu^s \otimes \id^{\otimes t})\, ,
			\]
			and these are precisely the $A_{\infty}$-relations modulo 2. For confirmation of signs we refer the reader to \cref{sec:signs_gradings_and_orientations_of_moduli_spaces}.
		\end{proof}
\section{The chain map is an isomorphism}
	\label{sec:chain_map_is_a_quasi-isomorphism}
	This section is dedicated to the proof of \cref{thm:theorem_A}.
	\begin{thm}[\cref{thm:theorem_A}]\label{thm:quiso}
		There exists a geometrically defined isomorphism of $A_\infty$-algebras $\varPsi \co CW^\ast_{\varLambda_K}(F,F) \longrightarrow C^{\text{cell}}_{-\ast}(BM_K)$.
	\end{thm}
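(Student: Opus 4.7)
The plan is to prove that the $A_\infty$-homomorphism $\varPsi$ constructed in \cref{lma:varpsi_a_infty_morphism} is an isomorphism by exploiting a compatible action filtration on both sides and computing the associated graded map explicitly. By \cref{lma:varpsi_a_infty_morphism} it suffices to show $\varPsi_1$ is a chain isomorphism after we identify $C^{\text{cell}}_{-\ast}(BM_K)$ with the Morse/Pontryagin model of $C_{-\ast}(\varOmega_\xi M_K)$ built from piecewise geodesic loops, since an $A_\infty$-homomorphism whose linear term is an isomorphism is itself an $A_\infty$-isomorphism.

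First, I would equip both complexes with the length/action filtration. On the Floer side the filtration is given by the action of Reeb chord generators (together with the small action of the Lagrangian intersection generators), and on the based loop side it is given by the length of broken geodesics indexing the cells of $BM_K$. Using the warped product metric of \eqref{eq:metric_definition} and the one-to-one correspondence between Reeb chords and broken geodesics provided by \cref{lma:1-1-correspondence_geodeiscs_reeb_of_index_k}, each generator $c$ of $CW^\ast_{\varLambda_K}(F,F)$ is paired with a unique broken geodesic loop $\gamma_c \in BM_K$ of the same action, and the degree of $c$ equals the Morse index of $\gamma_c$, hence the dimension of the corresponding cell. A standard energy/monotonicity estimate, of the same type used in \cref{sec:monotonicity_estimates} to compactify $\mathcal M(\boldsymbol a)$, shows that any $J_T$-holomorphic half strip asymptotic to $c$ at $+\infty$ has boundary loop of length at most the action of $c$, with equality only for the \emph{diagonal} disks whose boundary traces out a loop arbitrarily close to $\gamma_c$. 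Consequently $\varPsi_1$ is upper triangular with respect to the action filtration, and showing it is an isomorphism reduces to identifying the diagonal entries $\varPsi_1(c)|_{\gamma_c}$ with $\pm 1$ times the generator of the cell attached to $\gamma_c$.

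The main technical step, and the one I expect to be the hardest, is proving that the diagonal moduli space $\mathcal M^{\mathrm{diag}}(c) \subset \mathcal M(c)$ of disks whose boundary accumulates on $\gamma_c$ is transversely cut out and evaluates diffeomorphically onto the cell neighborhood of $\gamma_c$ in $BM_K$. My plan is to study the linearized Floer operator along a disk $u$ whose boundary is $\gamma_c$: via stretching the domain of the half strip in the direction of the outgoing segment and adapting the linearization techniques of~\cite{abbondandolo2006floer,abouzaid2012wrapped}, the kernel of the linearization is identified with vector fields along $u$ whose boundary restriction is a broken Jacobi field along $\gamma_c$, and whose normal component satisfies a Dirichlet-type boundary condition at the breaking points coming from the Reeb chord endpoints. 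The choice of warped product metric in \eqref{eq:metric_definition} guarantees that the Hessian of the energy functional on the broken path space is negative definite on precisely the subspace of broken Jacobi fields of dimension equal to the Morse index, matching the dimension count of \cref{lma:transversality_of_moduli_space_of_half_strips} and producing an explicit isomorphism from $\ker D_u$ onto the negative eigenspace of the Hessian, i.e. the tangent space to the cell.

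Having established that $\varPsi_1$ induces a triangular map whose diagonal entries are $\pm 1$, it follows that $\varPsi_1$ is a chain isomorphism, and hence $\varPsi = \{\varPsi_m\}$ is an $A_\infty$-isomorphism between $CW^\ast_{\varLambda_K}(F,F)$ and $C^{\text{cell}}_{-\ast}(BM_K)$ equipped with the Pontryagin product; the comparison of the Morse/cellular Pontryagin product on $BM_K$ with the cubical Pontryagin product on $C_{-\ast}(\varOmega_\xi M_K)$ is by now standard and will be handled exactly as in \cite[Section 2]{abouzaid2012wrapped}. The induced module statement then follows by noting that both sides are $\Z[\pi_1(M_K)]$-modules via the natural action on loops, and that $\varPsi_1$ respects this action because broken geodesics and half-strip boundaries represent the same homotopy classes in $M_K$.
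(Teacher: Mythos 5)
Your proposal follows essentially the same route as the paper: filter both sides by action/length, use the Reeb-chord--geodesic correspondence to see that $\varPsi_1$ is triangular, and identify the diagonal entries by showing the trivial half strip over each chord is transversely cut out via the identification of $\ker D_{u}$ with broken Jacobi fields on which the index form is negative definite, so that the evaluation map submerses onto the corresponding cell of $BM_K$. The only imprecision is that the key inequality $\mathfrak a(a)\geq L(\gamma)$ is obtained in the paper not from the monotonicity lemma of \cref{sec:monotonicity_estimates} (which serves only to establish compactness) but from a Stokes'-theorem argument with the nonnegative exact form $d\lambda_\tau$; the content of the estimate you invoke is nevertheless the correct one.
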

	The first step is to replace the full Moore loop space with a Morse theoretic model of it. It is the space of piecewise geodesic loops and we denote it by $BM_K$ (see \cref{sub:filtration_on_based_loops}). In the Morse theoretic model of the loop space, we have that the geodesics on $M_K$ are precisely critical points of the energy functional, with finite dimensional unstable manifolds, and infinite dimensional stable manifolds. There is a one-to-one correspondence between Reeb chords and oriented geodesics. Assuming that the metric is generic gives moreover that Reeb chords of degree $-\lambda$ are in one-to-one correspondence with geodesics of index $\lambda$ (see \cref{lma:1-1-correspondence_geodeiscs_reeb_of_index_k}). We will show that the evaluation map defined in \cref{sub:the_evaluation_map_etc} is transverse to the infinite dimensional stable manifolds, and that the kernel of the linearized operator $D_u$ has the same dimension as the unstable manifold.

	In \cref{sub:filtration_on_cw} we will define the action filtration on $CW^\ast_{\varLambda_K}(F,F)$, followed by \cref{sub:filtration_on_based_loops} where we first replace the full Moore loop space with the Morse theoretic model consisting of piecewise geodesic loops, and then we filter the space of loops by length. In \cref{sub:the_chain_map_Y1_respects_the_filtration} we prove that $\varPsi_1$ respects the action filtrations and in fact that $\varPsi_1$ is diagonal with respect to the action filtrations. In \cref{sub:the_filtration_argument} we prove that $CW^\ast_{\varLambda_K}(F,F)$ is isomorphic to the Morse theoretic model of the loop space in each filtration level, which allows us to pass to colimits.

	Consider $M_K \subset W_K$ and fix a generic Riemannian metric $g$ on $M_K$ such that in the handle $D_\varepsilon T^\ast([0,\infty) \times \varLambda_K)$ of $W_K$, the metric has the form 
	\[
		dt^2 + f(t) g\, ,
	\]
	where $t$ is the coordinate in the $[0,\infty)$-factor, and $f\co [0,\infty) \longrightarrow [0,\infty)$ satisfies $f'(0) = -1$, $f'(t) < 0$ and $f''(t) \geq 0$, see \eqref{eq:metric_definition} for details.
	\subsection{Length filtration on $CW^{\ast}_{\varLambda_{K}}(F,F)$}
		\label{sub:filtration_on_cw}
		For a Reeb chord generator $c\in CW^\ast_{\varLambda_K}(F,F)$ define its \emph{action} by
		\[
			\mathfrak a(c) \defeq \int_0^\ell c^\ast \lambda \, .
		\]
		In our case with $F \cong T^\ast_\xi S \subset W_K$ for $\xi\in M_K$ we only have a single Lagrangian intersection generator $\xi$, whose action we define explicitly as $\mathfrak a(\xi) \defeq 0$.
		We then filter $CW^\ast_{\varLambda_K}(F,F)$ by this action, and use the notation
		\[
			\mathcal F_p CW^\ast_{\varLambda_K}(F,F) \defeq \parenm{ c\in CW^\ast_{\varLambda_K}(F,F) \suchthat \mathfrak a(c) < p}\, .
		\]
		Now, by applying Stokes' theorem to any $J$-holomorphic disk which contributes to $\mu^1(c)$ we get the following lemma. (Compare with e.g.\@ \cite[Lemma B.3]{ekholm2006rational}.)
		\begin{lma}
			The differential $\mu^1 \co CW^\ast_{\varLambda_K}(F,F) \longrightarrow CW^\ast_{\varLambda_K}(F,F)$ does not increase the action of generators. That is,
			\[
				\mathfrak a(c) \geq \mathfrak a(\mu^1(c))\, ,
			\]
			for any $c\in CW^\ast_{\varLambda_K}(F,F)$.
		\end{lma}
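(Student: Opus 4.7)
The plan is to apply Stokes' theorem to every rigid $J$-holomorphic configuration contributing to $\mu^1$ and exploit positivity of energy. For any $J$-holomorphic map $u$ occurring either as a filling disk or as a component of a partial holomorphic building, $u^\ast d\lambda \geq 0$ pointwise because $\omega=d\lambda$ tames the relevant almost complex structure. Thus
\[
0 \leq \int u^\ast d\lambda = \int_{\partial u} u^\ast \lambda\, ,
\]
and the boundary integral splits into contributions from strip-like ends at Reeb chord punctures, which contribute $+\mathfrak a$ at positive punctures and $-\mathfrak a$ at negative punctures, and from arcs mapping to the exact Lagrangians $L_k$, which contribute $f_k$ evaluated at the endpoints, where $f_k$ is a primitive of $\lambda|_{L_k}$.

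I would organize the argument along the splitting $\mu^1 = \mu^1_{\text{Lag}} + \mu^1_{\text{Reeb}}$. For $\mu^1_{\text{Lag}}(c)$, the rigid filling disks have distinguished puncture at the Reeb chord $c$ and negative puncture at the Lagrangian intersection $\xi$; since $\mathfrak a(\xi)=0$ by definition and $\mathfrak a(c)>0$ for every non-trivial Reeb chord, the inequality $\mathfrak a(c)\geq \mathfrak a(\xi)$ is automatic, once one normalizes primitives on the parallel copies so that $f_k(\xi)=0$ and observes that the primitive contributions at the endpoints of the Reeb chord reconstruct $\mathfrak a(c)$ up to an error that can be made arbitrarily small by taking the parallel copies sufficiently $C^1$-close to $F$.

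For $\mu^1_{\text{Reeb}}(c)$, the contributing objects are rigid partial holomorphic buildings with distinguished output $c'$, consisting of a primary symplectization disk, capping $J_\lambda$-holomorphic spheres at interior punctures, and secondary filling disks at the remaining negative punctures of the primary disk. I would apply the Stokes inequality to each component separately and then sum. Internal Reeb chord matchings (a negative puncture of the primary disk glued to the distinguished positive puncture of a secondary disk) cancel in pairs, as do the orbit asymptotics of capped-off interior punctures. What remains is
\[
\mathfrak a(c) - \mathfrak a(c') + (\text{Lagrangian primitive terms}) \geq 0\, ,
\]
and the primitive terms can again be arranged to vanish (or to be made arbitrarily small by choice of parallel copies), giving $\mathfrak a(c) \geq \mathfrak a(c')$.

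The only technical nuisance is the bookkeeping of the primitives $f_k$ on the parallel copies and the tiny discrepancy between the action of $c$ and the action of the nearby Reeb chord used after identification; this is standard in the parallel-copy setup for wrapped Floer cohomology without Hamiltonians (cf.\ \cite[Lemma B.3]{ekholm2006rational}) and does not affect the qualitative conclusion. The essential input is positivity of $J$-holomorphic energy, which directly yields the claimed monotonicity of $\mathfrak a$ under $\mu^1$.
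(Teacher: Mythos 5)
Your proposal is correct and follows the same route as the paper, which proves the lemma simply by invoking Stokes' theorem applied to the $J$-holomorphic configurations contributing to $\mu^1$ (citing \cite[Lemma B.3]{ekholm2006rational}); your write-up merely spells out the level-by-level bookkeeping for filling disks and partial holomorphic buildings that the paper leaves implicit. The only cosmetic slip is that in the Lagrangian case the \emph{distinguished} puncture is the output intersection generator rather than the input Reeb chord, but this does not affect the Stokes argument.
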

	\subsection{Length filtration on $C_{-\ast}(\varOmega_{\xi} M_{K})$}
		\label{sub:filtration_on_based_loops}
		In this section we review basic material on the Morse theory of loop spaces from \cite{milnor1963morse}. 

		One goal in this section is to replace the full Moore loop space $\varOmega_{\xi} M_K$ with a homotopy equivalent Morse theoretic model by approximating Moore loops by piecewise geodesic loops. The second goal is to in detail define the filtration on the model of chains of based loops we use.

		By abuse of notation, we denote by $\varOmega_{\xi} M_{K}$ the space of continuous based loops $\gamma\co [0,1] \longrightarrow M_{K}$ with fixed domain $[0,1]$. It is homotopy equivalent with the space of Moore loops as defined in \cref{sub:based_loops_on_}. With respect to the generic Riemannian metric $h$ on $M_K$ as described in \eqref{eq:metric_definition}, equip $\varOmega_\xi M_{K}$ with the supremum metric
		\[
			d^{\ast}(\gamma,\beta) \defeq \sup_{t\in [0,1]} h(\gamma(t),\beta(t)), \quad \gamma,\beta\in \varOmega_\xi M_{K}\, .
		\]
		The metric topology on $\varOmega_\xi M_{K}$ induced by $d^\ast$ then agrees with the compact-open topology. Define $\varOmega^{\text{pw}}M_{K}$ as the space of piecewise smooth loops, and equip it with the metric
		\[
			d(\gamma,\beta) \defeq d^{\ast}(\gamma,\beta) + \paren{\int_{0}^{1} \abs{\dot \gamma}^{2} - \abs{\dot \beta}^{2} dt}^{\frac 12},\quad \gamma,\beta \in \varOmega^{\text{pw}} M_{K}\, .
		\]
		By~\cite[Theorem 17.1]{milnor1963morse}, we have that the inclusion $i \co \varOmega^{\text{pw}} M_{K} \longrightarrow \varOmega_\xi M_{K}$ is a homotopy equivalence. We define the energy of $\gamma\in \varOmega^{\text{pw}}M_{K}$ by
		\begin{equation}\label{eq:energy_functional}
			E(\gamma) \defeq \int_{0}^{1} \abs{\dot \gamma}^{2} dt\, .
		\end{equation}
		Similarly we define the length of $\gamma \in \varOmega^{\text{pw}}M_K$ as 
		\[
			L(\gamma) \defeq \int_0^1 \abs{\dot \gamma} dt\, .
		\]
		Define
		\[
			\varOmega^{\text{pw},c} M_{K} \defeq \parenm{\gamma \in \varOmega^{\text{pw}}M_K \suchthat E(\gamma) < c^2}\, .
		\]
		Fix a subdivision of $[0,1]$,
		\[
			0 = t_{0} < t_{1} < t_{2} < \cdots < t_{m} = 1\, .
		\]
		Then define $BM_{K}$ to be the set of loops in $\varOmega^{\text{pw}}M_{K}$ that are geodesic in the time interval $[t_{i},t_{i+1}]$ for each $i\in \parenm{0,\ldots,m-1}$. Let 
		\[
			B^c M_K \defeq \parenm{\gamma\in BM_K \suchthat E(\gamma) < c^2}\, .
		\]
		Applying~\cite[Lemma 16.1]{milnor1963morse} then gives that for a sufficiently fine subdivision, $B^{c}M_{K}$ is a smooth finite dimensional manifold which is a natural submanifold of $(M_K)^{m-1}$. Moreover by~\cite[Theorem 16.2]{milnor1963morse}, $B^{c}M_{K}$ is a deformation retract of $\varOmega^{\text{pw},c}M_{K}$, and critical points of $\eval[0]E_{\varOmega^{\text{pw},c}M_{K}}$ are the same as the critical points of $\eval[0]E_{B^{c}M_{K}}$, and $\eval[0]E_{B^{c}M_{K}}$ is furthermore a Morse function.
	
		We consider another increasing filtration on $BM_K$ by filtering by length. Namely, define the length filtration of $BM_K$ by
		\[
			\mathcal F_c BM_K = \parenm{\gamma \in BM_K \suchthat L(\gamma) < c}\, ,
		\]
		and correspondingly
		\[
			\mathcal F_c \varOmega^{\text{pw}}M_K = \parenm{\gamma \in \varOmega^{\text{pw}}M_K \suchthat L(\gamma) < c}\, .
		\]

		By the same proof as~\cite[Theorem 16.2]{milnor1963morse}, we construct an explicit deformation retract of $\mathcal F_c \varOmega^{\text{pw}}M_K$ onto $\mathcal F_c BM_K$ (see \cref{lma:pw_loops_deformation_retract} below).
		
		If $\sigma \in C_{-k}(BM_K)$ is a cubical $k$-chain of piecewise geodesic loops, we define the action of $\sigma$ as
		\[
			\mathfrak a(\sigma) \defeq \max_{x\in [0,1]^k} L(\sigma(x))\, .
		\]
		We then define
		\[
			\mathcal F_c C_{-{\ast}}(BM_K) \defeq \parenm{\sigma \in C_{- \ast}(BM_K) \suchthat \mathfrak a(\sigma) < c}\, ,
		\]
		which gives us an increasing filtration on $C_{-{\ast}}(BM_K)$. Futhermore, we see by definition that $\mathfrak a(\dd \sigma) \leq \mathfrak a(\sigma)$.

		\begin{lma}\label{lma:pw_loops_deformation_retract}
			There is a deformation retract
			\[
				r\co \mathcal F_c\varOmega^{\text{pw}}M_K \longrightarrow \mathcal F_c BM_K\, ,
			\]
			which therefore induces a quasi-isomorphism
			\[
				r_\ast \co \mathcal F_c C_{-\ast}(\varOmega^{\text{pw}}M_K) \longrightarrow \mathcal F_c C_{-\ast}(BM_K)\, .
			\]
		\end{lma}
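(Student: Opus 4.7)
The plan is to adapt the deformation retract construction in the proof of \cite[Theorem 16.2]{milnor1963morse} from the energy filtration to the length filtration. The key simplification is that straightening by replacement with minimizing geodesics is tautologically length-non-increasing, so the estimate needed to keep the deformation inside $\mathcal F_c$ is automatic. First I choose the subdivision $0 = t_0 < \cdots < t_m = 1$ sufficiently fine relative to $c$, so that for every $\gamma \in \mathcal F_c \varOmega^{\text{pw}} M_K$ and every $i$ the points $\gamma(t_i)$ and $\gamma(t_{i+1})$ lie in a common totally normal (geodesically convex) neighborhood; this is possible because all such loops stay in the compact ball $\overline B(\xi, c) \subset M_K$ (completeness of the warped product metric \eqref{eq:metric_definition} yields compactness via Hopf--Rinow). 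The retract $r(\gamma)$ replaces each piece $\gamma|_{[t_i, t_{i+1}]}$ with the unique minimizing geodesic from $\gamma(t_i)$ to $\gamma(t_{i+1})$, parametrized with constant speed on $[t_i, t_{i+1}]$; since each replacement is length-minimizing, $L(r(\gamma)) \leq L(\gamma) < c$, so $r(\gamma) \in \mathcal F_c BM_K$, and by construction $r$ fixes $\mathcal F_c BM_K$ pointwise.

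Next I construct a homotopy $\parenm{r_u}_{u \in [0,1]}$ from $\id$ to $r$ by iteratively straightening the pieces in Milnor's style: for $u \in [i/m, (i+1)/m]$, the loop $r_u(\gamma)$ is piecewise geodesic on $[t_0, t_i]$, unchanged on $[t_{i+1}, 1]$, and on $[t_i, t_{i+1}]$ equals the concatenation of the minimizing geodesic from $\gamma(t_i)$ to $\gamma(t_i + (mu - i)(t_{i+1} - t_i))$ with the unchanged tail $\gamma|_{[t_i + (mu-i)(t_{i+1}-t_i),\, t_{i+1}]}$. At each stage the replacement is length-non-increasing, so $L(r_u(\gamma)) \leq L(\gamma) < c$, meaning the homotopy stays inside $\mathcal F_c \varOmega^{\text{pw}} M_K$. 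Continuity of $r_u$ in $(u, \gamma)$ follows from the smooth dependence of minimizing geodesics on their endpoints in totally normal neighborhoods. Thus $r$ is a deformation retract in the required filtration-respecting sense.

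Finally, the inclusion $\mathcal F_c BM_K \hookrightarrow \mathcal F_c \varOmega^{\text{pw}} M_K$ is a homotopy equivalence with homotopy inverse $r$, and since cubical singular chains form a homotopy functor (the homotopy $r_u$ induces an explicit chain homotopy via the cubical prism operator), the induced map $r_\ast$ is a chain homotopy equivalence, and in particular a quasi-isomorphism. The main obstacle is purely technical: choosing the subdivision uniformly fine enough so that minimizing geodesic replacement is well-defined, continuous, and length-non-increasing simultaneously for all loops in $\mathcal F_c \varOmega^{\text{pw}} M_K$, which rests on a uniform lower bound for the injectivity radius on the compact set $\overline B(\xi, c)$ guaranteed by \eqref{eq:metric_definition}.
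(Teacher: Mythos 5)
Your proposal is correct and follows essentially the same route as the paper: both adapt Milnor's proof of Theorem 16.2 to the length filtration, using that loops of length $<c$ stay in the compact ball $B(\xi,c)$, that replacement by unique minimizing geodesics is tautologically length-non-increasing, and the same piecewise straightening homotopy $r_s$. Your insistence on a subdivision chosen uniformly for all loops in $\mathcal F_c\varOmega^{\text{pw}}M_K$ (rather than loop by loop) is if anything slightly more careful than the paper's phrasing, and is exactly what the paper also needs when passing to chains.
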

		\begin{proof}
			From the proof of~\cite[Theorem 16.2]{milnor1963morse}, we first define a retraction
			\[
				r\co \mathcal F_c\varOmega^{\text{pw}}M_K \longrightarrow \mathcal F_c BM_K\, ,
			\]
			as follows. Consider the closed ball with center $\xi\in M_K$ and radius $c$
			\[
				B(\xi, c) = \parenm{x \in M_K \suchthat h(x,\xi) \leq c}\, .
			\]
			
			For any $\gamma \in \mathcal F_c \varOmega^{\text{pw}}M_K$, fix a fine enough subdivision of $[0,1]$
			\[
				0 = t_0 < t_1 < \cdots < t_{k-1} < 1 = t_k\, ,
			\]
			so that $h(\gamma(t_{i-1}), \gamma(t_i)) < \varepsilon$ for some $\varepsilon > 0$ small enough so that there is a unique geodesic between $\gamma(t_{i-1})$ and $\gamma(t_i)$. Because $\gamma$ is contained in the ball $B(\xi,c)$, we have by~\cite[Corollary 10.8]{milnor1963morse} that there is a unique minimal geodesic between $\gamma(t_{i-1})$ and $\gamma(t_i)$ of length less than $\varepsilon$. Define $r(\gamma)$ so that for each $i \in \parenm{1,\ldots,k-1}$ we have
			\[
				\eval[0]{r(\gamma)}_{[t_{i-1},t_i]} = \text{ unique minimal geodesic of length less than } \varepsilon \text{ from } \gamma(t_{i-1}) \text{ to } \gamma(t_i)\, .
			\]
			Since geodesics are locally length minimizing, it is clear that $L(\gamma) \geq L(r(\gamma))$ and therefore that $r$ takes values in $\mathcal F_c BM_K$. For each $s\in[0,1]$ we define
			\[
				r_s \co \mathcal F_c \varOmega^{\text{pw}}M_K \longrightarrow \mathcal F_c BM_K\, ,
			\]
			in such a way that for $s\in [t_{i-1},t_i]$ and any $i\in \parenm{1,\ldots,k-1}$ the map $r_s$ is so that
			\[
				\begin{cases}
					\eval[0]{r_s(\gamma)}_{[0,t_{i-1}]} = \eval[0]{r(\gamma)}_{[0,t_{i-1}]} \\
					\eval[0]{r_s(\gamma)}_{[t_{i-1},s]} = \text{unique minimal geodesic from } \gamma(t_{i-1}) \text{ to } \gamma(s)\\
					\eval[0]{r_s(\gamma)}_{[s,1]} = \eval[0]{\gamma}_{[s,1]} \, .
				\end{cases}
			\]
			Then $\begin{cases}
				r_0(\gamma) = \gamma \\
				r_1(\gamma) = r(\gamma)
			\end{cases}$ and it is continuous in both $s$ and $\gamma$. Hence it shows that $\mathcal F_c BM_K$ is a deformation retract of $\mathcal F_c \varOmega^{\text{pw}}M_K$.

			It is now straightforward to see that this map is defined on singular chains. Namely, for any fixed $c > 0$, we pick a fine enough subdivision of $[0,1]$
			\[
				0 = t_0 < t_1 < \cdots < t_{N-1} < 1 = t_N\, ,
			\]
			so that for every $i\in \parenm{1,\ldots,N-1}$ we have
			\[
				\max_{x\in[0,1]^k} L \paren{\eval[0]{\sigma(x)}_{[t_{i-1},t_i]}} < \varepsilon\, .
			\]
			Hence for any $x\in [0,1]^k$, there is a unique geodesic from $\sigma(x)(t_{i-1})$ to $\sigma(x)(t_i)$. Then $r$ induces a map
			\begin{align}\label{eq:proof_r_ast_deformation_retract}
				r_\ast \co \mathcal F_c C_{-\ast}(\varOmega^{\text{pw}}M_K) &\longrightarrow \mathcal F_c C_{-\ast}(BM_K) \\
				\sigma &\longmapsto r \circ \sigma \nonumber \, .
			\end{align}
		\end{proof}

		By \cite[Theorem 16.3]{milnor1963morse}, $BM_K$ is a CW-complex with one cell of dimension $\lambda$ for each closed geodesic on $M_K$ of index $\lambda$. We consider the cellular chain complex $C^{\mathrm{cell}}_{-\ast}(BM_K)$. We think of the generators of $C_{-\lambda}^{\mathrm{cell}}(BM_K)$ as the unstable manifolds of geodesics of index $\lambda$ with respect to the energy functional $E$ on $BM_K$. We define the action of a $\lambda$-cell $e_\lambda$ as
		\[
			\mathfrak a(e_\lambda) = \max_{x\in [0,1]^\lambda} L(e_\lambda(x))\, .
		\]
		It is well known that singular chains and cellular chains on a CW-complex are homotopy equivalent. Denote the induced isomorphism on homology by
		\begin{equation}\label{eq:iso_cell_sing}
			s\co H_{-\ast}(BM_K) \xrightarrow{\cong} H_{-\ast}^{\text{cell}}(BM_K)\, .
		\end{equation}
		In particular by \cref{lma:pw_loops_deformation_retract} the map $r_\ast$ in \eqref{eq:proof_r_ast_deformation_retract} induces an isomorphism
		\begin{equation}\label{eq:r_deformation_retract_quiso}
			r_\ast\co H_{-\ast}(\varOmega^{\text{pw}}M_K) \xrightarrow{\cong} H_{-\ast}(BM_K)\, .
		\end{equation}
		
	\subsection{The chain map $\varPsi_1$ respects the filtration}
		\label{sub:the_chain_map_Y1_respects_the_filtration}
		The goal for this section is to prove that the chain map
		\[
			\varPsi_1 \co CW^\ast_{\varLambda_K}(F,F) \longrightarrow C_{-\ast}(\varOmega_{\xi}M_K)\, ,
		\]
		respects the filtrations $\mathcal F_c$ defined on $CW^\ast_{\varLambda_K}(F,F)$ and $C_{-\ast}(\varOmega_{\xi}M_K)$ in \cref{sub:filtration_on_cw,sub:filtration_on_based_loops} respectively. The plan is to follow and adapt the proof of \cite[Proposition 8.9]{cieliebak2017knot} to the current situation. The outline of the proof is to consider any $J$-holomorphic disk $u\in \overline{\mathcal M}(a)$ contributing to $\varPsi_1(a)$ and integrate the 2-form $d \lambda_\tau$ (defined in \eqref{eq:pullback_of_beta_tau} below) over the disk. Using Stokes' theorem we show that $0\leq \int_{u^{-1}(W_K)} u^{\ast}d \lambda_\tau = \mathfrak a(a) - L(\gamma)$.

		Consider a generator $a\in CW^\ast_{\varLambda_K}(F,F)$ and pick some loop $\gamma = \varPsi_1(a)(x) \co [0,1] \longrightarrow M_K$. Then pick a tubular neighborhood $N(M_K)$ of $M_K$ in $W_K$ and a symplectomorphism
		\begin{equation}\label{eq:weinstein_neighborhood_dfn}
			\varphi\co N(M_K) \longrightarrow D_\delta T^\ast M_K\, ,
		\end{equation}
		by the Lagrangian neighborhood theorem for some positive constant $\delta$. By a similar argument to that of the proof of \cite[Theorem 7.1]{weinstein1971symplectic}, we may assume that $\varphi$ sends the fiber $F \cap N(M_K)$ to a fiber of $D_\delta T^\ast M_K$.

		Recall that we use the metric on $M_K$ defined in \eqref{eq:metric_definition}. Pick coordinates $(q,p)$ in $T^\ast M_K$, and define the canonical 1-form $\beta = pdq$. Then let $\beta_1$ be a $1$-form on $T^\ast M_K$ that is given by
		\[
			\beta_1 = \frac{\delta p dq}{\abs p}\, .
		\]
		When we restrict to $S_\delta T^\ast M_K$, the Reeb vector field $R = p\dd_q$ and the contact structure $\xi = \ker \beta_1$ have the following expressions in these coordinates
		\[
			R = \sum_{i=1}^n p_i \dd_{q_i}, \quad \xi = \ker \beta_1 \cap \ker(pdp) = \paren{\mathrm{span} \parenm{R, p\dd_p}}^{\perp_{d \beta_1}}\, .
		\]
		Then we have the splitting $T_{(q,p)}T^\ast M_K = \mathrm{span}\parenm{R, p \dd_p} \oplus \xi$. We have picked an almost complex structure $J$ on $W_K$ which is compatible with $d \lambda$. The almost complex structure $J$ induces an almost complex structure $J'$ on $T^\ast M_K$ defined as
		\[
			J' \defeq (d \varphi) \circ J \circ (d \varphi)^{-1}\, ,
		\]
		which satisfies the following:
		\begin{enumerate}
			\item $J'$ is compatible with $dp \wedge dq$, and
			\item $J'$ preserves the splitting $T_{(q,p)}T^\ast M_K= \mathrm{span}\parenm{R, p \dd_p} \oplus \xi$.
		\end{enumerate}
		These two conditions ensure that the map 
		\[
			\varphi \circ u \co (u^{-1}(N(M_K)), j) \longrightarrow (D_\delta T^\ast M_K, J')
		\]
		is $J'$-holomorphic.

		By the proof of \cite[Lemma 8.8]{cieliebak2017knot} we have that $d \beta_1(v, J'v) \geq 0$. However, if we integrate $d \beta_1$ over the domain of $u\in \overline{\mathcal M}(a)$ we can not use Stokes' theorem directly since $\beta_1$ is singular along the zero section, so we have to make some further modifications to get rid of this singularity.

		Let
		\begin{equation}\label{eq:def_tau}
			\tau\co [0,\infty) \longrightarrow [0,1] \, ,
		\end{equation}
		be a smooth function so that
		\begin{itemize}
			\item $\tau(s) = 0$ near $s = 0$, and
			\item $\tau'(s) \geq 0$ for every $s$,
			\item $\tau(s) = 1$ for $s\geq \varepsilon$ for some small $\varepsilon < \delta$.
		\end{itemize}
		Then define
		\[
			\beta_{\tau} \defeq \frac{\delta \tau(\abs p)}{\abs p} pdq\, .
		\]
		\begin{lma}[{\cite[Lemma 8.8]{cieliebak2017knot}}]\label{lma:d_beta_tau_is_non-negative_outside_zero_sect}
			For any $v\in T_{(q,p)}T^\ast M_K$ outside of the zero section we have
			\[
				d \beta_{\tau}(v,J'v) \geq 0\, .
			\]
			For $\tau(\abs p) > 0$ and $\tau'(\abs p) > 0$ equality holds if and only if $v = 0$, whereas at points where $\tau(\abs p) > 0$ and $\tau'(\abs p) = 0$ equality holds if and only if $v$ is a linear combination of the Liouville vector field $p\dd_p$ and the Reeb vector field $R = p\dd_q$.
		\end{lma}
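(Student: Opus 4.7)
The plan is to decompose $d\beta_{\tau}$ using the $J'$-invariant splitting $T_{(q,p)} T^{\ast} M_K = V \oplus \xi$ with $V = \mathrm{span}\{R, p\dd_p\}$, verify that $d\beta_{\tau}$ is block-diagonal with respect to this splitting, and analyze the two blocks separately. Since $J'$ preserves the splitting, this reduces the verification of the sign of $d\beta_{\tau}(v, J'v)$ to two independent computations, one on $\xi$ and one on $V$, whose equality cases combine to give exactly the statement.

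First I would expand $\beta_{\tau} = \tau(\abs{p})\beta_1$ with $\beta_1 = \delta\, pdq/\abs{p}$, so that using $d\tau(\abs{p}) = \tau'(\abs{p})\, pdp/\abs{p}$,
$d\beta_{\tau} = \tau(\abs{p})\, d\beta_1 + \tfrac{\delta \tau'(\abs{p})}{\abs{p}^{2}}\, pdp \wedge pdq$.
On $\xi$ the second term vanishes since $pdp$ and $pdq$ both vanish on $\xi$ by definition, and a direct expansion gives $d\beta_1\vert_\xi = (\delta/\abs{p})\, \omega\vert_\xi$ with $\omega = dp \wedge dq$. Consequently
$d\beta_{\tau}(v_\xi, J'v_\xi) = \tfrac{\delta\tau(\abs{p})}{\abs{p}} \omega(v_\xi, J'v_\xi) \geq 0$
by $\omega$-compatibility of $J'$ and $J'$-invariance of $\xi$, with equality (when $\tau(\abs{p}) > 0$) iff $v_\xi = 0$. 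Block-diagonality of $d\beta_\tau$ with respect to $V \oplus \xi$ follows from the defining identity $\xi = V^{\perp_{d\beta_1}}$ together with the trivial observation that $pdp \wedge pdq$ also annihilates any pair with one vector in $\xi$, since both $pdp$ and $pdq$ kill that factor.

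For the $V$-block, a direct computation using $R = p\dd_q$ gives $dp\wedge dq(R, p\dd_p) = -\abs{p}^{2}$ and $pdp \wedge pdq(R, p\dd_p) = -\abs{p}^{4}$, and these combine (with $R$ being the Reeb vector field of $\beta_1$, so $d\beta_1(R, \cdot) \equiv 0$) to yield
$d\beta_{\tau}(R, p\dd_p) = -\delta\tau'(\abs{p})\abs{p}^{2}$.
Since $\omega(R, p\dd_p) = -\abs{p}^{2} \neq 0$, the $\omega$-compatibility of $J'$ on the two-dimensional $J'$-invariant plane $V$ forces $J'(p\dd_p) = aR + b(p\dd_p)$ with $a > 0$, and $J'^{2} = -\mathrm{id}$ then determines $J'R = -bR - \tfrac{1+b^{2}}{a}(p\dd_p)$. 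Expanding $d\beta_\tau(v, J'v)$ for $v = v_R R + v_L(p\dd_p) \in V$ using these formulas, the result is $\tfrac{\delta\tau'(\abs{p})}{a}\bigl((1+b^{2})v_R^{2} - 2ab\, v_R v_L + a^{2} v_L^{2}\bigr)$; the quadratic form in $(v_R, v_L)$ has discriminant $-4a^{2} < 0$ and is therefore positive definite. Combining both blocks and reading off when each contribution vanishes yields exactly the equality cases of the statement: if $\tau'(\abs{p}) > 0$ then $d\beta_\tau(v, J'v) = 0$ forces both $v_\xi = 0$ and the $V$-part of $v$ to vanish, hence $v = 0$; if $\tau'(\abs{p}) = 0$ the $V$-contribution is identically zero and only $v_\xi = 0$ is forced, so $v \in V = \mathrm{span}\{R, p\dd_p\}$.

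The main technical obstacle is the $V$-block computation, for which the cleanest presentation is the Liouville--Reeb organization sketched above: $d\beta_\tau\vert_V$ is a scalar multiple of the area form on $V$, so once $J'\vert_V$ is expressed via the $\omega$-compatibility constraint and $J'^{2} = -\mathrm{id}$, the inequality reduces to the single positive-semidefinite quadratic form displayed above, rather than an unwieldy expansion in coordinates.
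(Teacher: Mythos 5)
Your argument is correct, and it is essentially the standard proof of this statement; note that the paper itself does not prove this lemma but simply cites \cite[Lemma 8.8]{cieliebak2017knot}, so there is no internal proof to compare against. The decomposition into the $J'$-invariant blocks $\xi$ and $\mathrm{span}\{R, p\dd_p\}$, the identity $d\beta_\tau = \tau(\abs p)\,d\beta_1 + \tfrac{\delta\tau'(\abs p)}{\abs p^2}\,pdp\wedge pdq$ with $d\beta_1$ vanishing on the Liouville--Reeb plane, and the positive-definiteness of the resulting quadratic form all check out (the only slip is a harmless missing factor of $\abs p^2$ in the final expression for the $V$-block, which is a positive constant and does not affect the sign analysis).
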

		
		Let $a\in CW^\ast_{\varLambda_K}(F,F)$ be a generator and consider $u\co D_3 \longrightarrow W_K$ in $\overline{\mathcal M}(a)$. Denote by $\gamma \defeq \ev(u)$. Using the symplectomorphism $\varphi$ in \eqref{eq:weinstein_neighborhood_dfn} we define an exact 2-form on $W_K$. Define
		\begin{equation}\label{eq:pullback_of_beta_tau}
			d\lambda_\tau \defeq \varphi^\ast d\beta_\tau\, ,
		\end{equation}
		 on $N(M_K) \subset W_K$. We may extend $d \lambda_\tau$ to the whole of $W_K$ by defining it to be
		\[
			d \lambda_\tau = \begin{cases}\varphi^\ast d \beta_\tau, & \text{in } N(M_K) \\ \omega, & \text{otherwise.}\end{cases}
		\]
		\begin{lma}\label{lma:d_lambda_tau_is_non-negative}
			The 2-form $d \lambda_\tau$ on $W_K$ defined above satisfies
			\[
				d \lambda_\tau(v,Jv) \geq 0\, .
			\]
		\end{lma}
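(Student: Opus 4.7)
The plan is to verify the inequality pointwise by splitting $W_K$ into three regions and checking each separately, using \cref{lma:d_beta_tau_is_non-negative_outside_zero_sect} where $\beta_\tau$ is well-defined and $\omega$-compatibility of $J$ elsewhere.

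First, at a point $x \in W_K \setminus N(M_K)$, the form $d\lambda_\tau$ agrees with $\omega = d\lambda$ by construction, and since $J$ is chosen to be $d\lambda$-compatible on $W_K$, we have $d\lambda(v, Jv) = g(v,v) \geq 0$, where $g$ is the Riemannian metric determined by $(d\lambda, J)$. This handles the region outside the Weinstein neighborhood.

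Next, at a point $x \in N(M_K)$ whose image $\varphi(x) \in D_\delta T^\ast M_K$ does not lie on the zero section, we pass through the symplectomorphism. Since $\varphi$ is a symplectomorphism and $J'$ was defined precisely as $(d\varphi) \circ J \circ (d\varphi)^{-1}$, for any $v \in T_x W_K$ we have
\[
    d\lambda_\tau(v, Jv) = (\varphi^\ast d\beta_\tau)(v, Jv) = d\beta_\tau(d\varphi(v), J'\, d\varphi(v))\, .
\]
By the two properties of $J'$ listed above (compatibility with $dp\wedge dq$, and preservation of the splitting $\mathrm{span}\{R, p\dd_p\} \oplus \xi$), the hypotheses of \cref{lma:d_beta_tau_is_non-negative_outside_zero_sect} are satisfied, so the right-hand side is non-negative.

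Finally, at a point $x \in M_K$, i.e.\@ where $\varphi(x)$ lies on the zero section, we use the defining property of $\tau$: since $\tau(s) = 0$ for $s$ in a neighborhood of $0$, the $1$-form $\beta_\tau = \frac{\delta \tau(|p|)}{|p|} p\,dq$ vanishes identically in an open neighborhood of the zero section in $D_\delta T^\ast M_K$ (with the apparent singularity $\tfrac{\tau(|p|)}{|p|}$ resolved because $\tau$ vanishes to infinite order at $0$, or simply because $\beta_\tau$ is identically zero there). Consequently $d\beta_\tau$ vanishes on this neighborhood, hence so does $\varphi^\ast d\beta_\tau = d\lambda_\tau$, giving $d\lambda_\tau(v, Jv) = 0 \geq 0$.

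The main subtlety is the potential discontinuity of $d\lambda_\tau$ along $\partial N(M_K)$, where the two branches of the piecewise definition need not agree; however, this does not affect the statement, which is purely pointwise. The proof is then complete by exhausting the three cases above.
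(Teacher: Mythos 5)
Your proof is correct and follows essentially the same route as the paper: apply \cref{lma:d_beta_tau_is_non-negative_outside_zero_sect} via the symplectomorphism $\varphi$ inside $N(M_K)$, and use $\omega$-compatibility of $J$ outside. Your explicit treatment of the zero section (where $\beta_\tau$ vanishes identically because $\tau = 0$ near $0$) is a small extra step the paper leaves implicit, since the cited lemma is only stated off the zero section, but it does not change the argument.
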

		\begin{proof}
		In $N(M_K)$ we have $d \lambda_\tau = \varphi^\ast d \beta_\tau$, in which case the conclusion follows from \cref{lma:d_beta_tau_is_non-negative_outside_zero_sect}. Otherwise we have $d \lambda_\tau = \omega$ which is non-negative on complex lines, because $J$ is $\omega$-compatible.
		\end{proof}
		\begin{lma}\label{lma:image_of_fiber_is_exact_wrt_beta_1}
			Consider the exact Lagrangian fiber $F \cap N(M_K)$. Its image $F' \defeq \varphi(F \cap N(M_K)) \subset D_{\delta}T^\ast M_K$ under $\varphi$ is exact with respect to $\beta_1$.
		\end{lma}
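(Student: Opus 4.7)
The plan is to reduce the claim to a direct computation by exploiting the normal form of $\varphi$. By the construction of the Weinstein neighborhood in \eqref{eq:weinstein_neighborhood_dfn}, we have already arranged that $\varphi$ sends $F \cap N(M_K)$ to a cotangent fiber of $D_\delta T^\ast M_K$; since $F$ is the cotangent fiber over $\xi$ and $\varphi$ maps $M_K$ (as the Lagrangian zero section of $N(M_K)$) to the zero section of $T^\ast M_K$, this fiber must be $F' = D_\delta T^\ast_\xi M_K$. My first step is just to record this identification, after which everything else becomes essentially formal.

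With $F' = D_\delta T^\ast_\xi M_K$ in hand, the next step is to observe that on a cotangent fiber the base coordinate $q$ is frozen at $q = \xi$, so $dq$ vanishes identically on $TF'$. Therefore $\beta_1 = \delta \frac{pdq}{\abs p}$ pulls back to the zero $1$-form on $F' \setmin \parenm{p = 0}$, and it is trivially exact there with the constant primitive $f \equiv 0$.

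The only subtle point is the singularity of $\beta_1$ along the zero section of $T^\ast M_K$, which meets $F'$ at the single point $(\xi, 0)$. Because $\beta_1|_{F'}$ is identically zero on the complement of this one point, the primitive $f \equiv 0$ extends across it in the only possible way and $F'$ is exact with respect to $\beta_1$ in the literal sense required. I do not expect any real obstacle; the lemma is engineered so that the sole nontrivial input is the fiber-to-fiber normalization of $\varphi$ built into \eqref{eq:weinstein_neighborhood_dfn}, and the role of the lemma in what follows is only to supply a vanishing boundary integral for a Stokes'-theorem argument, which is insensitive to how one handles the isolated singular point.
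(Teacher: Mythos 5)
Your proposal is correct and follows the same route as the paper: the paper's proof simply invokes the assumption that $\varphi$ maps $F \cap N(M_K)$ to a cotangent fiber of $D_\delta T^\ast M_K$, and you spell out the (immediate) reason this gives exactness, namely that $dq$ vanishes on a fiber so $\beta_1$ restricts to zero. Your extra remark about the isolated singularity at $p=0$ is a harmless elaboration that the paper leaves implicit.
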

		\begin{proof}
			This follows immediately from the assumption that $\varphi$ maps $F \cap N(M_K)$ to a fiber of $D_\delta T^\ast M_K$, say $F' = D_\delta T^\ast_xM_K$ for $x\in M_K$.
		\end{proof}

		\begin{prp}\label{prp:action_length_estimate}
			Let $a\in CW^\ast_{\varLambda_K}(F,F)$ be any generator and $u$ be any $J$-holomorphic half strip with positive puncture at $a$. Letting $\gamma \defeq \ev(u)$ we have
			\[
				\mathfrak a(a) \geq L(\gamma)\, ,
			\]
			with equality if and only if $u$ is a branched covering of a half strip over a Reeb chord.
		\end{prp}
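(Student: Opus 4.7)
The plan is to apply Stokes' theorem to $\int_T u^\ast d\lambda_\tau$, where $T$ is the domain of the half strip $u$. By \cref{lma:d_lambda_tau_is_non-negative} the integrand $u^\ast d\lambda_\tau$ is pointwise non-negative, hence $\int_T u^\ast d\lambda_\tau \geq 0$, and the task reduces to identifying the boundary integral with $\mathfrak{a}(a) - L(\gamma)$. The case $a = \xi$ is trivial, since then $\mathfrak{a}(\xi) = 0$ and $u$ must be a constant map so $L(\gamma) = 0$; I may therefore assume $a$ is a Reeb chord.

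First I would fix a global primitive $\lambda_\tau$ of $d\lambda_\tau$ on $W_K$: take $\lambda_\tau = \varphi^\ast \beta_\tau$ inside $N(M_K)$ and $\lambda_\tau = \lambda$ outside a slightly larger neighborhood, gluing by an exact form across the collar region. The boundary $\partial T$ decomposes into four pieces: the outgoing segment (mapped to $M_K$), two arcs mapped to parallel copies of $F$, small loops around the punctures $\zeta_\pm$ (where $u$ is asymptotic to $\xi$), and a loop near $s = \infty$ asymptotic to the Reeb chord $a$.

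Next I would compute each boundary contribution. The asymptotic loop near $s = \infty$ yields $\mathfrak{a}(a)$, since $a$ lies in the region $|p| \geq \varepsilon$ where $\lambda_\tau = \lambda$. The loops around $\zeta_\pm$ contribute zero, by exponential convergence of $u$ to $\xi \in M_K$ combined with the vanishing of $\beta_\tau$ on the zero section. The two arcs on parallel copies of $F$ contribute zero in total: each $F_k$ is exact Lagrangian so $\lambda|_{F_k}$ is exact, and by \cref{lma:image_of_fiber_is_exact_wrt_beta_1} the image of $F \cap N(M_K)$ under $\varphi$ is exact with respect to $\beta_1$; the arc integrals telescope to differences of primitives at endpoints all sitting over $\xi$, and cancel. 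The heart of the argument is the outgoing segment: $u$ maps it to $M_K$ where $\beta_\tau$ vanishes, so the integral does not naively see $L(\gamma)$. Following the strategy of \cite[Proposition 8.9]{cieliebak2017knot}, one exploits the $J$-holomorphic equation to relate the normal velocity $\partial_s u$ in the $T^\ast M_K$-direction to the tangential velocity $\partial_t u$, and a careful accounting of the asymptotic behaviour of $\beta_\tau$ close to the zero section shows that this piece contributes exactly $-L(\gamma)$.

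Combining the four contributions yields $0 \leq \mathfrak{a}(a) - L(\gamma)$. For the equality case, $\int_T u^\ast d\lambda_\tau = 0$ together with the sharper part of \cref{lma:d_beta_tau_is_non-negative_outside_zero_sect} forces $du$ to take values in $\mathrm{span}\{R, p\partial_p\}$ on the open region where $\tau'(|p|) > 0$. A unique continuation argument then shows that $u$ factors through the cylinder $\R \times a$ over the Reeb chord, i.e.\ $u$ is a branched covering of a half strip over $a$. The main obstacle is extracting $-L(\gamma)$ from the outgoing segment, since $\lambda_\tau$ pulls back to zero along the zero section; this delicate analytic step requires converting an apparent vanishing into an honest length term via the $J$-holomorphic equation in the fiber directions transverse to $M_K$.
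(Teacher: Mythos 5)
Your proposal follows essentially the same route as the paper: integrate $d\lambda_\tau$ over the half strip, use its non-negativity on complex lines, apply Stokes' theorem with the two primitives glued via the exact form $\varphi^\ast\beta-\lambda=d\theta$, and extract $-L(\gamma)$ from the outgoing segment by the asymptotic analysis of $\beta_\tau$ near the zero section using the $J$-holomorphic equation, exactly as in the paper's computation of $I_1$ and $I_2$. One minor transposition in your equality discussion: by \cref{lma:d_beta_tau_is_non-negative_outside_zero_sect}, where $\tau'(\abs p)>0$ equality forces $du=0$, and it is on the region where $\tau(\abs p)>0$ and $\tau'(\abs p)=0$ that $du$ is confined to $\mathrm{span}\{R,p\dd_p\}$ — but this does not affect the argument.
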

		\begin{proof}
			Since $d \lambda_\tau(u,Ju) \geq 0$ by \cref{lma:d_lambda_tau_is_non-negative} we integrate it over the disk $u\co D_3 \longrightarrow W_K$ and use Stokes' theorem:
			\begin{align}\label{eq:integral_split_up_into_two_parts}
				0\leq \int_{u^{-1}(W_K)} u^\ast d \lambda_\tau &= \int_{u^{-1}(W_K \setmin N(M_K))} u^\ast d \lambda_\tau + \int_{u^{-1}(N(M_K))} u^\ast d \lambda_\tau \nonumber\\
				&= \underbrace{\int_{u^{-1}(W_K \setmin N(M_K))} u^\ast \omega}_{\eqdef I_{1}} + \underbrace{\int_{(\varphi \circ u)^{-1}(D_\delta T^\ast M_K)} (\varphi\circ u)^\ast d \beta_\tau}_{\eqdef I_{2}} \, .
			\end{align}

			For the remainder of this proof we follow the proof of \cite[Proposition 8.9]{cieliebak2017knot}. We start by computing $I_2$. To do this we consider $\beta_1 = \frac{\delta p dq}{\abs p}$. Then pick a biholomorphism
			\[
				\psi\co [0,\delta_0] \times [0,1] \longrightarrow U \subset D_{3}\, ,
			\]
			where $U \subset D_{3}$ is a neighborhood of the boundary arc between the boundary punctures $\zeta_\pm$ both of which are mapped to $\xi\in M_K \subset W_K$, so that $\psi(0,t)$ is a parametrization of the boundary arc between $\zeta_-$ and $\zeta_+$. We choose $\delta_0$ small enough so that $(\varphi \circ u \circ \psi) (\delta_0,t)$ does not hit $M_K \subset T^\ast M_K$. Let
			\[
				q(t) \defeq \varphi\circ u \circ \psi (0,t)\, .
			\]
			Since we have a non-flat metric $h$ on $M_K$ (see \eqref{eq:metric_definition}) we consider the splitting $T(T^\ast M_K) \cong V \oplus H$ and geodesic normal coordinates $(q,p)$ on $T^\ast M_K$. The almost complex structure $J$ then takes the vertical subspace to the horizontal and vice versa. Consider the Levi-Civita connection on $T(T^\ast M_K)$, and denote its associated Christoffel symbols by $\varGamma^k_{ij}$. Recall that in geodesic normal coordinates, the metric tensor at $(q,p)$ has components $h_{ij}(q,p) = \delta_{ij}$, where $\delta_{ij}$ is the Kronecker delta. In particular the Christoffel symbols vanish at $(q,p)$. For any $x$ in a neighborhood of $(q,p)$ it follows that $\varGamma^k_{ij}(x) = O(\abs x)$.

			The almost complex structure in a neighborhood of $(q,p)$ is
			\[
				\begin{cases}
					J(\dd_{p_i}) = \dd_{q_i} - \varGamma^k_{ij} p^j \dd_{p_k} \\ 
					J(\dd_{q_i}) = -\dd_{p_i} + \varGamma^k_{ij} p^j \dd_{q_k} - \varGamma^{m}_{ij} \varGamma^k_{mn} p^j p^n \dd_{p_k}\, .
				\end{cases}
			\]
			Since $u$ is $J$-holomorphic, we write
			\[
				\tilde u(s,t) \defeq (\varphi \circ u \circ \psi)(s,t) = (Q(s,t),P(s,t))\, ,
			\]
			where
			\[
				\begin{cases}
					\dd_s Q^k - \varGamma^k_{ij} \dd_t Q^i P^j + \dd_t P^k = 0 \\
					\dd_s P^k - \dd_t Q^k + \varGamma^k_{ij} \dd_t P^i P^j - \varGamma^{m}_{ij} \varGamma^k_{mn} \dd_t Q^i \dd_t Q^i P^j P^n = 0\, .
				\end{cases}
			\]
			Recall that in our geodesic normal coordinates we have $\varGamma^k_{ij}(x) = O(\abs x)$ where $x$ is in a neighborhood of $(q,p)$ , and hence with $x = \tilde u(s,t)$ we have
			\begin{equation}\label{eq:components_of_u_tilde}
				\begin{cases}
					\dd_s Q + \dd_t P + O(\abs x) = 0 \\
					\dd_s P - \dd_t Q + O(\abs x) = 0\, .
				\end{cases}
			\end{equation}
			If we write $Q(s,t) = q(t) + v(s,t)$ we get from the the second equation in \eqref{eq:components_of_u_tilde} that
			\[
				P(s,t) = s \paren{\dot q(t) + O(\abs x)} + w(s,t)\, ,
			\]
			where $w(s,t) \defeq \int_0^s \dd_t v(\sigma,t) d \sigma$. We now have $v(0,t) = 0 = w(0,t)$ and hence $\pd vt(0,t) = 0 = \pd wt(0,t)$. Setting $s = 0$ in \eqref{eq:components_of_u_tilde} gives $\pd vs(0,t) = O(\abs{\tilde u(0,t)}) = \pd ws(0,t)$. Next, from Taylor's formula we get $\pd vt (\delta_0,t) = O(\delta_0)$ and $w(\delta_0,t) = \delta_0 O(\abs{\tilde u(0,t)}) +O(\delta_0^2)$. Then we get
			\begin{align}\label{eq:computations_of_pullback_of_beta_1_close_to_geodesic}
				\eval[0]{\tilde u^\ast \beta_1}_{s = \delta_0} &= \frac{(\delta_0 \dot q(t) + w(\delta_0,t))(\dot q(t) + \pd vt(\delta_0,t))}{\abs{\delta_0 \dot q(t) + w(\delta_0,t)}}dt \\
				&= \frac{\ip{\delta_0 \dot q(t) + \delta_0 O(\abs{\tilde u(0,t)}) + O(\delta_0^2)}{\dot q(t) + O(\delta_0)}}{\abs{\delta_0 \dot q(t) + \delta_0 O(\abs{\tilde u(0,t)}) + O(\delta_0^2)}}dt \nonumber \\
				&= \frac{\ip{ \dot q(t) + O(\abs{\tilde u(0,t)}) + O(\delta_0)}{\dot q(t) + O(\delta_0)}}{\abs{\dot q(t) + O(\abs{\tilde u(0,t)}) + O(\delta_0)}}dt = \paren{\abs{\dot q(t)} + O(\delta_0)}dt \nonumber
			\end{align}
			Next, pick $\varepsilon > 0$ so that it is smaller than the minimal norm of the $p$-components of $(\varphi\circ u \circ \psi)(\delta_0,t)$ and pick a function $\tau \co [0,\infty) \longrightarrow [0,1]$ as in \eqref{eq:def_tau}. Namely, $\tau$ satisfies
			\[
				\begin{cases}
					\tau'(s)\geq 0, & \forall s \in [0,\infty) \\
					\tau(s) = 0, & \text{near } s = 0\\
					\tau(s) = 1, & s\geq \varepsilon\, .
				\end{cases}
			\]
			Consider $\beta_\tau = \frac{\delta \tau(\abs p)pdq}{\abs p}$. By \cref{lma:d_beta_tau_is_non-negative_outside_zero_sect} we have $(\varphi\circ u)^\ast d \beta_\tau \geq 0$, and also that $\beta_\tau$ agrees with $\beta_1$ in the set $\parenm{\abs p \geq \varepsilon} \subset T^\ast M_K$. Then we get
			\[
				\lim_{\delta_0 \to 0}\int_{\parenm{\delta_0} \times [0,1]} (\varphi \circ u \circ \psi)^\ast \beta_\tau = \lim_{\delta_0 \to 0}\int_{\parenm{\delta_0} \times [0,1]} \abs{\dot q(t)} + O(\delta_0) dt = \lim_{\delta_0 \to 0} L(\gamma) + O(\delta_0) = L(\gamma)\, .
			\]
			\begin{figure}[H]
				\centering
				\includegraphics{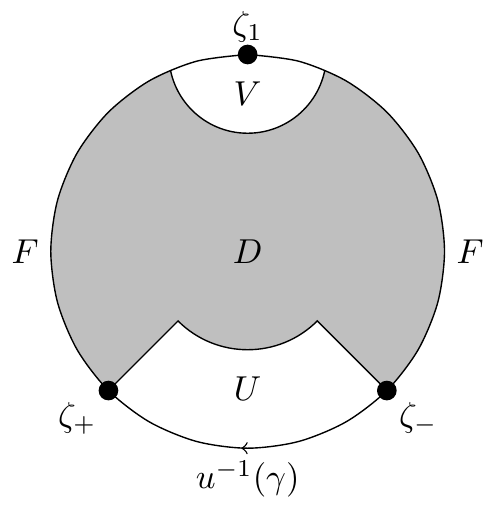}
				\caption{Domain of the $J$-holomorphic disk $u$ with neighborhoods around the outgoing segment between $\zeta_-$ and $\zeta_+$ and the positive puncture $\zeta_1$ marked in white.}
			\end{figure}
			By \cref{lma:image_of_fiber_is_exact_wrt_beta_1} we have that $F' = \varphi(F \cap N(M_K)) \subset D_\delta T^\ast M_K$ is exact with respect to $\beta_\tau$. Therefore we get
			\begin{align}\label{eq:computation_of_two}
				I_2 &= \int_{(\varphi \circ u)^{-1}(D_\delta T^\ast M_K)} (\varphi\circ u)^\ast d \beta_\tau \nonumber\\
				&= \int_{(\varphi \circ u)^{-1}(S_\delta T^\ast M_K)} (\varphi \circ u)^\ast \beta_\tau - \lim_{\delta_0 \to 0}\int_{\parenm{\delta_0} \times [0,1]} (\varphi \circ u \circ \psi)^\ast \beta_\tau \nonumber\\
				&= \int_{(\varphi \circ u)^{-1}(S_\delta T^\ast M_K)} (\varphi \circ u)^\ast \beta_\tau - L(\gamma)\, .
			\end{align}
			Finally, the integral $I_1$ in \eqref{eq:integral_split_up_into_two_parts} is computed by using Stokes' theorem and that $d \lambda_\tau = \omega$ outside of $N(M_K)$ by definition.
			\begin{align}\label{eq:computation_of_one}
				I_1 &= \int_{u^{-1}(W_K \setmin N(M_K))} u^\ast \omega = \mathfrak a(a) - \int_{u^{-1}(\dd N(M_K))} u^\ast \lambda
			\end{align}
			By combining \eqref{eq:computation_of_one} with \eqref{eq:computation_of_two} we get
			\begin{equation}\label{eq:computation_of_total_integral}
				0\leq \int_{u^{-1}(W_K)} u^\ast d \lambda_\tau =  - \int_{u^{-1}(\dd N(M_K))} u^\ast \lambda + \int_{(\varphi \circ u)^{-1}(S_\delta T^\ast M_K)} (\varphi \circ u)^\ast \beta_\tau+\mathfrak a(a)-L(\gamma) \, .
			\end{equation}
			Note that along $S_\delta T^\ast M_K$ we have $\beta_\tau = \beta = pdq$. Furthermore $\varphi$ is an exact symplectomorphism so we have $\varphi^\ast \beta - \lambda = d \theta$. Hence
			\[
				\int_{u^{-1}(\dd N(M_K))} u^\ast(\varphi^\ast \beta - \lambda) = \int_{u^{-1}(\dd N(M_K))} u^\ast d \theta = 0\, ,
			\]
			and therefore \eqref{eq:computation_of_total_integral} turns into
			\[
				0 \leq \int_{u^{-1}(W_K)} u^{\ast}d \lambda_\tau = \mathfrak a(a) - L(\gamma) \Leftrightarrow \mathfrak a(a) \geq L(\gamma)\, .
			\]
		\end{proof}
		\begin{cor}\label{cor:chain_map_preserves_filtrations}
			Let $a\in CW^\ast_{\varLambda_K}(F,F)$ be any generator and let $u\in \overline{\mathcal M}(a)$. Then
			\[
				\mathfrak a(a) \geq \mathfrak a(\varPsi_1 a)\geq \mathfrak a((r_\ast \circ \varPsi_1)(a))\, .
			\]
		\end{cor}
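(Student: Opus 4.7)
The plan is to obtain each of the two inequalities from a separate geometric input already available in the excerpt: \cref{prp:action_length_estimate} for the first, and the length-decreasing property of the retraction $r$ in \cref{lma:pw_loops_deformation_retract} for the second.

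For the first inequality $\mathfrak a(a) \geq \mathfrak a(\varPsi_1 a)$, I would unwind the definition of $\varPsi_1$ from \eqref{eq:def_of_chain_maps}: up to sign, $\varPsi_1(a) = \ev_\ast[\overline{\mathcal M}(a)]$, so $\varPsi_1(a)$ is represented by a cubical chain whose parameter values $x\in [0,1]^k$ (where $k = \dim \overline{\mathcal M}(a) = -\abs a$) label $J_T$-holomorphic half strips $u_x \in \overline{\mathcal M}(a)$, and the loop at parameter $x$ is $\ev(u_x)$. By definition of the action on cubical chains,
\[
	\mathfrak a(\varPsi_1 a) = \max_{x\in [0,1]^k} L(\ev(u_x))\, .
\]
But \cref{prp:action_length_estimate} applies to every $u\in \overline{\mathcal M}(a)$ and gives $\mathfrak a(a) \geq L(\ev(u))$ pointwise, so taking the maximum over $x$ yields $\mathfrak a(a) \geq \mathfrak a(\varPsi_1 a)$. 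The only thing to be slightly careful about is that $\varPsi_1(a)$ lives in $C_{-\ast}(\varOmega_\xi M_K)$, which was identified in \cref{sub:filtration_on_based_loops} with the space of continuous loops on the fixed domain $[0,1]$; since $\ev$ is obtained by arclength reparametrization of $u\circ r$ (see \eqref{eq:evaluation_map_def}) the length of the loop is unchanged, so the estimate of \cref{prp:action_length_estimate} transfers directly.

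For the second inequality $\mathfrak a(\varPsi_1 a) \geq \mathfrak a((r_\ast \circ \varPsi_1)(a))$, the point is that the deformation retraction $r\co \mathcal F_c \varOmega^{\text{pw}}M_K \longrightarrow \mathcal F_c BM_K$ constructed in \cref{lma:pw_loops_deformation_retract} is length non-increasing: on each subinterval $[t_{i-1},t_i]$ of the fixed subdivision of $[0,1]$, $r$ replaces $\eval[0]\gamma_{[t_{i-1},t_i]}$ with the unique minimal geodesic between its endpoints, whose length is at most $L(\eval[0]\gamma_{[t_{i-1},t_i]})$. Summing over $i$ gives $L(r(\gamma)) \leq L(\gamma)$. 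Applying this pointwise in the cube parameter to a chain $\sigma = \varPsi_1(a) \in C_{-\ast}(\varOmega^{\text{pw}}M_K)$ gives $L((r\circ \sigma)(x)) \leq L(\sigma(x))$ for all $x$, hence
\[
	\mathfrak a(r_\ast \sigma) = \max_{x} L(r(\sigma(x))) \leq \max_x L(\sigma(x)) = \mathfrak a(\sigma)\, ,
\]
which is the desired inequality.

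The only mildly non-routine point is ensuring that the piecewise smooth loops $\ev(u_x)$ actually admit the retraction $r$ with a fixed subdivision adapted to the cube parameter $x\in [0,1]^k$. Since $\varPsi_1(a)$ lies in a single filtration level $\mathcal F_c$ with $c = \mathfrak a(\varPsi_1 a)$ (finite by the first inequality applied to the finitely many generators involved in $\varPsi_1(a)$), the argument in the proof of \cref{lma:pw_loops_deformation_retract} provides a uniform subdivision valid simultaneously for every $\ev(u_x)$, so the retraction is defined on all of $\varPsi_1(a)$ and the length-decreasing estimate above applies uniformly in $x$. Combining the two inequalities completes the proof.
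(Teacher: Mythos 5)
Your proposal is correct and follows essentially the same route as the paper: the first inequality is obtained by applying \cref{prp:action_length_estimate} to the loops $\ev(u_x)$ appearing in the cubical chain $\varPsi_1(a)$ and taking the maximum over the cube parameter, and the second follows from the fact that the retraction $r$ of \cref{lma:pw_loops_deformation_retract} replaces each arc by a minimal geodesic and hence does not increase length. The additional remark about choosing a uniform subdivision for the retraction is a reasonable bit of extra care that the paper handles implicitly in the proof of \cref{lma:pw_loops_deformation_retract}.
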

		\begin{proof}
			Fix a generator $a\in CW^\ast_{\varLambda_K}(F,F)$ and consider the moduli space $\overline{\mathcal M}(a)$. The action of $\varPsi_1(a) \in C_{-\ast}(\varOmega_\xi M_K)$ is
			\[
				\mathfrak a(\varPsi_1(a)) = \max_{x\in [0,1]^\ast} L(\varPsi_1(a)(x))\, .
			\]
			Note that the maximum is well-defined by the compactness of $[0,1]^\ast$. Let $x_{\mathrm{max}} \in [0,1]^{\ast}$ be such that $L(\varPsi_1(a)(x_{\mathrm{max}})) = L(\varPsi_1(a))$, and let $\gamma_{\mathrm{max}} \defeq \varPsi_1(a)(x_{\mathrm{max}})$. Since \cref{prp:action_length_estimate} holds for any $\gamma\in \varOmega_\xi M_K$, in particular it holds for $\gamma_{\mathrm{max}}$. Therefore
			\[
				\mathfrak a(a) \geq L(\gamma_{\mathrm{max}}) = \max_{x\in [0,1]^\ast} L(\varPsi_1(a)(x)) = \mathfrak a(\varPsi_1 a)\, .
			\]

			Moreover, the inequality $\mathfrak	a(\varPsi_1 a) \geq \mathfrak	 a((r_\ast \circ \varPsi_1)(a))$ holds because $r_\ast$ does not increase filtration (see the proof of \cref{lma:pw_loops_deformation_retract}).
		\end{proof}
	\subsection{The chain map $\varPsi_1$ is diagonal with respect to the action filtrations}
		\label{sub:the_chain_map_is_diagonal_with_respect_to_the_action_filtrations}
		In this section we prove that $\varPsi_1$ is diagonal with respect to the action filtrations, that is $\mathfrak a(a) = \mathfrak a(\varPsi_1 a)$.

		We first give a brief outline of the proof. We consider the \emph{trivial} $J$-holomorphic half strip $u_0$ whose image is the cone over the Reeb chord $a \in CW^\ast_{\varLambda_K}(F,F)$ and whose tangent space at $(q,p)$ is spanned by $p\dd_q$ and $p\dd_p$ in geodesic normal coordinates. We show that $u_0$ is transversely cut out, and therefore by \cref{lma:d_beta_tau_is_non-negative_outside_zero_sect} together with the proof of \cref{prp:action_length_estimate} we get
		\[
			0 = \int_{u_0^{-1}(W_K)} u_{0}^{\ast}d \lambda_\tau = \mathfrak a(a) - L(\gamma_0)\, .
		\]
		To prove that $u_0$ is transversely cut out, we choose a generic Riemannian metric $g$ on $M_K$ (see \eqref{eq:metric_definition}). Then we have a one-to-one correspondence between Reeb chords of degree $-\lambda$ and geodesics of index $\lambda$ (see \cref{lma:1-1-correspondence_geodeiscs_reeb_of_index_k} below). We consider vector fields $v\in \ker D_{u_0}$ in the kernel of the linearized Cauchy--Riemann operator at $u_0$. Then we show that $v$ restricts to broken Jacobi fields along $\gamma$ for which the Hessian of the energy functional \eqref{eq:energy_functional} is negative definite.

		The following lemma is essentially found and proven in \cite[Prop 6.38]{robbin1995spectral} and \cite{duistermaat1976morse}. Recall that the degree of Reeb chords is defined via the Conley--Zehnder index, see \cref{rmk:description_of_maslov_index_of_reeb_chord_generators} for details.
		\begin{lma}\label{lma:1-1-correspondence_geodeiscs_reeb_of_index_k}
			Let $a < b$ be two real numbers. There is a one-to-one correspondence between Reeb chords $a$ of degree $-\lambda$ with action $\mathfrak a(a) = A$ and geodesics $\gamma$ in $BM_K$ of index $\lambda$ with length $L(\gamma) = A$.
		\end{lma}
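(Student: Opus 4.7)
The plan is to construct the bijection geometrically, then verify separately that it intertwines action with length and degree with Morse index. The main tools are the standard correspondence between Reeb chords on a unit cotangent bundle and geodesics in the base, together with the classical Morse index theorem identifying the Conley--Zehnder index of a geodesic's linearised flow with its Morse index as a critical point of the energy functional. The specific choice of warped product metric on the handle, as in \eqref{eq:metric_definition}, is what makes the correspondence work inside $W_K$ rather than merely on $ST^\ast S$.

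First I would set up the correspondence on the nose. A Reeb chord generator of $CW^\ast_{\varLambda_K}(F,F)$ is a chord of the Legendrian boundary $\dd F$ in $\dd W_K$. Under base projection, such a chord projects to a piecewise-smooth unit-speed curve in $M_K$ starting and ending at $\xi$. Inside the handle, which carries the warped product metric $dt^2+f(t)g$ with $f'(0)=-1$, $f'<0$, $f''\geq 0$, geodesics entering through $\varLambda_K$ turn back and exit through $\varLambda_K$ after finite time; the projected curve is therefore a broken geodesic in $M_K$ whose breaks occur at passages through $\varLambda_K$. Conversely, any broken geodesic in $BM_K$ based at $\xi$ lifts uniquely to a Reeb chord by taking its unit covelocity and connecting the breaks through handle arcs. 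For generic $g$ both sides consist of non-degenerate objects, so the map is a bijection.

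For the matching of actions and lengths, the pullback of the contact form $\alpha = pdq$ along a unit-speed Reeb chord $c\co[0,\ell]\to \dd W_K$ satisfies $c^\ast\alpha=dt$, giving $\mathfrak a(a)=\ell=L(\gamma)$. For the matching of indices, recall from \cref{rmk:description_of_maslov_index_of_reeb_chord_generators} that $\abs{a} = -\CZ(a) + (n-1)$, so $\abs{a} = -\lambda$ precisely when $\CZ(a) = \lambda + (n-1)$. The Conley--Zehnder index of $a$, computed from the linearised Reeb flow with positive close-up on the contact distribution, equals by Duistermaat's Morse index theorem \cite{duistermaat1976morse} (see also \cite[Proposition 6.38]{robbin1995spectral}) the quantity $\ind_E(\gamma)+(n-1)$, where $\ind_E(\gamma)$ is the Morse index of $\gamma$ as a critical point of the energy functional on $\varOmega^{\mathrm{pw}}M_K$. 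Since the cell in $BM_K$ associated to $\gamma$ has dimension $\ind_E(\gamma)=\lambda$, the bijection identifies Reeb chords of degree $-\lambda$ with cells of index $\lambda$.

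The main obstacle will be controlling the index contribution from handle crossings: one must verify that a geodesic segment inside the handle contributes trivially, so that all of $\CZ(a)-(n-1)$ is accounted for by the Morse index of the base portion. The warping conditions $f'<0$ and $f''\geq 0$ force the transverse Jacobi equation along handle geodesics to be free of interior conjugate points on $[0,\infty)\times\varLambda_K$; together with the genericity of the base metric on $M_K$, which makes all base geodesics non-degenerate with pairwise distinct lengths, this delivers the bijection at each length level.
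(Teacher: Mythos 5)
Your proposal is correct in outline and arrives at the same two identities the paper needs, namely $\mathfrak a(a)=L(\gamma)$ and $\CZ(a)=\ind_E(\gamma)+(n-1)$, but it reaches them by a different route in each case. For the bijection and the action--length matching, the paper does not argue directly with the Reeb dynamics on $\dd W_K$: it extracts both from the holomorphic-curve machinery, citing \cref{prp:chain_map_is_diagonal_wrt_filtrations} and \eqref{eq:diagonal_wrt_filtration}, where the trivial half strip (the cone over the chord $a$) is shown via the Stokes'-theorem computation of \cref{prp:action_length_estimate} to evaluate to a geodesic loop of length exactly $\mathfrak a(a)$. Your direct computation $c^{\ast}\alpha = dt$ is the elementary fact underlying this, but note that $\dd W_K$ is not literally $ST^{\ast}M_K$, so identifying Reeb chords of $\dd F$ with cogeodesic arcs of the warped metric $h$ requires the Weinstein-neighborhood identification of \eqref{eq:weinstein_neighborhood_dfn} and some care in the handle region; the paper's detour through the action of the trivial strip is precisely how it avoids analyzing the Reeb flow there. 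For the index matching, the paper proves from scratch what you cite from \cite{duistermaat1976morse} and \cite[Prop.~6.38]{robbin1995spectral}: it writes the Jacobi equation as a first-order system, exhibits the fundamental solution in terms of $C(-t^2R)$ and $tS(-t^2R)$, identifies conjugate points with the singularities of $S(-t^2R)$, and reads off $\mu(\ell)=(n-1)+\ind(\gamma)$ from the kernel dimensions plus the positive close-up. Citing those references is legitimate --- the paper itself attributes the lemma to them --- and what the self-contained computation buys is an argument phrased in exactly the trivializations and close-up conventions of \cref{rmk:description_of_maslov_index_of_reeb_chord_generators}.

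One imprecision worth correcting: the critical points of the energy functional on $BM_K$ are smooth geodesic loops of the warped metric $h$ on $M_K$; the ``broken geodesics'' of $BM_K$ refer only to the finite-dimensional approximation with respect to a fixed subdivision, and there is no breaking at passages through the cylindrical end, nor does the Reeb chord project to a curve with corners at $\varLambda_K$. Consequently your ``main obstacle'' of controlling index contributions from handle crossings is not a separate step in the paper's formulation --- the Jacobi-field computation applies verbatim to the full metric $h$ --- and the conditions $f'<0$, $f''\geq 0$ are used to force geodesics of bounded length back into a compact part $N_m$ (\cref{sec:monotonicity_estimates}), not to exclude conjugate points in the end; your claim that they do the latter is unsubstantiated and, fortunately, unnecessary.
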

		\begin{proof}
			It is a consequence of the first part of \cref{prp:chain_map_is_diagonal_wrt_filtrations} and in particular \eqref{eq:diagonal_wrt_filtration} (which do not depend on the current lemma) that Reeb chords with action $A$ are in one-to-one correspondence with geodesics in $BM_K$ with length $A$. What is left to show is that this one-to-one correspondence also preserves degree/index.

			Let $\gamma\in BM_K$ be a (non-broken) geodesic. By Morse theory on the loop space, the index of $\gamma$ is defined as the Morse index of the energy functional \eqref{eq:energy_functional}. Morse's index theorem \cite[Theorem 15.1]{milnor1963morse} says that the index of $\gamma$ is equal to the number of points $\gamma(t)$ for $t\in (0,1)$, which is conjugate to $\gamma(0)$ along $\gamma$, counted with multiplicity. Recall that $\gamma(t)$ is conjugate to $\gamma(0)$ along $\gamma$ by definition, if there is a Jacobi field $K$ along $\gamma$ so that $K(t) = K(0) = 0$. A Jacobi field is a vector field along $\gamma$ satisfying the Jacobi equation
			\[
				\nabla_{\frac{d}{dt}}\nabla_{\frac{d}{dt}} K + R(\dot \gamma, K) \dot \gamma = 0\, ,
			\]
			where $\nabla$ is the Levi-Civita connection on the bundle $\gamma^\ast TM_K$, and $R$ is the corresponding curvature tensor. The geodesic flow on $M_K$ lifts to the Reeb flow on $ST^\ast M_K$. Therefore Jacobi fields --- which are seen as linearizations of the geodesic flow --- lift to the linearized Reeb flow.

			Assume that $t_1 \in (0,1)$ so that $\gamma(t_1)$ is a point that is conjugate to $\gamma(0)$. Then let $\parenm{e_i(t)}_{i=2}^n$ be a parallel orthonormal frame of $\dot \gamma(t)^\perp \subset T_{\gamma(t)}M_K$, and let $K(t) = \sum_{i=2}^n K_i(t) e_i(t)$ be a Jacobi field so that $K(t_1) = K(0) = 0$. Defining $e_1(t) \defeq \dot \gamma(t)$ we thus have that $\parenm{e_i(t)}_{i=1}^n$ is a parallel orthonormal frame of $T_{\gamma(t)}M_K$ along $\gamma$. Using the notation $\dot K \defeq \nabla_{\frac{d}{dt}} K$, we define $L \defeq \dot K$. Then we have the system
			\[
				\begin{cases}
					\dot K = L \\
					\dot L = -R(\dot \gamma, K)\dot \gamma \, .
				\end{cases}
			\]
			By using $K = \sum_{i=1}^n K_i e_i$ and $L = \sum_{i=1}^n L_i e_i$ with $K_1 = L_1 = 0$, we get the system of differential equations
			\[
				\begin{cases}
					\dot K_i(t) = L_i(t) \\
					\dot L_i(t) = - \sum_{j=1}^n R^i_j(t) K_j(t)
				\end{cases} \text{ for all } i\in \parenm{1,\ldots,n}
			\]
			where $R(t) = \parenm{R^i_j(t)}_{i,j=1}^n = \parenm{\langle R(\dot \gamma, e_j)\dot \gamma, e_i\rangle}_{i,j=1}^n$ is a symmetric matrix. This is equivalent to say that
			\[
				\frac{d}{dt} \begin{pmatrix}
					K \\ L
				\end{pmatrix} = \begin{pmatrix}
					0 & I_{n} \\
					-R & 0
				\end{pmatrix} \begin{pmatrix}
					K \\ L
				\end{pmatrix}\, ,
			\]
			An explicit fundamental solution to this system is
			\[
				\varPhi(t) = \exp \paren{t \begin{pmatrix}
					0 & I_{n} \\ -R & 0
				\end{pmatrix}} = \begin{pmatrix}
					C(-t^2 R) & t S(-t^2 R) \\
					(-tR) S(-t^2 R) & C(-t^2 R)
				\end{pmatrix}\, ,
			\]
			where 
			\[
				C(A) = \sum_{k=0}^\infty \frac{A^k}{(2k)!}, \qquad S(A) = \sum_{k=0}^\infty \frac{A^k}{(2k+1)!}\, ,
			\]
			for $A \in \operatorname{End}(\R^n)$. The fundamental solution $\varPhi$ satisfies $\varPhi(0) = I$ and in particular
			\[
				\begin{pmatrix}
					K(t) \\ L(t) 
				\end{pmatrix} = \varPhi(t) \begin{pmatrix}
					K(0) \\ L(0)
				\end{pmatrix}\, .
			\]
			We use that the Jacobi field $K$ vanishes at $t = 0$. Then we plug in $t = t_1 \in (0,1)$ for which we also have $K(t_1) = 0$. Thus
			\[
				\begin{pmatrix}
					0 \\ L(t_1) 
				\end{pmatrix} = \varPhi(t) \begin{pmatrix}
					0 \\ L(0)
				\end{pmatrix} = \begin{pmatrix}
					t_1 S(-t_1^2 R)L(0) \\
					C(-t_1^2 R)L(0)
				\end{pmatrix}\, .
			\]
			From this, we have that $\gamma(t_1)$ is conjugate to $\gamma(0)$ if and only if $S(-t^2 R)$ is singular at $t = t_1$.

			We consider $(K,L) \in T(T_{\gamma(t)}M_K)$, and using the metric isomorphism and scaling $K$ properly, we consider $(K,L) \in T(ST^\ast_{\gamma(t)}M_K)$ for $\gamma(t) \in M_K$. Since we assumed that $K$ (and hence $L$) was orthogonal to $\gamma$, we regard the lift as $(K,L) \in \xi \subset T(ST^\ast M_K)$ along the lifted geodesic. Since $\xi_z \cong \CC^{n-1} \cong \R^{n-1} \oplus i \R^{n-1}$ is symplectic with the standard symplectic form in these coordinates, we have that $i \R^{n-1} \subset \xi$ is Lagrangian. We then consider the path of Lagrangians
			\[
				\ell(t) = \varPhi(t)(i \R^{n-1}) =\varPhi(t) \begin{pmatrix}
					0 \\ \zeta
				\end{pmatrix} = \begin{pmatrix}
					t S(-t^2 R)\zeta \\
					C(-t^2 R) \zeta
				\end{pmatrix}, \quad \forall \zeta \in i \R^{n-1}\, .
			\]
			Whenever $\gamma(t)$ is conjugate to $\gamma(0)$, the matrix $S(-t^2 R)$ is singular. Hence it has non-trivial kernel which contributes to the Maslov index exactly the dimension of the kernel. The dimension of the kernel also correspond to the multiplicity of $\gamma(t)$ as a conjugate point to $\gamma(0)$. By closing up the loop positively, we find an extra contribution of $n-1$. Hence
			\[
				\mu(\ell) = (n-1) + \sum_{t \; : \; S(-t^2R) \text{ singular}} \dim \ker S(-t^2R) = (n-1) + \ind(\gamma)\, ,
			\]
			from which we conclude $\ind(\gamma) = -\abs a$.
		\end{proof}
		
		\begin{prp}\label{prp:sols_up_to_first_order_respects_filtration}
			Let $a\in CW^\ast_{\varLambda_K}(F,F)$ be any generator. Let $u_0$ be a $J$-holomorphic half strip as in \cref{sub:moduli_space_of_half_strips}, and let $v \in \ker D_{u_0}$, where $D_{u_0}$ is the linearized Cauchy--Riemann operator at $u_0$. Then consider the linearized solution
			\[
				u_\varepsilon \defeq \exp_{u_0}(\varepsilon v)\, ,
			\]
			for any $\varepsilon > 0$. Then we have
			\[
				\mathfrak a(a) > L(\gamma_\varepsilon)\, ,
			\]
			where $\gamma_\varepsilon \defeq \ev(u_\varepsilon)$.
		\end{prp}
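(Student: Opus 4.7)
The plan is to first verify that the trivial half strip $u_0$ achieves equality in \cref{prp:action_length_estimate}, i.e.\ $\mathfrak a(a) = L(\gamma_0)$. Indeed, the image of $u_0$ is the cone over the Reeb chord $a$, so at each interior point its tangent plane is spanned by the Reeb vector field $p\dd_q$ and the Liouville vector field $p\dd_p$. By the equality clause of \cref{lma:d_beta_tau_is_non-negative_outside_zero_sect}, $d\lambda_\tau(du_0, J\,du_0) \equiv 0$ off the zero section, and hence $\int_{u_0^{-1}(W_K)} u_0^\ast d\lambda_\tau = 0$. Tracing through the computation in the proof of \cref{prp:action_length_estimate} yields $\mathfrak a(a) = L(\gamma_0)$, so it suffices to prove $L(\gamma_\varepsilon) < L(\gamma_0)$ for all sufficiently small $\varepsilon > 0$.

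Next I would identify $\ker D_{u_0}$ with the negative eigenspace of the Hessian of the energy functional at the geodesic $\gamma_0 \in BM_K$. Working in the geodesic normal coordinates used in the proof of \cref{prp:action_length_estimate} and writing out $D_{u_0} v = 0$ along the outgoing boundary segment, a direct computation shows that $V \defeq v|_{\mathrm{outgoing}}$ satisfies the (possibly broken) Jacobi equation $\nabla_{\dot\gamma_0}\nabla_{\dot\gamma_0} V + R(\dot\gamma_0, V)\dot\gamma_0 = 0$, with endpoint conditions $V(0) = V(R) = 0$ coming from the requirement that $v$ preserve the Lagrangian intersection point $\xi$ at the two punctures $\zeta_\pm$. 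By \cref{lma:1-1-correspondence_geodeiscs_reeb_of_index_k}, $\ind(\gamma_0) = -\abs a$, and the Fredholm index computation for $D_{u_0}$ at the trivial strip gives $\dim \ker D_{u_0} = -\abs a$ as well. The restriction map $\ker D_{u_0} \to T_{\gamma_0}(BM_K)$ is injective by unique continuation for $D_{u_0}$, and its image lies inside the negative eigenspace of $\mathrm{Hess}(E)|_{\gamma_0}$: this follows from a second-variation computation combined with the non-negativity of $d\lambda_\tau$ on $J$-holomorphic tangent planes, which forces the quadratic variation of length along $\gamma_\varepsilon$ to be non-positive. Matching dimensions, the restriction map is an isomorphism onto the negative eigenspace.

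With this identification in hand, the second variation formula for length along $\gamma_\varepsilon$ gives
\[
L(\gamma_\varepsilon) = L(\gamma_0) + \frac{\varepsilon^2}{2}\, I(V,V) + O(\varepsilon^3),
\]
where the first variation vanishes because $\gamma_0$ is a geodesic and $I$ is the index form of $\mathrm{Hess}(E)|_{\gamma_0}$. Since $V \neq 0$ lies in the negative eigenspace, $I(V,V) < 0$, and hence $L(\gamma_\varepsilon) < L(\gamma_0) = \mathfrak a(a)$ for $\varepsilon > 0$ sufficiently small, as desired. The main obstacle will be the identification of $\ker D_{u_0}$ with the negative eigenspace of $\mathrm{Hess}(E)|_{\gamma_0}$: while the dimension match via \cref{lma:1-1-correspondence_geodeiscs_reeb_of_index_k} is clean, the passage from the linearized Cauchy--Riemann equation to the Jacobi boundary-value problem requires a careful coordinate computation, and the sign of the induced quadratic form on the image has to be pinned down so that the second-variation argument closes up correctly.
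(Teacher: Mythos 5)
Your proposal inverts the logical order of the paper and in doing so becomes circular at two points. First, your dimension count $\dim\ker D_{u_0} = -\abs a$ is not available at this stage: the Fredholm theory only gives $\ind D_{u_0} = -\abs a$, and $\dim \ker D_{u_0} = \ind D_{u_0}$ holds precisely when $u_0$ is transversely cut out --- but transversality of $u_0$ is established in \cref{prp:chain_map_is_diagonal_wrt_filtrations} \emph{as a consequence} of the present proposition (the paper's argument there is $\dim\ker D_{u_0} \leq \dim V \leq \ind D_{u_0}$, where the first inequality uses this proposition to show $I(\pi_\ast v,\pi_\ast v)<0$ for every $v\in\ker D_{u_0}$). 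So you cannot assume the kernel matches the negative eigenspace of $\mathrm{Hess}(E)$ by matching dimensions. Second, and more fundamentally, your justification that the image of $\ker D_{u_0}$ lands in the negative eigenspace appeals to ``the non-negativity of $d\lambda_\tau$ on $J$-holomorphic tangent planes'' applied to $\gamma_\varepsilon$ --- but $u_\varepsilon = \exp_{u_0}(\varepsilon v)$ is \emph{not} $J$-holomorphic; it solves the Floer equation only to first order, with $\overline{\dd}_J u_\varepsilon = O(\varepsilon^2)$. Since the quantity you need to control is itself of order $\varepsilon^2$, the failure of holomorphicity competes at exactly the same order, and \cref{lma:d_beta_tau_is_non-negative_outside_zero_sect} gives you nothing without a quantitative comparison. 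This comparison is the actual content of the paper's proof: one writes $2(J_0+J)^{-1}J\,\overline{\dd}_J u_\varepsilon = \overline{\dd}_{J_0}u_\varepsilon + q(z)\dd_{J_0}u_\varepsilon \eqdef A(z)$ with $\abs{A(z)}^2 = O(\varepsilon^4)$, bounds $2\int u_\varepsilon^\ast d\lambda_1 \geq \norm{\pi_\xi(du_\varepsilon)}^2 - 4\norm{\pi_\xi(A(z))}^2$, and then uses a weighted Poincar\'e inequality to get the uniform lower bound $\norm{\pi_\xi(dv)}^2_{W^{1,2}_\kappa}\geq C'>0$, so that the right-hand side is $C'\varepsilon^2 + O(\varepsilon^3) > 0$. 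Without this step your argument only yields (at best) $I(V,V)\leq 0$, which does not give the strict inequality $\mathfrak a(a) > L(\gamma_\varepsilon)$; ruling out the null direction $I(V,V)=0$ is again where you would be forced back onto the circular dimension count.

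A secondary omission: even granting the positivity of $\int u_\varepsilon^\ast d\lambda_1$, one still needs the identity $\int_{u_\varepsilon^{-1}(W_K)} u_\varepsilon^\ast d\lambda_1 = \mathfrak a(a) - L(\gamma_\varepsilon)$ for the \emph{perturbed} strip. The paper obtains this by redoing the asymptotic computation of $I_2$ from the proof of \cref{prp:action_length_estimate} near the outgoing segment, tracking the extra $O(\varepsilon^2)$ terms coming from $\overline{\dd}_J u_\varepsilon = O(\varepsilon^2)$ and checking they do not affect the limit $\lim_{\delta_0\to 0}\int_{\{\delta_0\}\times[0,1]}(\varphi\circ u_\varepsilon\circ\psi)^\ast\beta_\tau = L(\gamma_\varepsilon)$. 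Your route via the second variation of length bypasses Stokes' theorem for $u_\varepsilon$, which is a legitimately different strategy in principle, but as written the negativity input it relies on is exactly what is missing. Your opening observation that the trivial strip achieves equality $\mathfrak a(a) = L(\gamma_0)$ is correct and is indeed how the paper uses this proposition downstream, but it belongs to \cref{prp:chain_map_is_diagonal_wrt_filtrations}, not here.
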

		\begin{proof}
			We modify the proof of \cref{prp:action_length_estimate}. Since $u_\varepsilon$ is not $J$-holomorphic, we first prove that the estimate $0 < \int_{u_\varepsilon^{-1}(W_K)} u_\varepsilon^\ast d\lambda_1$ holds.

			In a neighborhood of $(q,p) \in \im u_\varepsilon \subset W_K$, we have a splitting
			\[
				T_{(q,p)}W_K = \mathrm{span}\parenm{p\dd_p, p\dd_q} \oplus \xi \cong \R^2 \times \R^{2n-2}\, .
			\]
			We pick a small ball $B(\rho)$ of radius $\rho > 0$ around $(0,0) \in \R^2 \times \R^{2n-2}$. Let $J_0$ be the product almost complex structure on $\R^2 \times \R^{2n-2}$, which extends $J$ over $B(\rho)$, and let $\abs -$ be the norm determined by $d \lambda_1$ and $J_0$. Then we pick some coordinate $z = (s,t) \in T$ in the domain of $u_\varepsilon$, and define
			\[
				q(z) \defeq (J(u_\varepsilon(z))+J_0(u_\varepsilon(z)))^{-1}(J(u_\varepsilon(z))-J_0(u_\varepsilon(z)))\, .
			\]
			In the operator norm we have $\norm{q(z)} = O(\rho)$ as $\rho \to 0$. Then we have
			\begin{equation}\label{eq:def_of_A}
				2(J_0 + J)^{-1}J \overline{\dd}_Ju_\varepsilon = \overline{\dd}_{J_0}u_\varepsilon + q(z) \dd_{J_0}u_\varepsilon \eqdef A(z)\, .
			\end{equation}
			Let $\rho$ be small enough so that $\abs{\dd_{J_{0}} u_{\varepsilon}}^{2} - 3\abs{q(z)\dd_{J_{0}} u_{\varepsilon}}^{2} > 0$. Then we have
			\begin{align*}
				\abs{\dd_{J_0}u_\varepsilon}^2 + \abs{\overline{\dd}_{J_0}u_\varepsilon}^2 & < \abs{\dd_{J_0} u_\varepsilon}^2 + \abs{\overline{\dd}_{J_0}u_\varepsilon}^2+ \paren{\abs{\dd_{J_{0}} u_{\varepsilon}}^{2} - 3\abs{q(z)\dd_{J_{0}} u_{\varepsilon}}^{2}} + \abs{A(z)}^{2} \\
				&= 2 \abs{\dd_{J_0}u_\varepsilon}^2 + \abs{\overline{\dd}_{J_0}u_\varepsilon}^2 - 3 \paren{\abs{A(z)}^{2} + \abs{q(z)\dd_{J_{0}} u_{\varepsilon}}^{2}} + 4\abs{A(z)}^{2} \\
				&\leq 2 \paren{\abs{\dd_{J_0}u_\varepsilon}^2 - \abs{\overline{\dd}_{J_0}u_\varepsilon}^2} + 4 \abs{A(z)}^{2}\, .
			\end{align*}
			Next we note that
			\[
				u_\varepsilon^\ast d \lambda_1 = \paren{\abs{du_\varepsilon}^2 - 2 \abs{\overline{\dd}_{J_0}u_\varepsilon}^2} \mathrm{dvol}_T = \paren{\abs{\dd_{J_0}u_\varepsilon}^2 - \abs{\overline{\dd}_{J_0}u_\varepsilon}^2} \mathrm{dvol}_T\, ,
			\]
			and hence
			\begin{align*}
				\abs{du_\varepsilon}^2 \mathrm{dvol}_T &= \paren{\abs{\dd_{J_0}u_\varepsilon}^2 + \abs{\overline{\dd}_{J_0}u_\varepsilon}^2}\mathrm{dvol}_T < 2 \paren{\abs{\dd_{J_0} u_\varepsilon}^2 - \abs{\overline{\dd}_{J_0} u_\varepsilon}^2}\mathrm{dvol}_T + 4 \abs{A(z)}^2\mathrm{dvol}_T \\
				&= 2 u_\varepsilon^\ast d \lambda_1 + 4 \abs{A(z)}^2\mathrm{dvol}_T\, .
			\end{align*}
			In view of the definition of $A(z)$ in \eqref{eq:def_of_A}, we have that $\abs{A(z)}^2 = O(\varepsilon^4)$.

			Let $\pi_\xi \co TW_K \longrightarrow TW_K$ be the projection onto the contact plane $\xi$. By \cref{lma:d_beta_tau_is_non-negative_outside_zero_sect} the only contribution to $\int_{u_\varepsilon^{-1}(B(\rho))} u_\varepsilon^\ast d \lambda_1$ comes from the restriction to $\xi$. Summing over all balls $B(\rho)$ covering the image of $u_0$ gives
			\begin{equation}\label{eq:estimate_exp_pseudohol_to_first_order}
				2\int_{u_\varepsilon^{-1}(W_K)} u_\varepsilon^\ast d \lambda_1 = 2\int_{u_\varepsilon^{-1}(W_K)} \pi_\xi(u_\varepsilon^\ast d \lambda_1) \geq \norm{\pi_\xi(du_\varepsilon)}^2 - 4 \norm{\pi_\xi(A(z))}^2 \, .
			\end{equation}
			The Taylor expansion of $u_\varepsilon$ around $\varepsilon = 0$ is
			\[
				u_\varepsilon = u_0 + \varepsilon v + O(\varepsilon^2)\, ,
			\]
			where $v \in \ker D_{u_0}$. Because $v$ is a non-zero solution of the linearized equation $D_{u_0}v = 0$, we rescale $v$ in such a way that
			\begin{equation}\label{eq:rescaled_v_in_W_22}
				\norm v^2_{W^{2,2}_{\kappa}} = \norm v^2_{L^2_\kappa} + \norm{dv}^2_{W^{1,2}_\kappa} = 1
			\end{equation}
			where $W^{k,p}_\kappa$ is the weighted Sobolev space $W^{k,p}([0,\infty) \times [0,1])$ with weight $e^{\kappa s}$ for some small $\kappa > 0$ and where $s$ is the coordinate in the $[0,\infty)$-factor. Let $Z_T = [0,T] \times [0,1] \subset [0,\infty) \times [0,1]$ for some $T > 0$. We use the Poincar\'e inequality $\norm v^2_{L^2_\kappa(Z_T)} \leq C_1 \norm{dv}^2_{L^2_\kappa(Z_T)}$, where $C_1 > 0$ (given that $\kappa > 0$ is small enough), together with \eqref{eq:rescaled_v_in_W_22}. This gives that $\norm{dv}^2_{W^{1,2}_\kappa(Z_T)} \geq C_0$ for some $C_0 > 0$ and some $T > 0$. Hence $\norm{dv}^2_{W^{1,2}_\kappa} \geq C$ for some $C > 0$. 

			The same argument applied to $\pi_\xi(v)$ and $\pi_\xi(dv)$ gives $\norm{\pi_\xi(dv)}_{W^{1,2}_{\kappa}}^2 \geq C'$ for some $C' > 0$. Hence
			\[
			 	\norm{\pi_\xi(du_\varepsilon)}^2 - 4 \norm{\pi_\xi(A(z))}^2 = \norm{\varepsilon \pi_\xi(dv)}^2 + O(\varepsilon^3) \geq C' \varepsilon^2 + O(\varepsilon^3) > 0\, ,
			 \] 
			 for small enough $\varepsilon > 0$. By \eqref{eq:estimate_exp_pseudohol_to_first_order} we therefore have $\int_{u_\varepsilon^{-1}(W_K)} u_\varepsilon^{\ast} d \lambda_1 > 0$.
			
			Next, we show $\int_{u_\varepsilon^{-1}(W_K)} u_\varepsilon^{\ast} d \lambda_1 = \mathfrak a(a) - L(\gamma_\varepsilon)$. The proof is similar to the computation in the proof of \cref{prp:action_length_estimate}. The only difference is the computation of $I_2$ (with notation as in \cref{prp:action_length_estimate}). Since $\overline{\dd}_J u_\varepsilon = O(\varepsilon^2)$, equation \eqref{eq:components_of_u_tilde} becomes
			\begin{equation}\label{eq:components_of_u_tilde_O_epsilon_squared}
				\begin{cases}
					\dd_s Q + \dd_t P + O(\abs x) = O(\varepsilon^2) \\
					\dd_s P - \dd_t P + O(\abs x) = O(\varepsilon^2)\, ,
				\end{cases}
			\end{equation}
			where $x = \tilde u(s,t)$. Then from the second equation we get
			\[
				P(s,t) = s \paren{\dot q(t) + O(\abs x) + O(\varepsilon^2)} + w(s,t)\, .
			\]
			Setting $s = 0$ in \eqref{eq:components_of_u_tilde_O_epsilon_squared} gives
			\[
				\frac{\dd v}{\dd s}(0,t) = O(\abs{\tilde u(0,t)})) + O(\varepsilon^2) = \frac{\dd w}{\dd s}(0,t)\, ,
			\]
			and hence
			\[
				\begin{cases}
					\frac{\dd v}{\dd t}(\delta_0,t) = O(\delta_0) \\
					w(\delta_0, t) = \delta_0 \paren{O(\abs{\tilde u(0,t)}) + O(\varepsilon^2)} + O(\delta_0^2)\, .
				\end{cases}
			\]
			By repeating the same calculation as in \eqref{eq:computations_of_pullback_of_beta_1_close_to_geodesic} we end up at
			\[
				\eval[0]{\tilde u^\ast \beta_1}_{s = \delta_0} =\frac{\ip{ \dot q(t) + O(\abs{\tilde u(0,t)}) + O(\varepsilon^2) + O(\delta_0)}{\dot q(t) + O(\delta_0)}}{\abs{\dot q(t) + O(\abs{\tilde u(0,t)}) + O(\varepsilon^2) + O(\delta_0)}}dt = \paren{\abs{\dot q(t)} + O(\delta_0)}dt \, .
			\]
			The rest of the proof of \cref{prp:action_length_estimate} (which does not require holomorphicity) gives us the result.
		\end{proof}
		\begin{prp}\label{prp:chain_map_is_diagonal_wrt_filtrations}
			Let $a\in CW^\ast_{\varLambda_K}(F,F)$ be any generator and consider $\varPsi_1(a) \in C_{-\ast}(\varOmega_\xi M_K)$. Then 
			\[
				\mathfrak a(a) = \mathfrak a(\varPsi_1 a)\, .
			\]
			The same is also true for the chain map
			\[
				r_\ast \circ \varPsi_1\co CW^\ast_{\varLambda_K}(F,F) \longrightarrow C_{-\ast}(BM_K)\, .
			\]
		\end{prp}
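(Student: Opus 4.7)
The inequality $\mathfrak a(a) \geq \mathfrak a(\varPsi_1 a)$ is \cref{cor:chain_map_preserves_filtrations}, so the content of the proposition is the reverse inequality. The plan is to exhibit a specific half strip $u_0 \in \overline{\mathcal M}(a)$ whose evaluation $\ev(u_0)$ is a loop of length exactly $\mathfrak a(a)$, and then to verify that $u_0$ contributes non-degenerately to $\varPsi_1(a)$ so that the maximum in the definition of $\mathfrak a(\varPsi_1 a)$ is attained at $u_0$.

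The natural candidate is the \emph{trivial} half strip $u_0$ whose image is the rectangle in $W_K$ swept out by flowing the Reeb chord $a$ downward by the Liouville vector field until it reaches the Lagrangian $M_K$ at $\xi$. By \cref{lma:1-1-correspondence_geodeiscs_reeb_of_index_k} this rectangle projects to a piecewise geodesic loop $\gamma_0 = \ev(u_0)$ of length $\mathfrak a(a)$ and index $-\abs a$. Pointwise, the tangent space of $u_0$ at any $(q,p)$ is spanned by the Reeb vector field $p\dd_q$ and the Liouville vector field $p\dd_p$, which is precisely the equality case of \cref{lma:d_beta_tau_is_non-negative_outside_zero_sect}; hence $u_0^\ast d\lambda_\tau \equiv 0$, and running the Stokes computation in the proof of \cref{prp:action_length_estimate} with $u_0$ in place of a general $u$ turns the inequality into the equality $\mathfrak a(a) = L(\gamma_0)$.

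The main obstacle is verifying that $u_0$ actually contributes to $\varPsi_1(a)$, i.e.\@ that it lies in a transversely cut-out top-dimensional stratum of $\overline{\mathcal M}(a)$ and that $\ev$ is locally injective near $u_0$, so that $\ev_\ast [\overline{\mathcal M}(a)]$ is non-degenerate in $C_{-\ast}(\varOmega_\xi M_K)$. The expected dimension match is supplied by \cref{lma:1-1-correspondence_geodeiscs_reeb_of_index_k}, which gives $\dim \mathcal M(a) = -\abs a = \ind(\gamma_0)$, and the crucial rigidity is exactly the content of \cref{prp:sols_up_to_first_order_respects_filtration}: every nonzero $v \in \ker D_{u_0}$ produces a linearized deformation $u_\varepsilon = \exp_{u_0}(\varepsilon v)$ with $L(\ev(u_\varepsilon)) < \mathfrak a(a)$. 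This says $u_0$ is an isolated maximum of $L\circ \ev$ along every tangent direction of the kernel, which identifies $\ker D_{u_0}$ with the unstable tangent space of $\gamma_0$ in $BM_K$ and forces $\ev$ to be a local immersion there. Thus the maximum in $\mathfrak a(\varPsi_1 a) = \max_{x} L(\varPsi_1(a)(x))$ is realized at $u_0$ with value $\mathfrak a(a)$, yielding the desired equality.

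For the statement about $r_\ast \circ \varPsi_1$, the key observation is that $\gamma_0$ is already piecewise geodesic, so by choosing the subdivision of $[0,1]$ defining $BM_K$ compatibly with the breakpoints of $\gamma_0$ we have $r(\gamma_0) = \gamma_0$ and the length is preserved. Combined with the inequality $\mathfrak a(\varPsi_1 a) \geq \mathfrak a(r_\ast \varPsi_1 a)$ from \cref{cor:chain_map_preserves_filtrations}, this forces equality throughout.
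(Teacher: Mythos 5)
Your proposal follows essentially the same route as the paper: the trivial half strip $u_0$ over the Reeb chord, the equality case of \cref{lma:d_beta_tau_is_non-negative_outside_zero_sect} fed into the Stokes computation of \cref{prp:action_length_estimate} to get $\mathfrak a(a)=L(\gamma_0)$, and \cref{prp:sols_up_to_first_order_respects_filtration} to inject $\ker D_{u_0}$ into the negative cone of the index form and conclude transversality of $u_0$. The only caution is that the length statement of \cref{lma:1-1-correspondence_geodeiscs_reeb_of_index_k} is itself deduced from this proposition, so it must not be used to establish $L(\gamma_0)=\mathfrak a(a)$ — but your direct Stokes argument supplies that independently, exactly as in the paper.
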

		\begin{proof}
			By \cref{cor:chain_map_preserves_filtrations} we have that 
			\[
				0 \leq \mathfrak a(a) - \mathfrak a(\varPsi_1 a)\, ,
			\]
			and to prove equality, it is enough to show that for any $a\in CW^\ast_{\varLambda_K}(F,F)$, there exists a transversely cut out $J$-holomorphic disk $u\in \overline{\mathcal M}(a)$ with
			\[
				\int_{u^{-1}(W_K)} u^\ast d \lambda_\tau = 0\, .
			\]
			We let
			\[
				u_0 \co T \longrightarrow W_K\, ,
			\]
			be the $J$-holomorphic half strip that is the cone over the Reeb chord $a\in CW^\ast_{\varLambda_K}(F,F)$. In geodesic normal coordinates at $(q,p) = u_0(s,t)$, we have that the tangent space of $\im u_0$ at $(q,p)$ is $T_{u_0(s,t)} \im u_0 = \mathrm{span}\parenm{p\dd_p, p\dd_q}$ which means that $u_0^\ast d \lambda_\tau = 0$ by \cref{lma:d_beta_tau_is_non-negative_outside_zero_sect} and hence by \cref{prp:action_length_estimate} we have 
			\begin{equation}\label{eq:diagonal_wrt_filtration}
				\mathfrak a(a) = L(\gamma)\, .
			\end{equation}
			What is left to show is that $u_0$ is transversely cut out. Consider the following space of vector fields along $u_0$
			\[
				V = \parenm{\eta \in \ker D_{u_0} \suchthat I(\pi_\ast \eta, \pi_\ast \eta) < 0}\, .
			\]
			where $\pi$ is the projection $\pi : W_{K} \longrightarrow M_{K}$ along the Liouville flow, and $I$ is the index form (see \eqref{eq:index_form_hessian_of_energy} below for a definition). By \cref{lma:transversality_of_moduli_space_of_half_strips} we have
			\[
				\ind D_{u_0} = \ind \gamma\, .
			\]
			That is, $\ind D_{u_0}$ is equal to the dimension of the maximal subspace of the space of sections of $\gamma^\ast TM_K$ on which $I$ is negative definite. The projection 
			\[
				\eval[0]{\pi_\ast}_{\ker D_{u_0}} \co \ker D_{u_0} \longrightarrow \gamma^\ast TM_K
			\]
			is injective by unique continuation (cf.\@ \cite[Corollary 2.27]{wendl2016lectures}), which implies that we have
			\begin{equation}\label{eq:dim_jacobi_equals_index_of_lin_op}
				\dim V \leq \ind D_{u_0}\, .
			\end{equation}
			For $v\in \ker D_{u_0}$ we have that $u_\varepsilon = \exp_{u_0}(\varepsilon v)$ is a disk that is near to $u_0$ for small $\varepsilon > 0$. In particular, it is a solution of the Floer equation \eqref{eq:floer_eqn_half_strips} up to the first order. By \cref{prp:sols_up_to_first_order_respects_filtration} and \eqref{eq:diagonal_wrt_filtration} we get
			\[
				0 > L(\gamma_\varepsilon) - \mathfrak a(a) = L(\gamma_\varepsilon) - L(\gamma)\, ,
			\]
			which in turn implies
			\begin{equation}\label{eq:difference_of_energies_negative}
				0 > E(\gamma_\varepsilon) - E(\gamma)\, ,
			\end{equation}
			where $E(\gamma) = \int_0^1 \abs{\dot \gamma(t)}^2 dt$ is the energy of the curve $\gamma$.

			Now, by defining $E(s) \defeq E(\ev(\exp_{u_0}(s v)))$ we compute
			\begin{align}\label{eq:index_form_hessian_of_energy}
				\frac{d^2}{ds^2} \eval[3]{E(s)}_{s=0} &= \frac{d^2}{ds^2}\eval[3]{E(\ev(\exp_{u_0}(s v)))d\ev_{\exp_{u_0}(s v)}(d \exp_{u_0}(s v))}_{s=0} \\
				&= \frac{d^2}{ds^2}E(\ev(u_0)) d\ev_{u_0}(v) = I(\pi_\ast v, \pi_\ast v) \nonumber\, ,
			\end{align} 
			where $I$ is the index form, see e.g.\@ \cite[Section 4.1]{jost2008riemannian}. The Taylor expansion of $E(\varepsilon)$ around $\varepsilon = 0$ is
			\[
				E(\varepsilon) - E(0) = \varepsilon^2 I(\pi_\ast v, \pi_\ast v) + O(\varepsilon^3) \overset{\eqref{eq:difference_of_energies_negative}}{<} 0\, .
			\]
			Hence for small enough $\varepsilon > 0$, we obtain $I(\pi_\ast v, \pi_\ast v) < 0$ and consequently $v \in V$. Therefore we have
			\[
				\dim \ker D_{u_0} \leq \dim V \overset{\eqref{eq:dim_jacobi_equals_index_of_lin_op}}{\leq} \ind D_{u_0}\, ,
			\]
			which concludes $\dim \coker D_{u_{0}} = 0$ and therefore $u_0$ is transversely cut out.

			The same proof shows that $r_\ast \circ \varPsi_1$ is diagonal with respect to the action and length filtrations of Reeb chords and chains of broken geodesic loops, respectively, because of \cite[Lemma 15.4]{milnor1963morse}. Namely, the index of the Hessian $E_{\ast \ast}$ is equal to the index of $E_{\ast \ast}$ restricted to the tangent space $T_\gamma BM_K$.
		\end{proof}
		\subsection{Isomorphism between $CW^\ast_{\varLambda_K}(F,F)$ and $C^{\mathrm{cell}}_{-\ast}(BM_K)$}
			\label{sub:the_filtration_argument}
			The goal of this section is to show that there is a chain isomorphism between $CW^\ast_{\varLambda_K}(F,F)$ and $C^{\mathrm{cell}}_{-\ast}(BM_K)$. The outline of the proof is the following. Given a generator $a \in CW^\ast_{\varLambda_K}(F,F)$ we consider the trivial $J$-holomorphic half strip $u_0 \in \mathcal M(a)$ as in \cref{sub:the_chain_map_is_diagonal_with_respect_to_the_action_filtrations}. By the genericity of the metric as in \eqref{eq:metric_definition} we show that the evaluation map defined in \cref{sub:the_evaluation_map_etc} is transverse to the infinite dimensional stable manifold of the geodesic $\gamma$ in $BM_K$. This gives a chain isomorphism between $CW^\ast_{\varLambda_K}(F,F)$ and $C^{\mathrm{cell}}_{-\ast}(BM_K)$ by identifying a neighborhood of $u_0 \in \mathcal M(a)$ with the unstable manifold of the geodesic $\gamma \in BM_K$ which correspnds to the generator $a$.

			We use the notation
			\[
				\mathcal F_{[x_1,x_2)} C \defeq \mathcal F_{x_1} C / \mathcal F_{x_2}C\, ,
			\]
			and order the generators of $CW^{\ast}_{\varLambda_K}(F,F)$ by their action
			\begin{align*}
				0 &\leq \mathfrak a(a_1) < \mathfrak a(a_2) < \cdots
			\end{align*}
			Pick a strictly increasing sequence of numbers $\parenm{a_i}_{i=1}^\infty$ so that 
			\begin{align*}
				0 \leq \mathfrak a(a_1) < A_1 < \mathfrak a(a_2) < A_2 < \cdots\, ,
			\end{align*}
			and define
			\begin{align*}
				\mathcal F_{A_i} CW^\ast_{\varLambda_K}(F,F) &\defeq \parenm{c\in CW^\ast_{\varLambda_K}(F,F) \suchthat \mathfrak a(c) < A_i} \\
				\mathcal F_{A_i} C_{-\ast}(BM_K) &\defeq \parenm{\sigma \in C_{-\ast}(BM_K) \suchthat \mathfrak a(\sigma) < A_i}\\
				\mathcal F_{A_i} C_{-\ast}^{\text{cell}}(BM_K) &\defeq \parenm{\sigma \in C_{-\ast}^{\text{cell}}(BM_K) \suchthat \mathfrak a(\sigma) < A_i}\, .
			\end{align*}
			We extend the filtration to all of $\Z$ by letting $A_{i} = 0$ for every $i \leq 0$.

			Note that the ordering of the generators of $CW^\ast_{\varLambda_K}(F,F)$ gives an ordering of the generators of $C_{-\ast}(BM_K)$ and $C_{-\ast}^{\text{cell}}(BM_K)$ by \cref{prp:chain_map_is_diagonal_wrt_filtrations}.

			Recall the definition of the retraction
			\[
				r\co \mathcal F_{A_i} \varOmega^{\text{pw}}M_K \longrightarrow \mathcal F_{A_i} BM_K\, ,
			\]
			defined in the proof of \cref{lma:pw_loops_deformation_retract}: Let $\gamma\in \mathcal F_{A_i} \varOmega^{\text{pw}} M_K$ be any loop with $L(\gamma) = \int_0^1 \abs{\dot \gamma} dt < a_i$. Pick a subdivision of the domain of $[0,1]$
			\[
				0 = t_0 < t_1 < \cdots < t_{N-1} < 1 = t_N\, ,
			\]
			which is fine enough so that $\rho(\gamma(t_{i-1}), \gamma(t_i)) < \varepsilon$ for some $\varepsilon > 0$ small enough. Then $r(\gamma)$ is defined so that
			\[
				\eval[0]{r(\gamma)}_{[t_{i-1},t_i]} = \text{ unique minimal geodesic of length} < \varepsilon \text{ from } \gamma(t_{i-1}) \text{ to } \gamma(t_i)\, .
			\]
			Then we define
			\begin{align*}
				r_\ast \co \mathcal F_{A_i} C_{-\ast}^{\mathrm{cell}}(\varOmega^{\text{pw}}M_K) &\longrightarrow \mathcal F_{A_i} C_{-\ast}^{\mathrm{cell}}(BM_K)\\
				\sigma &\longmapsto r \circ \sigma\, .
			\end{align*}
			\begin{thm}\label{thm:isomorphism_cw_and_cell_bmk}
				The map
				\begin{align*}
					r_\ast \circ \varPsi_1 \co CW^{\ast}_{\varLambda_K}(F,F) &\longrightarrow C_{-\ast}^{\mathrm{cell}}(BM_K) \\
					a &\longmapsto r_\ast \circ \ev_{\ast}[\overline{\mathcal M}(a)]\, ,
				\end{align*}
				is an isomorphism
			\end{thm}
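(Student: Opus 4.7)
The plan is to prove the isomorphism by a filtration argument, using that $r_\ast \circ \varPsi_1$ is diagonal with respect to the action/length filtrations by \cref{prp:chain_map_is_diagonal_wrt_filtrations}. The map therefore restricts to $\mathcal F_{A_i}$ on both sides and descends to maps on the quotients $\mathcal F_{[A_{i-1},A_i)}$. By construction of the sequence $\parenm{A_i}$, the source quotient $\mathcal F_{[A_{i-1},A_i)} CW^\ast_{\varLambda_K}(F,F)$ is free of rank one, generated by the single Reeb chord $a_i$ in that action window. By \cref{lma:1-1-correspondence_geodeiscs_reeb_of_index_k} there is a unique corresponding geodesic $\gamma_i \in BM_K$ of length $\mathfrak a(a_i)$ and index $-\abs{a_i}$, contributing the unique cell $e_i$ that generates $\mathcal F_{[A_{i-1},A_i)} C^{\mathrm{cell}}_{-\ast}(BM_K)$.

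The heart of the argument is to identify $(r_\ast \circ \varPsi_1)(a_i)$ with $\pm e_i$ modulo $\mathcal F_{A_{i-1}}$. First, the trivial half strip $u_0 \in \mathcal M(a_i)$ (the cone over $a_i$) is transversely cut out by the proof of \cref{prp:chain_map_is_diagonal_wrt_filtrations}, and under the projection to $\gamma_i^\ast TM_K$ the kernel of $D_{u_0}$ is identified with the space of broken Jacobi fields along $\gamma_i$ on which the index form $I$ is negative definite; this is exactly the tangent space at $\gamma_i$ of the unstable manifold of $\gamma_i$ in $BM_K$ (cf.\@ \cite[Section 4.1]{jost2008riemannian} and \cite[Lemma 15.4]{milnor1963morse}). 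Using the genericity of the warped product metric in \eqref{eq:metric_definition}, we then argue that $\ev$ is transverse to the stable manifolds in $BM_K$, so in a neighborhood of $u_0$ the composition $r \circ \ev$ is a diffeomorphism onto the open unstable cell of $\gamma_i$. Consequently $(r_\ast \circ \ev_\ast)[\overline{\mathcal M}(a_i)]$ equals (up to a sign fixed by the coherent orientations of \cref{sec:signs_gradings_and_orientations_of_moduli_spaces}) the cellular generator $e_i$.

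Second, any $u \in \overline{\mathcal M}(a_i)$ lying outside a neighborhood of $u_0$ must satisfy $\mathfrak a(\ev(u)) < \mathfrak a(a_i)$. Indeed, by \cref{prp:action_length_estimate} we have $\mathfrak a(a_i) \geq L(\ev(u))$, and equality forces $u$ to be a branched cover of a half strip over a Reeb chord, hence a reparametrization of $u_0$. Thus every other contribution to $(r_\ast \circ \varPsi_1)(a_i)$ lies in $\mathcal F_{A_{i-1}}$ and vanishes in the quotient. Combining the two steps yields that the induced map on each $\mathcal F_{[A_{i-1},A_i)}$ is an isomorphism of free rank one $\Z$-modules. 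Induction on $i$ (applied to the short exact sequences $0 \to \mathcal F_{A_{i-1}} \to \mathcal F_{A_i} \to \mathcal F_{[A_{i-1},A_i)} \to 0$ of chain complexes via the five lemma) then promotes the isomorphism to every $\mathcal F_{A_i}$, and passing to the colimit in $i$ concludes the proof.

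The main obstacle I expect is the transversality of $\ev$ to the infinite dimensional stable manifolds in $BM_K$, together with the orientation/sign matching ensuring that the image of $[\overline{\mathcal M}(a_i)]$ hits $e_i$ with a unit coefficient. The transversality requires the specific form of the warped product metric \eqref{eq:metric_definition} and an analysis paralleling that of \cref{prp:chain_map_is_diagonal_wrt_filtrations}, ruling out kernel elements projecting to the stable directions; the sign computation must be traced through the conventions of \cref{sec:signs_gradings_and_orientations_of_moduli_spaces} and combined with the identification of $u_0$ with its unstable cell via the composition $r \circ \ev$.
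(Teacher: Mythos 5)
Your proposal follows essentially the same route as the paper: filter by action, reduce to the rank-one quotients $\mathcal F_{[A_{i-1},A_i)}$, and identify $(r_\ast\circ\varPsi_1)(a)$ with the unstable cell of the corresponding geodesic by showing that $d(r\circ\ev)_{u_0}$ is transverse to the complement of the unstable directions (the paper splits this into transversality to $T'$ via a Fredholm index count and to $T^+$ via the negativity of the index form on $\pi_\ast(\ker D_{u_0})$, exactly as you sketch). Your additional explicit appeal to \cref{prp:action_length_estimate} to rule out contributions from non-trivial disks at the top filtration level is a reasonable way to make precise a point the paper leaves implicit, but it does not change the argument.
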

			\begin{proof}
				We first show that for any $i\in \Z$ the map
				\begin{align*}
					r_\ast \circ \varPsi_1 \co \mathcal F_{[A_{i-1},A_i)}CW^{\ast}_{\varLambda_K}(F,F) &\longrightarrow \mathcal F_{[A_{i-1},A_i)}C_{-\ast}^{\mathrm{cell}}(BM_K) \\
					a &\longmapsto r_\ast \circ \ev_{\ast}[\overline{\mathcal M}(a)]\, ,
				\end{align*}
				is an isomorphism.

				By the definition of the numbers $\parenm{A_i}_{i=0}^\infty$ there is only one generator $a\in \mathcal F_{[A_{i-1},A_i)} CW^\ast_{\varLambda_K}(F,F)$. Denote its degree by $\lambda$. By \cref{lma:1-1-correspondence_geodeiscs_reeb_of_index_k}, there is exactly one generator $\sigma\in \mathcal F_{[A_{i-1},A_i)} C^{\mathrm{cell}}_{-\lambda}(BM_K)$ that corresponds to $a$. We think of $\sigma$ as the unstable manifold of the geodesic $\gamma$ corresponding to $a$. Since both $\mathcal F_{[A_{i-1},A_i)} CW^\ast_{\varLambda_K}(F,F)$ and $\mathcal F_{[A_{i-1},A_i)} C^{\mathrm{cell}}_{-\ast}(BM_K)$ only contains one generator each, we only need to show that
				\begin{equation}\label{eq:wts_surjection}
					\sigma = (r_\ast \circ \varPsi_1)(a)\, .
				\end{equation}
				By \cref{prp:chain_map_is_diagonal_wrt_filtrations} we already know that the trivial $J$-holomorphic half strip $u_0 \in \overline{\mathcal M}(a)$ over $a$ is so that $\ev(u_0) = \gamma$. To prove that equation \eqref{eq:wts_surjection} holds it is enough to consider the map
				\[
					r\circ \ev \co \overline{\mathcal M}(a) \longrightarrow BM_K\, ,
				\]
				and show that it is locally surjective at $\gamma\in \im \sigma \subset BM_K$. We do this by showing that it is a submersion. That is, we consider
				\[
					d(r \circ \ev)_{u_0} \co T_{u_0} \overline{\mathcal M}(a) \longrightarrow T_\gamma BM_K\, ,
				\]
				and we show that it is surjective onto the image of $\sigma$. As noted above, $\sigma$ should be thought of as the unstable manifold of $\gamma$ inside $BM_K$ with respect to the energy functional $E$. The following composition
				\[
					\begin{tikzcd}[row sep=scriptsize, column sep=scriptsize]
						T_{u_0} \overline{\mathcal M}(a) \rar{d\ev_{u_0}} & T_\gamma \varOmega^{\mathrm{pw}}M_K \rar{dr_{\gamma}} & T_\gamma BM_K
					\end{tikzcd}
				\]
				is described as follows. Pick a subdivision of the domain of $\gamma$
				\[
					0\leq t_0 < t_1 < \cdots < t_N \leq 1\, ,
				\]
				for some $N\in \Z_+$. The tangent space of $\varOmega^{\mathrm{pw}}M_K$ at $\gamma$ has the following splitting
				\[
					T_\gamma \varOmega^{\mathrm{pw}}M_K = T_\gamma BM_K \oplus T'\, ,
				\]
				by \cite[Lemma 15.3, 15.4]{milnor1963morse}. Here $T_\gamma BM_K$ is the space of \emph{broken Jacobi fields} vanishing at the endpoints, and $T'$ is the space of all vector fields $W$ along $\gamma$ so that $W(t_k) = 0$ for every $k\in \parenm{1,\ldots,N}$ (cf.\@ \cite[Section 15]{milnor1963morse}). Furthermore we write
				\[
					T_\gamma BM_K = T_\gamma \sigma \oplus T^+\, ,
				\]
				where $T_\gamma \sigma$ is the (maximal) subspace of $T_\gamma BM_K$ on which the Hessian $E_{\ast \ast}$ is negative definite, and $T^+ \subset T_\gamma BM_K$ is the subspace on which $E_{\ast \ast}$ is positive semidefinite. We will show that for any non-zero $v\in T_{u_0} \overline{\mathcal M}(a)$, its image $d\ev_{u_0}(v)$ does not lie in $T'$, and that the image $d(r\circ \ev)_{u_0}(v)$ does not lie in $T^+$.
				\begin{description}
					\item[$d\ev_{u_0}$ is transverse to $T'$] Consider any non-zero $v \in T_{u_0} \overline{\mathcal M}(a) = \ker D_{u_0}$, where
					\[
						D_{u_0} \co W^{2,2}_\kappa(D_3, u_0^\ast TW_K) \longrightarrow W^{1,2}_\kappa(D_3, \varLambda^{0,1} \otimes_J u_0^\ast TW_K)
					\]
					is the linearization of $\overline{\dd}_J$ at $u_0$. Here $W^{2,2}_{\kappa}$ is the Sobolev space $W^{2,2}$ with weight $e^{\kappa s}$ for some small $\kappa > 0$ at the positive punctures in the domain $D_3$. The differential of the evaluation map $\ev$ is a trace operator on $W^{2,2}_\kappa(D_3, {u_0}^\ast TW_K)$, so $d\ev_{u_0}(v)$ is a vector field in $W_K$ along $\gamma \subset M_K \subset W_K$. Assume that $v$ is such that $v' \defeq d\ev_{u_0}(v) \in T'$, that is $v'(\gamma(t_k)) = 0$ for every $k\in \parenm{1,\ldots,N}$. Since $d \ev_{u_0}$ is a restriction to $\gamma^\ast TM_K$, we assume that $v$ is so that $v(\gamma(t_k)) = 0$ for every $k\in \parenm{1,\ldots,N}$. We consider the subspace
					\[
						A \defeq \parenm{v \in W^{2,2}_\kappa(D_3, {u_0}^\ast TW_K) \suchthat v(\gamma(t_k)) = 0, \, k\in \parenm{1,\ldots,N}}\, .
					\]
					It is closed and has codimension $N$. The restricted linearized operator $\eval[0]{D_{u_0}}_{A}$ is therefore a Fredholm operator with index
					\[
						\ind \eval[0]{D_{u_0}}_A = \ind D_{u_0} - N\, .
					\]
					If we pick $N$ large enough by making the subdivision of the domain of loops fine enough, the index $\ind \eval[0]{D_{u_0}}_A$ is negative. Hence $\ker D_{u_0} \cap A$ is empty for generic choices of almost complex structures on $W_K$. This means that $\im d\ev_{u_0} \cap T' = \parenm{0}$.
					\item[$d(r\circ \ev)_{u_0}$ is transverse to $T^+$] Next, we show that for any non-zero $v \in \ker D_{u_0}$, its projection
					\[
					 	v'' \defeq d(r \circ \ev)_{u_0}(v) \in T_\gamma BM_K
					\]
					 does not lie in $T^+$. We consider the path $s \longmapsto \exp_{u_0}(s v)$ for $s\in (0,\varepsilon)$ with $\varepsilon$ small enough. Then by the proof of \cref{prp:chain_map_is_diagonal_wrt_filtrations} we have for every $s\in (0,\varepsilon)$ that
					\[
						E((r\circ \ev)(u_0)) > E((r\circ \ev)(\exp_{u_0}(s v)))\, .
					\]
					Repeating the argument in the proof of \cref{prp:chain_map_is_diagonal_wrt_filtrations} gives $I(v'', v'') < 0$, which shows that $v'' = d(r\circ \ev)_{u_0}(\xi)$ does not lie in $T^+$.
				\end{description}
				Therefore $r_\ast \circ \varPsi_1 \co \mathcal F_{[A_{i-1},A_i)}CW^{\ast}_{\varLambda_K}(F,F) \longrightarrow \mathcal F_{[A_{i-1},A_i)}C_{-\ast}^{\mathrm{cell}}(BM_K)$ is an isomorphism.

				The filtrations on $CW^\ast_{\varLambda_K}(F,F)$ and $C_{-\ast}^{\text{cell}}(BM_K)$ are both bounded from below which gives an isomorphism $\mathcal F_{A_i} CW^\ast_{\varLambda_K}(F,F) \cong \mathcal F_{A_i} C_{-\ast}^{\text{cell}}(BM_K)$ for every $i\in \Z$. Thus every square in the following diagram commutes.
				\[
					\begin{tikzcd}[row sep=scriptsize, column sep=tiny]
						\cdots \rar[description, phantom]{\subset} & \mathcal F_{A_i} CW^\ast_{\varLambda_K}(F,F) \dar{r_\ast \circ \varPsi_1}[swap]{\cong} \rar[description, phantom]{\subset} & \mathcal F_{a_{i+1}} CW^\ast_{\varLambda_K}(F,F) \dar{r_\ast \circ \varPsi_1}[swap]{\cong} \rar[description, phantom]{\subset} & \cdots \\
						\cdots \rar[description, phantom]{\subset} & \mathcal F_{A_i} C_{-\ast}^{\text{cell}}(BM_K) \rar[description, phantom]{\subset} & \mathcal F_{a_{i+1}} C_{-\ast}^{\text{cell}}(BM_K) \rar[description, phantom]{\subset} & \cdots
					\end{tikzcd}
				\]
				We then pass to colimits to obtain the isomorphism
				\[
					CW^\ast_{\varLambda_K}(F,F) \cong C_{-\ast}^{\text{cell}}(BM_K)\, .
				\]
			\end{proof}
			\begin{proof}[Proof of \cref{thm:quiso}.]
				\cref{thm:quiso} is now an immediate corollary of \cref{thm:isomorphism_cw_and_cell_bmk}, because there is a chain homotopy equivalence $C_{-\ast}^{\text{cell}}(BM_K) \simeq C_{-\ast}(\varOmega_\xi M_K)$. So in particular we have $H_{-\ast}(\varOmega_\xi M_K) \cong H^{\mathrm{cell}}_{-\ast}(BM_K)$ via $s\circ r_\ast$ defined in \eqref{eq:iso_cell_sing} and \eqref{eq:r_deformation_retract_quiso}. Hence
				\[
					\varPsi_1 \co HW^\ast_{\varLambda_K}(F,F) \longrightarrow H_{-\ast}(\varOmega_\xi M_K)\, ,
				\]
				is an isomorphism.
			\end{proof}
\section{Applications}
\label{sec:applications}
	The first goal of this section is to equip $HW^\ast_{\varLambda_K}(F,F)$ and $H_{-\ast}(\varOmega_\xi M_K)$ with the structure of $\Z[\pi_1(M_K)]$-modules. The second goal is then to consider the case when $S = S^n$ and exhibit examples of codimension 2 knots $K \subset S^n$ where the Alexander invariant is related to $CW^\ast_{\varLambda_K}(F,F)$ as $\Z[\pi_1(M_K)]$-modules. From this we draw the conclusion that the unit conormal of $K$ knows about the smooth topology of $K$ beyond the fundamental group.

	After we have discussed the $\Z[\pi_1(M_K)]$-module structures in \cref{sub:Zpi_module_structures}, we will provide background material surrounding the Alexander invariant in \cref{sub:plumbings_and_infinite_cyclic_covers,sub:the_alexander_invariant}. Then, in \cref{sub:using_the_leray--serre_spectral_sequence} we use the Leray--Serre spectral sequence associated with the path-loop fibration to relate the Alexander invariant to $CW^\ast_{\varLambda_K}(F,F)$ as a $\Z[\pi_1(M_K)]$-module.
	\subsection{$\Z[\pi_1(M_K)]$-module structures on $HW^{\ast}_{\varLambda_K}(F,F)$ and $H_{-\ast}(\varOmega M_K)$}
		\label{sub:Zpi_module_structures}
		Consider any homotopy class $[\gamma] \in \pi_1(M_K)$ represented by the unique minimizing geodesic $\gamma$ in the given homotopy class. Via the cell structure of $BM_K$, we associate to $\gamma$ a generator $\sigma_\gamma \in H_{-\ast}(\varOmega M_K)$. Then consider the map
		\begin{align*}
			\pi_1(M_K) \times H_{-\ast}(\varOmega M_K) &\longrightarrow H_{-\ast}(\varOmega M_K) \\
			(\gamma, \sigma) &\longmapsto \gamma \sigma \defeq (-1)^{\sigma_\gamma}P(\sigma \otimes \sigma_\gamma)\, ,
		\end{align*}
		where $P$ denotes the Pontryagin product as in \eqref{eq:def_pontryagin_product}.
		\begin{lma}\label{lma:grp_action_on_cohomology_of_loops}
			The map $(\gamma, \sigma) \longmapsto \gamma \sigma$ defines a group action of $\pi_1(M_K)$ on $H_{-\ast}(\varOmega M_K)$.
		\end{lma}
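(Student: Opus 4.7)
The plan is to verify the three defining axioms of a (left) group action on $H_{-\ast}(\varOmega_\xi M_K)$: (i) the class $\sigma_\gamma$ depends only on $[\gamma] \in \pi_1(M_K)$, (ii) the identity element acts as the identity, and (iii) $(\gamma_1 \gamma_2)\cdot \sigma = \gamma_1 \cdot(\gamma_2 \cdot \sigma)$. The key observation I would exploit is that a minimizing geodesic has Morse index $0$ for the energy functional \eqref{eq:energy_functional}, so $\sigma_\gamma$ is a $0$-dimensional cell in $C_{-\ast}^{\mathrm{cell}}(BM_K)$; in particular $\abs{\sigma_\gamma}=0$ and the sign $(-1)^{\abs{\sigma_\gamma}}$ in the definition is trivial. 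Unwinding \eqref{eq:def_pontryagin_product} then shows that on homology the action simply becomes pre-concatenation, $\gamma \cdot [\sigma] = [\sigma_\gamma \circ \sigma]$.

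For axiom (i), I would invoke the standard identification $\pi_0(\varOmega_\xi M_K) \cong \pi_1(M_K)$: each based homotopy class singles out a path-component of $\varOmega_\xi M_K$, and for a generic Riemannian metric of the form \eqref{eq:metric_definition} each such component contains a unique minimizing geodesic, so $\sigma_\gamma$ depends only on $[\gamma]$. For axiom (ii), the identity element of $\pi_1(M_K)$ is represented by the constant loop at $\xi$, which is a Moore loop of length $0$ and Morse index $0$; the associated $0$-cell $\sigma_e$ is a strict unit for Moore-loop concatenation, and hence for the Pontryagin product, so $e \cdot \sigma = \sigma$ already on chains.

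The substantive step is axiom (iii). After unwinding the definition and using $\abs{\sigma_{\gamma_i}}=0$, the left-hand side is represented on homology by $[\sigma_{\gamma_1 \gamma_2} \circ \sigma]$ and the right-hand side by $[\sigma_{\gamma_1} \circ (\sigma_{\gamma_2} \circ \sigma)]$. Since Moore-loop concatenation is strictly associative, this reduces to the equality $[\sigma_{\gamma_1 \gamma_2}] = [\sigma_{\gamma_1} \circ \sigma_{\gamma_2}]$ in $H_0$ of the path-component of $\varOmega_\xi M_K$ indexed by $\gamma_1 \gamma_2 \in \pi_0(\varOmega_\xi M_K)$. Both $0$-cycles lie in that component, and $H_0$ of a path-connected space is $\Z$ generated by any point, so the equality is automatic.

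The main obstacle is almost purely bookkeeping: ensuring that the sign $(-1)^{\abs{\sigma_\gamma}}$ in the definition and the sign twist in \eqref{eq:def_pontryagin_product} cancel correctly. Since every $\sigma_\gamma$ sits in degree zero, all of these signs trivialize and the verification is immediate. Conceptually, the content of the lemma is the classical fact that $H_{-\ast}(\varOmega_\xi M_K)$ is a unital associative algebra under the Pontryagin product and that the map $[\gamma] \longmapsto [\sigma_\gamma]$ extends to a ring isomorphism $\Z[\pi_1(M_K)] \cong H_0(\varOmega_\xi M_K)$; the $\Z[\pi_1(M_K)]$-module structure on $H_{-\ast}(\varOmega_\xi M_K)$ is then the restriction of the Pontryagin product to $H_0 \otimes H_{-\ast}$.
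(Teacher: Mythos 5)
Your proof is correct and takes essentially the same route as the paper: both reduce the action axioms to associativity (and unitality) of the Pontryagin product on homology. You are somewhat more careful than the paper, which simply \emph{defines} $\sigma_{\gamma_1\gamma_2}\defeq\sigma_{\gamma_1}\circ\sigma_{\gamma_2}$ and invokes associativity of $P$ up to sign, whereas you justify that identification by noting both $0$-cycles lie in the same path component of $\varOmega_\xi M_K$, observe that all signs trivialize since $\abs{\sigma_\gamma}=0$, and verify well-definedness and the identity axiom, which the paper leaves implicit.
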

		\begin{proof}
			Let $[\gamma_1], [\gamma_2]\in \pi_1(M_K)$. As above, we assign to $\gamma_1$ and $\gamma_2$ the cohomology classes $\sigma_{\gamma_1}, \sigma_{\gamma_2} \in H_{-\ast}(\varOmega M_K)$. Assign to the composition $\gamma_1 \gamma_2$ the cohomology class
			\[
				\sigma_{\gamma_1 \gamma_2} \defeq \sigma_{\gamma_1} \circ \sigma_{\gamma_2} = (-1)^{\abs{\sigma_{\gamma_1}}} P(\sigma_{\gamma_2} \otimes \sigma_{\gamma_1}) \in H_{-\ast}(\varOmega M_K)\, .
			\]
			Since $P$ is associative up to a sign in cohomology we have
			\begin{align*}
				\gamma_1(\gamma_2 \sigma) &= P(P(\sigma \otimes \sigma_{\gamma_2}) \otimes \sigma_{\gamma_1}) = (-1)^{\abs{\sigma_{\gamma_1}}}P(\sigma\otimes P(\sigma_{\gamma_2} \otimes \sigma_{\gamma_1})) \\
				&= (-1)^{\abs{\sigma_{\gamma_1}}+\abs{\sigma_{\gamma_2}}} P(\sigma \otimes \sigma_{\gamma_1 \gamma_2}) = (\gamma_1 \gamma_2) \sigma \, .
			\end{align*}
		\end{proof}
		By linearity we extend the action to a $\Z[\pi_1(M_K)]$-module structure on $H_{-\ast}(\varOmega M_K)$. 

		Consider a generator $a_\gamma \in HW^\ast_{\varLambda_K}(F,F)$, and denote by $\gamma$ the geodesic that $a_\gamma$ corresponds to. Via $\gamma$, we let $\sigma_\gamma \in H_{-\ast}(\varOmega M_K)$ be the cohomology class corresponding to $a_\gamma \in HW^\ast_{\varLambda_K}(F,F)$. Then define
		\begin{align*}
			\pi_1(M_K) \times HW^\ast_{\varLambda_K}(F,F) &\longrightarrow HW^\ast_{\varLambda_K}(F,F) \\
			(\gamma,a) &\longmapsto \gamma a\defeq (-1)^{\abs{a_\gamma}}\mu^2(a \otimes a_\gamma)\, .
		\end{align*}
		\begin{lma}\label{lma:grp_action_on_wrapped_floer}
			The map $(\gamma,a) \longmapsto \gamma a$ defines a group action of $\pi_1(M_K)$ on $HW^\ast_{\varLambda_K}(F,F)$.
		\end{lma}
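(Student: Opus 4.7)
The plan is to mirror the argument of \cref{lma:grp_action_on_cohomology_of_loops}, with $\mu^2$ on $HW^\ast_{\varLambda_K}(F,F)$ playing the role of the Pontryagin product $P$ on $H_{-\ast}(\varOmega M_K)$. First I would check that $a_\gamma$ is a well-defined cohomology class: the minimizing geodesic in the homotopy class $[\gamma]$ is unique (generically) by the genericity of the metric in \eqref{eq:metric_definition}, and \cref{lma:1-1-correspondence_geodeiscs_reeb_of_index_k} then produces a canonical Reeb chord generator $a_\gamma \in CW^\ast_{\varLambda_K}(F,F)$.

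Next I would verify the two group axioms. For the identity, the constant loop at $\xi$ represents the trivial class $e \in \pi_1(M_K)$, and the associated generator is the Lagrangian intersection point $\xi \in CW^\ast_{\varLambda_K}(F,F)$. Since $\xi$ is a cohomological unit for $\mu^2$ (this is standard for the Lagrangian intersection generator associated with the minimum of the Morse function on the fiber, and can also be read off from \cref{thm:isomorphism_cw_and_cell_bmk} using the fact that the constant loop is the unit for $P$), we conclude $e \cdot a = a$. For composition, the key input is associativity of $\mu^2$ on cohomology: this follows from the $A_\infty$-relation with $d = 3$, which expresses the associator $\mu^2(\mu^2(c,b),a) \pm \mu^2(c,\mu^2(b,a))$ as a $\mu^1$-coboundary (modulo terms involving $\mu^3$ composed with $\mu^1$ that vanish on cocycles modulo coboundaries).

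The remaining point is to identify $a_{\gamma_1\gamma_2}$ with $(-1)^{\star}\mu^2(a_{\gamma_2} \otimes a_{\gamma_1})$ on cohomology, for the appropriate sign $\star$. The most efficient route is to transport the statement through \cref{thm:isomorphism_cw_and_cell_bmk}: $\varPsi_1$ is an $A_\infty$-algebra isomorphism intertwining $\mu^2$ with $P$, and it is diagonal with respect to the action/length filtrations by \cref{prp:chain_map_is_diagonal_wrt_filtrations}, so $\varPsi_1(a_\gamma) = \sigma_\gamma$ up to a filtration-preserving unit. The identity $\sigma_{\gamma_1\gamma_2} = \sigma_{\gamma_1} \circ \sigma_{\gamma_2}$ established in the proof of \cref{lma:grp_action_on_cohomology_of_loops} therefore pulls back to the required identity on the $HW$-side, and combining with associativity of $\mu^2$ one obtains $\gamma_1(\gamma_2 a) = (\gamma_1\gamma_2)a$. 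The main obstacle, such as it is, consists entirely of sign bookkeeping; there is no new analytic or geometric content beyond the structures already constructed.
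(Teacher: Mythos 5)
Your proposal is correct and follows essentially the same route as the paper: associativity of $\mu^2$ on cohomology via the $A_\infty$-relations, transport of the identity $\sigma_{\gamma_1\gamma_2} = \sigma_{\gamma_1}\circ\sigma_{\gamma_2}$ through the $A_\infty$-homomorphism $\varPsi$ (equation \eqref{eq:pontryagin_product_giving_group_action}) to define $a_{\gamma_1\gamma_2}$ as $\pm\mu^2(a_{\gamma_2}\otimes a_{\gamma_1})$, and the identification of the constant loop with the unique Lagrangian intersection generator acting as a unit for $\mu^2$. No substantive differences from the paper's argument.
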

		\begin{proof}
			Let $\sigma_{\gamma_{1}}, \sigma_{\gamma_{2}}$ and $\sigma_{\gamma_{1}\gamma_{2}}$ be as in the proof of \cref{lma:grp_action_on_cohomology_of_loops} above. Let $a\in HW^\ast_{\varLambda_K}(F,F)$ be any generator. Because $\mu^2$ is associative up to a sign in cohomology we have
			\begin{equation}\label{eq:first_part_of_group_action_on_HW}
				\gamma_1(\gamma_2 a) = (-1)^{\abs{a_{\gamma_1}} + \abs{a_{\gamma_2}}}\mu^2(\mu^2(a \otimes a_{\gamma_2}) \otimes a_{\gamma_1}) = (-1)^{\abs{a_{\gamma_2}}} \mu^2(a \otimes \mu^2(a_{\gamma_2} \otimes a_{\gamma_1}))\, .
			\end{equation}
			Because $\parenm{\varPsi_m}_{k=1}^\infty$ is an $A_{\infty}$-homomorphism, we glue the two disks contributing to $\varPsi_1(a_{\gamma_1}) = \sigma_{\gamma_1}$ and $\varPsi_1(a_{\gamma_2}) = \sigma_{\gamma_2}$ to obtain
			\begin{equation}\label{eq:pontryagin_product_giving_group_action}
				P(\varPsi_1(a_{\gamma_2}) \otimes \varPsi_1(a_{\gamma_1})) = \varPsi_1(\mu^2(a_{\gamma_2} \otimes a_{\gamma_1})) \, .
			\end{equation}
			Hence there exists a $J$-holomorphic disk in the symplectization of $\dd W_K$ with two positive punctures $a_{\gamma_1}$ and $a_{\gamma_2}$. Define $a_{\gamma_1 \gamma_2} \defeq (-1)^{\abs{\sigma_{\gamma_1}}}\mu^2(a_{\gamma_2}, a_{\gamma_1})$. Then \eqref{eq:pontryagin_product_giving_group_action} says that
			\[
				\varPsi_1(a_{\gamma_1 \gamma_2}) = (-1)^{\abs{\sigma_{\gamma_1}}}P(\sigma_{\gamma_2} \otimes \sigma_{\gamma_1}) = \sigma_{\gamma_1 \gamma_2}\, .
			\]
			Thus $a_{\gamma_1 \gamma_2} \in HW^\ast_{\varLambda_K}(F,F)$ is the generator corresponding to the concatenation $[\gamma_1 \gamma_2] \in \pi_1(M_K)$. Combining this with \eqref{eq:first_part_of_group_action_on_HW} gives
			\begin{align*}
				\gamma_1(\gamma_2 a) &= (-1)^{\abs{a_{\gamma_1}} + \abs{a_{\gamma_2}}}\mu^2(\mu^2(a \otimes a_{\gamma_2}) \otimes a_{\gamma_1}) \\
				&= (-1)^{\abs{a_{\gamma_2}}} \mu^2(a \otimes \mu^2(a_{\gamma_2} \otimes a_{\gamma_1})) = (-1)^{\abs{a_{\gamma_1 \gamma_2}}} \mu^2(a \otimes a_{\gamma_1 \gamma_2}) = (\gamma_1 \gamma_2)a
			\end{align*}
			Lastly, we need to prove that if $\gamma_{\text{const}} \in \pi_1(M_K)$ is the constant loop, then $\mu^2(a \otimes a_{\gamma_{\text{const}}}) = a$. This follows from the definition of $\varPsi_1$. The generator of $HW^\ast_{\varLambda_K}(F,F)$ which corresponds to the (0-chain of) the constant loop is the unique Lagrangian intersection generator in the compact part of $W_K$ which corresponds to the unique intersection point of $F_i$ with $F_j$, call it $x\in HW^\ast_{\varLambda_K}(F,F)$. Therefore
			\[
				\gamma_{\text{const}}a = \mu^2(a \otimes x) = a\, .
			\]
		\end{proof}
		By linearity we extend the action to a $\Z[\pi_1(M_K)]$-module structure on $HW^\ast_{\varLambda_K}(F,F)$.
		\begin{thm}[\cref{thm:theorem_A}]\label{thm:extending_Y1_to_Zpi_module_iso}
			The isomorphism
			\[
				\varPsi_1 \co HW^\ast_{\varLambda_K}(F,F) \longrightarrow H_{-\ast}(\varOmega M_K)\, ,
			\]
			is an isomorphism of $\Z[\pi_1(M_K)]$-modules.
		\end{thm}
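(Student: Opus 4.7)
The isomorphism part is already delivered by \cref{thm:quiso}, so the only content left is to check that $\varPsi_1$ intertwines the two $\Z[\pi_1(M_K)]$-actions defined in \cref{lma:grp_action_on_wrapped_floer} and \cref{lma:grp_action_on_cohomology_of_loops}. The strategy is to extract this equivariance from the $A_\infty$-homomorphism property of $\varPsi = \parenm{\varPsi_m}_{m=1}^\infty$ proved in \cref{lma:varpsi_a_infty_morphism}, applied at level $m=2$.

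Unpacking the $A_\infty$-homomorphism equation for $m=2$ on two inputs $a_2 \otimes a_1$ yields
\[
	\dd \varPsi_2(a_2, a_1) + P(\varPsi_1(a_2) \otimes \varPsi_1(a_1)) = \varPsi_1(\mu^2(a_2, a_1)) + \varPsi_2(\mu^1 a_2, a_1) + (-1)^{\maltese_1}\varPsi_2(a_2, \mu^1 a_1).
\]
When $a_1$ and $a_2$ are $\mu^1$-closed representatives of cohomology classes, the two $\varPsi_2$-terms on the right and the $\dd \varPsi_2$-term on the left drop out in cohomology. Hence $\varPsi_1$ descends to an algebra map intertwining the induced products $[\mu^2]$ and $[P]$ on cohomology (up to sign conventions built into the definitions in \cref{sub:Zpi_module_structures}).

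Now fix $\gamma \in \pi_1(M_K)$ and let $a_\gamma \in HW^\ast_{\varLambda_K}(F,F)$ be the generator corresponding to the unique minimizing geodesic representing $\gamma$, via the bijection of \cref{lma:1-1-correspondence_geodeiscs_reeb_of_index_k}. The isomorphism constructed in \cref{thm:isomorphism_cw_and_cell_bmk} identifies $\varPsi_1(a_\gamma)$ with the unstable-manifold cell $\sigma_\gamma \in C^{\mathrm{cell}}_{-\ast}(BM_K)$ of that same geodesic; this is precisely the cohomology class used to define the $\pi_1$-action on $H_{-\ast}(\varOmega M_K)$. For any class $a \in HW^\ast_{\varLambda_K}(F,F)$ we therefore compute, modulo exact terms,
\[
	\varPsi_1(\gamma \cdot a) = (-1)^{\abs{a_\gamma}} \varPsi_1(\mu^2(a \otimes a_\gamma)) = (-1)^{\abs{a_\gamma}} P(\varPsi_1(a) \otimes \sigma_\gamma) = (-1)^{\abs{\sigma_\gamma}} P(\varPsi_1(a) \otimes \sigma_\gamma) = \gamma \cdot \varPsi_1(a),
\]
using that $\varPsi_1$ preserves grading so $\abs{a_\gamma} = \abs{\sigma_\gamma}$. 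Extending $\Z$-linearly gives $\Z[\pi_1(M_K)]$-equivariance, and combining with \cref{thm:quiso} gives the $\Z[\pi_1(M_K)]$-module isomorphism.

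The only point requiring any care, rather than being a direct consequence of the $A_\infty$-relation, is the identification $\varPsi_1(a_\gamma) = \sigma_\gamma$ in cohomology, but this is exactly what is produced by the action–length matching of \cref{prp:chain_map_is_diagonal_wrt_filtrations} together with the stratum-by-stratum isomorphism in the proof of \cref{thm:isomorphism_cw_and_cell_bmk}: both sides correspond to the single critical point of the energy functional on $BM_K$ representing the class $\gamma$.
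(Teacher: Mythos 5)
Your proposal is correct and follows essentially the same route as the paper: the paper's proof also reduces the statement to the identity $\varPsi_1(\mu^2(a\otimes a_\gamma)) = P(\varPsi_1(a)\otimes\varPsi_1(a_\gamma))$, which it extracts from the $A_\infty$-homomorphism property of $\parenm{\varPsi_m}$ (equation \eqref{eq:pontryagin_product_giving_group_action}), combined with the identification $\varPsi_1(a_\gamma)=\sigma_\gamma$ built into the definitions of the two module structures. Your extra remark isolating that identification as the one point needing care is a fair observation but does not change the argument.
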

		\begin{proof}
			Let $a\in HW^\ast_{\varLambda_K}(F,F)$ be a generator and let $[\gamma]\in \pi_1(M_K)$ be a homotopy class represented by a unique minimizing geodesic $\gamma$. Then consider a generator $a_\gamma\in HW^\ast_{\varLambda_K}(F,F)$ so that $\varPsi_1(a_\gamma) = \sigma_\gamma$, where $\sigma_\gamma \in H_{-\ast}(\varOmega M_K)$ is the cohomology class corresponding to $\gamma$. Then we have
			\begin{align*}
				\varPsi_1(\gamma a) &= (-1)^{\abs{a_\gamma}}\varPsi_1(\mu^2(a \otimes a_\gamma)) \overset{\eqref{eq:pontryagin_product_giving_group_action}}{=} (-1)^{\abs{a_\gamma}}P(\varPsi_1(a) \otimes \varPsi_1(a_\gamma)) \\
				&= (-1)^{\abs{\sigma_\gamma}}P(\varPsi_1(a)\otimes \sigma_\gamma) = \gamma \varPsi_1(a)\, .
			\end{align*}
		\end{proof}
		\begin{rmk}
			Note that $\Z[\pi_1(M_K)] \cong H_0(\varOmega M_K)$, and consider $C_0^{\text{cell}}(BM_K)$ as an $A_\infty$-algebra with operations $\parenm{m_i}_{i=1}^\infty$ where $m_1 = 0$, $m_2 = P$ is the Pontryagin product, and $m_i = 0$ for $i \geq 3$. We observe that $CW^\ast_{\varLambda_K}(F,F)$ can be equipped with the structure of a left $A_3$-module over $C^{\text{cell}}_0(BM_K)$. More precisely we define this left $A_3$-module structure as a sequence of maps
			\[
				\nu^r_{CW} \co (C^{\text{cell}}_0(BM_K))^{\otimes(r-1)} \otimes CW^\ast_{\varLambda_K}(F,F) \longrightarrow CW^\ast_{\varLambda_K}(F,F)\, ,
			\]
			defined by
			\[
				\begin{cases}
					\nu^1_{CW}(c) \defeq \mu^1(c) \\
					\nu^2_{CW}(x \otimes c) \defeq \mu^2(a_x \otimes c) \\
					\nu^3_{CW}(x_2 \otimes x_1 \otimes c) \defeq \mu^3(a_{x_2} \otimes a_{x_1} \otimes c) \\
					\nu^k_{CW}(x_{k-1} \otimes \cdots \otimes x_1 \otimes c) \defeq 0, & k \geq 4\, ,
				\end{cases}
			\]
			where $a_{x_i}$ is the unique Reeb chord corresponding to $x_i$ via $\varPsi_1$. Then by computation we have that $\parenm{\nu^r_{CW}}_{r=1}^\infty$ satisfies the following equation for $n \in \parenm{1,2,3}$.
			\begin{align}\label{eq:a_infinity_module_structure}
				&\sum_{i=0}^{n-1} m_{n-i}(x_{n-1}\otimes \cdots \otimes x_{i+1} \otimes \nu^{i+1}_{CW}(x_{i}\otimes \cdots \otimes x_{1} \otimes c)) \\
				&+  \sum_{\substack{\ell+k < r \\ \ell \geq 1 \\ k\geq 0}} (-1)^{\abs c + \maltese_k}\nu^{r- \ell+1}_{CW}(x_{r-1}\otimes \cdots \otimes x_{k+ \ell+1} \otimes \mu^\ell(x_{k+\ell}\otimes \cdots \otimes x_{k+1}) \otimes x_k \otimes \cdots \otimes x_1 \otimes c) = 0 \nonumber
			\end{align}
			Note that this means that there is a group action up to homotopy of $C_0^{\text{cell}}(BM_K)$ on $CW^\ast_{\varLambda_K}(F,F)$, but there are no higher coherent homotopies. However, this is enough to directly obtain \cref{lma:grp_action_on_cohomology_of_loops} and \cref{lma:grp_action_on_wrapped_floer}.

			Since $C^{\text{cell}}_{-\ast}(BM_K)$ is an $A_\infty$-algebra, it can be regarded as a left $A_\infty$-module over itself, and therefore also as a left $A_\infty$-module over $C^{\text{cell}}_0(BM_K)$ via the sequence of maps
			\[
				\nu^r_{\text{cell}}\co (C^{\text{cell}}_0(BM_K))^{\otimes (r-1)} \otimes C^{\text{cell}}_{-\ast}(BM_K) \longrightarrow C^{\text{cell}}_{-\ast}(BM_K)\, ,
			\]
			defined by
			\[
				\begin{cases}
					\nu^1_{\text{cell}}(y) \defeq m_1(y) \\
					\nu^2_{\text{cell}}(x \otimes y) \defeq P(x \otimes y) \\
					\nu^k_{\text{cell}}(x_{k-1} \otimes \cdots \otimes x_1 \otimes y) \defeq 0, & k\geq 3\, .
				\end{cases}
			\]
			By a computation we see that $\parenm{\nu^r_{\text{cell}}}_{r=1}^\infty$ satisfies \eqref{eq:a_infinity_module_structure} for every $n \in \Z_+$. For $n\geq 4$ the equation is trivial.

			Furthermore we have that the $A_\infty$-homomorphism $\parenm{\varPsi_k}_{k=1}^\infty$ induces an isomorphism of $A_3$-modules over $C^{\text{cell}}_0(BM_K)$ as follows. The isomorphism of $A_3$-modules over $C^{\text{cell}}_0(BM_K)$ is a sequence of maps
			\[
				\psi_r \co (C^{\text{cell}}_0(BM_K))^{\otimes(r-1)} \otimes CW^\ast_{\varLambda_K}(F,F) \longrightarrow C^{\text{cell}}_{-\ast}(BM_K)
			\]
			defined by
			\[
				\psi_r(x_{r-1} \otimes \cdots \otimes x_1 \otimes c) \defeq \varPsi_r(a_{x_{r-1}} \otimes \cdots \otimes a_{x_{1}} \otimes c)\, ,
			\]
			where $a_{x_i}$ is the unique generator corresponding to $x_i$ via $\varPsi_1$. Then by computation we have that $\parenm{\psi_r}_{r=1}^\infty$ satisfies the following equation for $n\in \parenm{1,2,3}$.
			\begin{align*}
				&\sum_{i=0}^{r-1} \nu^{r-i}_{\text{cell}}(x_{r-1}\otimes \cdots \otimes x_{i+1} \otimes \psi_{i+1}(x_{i}\otimes \cdots \otimes x_{1} \otimes c))\\
				&= \sum_{i=0}^{r-1} \psi_{r-i}(x_{r-1}\otimes \cdots \otimes x_{i+1} \otimes \nu^{i+1}_{CW}(x_{i}\otimes \cdots \otimes x_{1} \otimes c)) \\
				&+ \sum_{\substack{s+t+k = r \\ t,k\geq 1 \\ s\geq 0}} (-1)^{\abs c + \maltese_k} \psi_{r-\ell+1}(x_{r-1} \otimes \cdots \otimes x_{k+\ell+1} \otimes m_\ell(x_{k+\ell} \otimes \cdots \otimes x_{k+1}) \otimes x_k \otimes \cdots \otimes x_1 \otimes c) = 0 \, ,
			\end{align*}
			The fact that this is an $A_3$-module isomorphism directly implies \cref{thm:extending_Y1_to_Zpi_module_iso}.
		\end{rmk}
	\subsection{Plumbings and infinite cyclic covers}
		\label{sub:plumbings_and_infinite_cyclic_covers}
		In this section we review standard background material from \cite{rolfsen1976knots}.
		
		Let $p,q \geq 2$ and $n = p+q+1$. We consider the plumbing of $S^p$ with $S^q$. That is, consider $S^p \times D^q$ and $S^q \times D^p$. By identifying $D^p$ with the upper hemisphere of $S^p$, we have 
		\begin{align*}
			D^p \times D^q \subset S^p \times D^q \\
			D^q \times D^p \subset S^q \times D^p\, .
		\end{align*}
		We then take the disjoint union of $S^p \times D^q$ with $S^q \times D^p$ and identify their common submanifolds $D^p \times D^q \cong D^q \times D^p$ via $f\co (x,y) \longmapsto (y,x)$. We call the resulting space the plumbing of $S^p$ and $S^q$, denoted by $S^p \#_{\text{plumb}} S^q$. In short we write
		\[
			\varSigma = S^p \#_{\text{plumb}} S^q \defeq (S^p \times D^q) \sqcup_{f} (S^q \times D^p)\, .
		\]
		We note that $S^p \vee S^q$ is the deformation retract of $\varSigma$. Let $K \defeq \dd \varSigma$ and note that it is a $(p+q-1)$-dimensional sphere. Embed $\varSigma$ into $S^n$ and consider the complement of its boundary $M_K \defeq S^n \setmin K$; denote its infinite cyclic cover by $\widetilde M_K$. 

		Following \cite[Section 5.C]{rolfsen1976knots} we find the simplicial structure of $\widetilde M_K$ by cutting along $\varSigma$. More precisely, let $\varSigma^{\pm} \cong \overset{\circ}{\varSigma} \times (-1,1)$ be an open bicollar of the interior of $\varSigma$, and let
		\begin{align*}
			\varSigma^+ &\defeq \overset{\circ}{\varSigma} \times (0,1) \subset S^n \\
			\varSigma^- &\defeq \overset{\circ}{\varSigma} \times (-1,0) \subset S^n \\
			M_{\varSigma} &\defeq S^n \setmin \varSigma\, .
		\end{align*}
		Consider infinitely many copies of each of $\varSigma^+$, $\varSigma^-$ and $M_{\varSigma}$. Denote the copies by $M_{\varSigma ;i}$, $\varSigma^+_i$ and  $\varSigma^-_i$ for $i\in \Z$. Then consider the disjoint union of all the $M_{\varSigma; i}$ and glue them together by identifying $\varSigma_i^+ \subset M_{\varSigma; i}$ with $\varSigma_{i+1}^- \subset M_{\varSigma;i+1}$ via the map
		\begin{align*}
			\varSigma_i^+ = \overset{\circ}{\varSigma} \times (0,1) &\longrightarrow \overset{\circ}{\varSigma} \times (-1,0) = \varSigma_{i+1}^- \\
			(\sigma,t) &\longmapsto (\sigma,t-1)\, .
		\end{align*}
		Define
		\begin{equation}\label{eq:construction_of_infinite_cyclic_cover}
			\widetilde M_K \defeq \coprod_{i=-\infty}^\infty M_{\varSigma;i}/(\varSigma^+_i \sim \varSigma^-_{i+1})\, .
		\end{equation}
		\begin{figure}[H]
			\centering
			\includegraphics{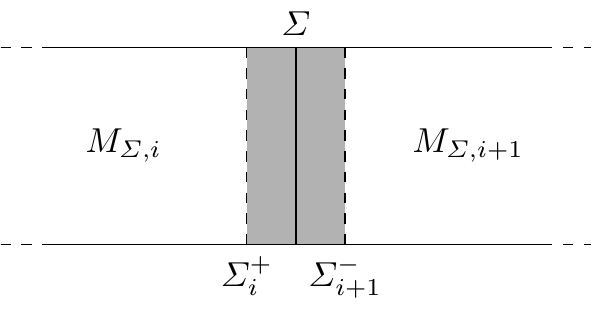}
		\end{figure}
	\subsection{The Alexander invariant}\label{sub:the_alexander_invariant}
		In this section we review standard material on the Alexander invariant from \cite{rolfsen1976knots}.

		Associated to the open cover $\mathcal U = \paren{M_{\varSigma;i}}_{i=-\infty}^\infty$ of $\widetilde M_K$ is the sequence of inclusions
		\[
			\begin{tikzcd}[row sep=scriptsize, column sep=scriptsize]
				\coprod_{i=-\infty}^\infty M_{\varSigma;i} \cap M_{\varSigma; i+1} \rar[shift left]{\iota_{i+1}} \rar[shift right, swap]{\iota_{i}} & \coprod_{i=-\infty}^\infty M_{\varSigma;i} \rar{\kappa} & \widetilde M_K
			\end{tikzcd}
		\]
		from which we get a short exact sequence in singular chains (cf.\@ \cite[Section 8]{bott2013differential})
		\begin{equation}\label{eq:mayer-vietoris_applied_to_infinite_cyclic_cover}
			\begin{tikzcd}[row sep=scriptsize, column sep=scriptsize]
				0 \rar & C_{\ast} \paren{\coprod_{i=-\infty}^\infty M_{\varSigma;i} \cap M_{\varSigma;i+1}} \rar{\alpha_\ast} & C_{\ast}(\coprod_{i=-\infty}^\infty M_{\varSigma;i}) \rar{\beta_\ast} & C_{\ast}(\widetilde M_K) \rar & 0
			\end{tikzcd}\, .
		\end{equation}
		Let $x = (x_i)_{i\in \Z} \in C_{\ast} \paren{\coprod_{i=-\infty}^\infty M_{\varSigma;i} \cap M_{\varSigma;i+1}}$. Then
		\[
			\alpha_\ast x = \paren{(\iota_{i})_\ast(x_i) - (\iota_i)_\ast(x_{i-1})}_{i\in \Z}\, ,
		\]
		and for any $y = (y_i)_{i\in \Z} \in C_{\ast}(\coprod_{i=-\infty}^\infty M_{\varSigma;i})$ we have
		\[
			\beta_\ast y = \sum_{i=-\infty}^\infty \kappa_\ast(y_i)\, .
		\]
		Since
		\[
			M_{\varSigma;i} \cap M_{\varSigma;i+1} = \varSigma_i^+ \simeq S^p \vee S^q\, ,
		\]
		the short exact sequence \eqref{eq:mayer-vietoris_applied_to_infinite_cyclic_cover} induces a long exact sequence in homology
		\[
			\begin{tikzcd}[column sep=scriptsize]
				\cdots \rar & \bigoplus_{i=-\infty}^\infty H_{j}(S^p \vee S^q) \rar{\alpha_\ast} \ar[draw=none]{d}[name=X, anchor=center]{} & \bigoplus_{i=-\infty}^\infty H_{j}(M_{\varSigma;i}) \rar & H_{j}(\widetilde M_K)
				\ar[rounded corners,
					to path={ -- ([xshift=2ex]\tikztostart.east)
					|- (X.center) \tikztonodes
					-| ([xshift=-2ex]\tikztotarget.west)
					-- (\tikztotarget)}]{dll}[at end]{} & {} \\
				{}& \bigoplus_{i= -\infty}^\infty H_{j-1}(S^p \vee S^q) \rar{\alpha_\ast} & \bigoplus_{i=-\infty}^\infty H_{j-1}(M_{\varSigma;i}) \rar & \cdots
			\end{tikzcd}\, .
		\]
		We have
		\[
			\widetilde H_j(S^p \vee S^q) \cong \widetilde H_j(S^p) \oplus \widetilde H_j(S^q)\, ,
		\]
		where $\widetilde H$ denotes reduced homology. Since $M_{\varSigma} = S^n \setmin \varSigma$, Alexander duality gives that
		\[
			\widetilde H_j(M_{\varSigma;i}) \cong \widetilde H^{n-j-1}(\varSigma) \cong \widetilde H^{n-j-1}(S^p \vee S^q) \cong \widetilde H^{n-j-1}(S^p) \oplus \widetilde H^{n-j-1}(S^q)\, .
		\]
		Since $n = p+q+1$, we get
		\[
			H_j(S^p \vee S^q) \cong H_j(M_{\varSigma;i}) \cong \begin{cases}
				\Z, & j = 0,p,q \\
				0, & \text{otherwise,}
			\end{cases}
		\]
		which means that $H_j(\widetilde M_K) \cong 0$ unless $j \in \parenm{0,p,q}$.

		Since the group of deck transformations of $\widetilde M_K$ is infinite cyclic, we choose a generator $\tau\in \mathrm{Aut}(\widetilde M_K, \pi)$ which induces an automorphism
		\[
			\tau_\ast \co H_\ast(\widetilde M_K) \longrightarrow H_\ast(\widetilde M_K)\, .
		\]
		This gives a $\Z[t^{\pm 1}]$-module structure on $H_\ast(\widetilde M_K)$ as follows. Let $p(t) = \sum_{i=-s}^r c_i t^i \in \Z[t^{\pm 1}]$, then for any $\alpha\in H_\ast(\widetilde M_K)$ let
		\[
			p(t)\alpha = \sum_{i=-s}^r c_i \tau_\ast^i(\alpha)\, ,
		\]
		where $\tau_\ast^i$ is the $i$-fold composition power of $\tau_\ast$. The \emph{Alexander invariant} is then defined as $H_{\ast}(\widetilde M_K)$ considered as a $\Z[t^{\pm 1}]$-module.
		\begin{lma}[{\cite[Theorem 7.G.1]{rolfsen1976knots}}]\label{lma:non-trivial_knots_with_pi1_Z}
			There exist non-trivial knots $K \subset S^{n+2}$ with infinite cyclic knot group, $\pi_1(M_K) \cong \Z$.
		\end{lma}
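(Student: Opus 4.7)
The plan is to construct the desired knots via the plumbing construction introduced in \cref{sub:plumbings_and_infinite_cyclic_covers} and verify the two required properties (non-triviality and $\pi_1 \cong \Z$) using the Mayer--Vietoris calculation of the Alexander invariant already set up in the excerpt. Fix integers $p,q\geq 2$ with $p+q = n-1$, form the plumbing $\varSigma = S^p \#_{\mathrm{plumb}} S^q$, and embed it in $S^{n+2}$ as a codimension one submanifold, taking care that the framings on the two handles are chosen so that $K\defeq \partial \varSigma$ is homeomorphic to $S^{n-1}$ (this is standard: $\varSigma$ admits a handle decomposition with two handles on an $(n+1)$-ball, and for suitable framings the boundary is a homotopy sphere, which in the dimension ranges $n=5$ or $n\geq 7$ is actually the standard sphere).

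Next I would verify $\pi_1(M_K) \cong \Z$. The complement $M_\varSigma = S^{n+2}\setmin \varSigma$ is simply connected: by Alexander duality one has $H_1(M_\varSigma) \cong \widetilde H^{n}(\varSigma) \cong \widetilde H^{n}(S^p \vee S^q) = 0$ since $p,q\geq 2$, and simple connectivity follows from general position arguments because $\varSigma \simeq S^p \vee S^q$ is simply connected with $p,q \geq 2$. The space $M_K$ is obtained from $M_\varSigma$ by identifying the two boundary copies $\varSigma^+$ and $\varSigma^-$ of $\varSigma$, so its fundamental group is an HNN-extension of $\pi_1(M_\varSigma) = 1$ along $\pi_1(\varSigma^\pm) = 1$, yielding $\pi_1(M_K) \cong \Z$ with generator a meridian.

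Finally I would show that $K$ is not the unknot by exhibiting a non-trivial Alexander invariant. Using the model \eqref{eq:construction_of_infinite_cyclic_cover} of $\widetilde M_K$ and the associated Mayer--Vietoris sequence \eqref{eq:mayer-vietoris_applied_to_infinite_cyclic_cover}, the homology $H_\ast(\widetilde M_K)$ is non-trivial precisely in degrees $p$ and $q$. The map $\alpha_\ast$ in the Mayer--Vietoris sequence can be identified with multiplication by $1 - t$ on $\bigoplus_i H_{p}(S^p\vee S^q) \cong \Z[t^{\pm 1}]$ (similarly for degree $q$), so $H_p(\widetilde M_K)$ and $H_q(\widetilde M_K)$ are non-trivial $\Z[t^{\pm 1}]$-modules. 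Since the unknot has $\widetilde M_{\mathrm{unknot}}$ contractible, $H_\ast(\widetilde M_{\mathrm{unknot}}) = 0$ in positive degrees, and therefore $K$ is not isotopic to the unknot.

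The main obstacle is the first step: ensuring that the boundary of the plumbing is the standard $(n-1)$-sphere (rather than merely a homotopy sphere or an exotic sphere), and that the embedding into $S^{n+2}$ can be arranged. This is why the statement restricts to $n = 5$ or $n\geq 7$: in these dimensions the generalized Poincar\'e conjecture and the classification of framed plumbings yield the desired standard sphere, whereas the boundary-sphere question in dimensions $n=4,6$ would require additional care. Once this smooth/topological input from Rolfsen is granted, the remaining algebraic steps are the Mayer--Vietoris computations outlined above.
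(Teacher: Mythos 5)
Your construction and your verification of $\pi_1(M_K)\cong\Z$ follow the paper: the same plumbing $\varSigma=S^p\#_{\mathrm{plumb}}S^q$, simple connectivity of $M_\varSigma$ from general position (the paper phrases the conclusion as $\pi_1(\widetilde M_K)\cong 1$ plus the deck group being $\Z$, you phrase it as an HNN extension; these are equivalent). Two side remarks: the restriction $n=5$ or $n\geq 7$ is not needed for this lemma and does not come from exotic-sphere issues --- it enters only in \cref{thm:thm_B}, where the Leray--Serre argument requires $\abs{p-q}\neq 1$; and the boundary of the plumbing of the two \emph{trivial} bundles $S^p\times D^q$ and $S^q\times D^p$ is the standard sphere, which is part of what the citation to Rolfsen supplies.

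The genuine gap is in your non-triviality step. The Mayer--Vietoris map $\alpha_\ast$ in \eqref{eq:mayer-vietoris_applied_to_infinite_cyclic_cover} is \emph{not} multiplication by $1-t$ in degrees $p$ and $q$: it is the difference $i_+ - t\,i_-$ of the two push-off maps $H_j(\varSigma)\to H_j(M_\varSigma)$, and under the Alexander duality identification $H_p(M_\varSigma)\cong H_q(\varSigma)^\vee$ these push-offs are computed by the linking (Seifert) pairing of the chosen embedding. Concretely, for $p\neq q$ the relevant map is the $1\times 1$ matrix $a - t(a\pm 1)$ where $a$ is the linking number of a push-off of the core $S^p$ with the core $S^q$; for the most obvious embedding ($a=0$) this is a unit in $\Z[t^{\pm 1}]$ and the Alexander module is \emph{trivial} --- consistent with the fact that in the classical case $p=q=1$ a plumbing of two unknotted bands can bound the unknot. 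So non-triviality does not follow from the plumbing construction alone; it requires choosing the cores knotted/linked so that the resulting matrix is a non-unit. This is exactly what the paper supplies by taking the specific embedding of \cref{fig:self_plumbing_of_S2_diagram} and citing the explicit computation in Rolfsen (Exercise 7.F.5). (Note also that if $\alpha_\ast$ really were $1-t$ your conclusion would still need care: $1-t$ is injective on $\Z[t^{\pm1}]$, so the content would sit entirely in its cokernel.) To repair your proof, replace the claim about $\alpha_\ast$ with a computation of the linking numbers for a specific knotted embedding, or defer to the cited computation as the paper does.
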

		\begin{proof}
			Let $p, q \geq 2$ and let $n = p+q-1$. We then consider any $K$ obtained as $\dd \varSigma$, where
			\[
				\varSigma = S^p \#_{\text{plumb}} S^q\, .
			\]
			Now we have that $\widetilde M_{\varSigma}$ is simply connected: Every loop in $S^{n+2}$ shrinks missing $\varSigma$ since $\varSigma$ is homotopy equivalent to $S^p \vee S^q$. This is because $\codim(S^p) \geq 3$ and $\codim(S^q) \geq 3$ in $S^{n+2}$. From the construction of $\widetilde M_K$ in \eqref{eq:construction_of_infinite_cyclic_cover} we thus have $\pi_1(\widetilde M_K) \cong 1$. Hence, because the group of deck transformations of $\widetilde M_K \longrightarrow M_K$ is $\Z$, we have $\pi_1(M_K) \cong \Z$.
			\begin{figure}[H]
				\centering
				\includegraphics{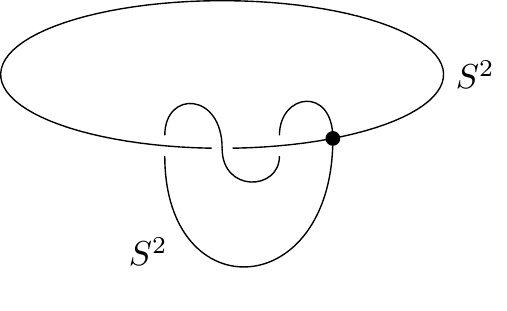}
				\caption{The core of the self plumbing of two knotted $S^2$ embedded in $S^5$.}
				\label{fig:self_plumbing_of_S2_diagram}
			\end{figure}
			To see that such non-trivial $K$ exists, we may consider $K = \dd(S^2 \#_{\text{plumb}} S^2) \subset S^5$, where the core of the plumbing is shown in \cref{fig:self_plumbing_of_S2_diagram}. The Alexander invariant of $K$ is non-trivial by a computation (cf.\@ \cite[Exercise 7.F.5]{rolfsen1976knots}).
		\end{proof}
	\subsection{Using the Leray--Serre spectral sequence}
		\label{sub:using_the_leray--serre_spectral_sequence}
		Consider the knot $K = S^p \#_{\mathrm{plumb}} S^q \subset S^{n}$ with $p+q = n-1$ as in \cref{sub:plumbings_and_infinite_cyclic_covers}. We use the notation $\Z \pi \defeq \Z[\pi_1(M_K)]$.

		Associated to the path-loop fibration
		\[
			\begin{tikzcd}[row sep=scriptsize, column sep=scriptsize]
				\varOmega M_K \rar[hook] & PM_K \dar \\
				& M_K
			\end{tikzcd}
		\]
		is the Leray--Serre spectral sequence. It is first quadrant spectral sequence $\parenm{E^r_{i, j}, d^r_{i,j}}_{i,j \in \N}$ of $\Z \pi$-modules which converges:
		\[
			E^2_{i,j} \cong H_i(M_K; H_j(\varOmega M_K)) \Longrightarrow H_{i+j}(PM_K) = \begin{cases}
				\Z \pi/(t-1), & i+j = 0\\
				0, & \text{otherwise,}
			\end{cases}
		\]
		Note that $\pi_1(M_K)$ is Abelian and hence we can consider $H_i(M_K; H_j(\varOmega M_K))$ as a $\Z \pi$-module.

		Since $C_{\ast}(\widetilde M_K)$ is only supported in degrees $0$, $p$ and $q$ we have the following facts
		\begin{itemize}
			\item Following \cite[Section 3.H]{hatcher2002algebraic} and \cite{shulman2010equivariant} we have the following identification
			\[
				H_i(M_K; H_j(\varOmega M_K)) \cong H_i \paren{C_{\ast}(\widetilde M_K) \otimes_{\Z \pi} H_j(\varOmega M_K)}\, .
			\]
			Assume that $\abs{p-q} \neq 1$. Then we trivially have
			\[
				H_i(M_K; H_j(\varOmega M_K)) = \begin{cases}
					H_{j}(\varOmega M_K), & i = 0 \\
					H_p(\widetilde M_K) \otimes_{\Z \pi} H_j(\varOmega M_K), & i = p \\
					H_q(\widetilde M_K) \otimes_{\Z \pi} H_j(\varOmega M_K), & i = q \\
					0, & \text{otherwise,}
				\end{cases}
			\]
			because $C_\ast(\widetilde M_K)$ is only supported in $\ast \in \parenm{0,p,q}$.
			\item $E^2_{i,j}$ is only supported on the vertical lines $i \in \parenm{0,p,q}$.
			\item The bottom row is $E^2_{i,0} = H_i(\widetilde M_K)$, since $H_0(\varOmega M_K) \cong \Z \pi$.
		\end{itemize}
		
		\begin{ex}\label{ex:plumbing_Sp_Sp}
			Consider the case when $K$ is obtained as the boundary of $S^p \#_{\text{plumb}} S^p \subset S^{2p+1}$ where the core of the plumbing is depicted in \cref{fig:self_plumbing_of_S2_diagram}. In this case, $E^2_{i,j}$ is only supported at the vertical lines $i \in \parenm{0,p}$. For this spectral sequence, the $p$-th page is the first page after page 1 that has non-zero differentials. Namely, the $p$-th page of this spectral sequence is
		\begin{figure}[H]
			\centering
			\includegraphics{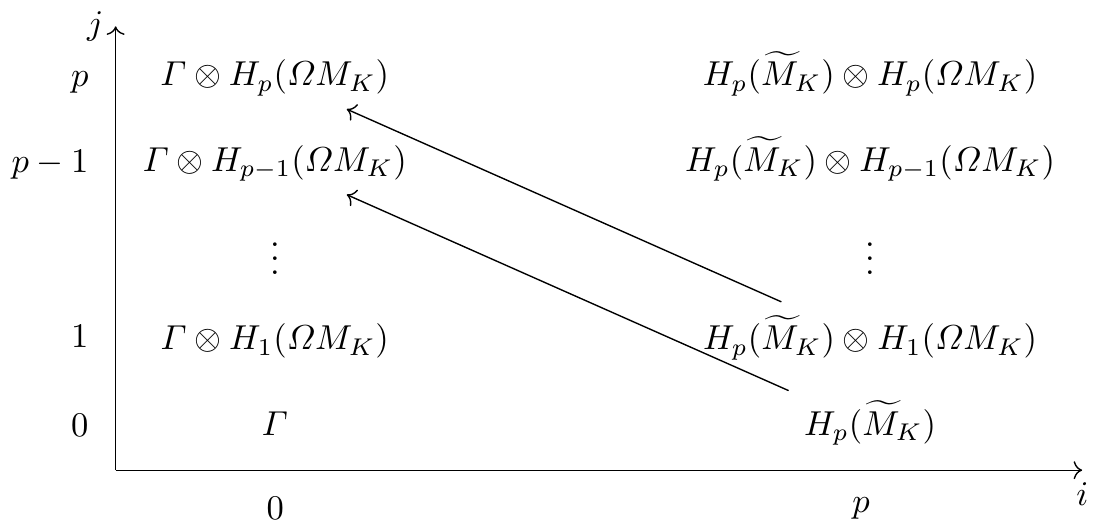}
		\end{figure}
		Where $\varGamma = \Z \pi/(t-1)$ and every tensor product is taken over $\Z \pi$. The differentials at every page succeeding the $p$-th page is zero, so in particular we get 
		\[
			H_p(\widetilde M_K) \cong \Z \pi/(t-1) \otimes_{\Z \pi} H_{p-1}(\varOmega M_K) \cong \Z \pi/(t-1) \otimes_{\Z \pi} HW^{1-p}_{\varLambda_K}(F,F)\, .
		\]
		\end{ex}
		\begin{ex}\label{ex:plumbing_Sp_S2p}
			Suppose $K$ is obtained as the boundary of $S^p \#_{\text{plumb}} S^{2p} \subset S^{3p+1}$ for $p\geq 2$ where the core of the plumbing is depicted in \cref{fig:self_plumbing_of_S2_diagram}. In this case the second page of the spectral sequence is only supported at the lines $i \in \parenm{0,p,2p}$. The $p$-th page of the spectral sequence is the first page after page 1 that has non-zero differentials and it looks as follows:
			\begin{figure}[H]
				\centering
				\includegraphics{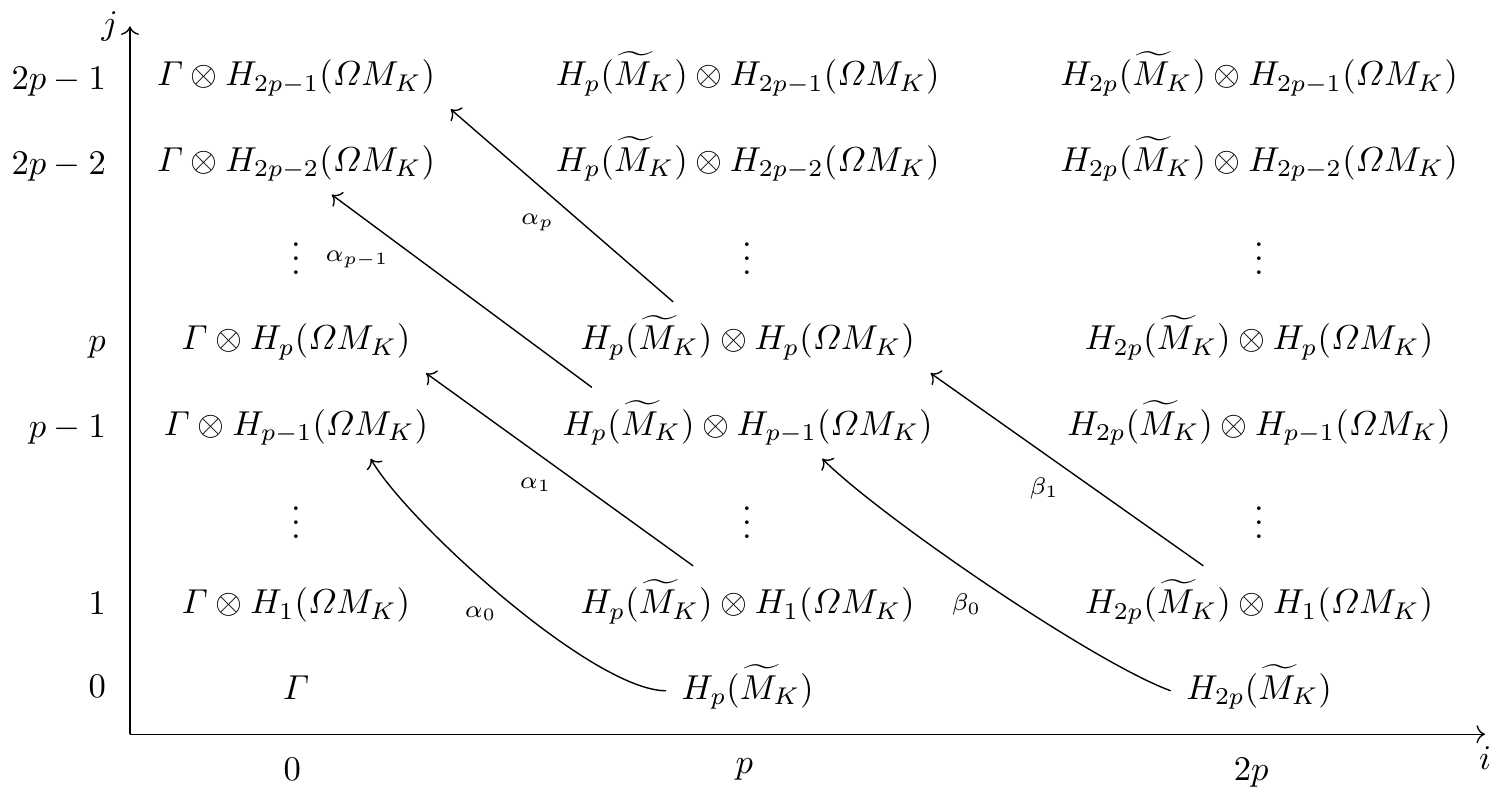}
			\end{figure}
			Immediately from this page, we get an isomorphism of $\Z \pi$-modules
			\begin{equation}\label{eq:computation_of_Hp}
				H_p(\widetilde M_K) \cong \Z \pi/(t-1) \otimes_{\Z \pi} H_{p-1}(\varOmega M_K) \cong \Z \pi/(t-1) \otimes_{\Z \pi} HW^{1-p}_{\varLambda_K}(F,F)\, .
			\end{equation}
			Furthermore, the next page with non-zero differentials is page $2p$, which looks as follows
			\begin{figure}[H]
				\centering
				\includegraphics{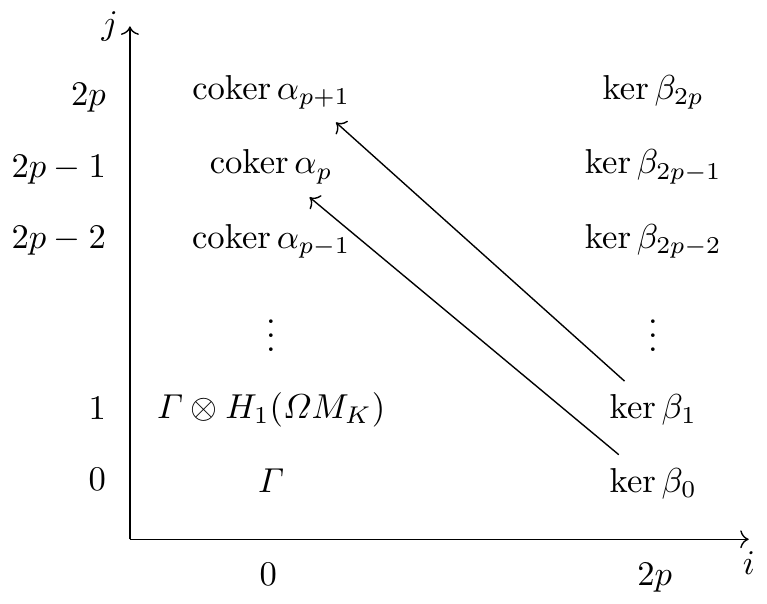}
			\end{figure}
			This is the last page with non-zero differentials, so $\coker \alpha_{p-1} \cong \parenm{0}$ and hence from page $p$, we obtain an exact sequence
			\begin{equation}\label{eq:exact_sequence_involving_H2p}
				\begin{tikzcd}[row sep=scriptsize, column sep=scriptsize]
					H_{2p}(\widetilde M_K) \rar{\beta_0} & H_p(\widetilde M_K) \otimes_{\Z \pi} H_{p-1}(\varOmega M_K) \rar{\alpha_{p-1}} & \varGamma \otimes_{\Z \pi} H_{2p-2}(\varOmega M_K) \rar & 0
				\end{tikzcd}\, ,
			\end{equation}
			and for each $i\in \Z_+$ we have an exact sequence
			\[
				\begin{tikzcd}[row sep=scriptsize, column sep=scriptsize]
					H_{2p}(\widetilde M_K) \otimes H_i(\varOmega M_K) \rar{\beta_i} & H_p(\widetilde M_K) \otimes_{\Z \pi} H_{p-1+i}(\varOmega M_K) \rar{\alpha_{p-1+i}} & \varGamma \otimes_{\Z \pi} H_{2p-2+i}(\varOmega M_K)
				\end{tikzcd}\, .
			\]
			Furthermore, from page $2p$ we get isomorphisms
			\[
				\ker \beta_j \cong \coker \alpha_{p+j}\, ,
			\]
			for every $j\in \N$. In particular, in view of \eqref{eq:exact_sequence_involving_H2p}, we have $\ker \beta_0 \cong \coker \alpha_p$ where
			\[
				\alpha_p \co H_p(\widetilde M_K) \otimes_{\Z \pi} H_p(\varOmega M_K) \longrightarrow \varGamma \otimes_{\Z \pi} H_{2p-1}(\varOmega M_K) \, .
			\]
			So if $H_\ast(\varOmega M_K)$ and $\alpha_p$ is known, we compute $H_p(\widetilde M_K)$ by \eqref{eq:computation_of_Hp} but also a quotient of $H_{2p}(\widetilde M_K)$ by exactness of \eqref{eq:exact_sequence_involving_H2p}
			\[
				H_{2p}(\widetilde M_K)/\coker \alpha_p \cong \ker \alpha_{p-1}\, .
			\]
			Let us summarize what we have.
			\begin{align*}
				\begin{cases}
					H_p(\widetilde M_K) \cong \Z \pi/(t-1) \otimes_{\Z \pi} HW^{1-p}_{\varLambda_K}(F,F)\\
					H_{2p}(\widetilde M_K)/\coker \alpha_p \cong \ker \alpha_{p-1}\, ,
				\end{cases}
			\end{align*}
			where
			\begin{align*}
				\alpha_p \co H_p(\widetilde M_K) \otimes_{\Z \pi} HW^{-p}_{\varLambda_K}(F,F) &\longrightarrow \Z \pi/(t-1) \otimes_{\Z \pi} HW_{\varLambda_K}^{1-2p}(F,F) \\
				\alpha_{p-1} \co H_p(\widetilde M_K) \otimes_{\Z \pi} HW_{\varLambda_K}^{1-p}(F,F) &\longrightarrow \Z \pi/(t-1) \otimes_{\Z \pi} HW_{\varLambda_K}^{2-2p}(F,F)\, .
			\end{align*}
		\end{ex}
		\begin{ex}\label{ex:plumbing_Sp_Sq}
			For a slightly more general case, where $p \geq 2$ and $q > p+1$ we consider again $K$ to be the boundary of $S^p \#_{\text{plumb}} S^q \subset S^{p+q+1}$ where the core of the plumbing is depicted in \cref{fig:self_plumbing_of_S2_diagram}. The Leray--Serre spectral sequence is supported at the lines $i\in \parenm{0,p,q}$. Exactly like in \cref{ex:plumbing_Sp_S2p}, we compute 
			\[
				H_p(\widetilde M_K) \cong \Z \pi/(t-1) \otimes_{\Z \pi} HW^{1-p}_{\varLambda_K}(F,F)\, .
			\]
		\end{ex}
		Let $\varLambda_{\mathrm{unknot}}$ denote the unit conormal of the standard embedded $S^{n-2} \subset S^n$. A consequence of these computations is the following theorem.
		\begin{thm}[\cref{thm:thm_B}]\label{thm:exists_codim_2_knot_with_conormal_not_leg_iso_to_unknot}
			Let $n = 5$ or $n\geq 7$. Let $x \in M_K$ be a point. Then there exists a codimension 2 knot $K \subset S^{n}$ with $\pi_1(M_K) \cong \Z$, such that $\varLambda_{K} \cup \varLambda_x$ is not Legendrian isotopic to $\varLambda_{\mathrm{unknot}} \cup \varLambda_x$.
		\end{thm}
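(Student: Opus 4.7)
The plan is to use \cref{thm:theorem_A} together with the Leray--Serre spectral sequence computations of \cref{ex:plumbing_Sp_Sp} and \cref{ex:plumbing_Sp_Sq} to obstruct any putative Legendrian isotopy $\varLambda_K \cup \varLambda_x \simeq \varLambda_{\mathrm{unknot}} \cup \varLambda_x$ by comparing Alexander invariants. Concretely, for $n = 5$ I take $K = \dd(S^2 \#_{\mathrm{plumb}} S^2) \subset S^5$, so that \cref{ex:plumbing_Sp_Sp} with $p = 2$ applies, and for $n \geq 7$ I take $K = \dd(S^2 \#_{\mathrm{plumb}} S^{n-3}) \subset S^n$, so that \cref{ex:plumbing_Sp_Sq} with $p = 2$ and $q = n - 3 \geq 4 > p+1$ applies. \cref{lma:non-trivial_knots_with_pi1_Z} ensures that such $K$ has $\pi_1(M_K) \cong \Z$ and non-trivial Alexander invariant $H_2(\widetilde M_K) \neq 0$. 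The dimensions $n = 4$ and $n = 6$ are precisely those for which $p + q + 1 = n$ with $p, q \geq 2$ admits no solution with $|p - q| \neq 1$, explaining their exclusion from the statement.

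Suppose, towards a contradiction, that $\varLambda_K \cup \varLambda_x$ is Legendrian isotopic to $\varLambda_{\mathrm{unknot}} \cup \varLambda_x$. Standard invariance of partially wrapped Floer cohomology under Legendrian isotopy of the stop yields an $A_\infty$-quasi-isomorphism between $CW^\ast_{\varLambda_K}(F,F)$ and $CW^\ast_{\varLambda_{\mathrm{unknot}}}(F,F)$, where $F$ is the cotangent fiber associated to $\varLambda_x$. The $\Z[\pi_1(M_K)]$-module structure constructed in \cref{sub:Zpi_module_structures} is defined entirely through $\mu^2$ and the degree-zero Reeb chord generators, so after the identification $\pi_1(M_K) \cong \Z \cong \pi_1(M_{\mathrm{unknot}})$ this quasi-isomorphism descends to an isomorphism of $\Z[t^{\pm 1}]$-modules on cohomology. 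Applying \cref{thm:extending_Y1_to_Zpi_module_iso} therefore yields $HW^\ast_{\varLambda_K}(F,F) \cong HW^\ast_{\varLambda_{\mathrm{unknot}}}(F,F)$ as $\Z[t^{\pm 1}]$-modules in each degree.

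The contradiction now follows from the explicit formula
\[
H_2(\widetilde M_K) \cong \Z\pi/(t-1) \otimes_{\Z\pi} HW^{-1}_{\varLambda_K}(F,F)
\]
supplied by \cref{ex:plumbing_Sp_Sp} or \cref{ex:plumbing_Sp_Sq}. Applied to the unknot, whose complement $M_{\mathrm{unknot}} \simeq S^1$ has contractible infinite cyclic cover $\widetilde M_{\mathrm{unknot}} \simeq \R$, the right hand side vanishes. Under the isomorphism from the previous paragraph, the two right hand sides must agree, forcing $H_2(\widetilde M_K) = 0$, which contradicts the non-triviality of the Alexander invariant of $K$.

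The main technical point requiring care is the passage from a Legendrian isotopy of the link $\varLambda_K \cup \varLambda_x$ to a $\Z[\pi_1]$-module isomorphism on wrapped Floer cohomology, as opposed to a mere quasi-isomorphism of chain complexes. This reduces to two ingredients: the $A_\infty$-functoriality of continuation in the partially wrapped Fukaya category, which preserves $\mu^2$, together with the observation in \cref{sub:Zpi_module_structures} that loop generators in $\pi_1(M_K)$ are realized through the degree-zero part of $CW^\ast_{\varLambda_K}(F,F)$ via $\varPsi_1$. Once this compatibility is in place, the remainder of the argument is a direct application of the earlier examples and the classical Alexander invariant calculation already invoked in \cref{lma:non-trivial_knots_with_pi1_Z}.
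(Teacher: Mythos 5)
Your proposal is correct and follows essentially the same route as the paper: choose $K=\dd(S^p\#_{\mathrm{plumb}}S^q)$ with $|p-q|\neq 1$, use \cref{ex:plumbing_Sp_Sp} and \cref{ex:plumbing_Sp_Sq} to deduce $HW^{1-p}_{\varLambda_K}(F,F)\neq 0$ from the non-vanishing of the Alexander invariant, and contrast this with the unknot, whose $HW^\ast_{\varLambda_{\mathrm{unknot}}}(F,F)\cong H_{-\ast}(\varOmega S^1)$ is supported in degree $0$. Your detour through the $\Z[t^{\pm 1}]$-module structure (and the informal application of the spectral-sequence formula to the unknot) is unnecessary: the paper concludes directly from the fact that the two graded groups have different degree support, which is all the Legendrian-isotopy invariance one needs.
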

		\begin{proof}
			For the case $n = 5$, consider the knot $K = \dd(S^2 \#_{\mathrm{plumb}} S^2) \subset S^5$ where the core of the plumbing is depicted in \cref{fig:self_plumbing_of_S2_diagram}. In the case $n\geq 7$ we let $p\geq 2$ and $q > p+1$ and consider $K = \dd(S^p \#_{\mathrm{plumb}} S^q) \subset S^{p+q+1}$, where the core of the plumbing is again depicted in \cref{fig:self_plumbing_of_S2_diagram}.

			We note that for dimensional reasons we have $\pi_1(M_K) \cong \Z$, but the Alexander invariant shows that $K$ is non-trivial \cite[Section 7.G]{rolfsen1976knots} (see also \cref{lma:non-trivial_knots_with_pi1_Z}).

			The computations in \cref{ex:plumbing_Sp_Sp} and \cref{ex:plumbing_Sp_Sq} show that in particular
			\[
				H_p(\widetilde M_K) \cong \Z[\pi_1(M_K)]/(t-1) \otimes_{\Z[\pi_1(M_K)]} HW^{1-p}_{\varLambda_K}(F,F)\, .
			\]
		Since we use classical methods to show that $H_p(\widetilde M_K)$ is non-trivial (see \cref{lma:non-trivial_knots_with_pi1_Z}, it follows that $HW^{1-p}_{\varLambda_K}(F,F)$ is non-trivial. Consider the unknot $S^{n-2} \subset S^n$, then the complement $M_{\mathrm{unknot}}$ is homotopy equivalent to a circle, which means that $H_{-\ast}(\varOmega M_{\mathrm{unknot}}) \cong HW^\ast_{\varLambda_{\mathrm{unknot}}}(F,F)$ is only supported in degree 0. Therefore we have $HW^{1-p}_{\varLambda_K}(F,F) \not\cong HW^{1-p}_{\varLambda_{\mathrm{unknot}}}(F,F)$ and so $\varLambda_K \cup \varLambda_x$ is not Legendrian isotopic to $\varLambda_{\mathrm{unknot}} \cup \varLambda_x$.
		\end{proof}
\appendix
\section{Monotonicity of $J$-holomorphic half strips}\label{sec:monotonicity_estimates}
	To establish compactness of the moduli spaces $\mathcal M(\boldsymbol a)$ in \cref{sub:moduli_space_of_half_strips} we need to make sure that $J$-holomorphic half strips in $\mathcal M(\boldsymbol a)$ does not escape to horizontal infinity. Pick a tubular neighborhood of $K \subset M_K$ and call it $N(K)$. Then we decompose $M_K$ as
		\[
			M_K \cong (S \setmin N(K)) \cup_{\dd N(K)} \paren{[0,\infty) \times \dd N(K)}\, ,
		\]
		where we identify $\dd(S \setmin N(K)) \cong \dd N(K)$ with $\parenm{0} \times \dd N(K) \cong \dd N(K)$. Pick a generic Riemannian metric $g$ on $S \setmin N(K)$ such that geodesics are non-degenerate critical points of the length and energy functionals. Define a function
		\[
			f\co [0,\infty) \longrightarrow [0,\infty)
		\]
		so that

		\centerline{\begin{minipage}{0.4\linewidth}
			\begin{equation}\label{eq:metric_definition}
				\begin{cases}
					f(0) = 1 \\
					f(t) > c_0 > 0, & \forall t \in [0,\infty) \\
					f'(0) = -1 \\
					f'(t) < 0, & \forall t \in [0,\infty) \\
					f''(t) \geq 0, & \forall t \in [0,\infty)
				\end{cases}
			\end{equation}
		\end{minipage}
		\begin{minipage}{0.4\linewidth}
			\begin{figure}[H]
				\centering
				\includegraphics{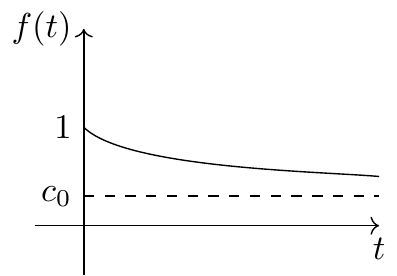}
			\end{figure}
		\end{minipage}}

		Define a metric $h$ on $M_K$ as
		\[
			h = \begin{cases}
				g, & \text{in } S \setmin N(K) \\
				dt^2 + f(t)\eval[0] g_{\dd N(K)}, & \text{in } [0,\infty) \times \dd N(K)\ ,
			\end{cases}
		\]
		where $t$ is the coordinate in the $[0,\infty)$-factor. Similar to the situation in~\cite[Appendix C]{ekholm2017duality}, if $x,y\in \dd N(K)$ are two points and $c\co [0,\ell] \longrightarrow M_K$ a geodesic with $c(s_1) = x \in \dd N(K)$ and $c(s_2) = y \in \dd N(K)$, then there is a unique geodesic $(t(s), c(s)) \in [0,\infty) \times \dd N(K)$ so that
		\begin{itemize}
			\item $(0, c(s_1)) = (0,x)$ and $(t(\ell), c(s_2)) = (0,y)$, and
			\item $t\co [0,\ell] \longrightarrow [0,\infty)$ is a Morse function with a unique maximum at some interior point ${s_0 \in (0,\ell)}$.
		\end{itemize}
		If we define
		\[
			N_i \defeq [0,i] \times \dd N(K)\, ,
		\]
		then
		\[
			N_0 \subset N_1 \subset N_2 \subset \cdots \subset [0,\infty) \times \dd N(K)\, ,
		\]
		is an exhaustion of $[0,\infty) \times \dd N(K)$ by compacts. Then given any geodesic $c \co [0,\ell] \longrightarrow M_K$, there exists some $m \geq 0$ so that $c(t) \in N_m$ for every $t\in [0,\ell]$. In particular, if we restrict to the present situation in this paper, where every geodesic is a loop based at $\xi\in S \setmin N(K)$. To this end fix some constant $L_0 > 0$ and assume $\gamma\in \mathcal F_{L_0} BM_K$, that is $\gamma$ is a piecewise geodesic loop based at $\xi \in S \setmin N(K)$ with length bounded above by $L_0$ (for details see \cref{sub:filtration_on_based_loops}). Then there is some $m = m(L_0, h) > 0$ depending only on $L_0$ and the metric $h$ so that $\gamma(t) \in N_m$ for every $t$. We prove that there exists some $m_0 > 0$ (depending on $m$ and the metric $h$) so that the $J$-holomorphic strips lie inside of $N_{m_0}$ by using the monotonicity lemma \cite[Proposition 4.7.2]{sikorav1994some} (see also \cite[Lemma 3.4]{cieliebak2010compactness}).

		Our metric $h$ defined in \eqref{eq:metric_definition} extends to a metric on $W_K$ such that it has bounded geometry in the terminology of \cite[Section 4]{sikorav1994some}. Furthermore, since $M_K \subset W_K$ is Lagrangian, the tuple $(W_K, J, M_K, h)$ is tame in the sense of \cite[Definition 4.1.1]{sikorav1994some}. Let $r_W, C_W > 0$ be constants so that for any $x,y\in M_K$
		\[
			d_{M_K}(x,y) \leq r_W \imp d_{W_K}(x,y) \leq C_W d_{M_K}(x,y)\, ,
		\]
		where $d_{M_K}$ and $d_{W_K}$ are the metrics induced by $h$ on $M_K$ and $W_K$ respectively. If we denote the lower bound on the injectivity radius by $\rho$, we may assume $r_W \leq \rho$.
		\begin{lma}[{\cite[Proposition 4.7.2 (ii)]{sikorav1994some}}]\label{lma:monotonicity}
			Let $(V, J, W, \mu)$ be tame. Then there exist a positive constant $C_4(W) > 0$ with the following property. Let $u\co T \longrightarrow V$ be a $J$-holomorphic curve so that $u(\dd T) \subset \dd B(x,r) \cup W$ where $x\in u(T)$ and $r < r_W$. Then
			\[
				\operatorname{area}(u(T) \cap B(x,r)) \geq C_4(W) r^2\, .
			\]
		\end{lma}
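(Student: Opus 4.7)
The statement is a restatement of Sikorav's monotonicity lemma for tame almost complex manifolds, so my plan is to follow the argument in \cite[Section 4.7]{sikorav1994some} essentially verbatim. The only thing to verify here is that the tameness constants for $(W_K, J, M_K, h)$ are uniform on the relevant scale, which is built into the definition of tame tuple we have already invoked.

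Concretely, I would set $a(r) \defeq \operatorname{area}(u(T) \cap B(x,r))$ and prove the differential inequality $a(r) \geq C r^2$ by the standard argument. Let $S_r \defeq u^{-1}(B(x,r))$. First, since $u$ is $J$-holomorphic and $J$ is tamed by $\omega = d \lambda$, the area and symplectic area agree up to a uniform multiplicative constant: $\operatorname{area}(u(S_r)) \leq C_1 \int_{S_r} u^\ast \omega$. Second, by Stokes' theorem,
\[
    \int_{S_r} u^\ast \omega = \int_{u^{-1}(\partial B(x,r))} u^\ast \lambda + \int_{u^{-1}(\partial T) \cap S_r} u^\ast \lambda\, .
\]
The boundary piece landing in $W = M_K$ is controlled by a primitive of $\lambda|_{M_K}$ (which exists locally on scale $r_W$ because $M_K$ is exact Lagrangian), so it contributes at most $C_2 \cdot r \cdot \operatorname{length}(u(T) \cap B(x,r) \cap M_K)$, which in turn is dominated by $C_3 \cdot r \cdot \operatorname{length}(u(T) \cap \partial B(x,r))$ after a small rearrangement using the tame structure. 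Third, the remaining boundary term is estimated by $r \cdot \operatorname{length}(u(T) \cap \partial B(x,r))$ since $|\lambda| \leq r$ on $B(x,r)$ up to a uniform constant.

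The coarea formula gives $a'(r) \geq \operatorname{length}(u(T) \cap \partial B(x,r))$ for almost every $r$, so the combined estimate reads $a(r) \leq C_4 \cdot r \cdot a'(r)$, i.e.\@ $(\log a)'(r) \geq (C_4 r)^{-1}$. Integrating from $0$ to $r$ and using that $x \in u(T)$ so $a(r) > 0$ for $r > 0$ (with the correct initial behavior from $x$ being an interior point of the curve) yields $a(r) \geq C_4(W) r^2$ for all $r < r_W$. The main obstacle is a careful handling of the Lagrangian boundary contribution; the tameness of $(W_K, J, M_K, h)$ is exactly what makes all the constants uniform. Since each of these steps is carried out in detail in \cite[Proposition 4.7.2]{sikorav1994some}, I would simply cite that reference.
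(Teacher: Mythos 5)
The paper offers no proof of this lemma beyond the citation to \cite[Proposition 4.7.2 (ii)]{sikorav1994some}, and your proposal ends in the same place, so the two approaches agree. One caveat about your sketch of the internals: the differential inequality $a(r) \leq C\, r\, a'(r)$ by itself only integrates to $a(r) \gtrsim r^{1/C}$, which is weaker than the claimed $r^2$ unless $C \leq 1/2$; the quadratic lower bound in the tame setting instead comes from the isoperimetric inequality $a(r) \leq C\, \ell(r)^2$ with $\ell(r) = \operatorname{length}\left(u(T) \cap \partial B(x,r)\right)$, combined with the coarea bound $a'(r) \geq \ell(r)$, which yields $(\sqrt{a})'(r) \geq c$ and hence $a(r) \geq c^2 r^2$. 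Since you defer to Sikorav for the actual estimates, this slip does not affect the validity of the citation-based argument, but it is the step you would need to state correctly if you wrote the proof out in full.
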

		We use this lemma with $V = W_K$, $W = F \cup M_K$ and $\mu = h$.
		\begin{thm}\label{thm:holomorphic_strips_stays_in_compact_part}
			Let $A > 0$ be arbitrary and consider a generator $a \in \mathcal F_A CW^\ast_{\varLambda_K}(F,F)$. Then there exists $m > 0$ so that $\im u \subset N_m$ for any $u\in \mathcal M(a)$.
		\end{thm}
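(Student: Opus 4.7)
The plan is to argue by contradiction using a monotonicity estimate in the spirit of \cite{sikorav1994some}. First, a uniform area bound: for any $u \in \mathcal M(a)$, Stokes' theorem applied to $\int_T u^\ast \omega = \int_{\partial T} u^\ast \lambda$ together with exactness of the Lagrangian boundary conditions ($M_K$ and the parallel copies $F_k$) and the fact that both negative punctures $\zeta_\pm$ map to the same Lagrangian intersection $\xi$ (so that the primitive contributions cancel) give $\int_T u^\ast \omega = \mathfrak a(a) < A$. The Reeb-chord puncture at $a$ contributes the limiting integral $\int_0^\ell a^\ast \lambda$, while the two $\xi$-asymptotes contribute nothing since $\lambda$ is single-valued and the primitives of $M_K$ and $F_k$ are smooth at $\xi$.

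Next, suppose for contradiction that there is a sequence $\parenm{u_k} \subset \mathcal M(a)$ with $T_k \defeq \sup_{z\in T} t(u_k(z)) \to \infty$, where $t$ denotes the coordinate in the $[0,\infty)$-factor of the handle. Fix $r_0$ with $0 < r_0 < r_W$, independent of $k$. Since each $u_k(T)$ is connected and contains both points of $t$-coordinate $0$ (close to $\xi$) and points of $t$-coordinate arbitrarily close to $T_k$, by continuity $t \circ u_k$ attains every value in $(0, T_k)$. I would select centers $x_1, \ldots, x_{N_k} \in \im u_k$ whose $t$-values are spaced by at least $2 C_W r_0$, so that $N_k \gtrsim T_k /(C_W r_0)$. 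The warped-product form of $h$ in the handle makes $t$ a $1$-Lipschitz function on $M_K$, and the tameness comparison $d_{W_K} \geq C_W^{-1} d_{M_K}$ on scales below $r_W$ then forces the ambient $W_K$-balls $B(x_i, r_0)$ to be pairwise disjoint.

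Finally, applying the monotonicity lemma (\cref{lma:monotonicity}) to $(W_K, J, F \cup M_K, h)$ at each $x_i$ with radius $r_0$ yields $\operatorname{area}(u_k(T) \cap B(x_i, r_0)) \geq C_4 r_0^2$, so the total symplectic area of $u_k$ grows at least linearly in $T_k$. For $T_k$ sufficiently large this contradicts the uniform bound $\mathfrak a(a) < A$ from the first step, establishing $T_k \leq m = m(A, J, h)$. The main technical obstacle will be verifying the underlying metric comparisons: that the ambient metric on $W_K$ extending $h|_{M_K}$ in a $J$-compatible way is genuinely tame in the sense of \cite[Definition 4.1.1]{sikorav1994some} with bounded geometry, and that $t$ remains $1$-Lipschitz on a $W_K$-neighborhood of $M_K \cap \text{handle}$ large enough to accommodate all of the balls $B(x_i, r_0)$. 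Once these geometric facts are in place, the ball-counting argument is routine.
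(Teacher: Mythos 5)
Your proposal is correct and follows essentially the same route as the paper: a uniform area bound of $\mathfrak a(a)<A$ via Stokes and exactness, followed by the monotonicity lemma applied to a collection of pairwise disjoint balls of fixed radius $r<r_W$ centered on $\im u$ inside the handle, so that the number of such balls — and hence the depth of penetration into $[0,\infty)\times \partial N(K)$ — is bounded by $A/(C_4 r^2)$. The only cosmetic difference is that you phrase it as a contradiction and space the ball centers along the $t$-coordinate (which makes explicit the step from ``boundedly many disjoint balls'' to ``bounded $t$-range'' that the paper leaves implicit), whereas the paper takes a maximal $2r$-separated set in $\im u\cap(N_m\setmin N_{m'})$ directly.
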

		\begin{proof}
			Consider a generator $a \in \mathcal F_A CW^\ast_{\varLambda_K}(F,F)$ and pick some $u\in \overline{\mathcal M}(a)$. Then by \cref{prp:chain_map_is_diagonal_wrt_filtrations} we have
			\[
				L(\ev(u)) = \mathfrak a(a) < A\, ,
			\]
			Because the $J$-holomorphic disk $u\in \mathcal M(a)$ has boundary on the Reeb chord $a$, the exact Lagrangian $F \cong DT^\ast_\xi S$ for $\xi\in M_K$ and the geodesic $\gamma \defeq \ev(u)$. Therefore there is some $m' > 0$ (depending only on $A$) so that $\dd \im u \subset N_{m'}$ for any $u\in \mathcal M(a)$. Then pick some $m >m'> 0$ (which a priori can be equal to $\infty$) and assume that $\im u \subset N_m$. We consider $U\defeq \im u \cap (N_m \setmin N_{m'})$ and then we prove that $m$ is finite. Namely, fix some $r < r_W$ and let $v_1,\ldots,v_\mu \in U$ be the maximal number of points so that $d_{W_K}(v_i,v_j) > 2 r$. Then we apply \cref{lma:monotonicity} to each $U_i \defeq U \cap B(v_i,r)$ so that $\operatorname{area}(U_i) \geq C_4 r^2$ for each $i \in \parenm{1,\ldots,\mu}$. Therefore
			\[
				\operatorname{area}(U) \geq \mu \operatorname{area}(U_1) \geq \mu C_4 r^2 \Leftrightarrow \mu \leq \frac{\operatorname{area}(U)}{C_4 r^2}\, .
			\]
			Since $\mathfrak a(a)$ is bounded by $A$, so is the area of $U$. Hence
			\[
				\mu < \frac{A}{C_4 r^2} < \infty\, .
			\]
			This shows that there is some finite $m > 0$ such that $\im u \subset N_{m}$ for every $u\in \mathcal M(a)$.
		\end{proof}
	\section{Signs, gradings and orientations of moduli spaces}
		\label{sec:signs_gradings_and_orientations_of_moduli_spaces}
		In this section, we use the same conventions and setup as in \cite[Section (11)]{seidel2008fukaya} and \cite[Section 8]{fukaya2010lagrangian}.
		Pick some $T \in \overline{\mathcal H}_m$ and consider the collection of Lagrangian branes $F_0^{\#},\ldots,F_m^{\#}$ of a cotangent fiber $F_0 \cong T^\ast_\xi S \subset W_K$ at $\xi\in M_K$ and a system of parallel copies $\overline F$ as in \cref{sub:moduli_space_of_half_strips} and \cref{sec:wrapped_floer_cohomology_wo_ham}. Pick a word of generators $\boldsymbol a = a_1 \cdot \cdots \cdot a_m$ where $a_k \in CW^\ast(F_{k-1}, F_k)$, and pick abstract perturbation data so that $\mathcal M(\boldsymbol a)$ is regular. Then for some $u\in \mathcal M(\boldsymbol a)$, denote the linearization of the operator $\overline{\dd}_{J_T}$ at the $J$-holomorphic disk $u$ by $D_u$. Then we have the following:
		\begin{lma}[{\cite[Lemma 6.1]{abouzaid2012wrapped}}]\label{lma:det_in_terms_of_orientation_lines}
			With the choice as above there is a canonical up to homotopy isomorphism
			\[
				\det D_u \cong o_\xi \otimes o_{a_1}^\vee \otimes \cdots \otimes o_{a_m}^\vee \otimes o_\xi^\vee
			\]
			and in particular
			\[
				\topp (T \overline{\mathcal M}(\boldsymbol a)) \cong \topp (T \overline{\mathcal H}_m) \otimes o_\xi \otimes o_{a_1}^\vee \otimes \cdots \otimes o_{a_m}^\vee \otimes o_\xi^\vee \, .
			\]
		\end{lma}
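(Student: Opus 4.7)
The strategy follows the standard orientation formalism for moduli spaces of $J$-holomorphic disks, adapted to our setting with both Lagrangian intersection and Reeb chord asymptotics, as in \cite[Lemma 6.1]{abouzaid2012wrapped} and \cite[Section (11)]{seidel2008fukaya}. The two ingredients I would invoke are (i) the definition of the orientation lines $o_a$ at each asymptotic, and (ii) the linear gluing theorem for determinant lines of linearized Cauchy--Riemann operators. The principal claim, from which the second statement is immediate by the index-theoretic identity $\det(T \overline{\mathcal M}(\boldsymbol a)) \cong \det D_u \otimes \det(T\overline{\mathcal H}_m)$, is the canonical trivialization of $\det D_u$ in terms of the orientation lines at the $m+2$ punctures.

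First I would recall, following \cite[Section (11)]{seidel2008fukaya} and the conventions of \cite[Section 8]{fukaya2010lagrangian}, the orientation line $o_a$ associated to each generator. For a Lagrangian intersection generator $a \in F_i \cap F_j$, one sets $o_a \defeq \det D_a^{\mathrm{cap}}$ where $D_a^{\mathrm{cap}}$ is a Cauchy--Riemann operator on the upper half-plane with boundary conditions given by the path of Lagrangian subspaces from $T_a F_i$ to $T_a F_j$ prescribed by the choices of graded lifts; for a Reeb chord generator $a$, the same construction applies using the path of Lagrangians in the contact planes obtained from the linearized Reeb flow together with positive close-up, as in \cref{rmk:description_of_maslov_index_of_reeb_chord_generators}. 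Since $S$ is spin and the handle attachment constructing $W_K$ preserves the spin condition, and since the Lagrangian $F$ and each parallel copy $F_k$ are orientable with vanishing Maslov class, these orientation lines are well-defined up to canonical isomorphism.

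Second, I would apply the linear gluing theorem: capping off each boundary puncture of $D_{m+2}$ by the capping operators yields a Cauchy--Riemann operator $D_u^{\#}$ on the closed disk with Lagrangian boundary conditions. The gluing isomorphism reads
\[
\det D_u \otimes \det D_\xi^{\mathrm{cap},-} \otimes \bigotimes_{k=1}^{m} \det D_{a_k}^{\mathrm{cap}} \otimes \det D_\xi^{\mathrm{cap},+} \cong \det D_u^{\#}.
\]
Because the boundary conditions on the closed disk form a loop of Lagrangians of Maslov index zero in a spin ambient manifold, the right-hand side is canonically trivial. Rearranging and keeping track of the sign conventions for positive versus negative punctures (the two $\xi$-punctures $\zeta_\pm$ are of opposite type by the conventions of \cref{sub:moduli_space_of_half_strips}, producing $o_\xi$ at one and $o_\xi^\vee$ at the other, while each $a_k$ is a positive puncture and contributes $o_{a_k}^\vee$) yields the asserted isomorphism $\det D_u \cong o_\xi \otimes o_{a_1}^\vee \otimes \cdots \otimes o_{a_m}^\vee \otimes o_\xi^\vee$.

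The main technical obstacle will be the bookkeeping of the capping operators for Reeb chord asymptotics in the non-Hamiltonian setting of \cref{sec:wrapped_floer_cohomology_wo_ham}, together with checking that the spin structures on the parallel copies $\{F_k\}_{k\geq 0}$ are compatibly transported so that the gluing isomorphism is canonical and not merely defined up to a sign. Once that is in place, the second identity of the lemma follows by tensoring with $\det T\overline{\mathcal H}_m$ and using that the index of the full problem on a varying domain splits as the Deligne--Mumford parameters plus the fiber of the linearized operator.
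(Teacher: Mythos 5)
Your proposal is correct and follows essentially the same route as the source the paper relies on: the paper gives no proof of this lemma, citing \cite[Lemma 6.1]{abouzaid2012wrapped} directly, and that proof is exactly the capping-operator/linear-gluing argument you outline, with the determinant of the glued closed-disk operator trivialized via the spin structure and the vanishing of the Maslov class. The only point to finish carefully is the one you already flag: the choice of positive versus negative capping operators at $\zeta_+$ versus $\zeta_-$ and at the $a_k$ (so that one $\xi$-puncture contributes $o_\xi$ and the other $o_\xi^\vee$), which is a convention check rather than a gap.
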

		Since the orientation lines $o_x$ are naturally graded by the indices of the linearized operators $D_x$, we have a nautral isomorphism coming from reordering tensor products of orientation lines which produces a Koszul sign
		\[
			o_{x_1} \otimes o_{x_2} \cong (-1)^{\abs{x_1}\abs{x_2}} o_{x_2} \otimes o_{x_1}\, .
		\]
		Furthermore there are natural non-degnerate pairings
		\[
			o_x \otimes o_x^\vee \cong \R\, .
		\]
		From now on we use the following abbreviation: For the word $\boldsymbol a = a_1 \cdots a_m$, we let
		\[
			o_{\boldsymbol a} \defeq o_{a_1} \otimes \cdots \otimes o_{a_m}\, .
		\]
		As in \eqref{eq:stratification_of_deligne_mumford_compactification_1} and \eqref{eq:stratification_of_deligne_mumford_compactification_2} denote by $\mathcal H_{m}$ the moduli space of abstract $J$-holomorphic disks with $m+2$ boundary punctures, and its Deligne--Mumford compactification by $\overline{\mathcal H}_m$. Then the codimension one boundary $\dd \overline{\mathcal H}_{m}$ is covered by the natural inclusions of the following strata
		\begin{align}
			\label{eq:boundary_strata_1}
			\overline{\mathcal H}_{m_1} &\times \overline{\mathcal H}_{m_2}, \, m_1+m_2 = m \\
			\label{eq:boundary_strata_2}
			\overline{\mathcal H}_{m_1} &\times \overline{\mathcal R}_{m_2}, \, m_1+m_2 = m+1\, .
		\end{align}
		Here $\mathcal R_m$ is the Deligne--Mumford space of unit disks in the complex plane with $m+1$ boundary punctures that are oriented counterclockwise. We would like to compare the product orientation of each of the strata with the boundary orientation on $\dd \overline{\mathcal H}_{m}$. The orientation of the boundary is determined as follows. Any orientation on a manifold $X$ induces an orientation on its boundary via the outward normal first-rule. More precisely via the canonical isomorphism
		\[
			\topp TX \cong \nu_{\dd X} \otimes \topp T\dd X\, ,
		\]
		where $\nu_{\dd X}$ is the normal bundle of $\dd X$ which is canonically trivialized by the outwards normal vector along the boundary. Following the conventions in \cite{seidel2008fukaya,abouzaid2010geometric,abouzaid2012wrapped} there is a choice of coherent orientations on $\overline{\mathcal H}_m$ such that the boundary strata \eqref{eq:boundary_strata_1} and \eqref{eq:boundary_strata_2} differs from the boundary orientation on $\dd \overline{\mathcal H}_m$ by a sign $(-1)^{\dag_1}$ and $(-1)^{\dag_2}$ respectively where we have
		\begin{align}
			\label{eq:boundary_strata_1_sign_difference}
			\dag_1 &= m_1\\
			\label{eq:boundary_strata_2_sign_difference}
			\dag_2 &= m_2(m-k)+k+m_2\, ,
		\end{align}
		and $\overline{\mathcal R}_{m_2}$ is attached to the $(k+1)$-th outgoing leaf of $\overline{\mathcal H}_{m_1}$ (cf.\@ \cite[(12.22)]{seidel2008fukaya}). The first sign $\dag_1$ is obtained from \cite[(12.22)]{seidel2008fukaya} by using $m = m_2+1$, $d = m_1+m_2+1$ and $n = d$ since $\overline{\mathcal H}_{m_2}$ is attached to $\overline{\mathcal H}_{m_1}$ at the last outgoing leaf.

		The second sign $\dag_2$ is obtained from \cite[(12.22)]{seidel2008fukaya} by using $m = m_2$, $d = m_1+m_2$ and $n = k$.
		\begin{proof}[Proof of \cref{lma:product_ori_differs_from_boundary_ori}]
			We consider the moduli space $\overline{\mathcal M}(\boldsymbol a)$ and the stratification of its codimension one boundary as in \eqref{eq:compactification_stratification}. We first consider the strata of the form $\overline{\mathcal M}(\boldsymbol a') \times \overline{\mathcal M}(\boldsymbol a'')$ where $\boldsymbol a' \boldsymbol a'' = \boldsymbol a$. Then, using \cref{lma:det_in_terms_of_orientation_lines} we have
			\begin{align*}
				\topp (T \overline{\mathcal M}(\boldsymbol a')) \otimes \topp (T \overline{\mathcal M}(\boldsymbol a'')) &= \topp (T \overline{\mathcal H}_{m_1}) \otimes o_\xi \otimes o_{\boldsymbol a'}^\vee \otimes o_\xi^\vee\\
				&\quad \otimes \topp (T \overline{\mathcal H}_{m_2}) \otimes o_\xi \otimes o_{\boldsymbol a''}^\vee \otimes o_\xi^\vee\, .
			\end{align*}
			Reordering the factors so that $\topp (T \overline{\mathcal H}_{m_2})$ becomes adjacent to $\topp (T \overline{\mathcal H}_{m_1})$ introduces the Koszul sign $(-1)^{\gad_1}$ where
			\[
				\gad_1 = (m_2+1) \paren{\sum_{i=1}^{m_1} \abs{a_i}}\, ,
			\]
			since $\dim \overline{\mathcal H}_{m_2} = m_2 + 1$. Canceling the adjacent factors $o_\xi^\vee$ and $o_\xi$ then gives
			\[
				\topp (T \overline{\mathcal H}_{m_1}) \otimes \topp (T \overline{\mathcal H}_{m_2}) \otimes o_\xi \otimes o_{\boldsymbol a'}^\vee \otimes o_{\boldsymbol a''}^\vee \otimes o_\xi^\vee\, .
			\]
			Then by \eqref{eq:boundary_strata_1_sign_difference} we get a sign $(-1)^{\dag_1}$ when comparing the product orientation of $\overline{\mathcal H}_{m_1} \times \overline{\mathcal H}_{m_2}$ with the boundary orientation of $\dd \overline{\mathcal H}_{m}$. After these reorderings we arrive at
			\[
				\topp (T \dd \overline{\mathcal H}_{m}) \otimes o_\xi \otimes o_{\boldsymbol a'}^\vee \otimes o_{\boldsymbol a''}^\vee \otimes o_\xi^\vee = \topp (T \dd \overline{\mathcal H}_{m}) \otimes o_\xi \otimes o_{\boldsymbol a}^\vee \otimes o_\xi^\vee\, ,
			\]
			which is canonically isomorphic to $\topp(T \overline{\mathcal M}(\boldsymbol a))$. The total sign difference between the product orientation on $\overline{\mathcal M}(\boldsymbol a') \times \overline{\mathcal M}(\boldsymbol a'')$ and the boundary orientation on $\dd \overline{\mathcal M}(\boldsymbol a)$ is therefore
			\[
				\ddag_1 = \gad_1 + \dag_1 = (m_2+1) \paren{\sum_{i=1}^{m_1} \abs{a_i}} + m_1\, .
			\]
			Similarly, we compare the product orientation of $\overline{\mathcal M}(\boldsymbol a \setmin \tilde{\boldsymbol a}) \times \mathcal M^{\mathrm{cw}}(\tilde{\boldsymbol a})$ with the boundary orientation on $\dd \overline{\mathcal M}(\boldsymbol a)$. Recall from \eqref{eq:a_minus_subword_a_tilde_def} that if $\tilde{\boldsymbol a} \subset \boldsymbol a$ is a subword at position $t+1$, then $\boldsymbol a \setmin \tilde{\boldsymbol a}$ denotes the word $\boldsymbol a$ with the subword $\tilde{\boldsymbol a}$ replaced by an auxiliary generator $y$. Again by \cref{lma:det_in_terms_of_orientation_lines} we therefore have
			\begin{align*}
				\topp (T \overline{\mathcal M}(\boldsymbol a \setmin \tilde{\boldsymbol a})) \otimes \topp (T \mathcal M^{\mathrm{cw}}(\tilde{\boldsymbol a})) &= \topp(T \overline{\mathcal H}_{t+1+r}) \otimes o_\xi \otimes o_{\boldsymbol a \setmin \tilde{\boldsymbol a}}^\vee \otimes o_\xi^\vee \\
				&\quad \otimes \topp(T \overline{\mathcal R}_{s}) \otimes o_y \otimes o_{\tilde{\boldsymbol a}}^\vee
			\end{align*}
			Assuming that $\mathcal M^{\mathrm{cw}}(\tilde{\boldsymbol a})$ is rigid means especially that $\abs y = 2 - s + \abs y - \sum_{i=1}^{s} \abs{a_{t+i}}$ and so we move $o_{(\boldsymbol a \setmin \tilde{\boldsymbol a})_2}^\vee \otimes o_\xi^\vee$ past $\topp(T \overline{\mathcal R}_{s}) \otimes o_y \otimes o_{\tilde{\boldsymbol a}}^\vee$ without introducing any sign and arrive at
			\[
				\topp(T \overline{\mathcal H}_{t+1+r}) \otimes o_\xi \otimes o_{(\boldsymbol a \setmin \tilde{\boldsymbol a})_1}^\vee \otimes o_y^\vee \otimes \topp(T \overline{\mathcal R}_{s}) \otimes o_y \otimes o_{\tilde{\boldsymbol a}}^\vee \otimes o_{(\boldsymbol a \setmin \tilde{\boldsymbol a})_2}^\vee \otimes o_\xi^\vee \, .
			\]
			Because $\dim \overline{\mathcal R}_{s} = s$, moving $\topp(T \overline{\mathcal R}_{s})$ to the front and adjacent to $\topp(T \overline{\mathcal H}_{t+1+r})$ gives
			\[
				\topp(T \overline{\mathcal H}_{t+1+r}) \otimes \topp(T \overline{\mathcal R}_{s}) \otimes o_\xi \otimes o_{(\boldsymbol a \setmin \tilde{\boldsymbol a})_1}^\vee \otimes o_y^\vee \otimes o_y \otimes o_{\tilde{\boldsymbol a}}^\vee \otimes o_{(\boldsymbol a \setmin \tilde{\boldsymbol a})_2}^\vee \otimes o_\xi^\vee \, ,
			\]
			with a sign difference of $(-1)^{\gad_2}$ where
			\[
				\gad_2 = s\paren{\abs{\xi} + \abs{y} + \sum_{i=1}^{t}\abs{a_i}} = s \paren{\abs{\xi} + \sum_{i=1}^{t + s}\abs{a_i}}\, .
			\]
			Recall from the assumptions in \cref{lma:product_ori_differs_from_boundary_ori} that $\tilde{\boldsymbol{a}} \subset \boldsymbol a$ is a subword at position $t+1$.
			
			Then using $o_y^\vee \otimes o_y \cong \R$ and $\boldsymbol a = (\boldsymbol a \setmin \tilde{\boldsymbol a})_1\tilde{\boldsymbol a}(\boldsymbol a \setmin \tilde{\boldsymbol a})_2$ this collapses to
			\[
				\topp(T \overline{\mathcal H}_{t+1+r}) \otimes \topp(T \overline{\mathcal R}_{s}) \otimes o_\xi \otimes o_{\boldsymbol a}^\vee \otimes o_\xi^\vee \, ,
			\]
			and using \eqref{eq:boundary_strata_2_sign_difference}, $\topp(T \overline{\mathcal H}_{t+1+r}) \otimes \topp(T \overline{\mathcal R}_{s}) \cong  \topp (T \overline{\mathcal H}_{m})$ with a sign difference of $(-1)^{\dag_2}$. The total sign difference between the product orientation of $\overline{\mathcal M}(\boldsymbol a \setmin \tilde{\boldsymbol a}) \times \mathcal M^{\mathrm{cw}}(\tilde{\boldsymbol a})$ and the boundary orientation on $\dd \overline{\mathcal M}(\boldsymbol a)$ is therefore
			\[
				\ddag_2 = \gad_2 + \dag_2 = s \paren{\abs \xi + \sum_{i=1}^{t+s}\abs{a_i}} + s(m-t)+t+s \, .
			\]
		\end{proof}
		\begin{proof}[Proof of \cref{lma:varpsi_a_infty_morphism} (continued)]
			To confirm that the signs match up in the $A_{\infty}$-relation
			\[
				\dd \varPsi_m + \sum_{m_1+m_2 = m} P(\varPsi_{m_2} \otimes \varPsi_{m_1}) = \sum_{r+s+t = m} (-1)^{\maltese_t} \varPsi_{r+1+t}(\id^{\otimes r} \otimes \mu^s \otimes \id^{\otimes t})\, ,
			\]
			we look at the terms one by one and compute the sign that is in front of each term. In the first term $\dd \varPsi_m$ it is only the sign from $\varPsi_m$ that is taken into account, namely $(-1)^{\S}$ where
			\[
				\S = \sum_{i=1}^m i \abs{a_i} + (\abs \xi + m) \sum_{i=1}^m \abs{a_i}\, .
			\]
			The second term has a sign coming from:
			\begin{enumerate}
				\item The definition of the Pontryagin product $P$ in \eqref{eq:def_pontryagin_product} contributes with a sign $(-1)^{\circ}$ where
				\[
					\circ = \abs{\varPsi_{m_1}(a_{m_1}\otimes \cdots \otimes a_1)} = \dim \overline{\mathcal M}(\boldsymbol a') = -1 + m_1 - \sum_{i=1}^{m_1}\abs{a_i}\, ,
				\]
				\item the difference between the product orientation on $\overline{\mathcal H}_{m_1} \times \overline{\mathcal H}_{m_2}$ and the boundary orientation on $\dd \overline{\mathcal H}_m$ is $(-1)^{\ddag_1}$, where $m_1+m_2 = m$ and
				\[
					\ddag_1 = (m_2+1) \paren{\sum_{i=1}^{m_1} \abs{a_i}} + m_1\, ,
				\]
				as in \cref{lma:product_ori_differs_from_boundary_ori},
				\item the definition of $\varPsi_{m_1}(a_{m_1} \otimes \cdots \otimes a_1)$ in \eqref{eq:def_of_chain_maps} contributes with a sign $(-1)^{\S_1}$ where
				\[
					\S_1 = \sum_{i=1}^{m_1} i\abs{a_i} + (\abs \xi + m_1) \sum_{i=1}^{m_1} \abs{a_i}\, ,
				\]
				and
				\item the definition of $\varPsi_{m_2}(a_{m} \otimes \cdots \otimes a_{m_1+1})$ in \eqref{eq:def_of_chain_maps} contributes with a sign $(-1)^{\S_2}$ where
				\[
					\S_2 = \sum_{i=m_1+1}^m (i-m_1)\abs{a_i} + (\abs \xi + m_2) \sum_{i=m_1+1}^{m} \abs{a_i}\, .
				\]
			\end{enumerate}
			Now it is straightforward to check that $\circ + \ddag_1 + \S_1 + \S_2 = 1+\S \pmod 2$.
			\begin{align*}
				\circ + \ddag_1 + \S_1 + \S_2 &= -1 + m_1 - \sum_{i=1}^{m_1}\abs{a_i} + (m_2+1) \paren{\sum_{i=1}^{m_1} \abs{a_i}} + m_1 + \sum_{i=1}^{m_1} i\abs{a_i} \\
				&\quad + (\abs \xi + m_1) \sum_{i=1}^{m_1} \abs{a_i} + \sum_{i=m_1+1}^m (i-m_1)\abs{a_i} + (\abs \xi + m_2) \sum_{i=m_1+1}^{m} \abs{a_i} \\
				&= 1+ m_2 \sum_{i=1}^{m_1} \abs{a_i} + \sum_{i=1}^m i \abs{a_i} +(\abs \xi + m_1) \sum_{i=1}^{m_1}\abs{a_i} + m_1 \sum_{i=m_1+1}^m \abs{a_i} \\
				&\quad + (\abs \xi + m_2) \sum_{i={m_1+1}}^m \abs{a_i} \\
				&= 1+ \sum_{i=1}^m i \abs{a_i} + (\abs \xi + m) \sum_{i=1}^m \abs{a_i} = 1+\S \pmod 2\, .
			\end{align*}
			Next we consider the term in the right hand side. Let $y\defeq \mu^s(a_{t+s}\otimes \cdots \otimes a_{t+1})$. This sum has a sign coming from:
			\begin{enumerate}
				\item The difference between the product orientation on $\overline{\mathcal H}_{r+1+t} \times \overline{\mathcal R}_{s}$ and the boundary orientation on $\dd \overline{\mathcal H}_{m}$ is $(-1)^{\ddag_2}$ where $r+s+t = m$ and
				\[
					\ddag_2 = s \paren{\abs \xi + \sum_{i=1}^{t+s} \abs{a_i}} + s(m-t) + t + s\, ,
				\]
				\item the definition of $\mu^s(a_{t+s} \otimes \cdots \otimes a_{t+1})$ in \eqref{eq:def_of_a_infty_operations_wo_ham} contributes with a sign $(-1)^{\diamond}$ where
				\[
					\diamond = \sum_{i=t+1}^{t+s}(i-t)\abs{a_i}\, ,
				\]
				\item the definition of $\varPsi_{r+1+t}(a_m \otimes \cdots \otimes a_{t+s+1} \otimes y \otimes a_{t} \otimes \cdots \otimes a_1)$ in \eqref{eq:def_of_chain_maps} contributes with a sign $(-1)^{\tilde \S}$ where
				\begin{align*}
					\tilde \S &= \sum_{i=1}^t i \abs{a_i} + (t+1) \abs y + \sum_{i=t+s+1}^{m}(i-s+1)\abs{a_i} + (\abs \xi + r+t+1)\paren{\sum_{i=1}^t \abs{a_i} + \abs y + \sum_{i=t+s+1}^m \abs{a_i}} \, .
				\end{align*}
				Note that since we assume that $\mathcal M^{\mathrm{cw}}(a_{t+1}\cdots a_{t+s})$ is rigid, we have
				\[
					\abs y = 2-s+ \sum_{i=t+1}^{t+s} \abs{a_i} = s + \sum_{i=t+1}^{t+s} \abs{a_i} \pmod 2\, ,
				\]
				hence we get
				\begin{align*}
					\tilde \S &= \sum_{i=1}^t i \abs{a_i} + (t+1)\paren{s+ \sum_{i=t+1}^{t+s} \abs{a_i}} + \sum_{i=t+s+1}^{m}(i-s+1)\abs{a_i} \\
					&\quad + (\abs \xi + r+t+1)\paren{s + \sum_{i=1}^m \abs{a_i}}\\
					&= \sum_{i=1}^t i \abs{a_i} + (t+1) \sum_{i=t+1}^{t+s}\abs{a_i} + (s+1) \sum_{i=t+s+1}^m \abs{a_i} \\
					&\quad + \sum_{i=t+s+1}^m i \abs{a_i} + (\abs \xi + r)s + (\abs \xi + r+t+1) \sum_{i=1}^m \abs{a_i} \pmod 2\, .
				\end{align*}
				It is then again a straightforward calculation to show that $\ddag_2 + \diamond + \tilde \S + \maltese_t = \S\pmod 2$.
				\begin{align*}
					\ddag_2 + \diamond + \tilde \S + \maltese_t &= s \paren{\abs \xi + \sum_{i=1}^{t+s} \abs{a_i}} + s(m-t) + t + s + \sum_{i=t+1}^{t+s}(i-t)\abs{a_i} \\
					&\quad + \sum_{i=1}^t i \abs{a_i} + (t+1) \sum_{i=t+1}^{t+s}\abs{a_i} + (s+1) \sum_{i=t+s+1}^m \abs{a_i} \\
					&\quad + \sum_{i=t+s+1}^m i \abs{a_i} + (\abs \xi + r)s + (\abs \xi + r+t+1) \sum_{i=1}^m \abs{a_i} + \sum_{i=1}^t \abs{a_i} + t \\
					&= s \sum_{i=1}^{t+s} \abs{a_i} + s(m-t)+t+s + \sum_{i=t+1}^{t+s} i \abs{a_i} + \sum_{i=1}^t i \abs{a_i} \\
					&\quad +\sum_{i=t+1}^{t+s} \abs{a_i}+ (s+1) \sum_{i=t+s+1}^m \abs{a_i} + \sum_{i=t+s+1}^m i \abs{a_i} \\
					&\quad +  rs + (\abs \xi +r+t+1) \sum_{i=1}^m \abs{a_i} + \sum_{i=1}^t \abs{a_i}+t \\
					&= m \sum_{i=1}^m \abs{a_i} + rs+ s^2 +s+\sum_{i=1}^m i\abs{a_i} + rs + \abs \xi \sum_{i=1}^m \abs a_i \\
					&= \sum_{i=1}^m i\abs{a_i} + (\abs \xi + m) \sum_{i=1}^m \abs{a_i} = \S \pmod 2\, .
				\end{align*}
			\end{enumerate}
		\end{proof}
\bibliographystyle{../../bib/alpha2}
\bibliography{../../bib/ref}
\end{document}